\newcommand{\red}[1]{\leavevmode{\color{red}{#1}}}
\DeclareMathOperator{\Gra}{{Gr}}
\DeclareMathOperator{\fact}{{fact}}
\DeclareMathOperator{\pt}{{pt}}
\DeclareMathOperator{\ind}{{ind}}
\DeclareMathOperator{\Jets}{{Jets}}
\DeclareMathOperator{\mer}{{mer}}
\DeclareMathOperator{\reg}{{reg}}
\DeclareMathOperator{\hormerxJets}{{Jets_{\nabla}^{mer, x}}}
\DeclareMathOperator{\hormerxJetsF}{{{Jets}}}
\DeclareMathOperator{\aff}{{aff}}
\DeclareMathOperator{\Div}{{Div}}
\DeclareMathOperator{\laxPreSt}{{laxPreSt}}
\DeclareMathOperator{\Corr}{{Corr}}
\DeclareMathOperator{\prjctn}{{pr}}
\newcommand{\bA}{{\mathbb A}}
\newcommand{\bE}{{\mathbb E}}
\newcommand{\bF}{{\mathbb F}}
\newcommand{\bG}{{\mathbb G}}
\newcommand{\bL}{{\mathbb L}}
\newcommand{\bN}{{\mathbb N}}
\newcommand{\bP}{{\mathbb P}}
\newcommand{\bQ}{{\mathbb Q}}
\newcommand{\bZ}{{\mathbb Z}}
\newcommand{\BA}{{\mathbf A}}
\newcommand{\BB}{{\mathbf B}}
\newcommand{\BC}{{\mathbf C}}
\newcommand{\BD}{{\mathbf D}}
\newcommand{\BE}{{\mathbf E}}
\newcommand{\BH}{{\mathbf H}}
\newcommand{\BM}{{\mathbf M}}
\newcommand{\BN}{{\mathbf N}}
\newcommand{\Bh}{{\mathbf h}}
\newcommand{\one}{{\mathbf 1}}
\newcommand{\cA}{{\mathcal A}}
\newcommand{\cC}{{\mathcal C}}
\newcommand{\cE}{{\mathcal E}}
\newcommand{\cF}{{\mathcal F}}
\newcommand{\cG}{{\mathcal G}}
\newcommand{\cL}{{\mathcal L}}
\newcommand{\cM}{{\mathcal M}}
\newcommand{\cO}{{\mathcal O}}
\newcommand{\cT}{{\mathcal T}}
\newcommand{\cX}{{\mathcal X}}
\newcommand{\cY}{{\mathcal Y}}
\newcommand{\cZ}{{\mathcal Z}}
\newcommand{\Ranp}{{\on{Ran}}}
\newcommand{\Randr}{\on{Ran}_{X_{\dR}}}
\newcommand{\Randrpt}{\operatorname{Ran}_{X_{\dR}, x}}
\newcommand{\colim}{\text{colim}}
\newcommand{\nc}{\newcommand}
\nc{\renc}{\renewcommand}
\nc{\ssec}{\subsection}
\nc{\sssec}{\subsubsection}
\nc{\on}{\operatorname}
\nc\Gr{\on{Gr}}
\nc\Fl{\on{Fl}}
\newtheorem{thm}[subsection]{Theorem}
\newtheorem{conj}{Conjecture}
\newtheorem*{conj*}{Conjecture}
\DeclareMathOperator{\DGCat}{{DGCat}}
\DeclareMathOperator{\QLisse}{{QLisse}}
\DeclareMathOperator{\Lisse}{{Lisse}}
\DeclareMathOperator{\IndLisse}{{IndLisse}}
\DeclareMathOperator{\QCoh}{{QCoh}}
\DeclareMathOperator{\Maps}{{Maps}}
\DeclareMathOperator{\Res}{{Res}}
\DeclareMathOperator{\oblv}{{oblv}}
\DeclareMathOperator{\Oblv}{{Oblv}}
\DeclareMathOperator{\add}{add}
 \DeclareMathOperator{\Bun}{{Bun}}
\DeclareMathOperator{\Isom}{{Isom}}
\DeclareMathOperator{\Lie}{{Lie}}
\DeclareMathOperator{\LocSys}{{LS}}
\DeclareMathOperator{\Loc}{{Loc}}
\DeclareMathOperator{\Sym}{{Sym}}
\DeclareMathOperator{\CommAlg}{{CAlg}}
\DeclareMathOperator{\univ}{{univ}}
\DeclareMathOperator{\BG}{{BG}}
\DeclareMathOperator{\PreSt}{{PreSt}}
\DeclareMathOperator{\Aff}{{Aff}}
\DeclareMathOperator{\Fin}{{Fin}}
\DeclareMathOperator{\Arr}{{Arr}}
\DeclareMathOperator{\EComm}{{EComm^{\vee}}}
\DeclareMathOperator{\ECMod}{{ECMod^{\vee}}}
\DeclareMathOperator{\Strcirc}{{\stackrel{\circ}{Str}}}
\DeclareMathOperator{\Str}{{Str}}
\newcommand{\limto}{{\displaystyle\lim_{\longrightarrow}}}
\newcommand{\rightlim}{\mathop{\limto}}
\newcommand{\leftlim}{\mathop{\displaystyle\lim_{\longleftarrow}}}
\newcommand{\limfromn}{\leftlim\limits_{\raise3pt\hbox{$n$}}}
\newcommand{\limton}{\rightlim\limits_{\raise3pt\hbox{$n$}}}
\newcommand{\rightlimit}[1]{\mathop{\lim\limits_{\longrightarrow}}\limits%
	_{\raise3pt\hbox{$\scriptstyle #1$}}}
\newcommand{\leftlimit}[1]{\mathop{\lim\limits_{\longleftarrow}}\limits%
	_{\raise3pt\hbox{$\scriptstyle #1$}}}
\DeclareMathOperator{\Id}{{Id}}
\DeclareMathOperator{\Hom}{{Hom}} \DeclareMathOperator{\Ind}{{Ind}}
 \DeclareMathOperator{\Mmod}{{-mod}}
 \DeclareMathOperator{\op}{{op}}
\DeclareMathOperator{\Spec}{{Spec}}
\DeclareMathOperator{\Vect}{{Vect}}
\DeclareMathOperator{\HH}{{HH}}
\DeclareMathOperator{\Stab}{{Stab}}
\DeclareMathOperator{\ev}{{ev}}
\DeclareMathOperator{\Rep}{{Rep}}
\newcommand{\Rmnum}[1]{\expandafter\@slowromancap\romannumeral #1@}
\newtheorem{pr}[subsubsection]{Proposition}
\newtheorem{lm}[subsubsection]{Lemma}
\newtheorem{cor}[subsubsection]{Corollary}
\newtheorem{cnstr}[subsubsection]{Construction}
\newtheorem{df}[subsubsection]{Definition}
\newtheorem{thmdefn}[subsubsection]{Theorem-Definition}
\newtheorem{dfprop}[subsubsection]{Definition-Proposition}
\newtheorem{rem}[subsubsection]{Remark}
\newtheorem{ex}[subsubsection]{Example}
\newtheorem{ntn}[subsubsection]{Notation}
\numberwithin{equation}{section}
\newcommand{\dR}{\mathrm{dR}}
\DeclareMathOperator{\plaxlim}{{plaxlim}}
\DeclareMathOperator{\plaxcolim}{{plaxcolim}}
\DeclareMathOperator{\laxlim}{{laxlim}}
\DeclareMathOperator{\ins}{{ins}}
\DeclareMathOperator{\Funct}{{Funct}}
\DeclareMathOperator{\AffSch}{{AffSch}}
\DeclareMathOperator{\Groth}{{Groth}}
\DeclareMathOperator{\D}{{DMod}}
\DeclareMathOperator{\BiCat}{{BiCat_{\infty}}}
\DeclareMathOperator{\Ranu}{{Ran_U}}
\nc{\WFactAlg}{\on{WFactAlg}}
\nc{\FactAlg}{\on{FactAlg}}
\nc{\bFact}{\mathbf{Fact}}
\nc{\FactAlgCat}{\mathbf{FactAlgCat}}
\nc{\WFactAlgCat}{\mathbf{WFactAlgCat}}
\nc{\LWFactAlgCat}{\mathbf{LWFactAlgCat}}
\nc{\WFactModCat}{\on{-}\!\mathbf{WFactModCat}}
\nc{\LWFactModCat}{\on{-}\!\mathbf{LWFactModCat}}
\nc{\FactModCat}{\on{-}\!\mathbf{FactModCat}}
\nc{\WFactMod}{\on{-WFactMod}}
\nc{\FactMod}{\on{-FactMod}}
\nc{\weak}{{\on{weak}}}
\nc{\lax}{{\on{lax}}}
\nc{\str}{{\on{str}}}
\nc{\laxun}{{\on{laxun}}}
\nc{\strun}{{\on{strun}}}
\nc{\enh}{{\on{enh}}}
\nc{\IC}{{\on{IndCoh}}}
\nc{\restr}{{\on{restr}}}
\nc{\redu}{{\on{red}}}
\nc{\naive}{{\on{naive}}}
\nc{\indhol}{{\on{ind-hol}}}
\nc{\Sets}{{\on{Sets}}}
\nc{\cone}{{\on{cone}}}
\nc{\cha}{{\on{char}}}
\nc{\triv}{{\on{triv}}}
\nc{\redd}{{\on{red}}}
\nc{\CFactAlg}{{\on{CFactAlg}}}
\nc{\proje}{{\on{pr}}}
\nc{\action}{{\on{act}}}
\nc{\Cat}{{\on{Cat}}}
\nc{\Grpd}{{\on{Grpd}}}
\nc\wt{\widetilde}
\newcommand{\et}{\on{et}}
\begin{document}
	
	\title[Local systems with restricted variation via factorization]{Local systems with restricted variation on the formal punctured disc via factorization}
	
	\author[E.~Bogdanova]{Ekaterina Bogdanova}
	\address{Harvard University,  USA}
	\email{ebogdanova@math.harvard.edu}
	
	\begin{abstract}
		We define the stack of $G$-local systems with restricted variation on the formal puntured disc and study its properties. We embed sheaves of categories over this stack into the category of factorization module categories over $\Rep(G)$. Along the way we develop a theory of factorization structures in families and study functorialities of such under changes of the base curve.
	\end{abstract}
	
	\maketitle
	
	\tableofcontents
	
\section{Introduction}
Let $k$ be a ground field of characteristic zero, $G$ be a split connected reductive group over $k$, $\check{G}$ be the Langlands dual group to $G$.
\subsubsection{Local geometric Langlands} The local geometric Langlands program from \cite{FG06a} suggests that, roughly speaking, DG categories strongly acted on by the loop group $LG$ are equivalent to categories over the
moduli space of $\check{G}$-local systems on the formal punctured disc, i.e. $\check{G}$-bundles on the formal punctured disc with connection. The equivalence reads as

$$\bL: \D(LG)\mathbf{-ModCat}  \xrightarrow{\sim} \QCoh(\LocSys_{\check{G}}(D^{\circ}))\mathbf{-ModCat}.$$

The first orienting remark is that $\bL$ does not preserve the functor of forgetting the module structures. Rather $\bL$ should be thought of as a categorical Morita equivalence. 

Local geometric Langlands is supposed to satisfy various compatibilities as in the arithmetic theory, e.g. there should be a compatibilities with parabolic induction and with Whittaker models.

There are a number of examples providing the evidence for the existence of such an equivalence. Among them are derived Satake equivalence (\cite{BF}), Bezrukavnikov's theory (\cite{BR}), works of Frenkel-Gaitsgory (\cite{FG04}, \cite{FG06a}, \cite{FG09a}, \cite{FG09b}, \cite{FG09c}) and Raskin (\cite{Ras15b}, \cite{Ras21}). One expects that these works combined with the compatibilities are supposed to pin down the equivalence $\bL$. 

One distinguishing feature of this setup is the geometry on the set of spectral parameters, which has led to the suggestion (c.f. \cite{BD}) that pointwise spectral descriptions should extend in families over these moduli spaces. More precisely, there should be a natural functor (different from $\bL$)
\begin{equation}\label{spectral decomposition}
	\D(LG)\mathbf{-ModCat} \rightarrow \QCoh(\LocSys_{\check{G}}(D^{\circ}))\mathbf{-ModCat}
\end{equation}
preserving the functor forgetting the module structures. 

\subsubsection{A sibling: restricted local geometric Langlands.} A recent progress in local arithmetic Langlands has been the formulation of the conjecture using families of Galois representations  (\cite{Zhu21}). Roughly speaking, it says that there exists a fully faithful functor from representations of $G(\bF_q((t)))$ to (appropriately understood) coherent sheaves on a certain stack parametrizing Galois representations. 

In an unpublished work, Gaitsgory came up with a categorical incarnation of this conjecture, which provides a link (via methods of \cite{AGKRRV2}) between geometric and arithmetic worlds. Moreover, Gaitsgory`s conjecture is suited to any ground field and any sheaf theory. It roughly states that there exists a natural equivalence
\begin{equation}\label{restricted Langlands}
	\bL^{\restr}: LG\mathbf{-ModCat}_{\restr} \xrightarrow{\sim} \QCoh(\LocSys_{\check{G}}^{\restr}(D^{\circ}))\mathbf{-ModCat},
\end{equation}
where on the left-hand side we have to work with a subtler notion of categorical representation (in order to fix an issue with the failure of Kunneth formula in the constructible setting), and on the right-hand side we need accordingly to replace $\LocSys_{\check{G}}(D^{\circ})$ by the moduli of {\it local systems with restricted variation}.
\subsection{Statement of the result}

Let $X$ be a smooth affine curve, $x \in X$ be a closed point. Let $U = X \setminus x_0$.

One of the difficulties in local geometric Langlands is the fact that $\LocSys_{\check{G}}(D^{\circ})$ is complicated as an algebro-geometric object. For instance, this is not an Artin stack, and it is not even locally of finite type. One way to avoid dealing with this problem is to introduce an additional structure on the objects with rather simple geometry. Namely, factorization structure. 
The general heuristic is that factorization structures tend to "decrease the complexity" in the following sense. A factorization algebra whose fibers involve only the formal disc will have invariants encoding the information about the whole formal punctured disc. For instance, factorization geometry of Beilinson-Drinfeld grassmannian encodes usual geometry of the loop group, and factorization geometry of classifying space of $\check{G}$ encodes geometry of $\check{G}$-local systems on the formal punctured disc. We refer to \cite{R} for a more detailed introduction to the factorizable geometry. 

\subsubsection{From local systems on the punctured disc to factorization modules over $\Rep(\check{G})$.}
\begin{conj}\label{intro conj}
	There exists  a fully faithful functor 
	$$\bFact: \QCoh(\LocSys_{\check{G}}(D^{\circ}))\mathbf{-ModCat} \rightarrow \Rep({\check{G}})\mathbf{-FactModCat}$$
	that preserves the forgetful functors from both sides to $\mathbf{DGCat}$.
	Here 
	\begin{itemize}
		\item[-] On the left-hand side $\QCoh(\LocSys_{\check{G}}(D^{\circ}))$ is viewed as a monoidal category with usual tensor product. 
		\item[-] On the right-hand side $\Rep({\check{G}})$ is the DG category of representation of ${\check{G}}$ equipped with factorization algebra category structure coming from symmetric monoidal structure on $\Rep({\check{G}})$. 
	\end{itemize}
\end{conj}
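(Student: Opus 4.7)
The plan is to construct $\bFact$ by first producing a canonical factorization module category structure on $\QCoh(\LocSys_{\check{G}}(D^{\circ}))$ itself over $\Rep(\check{G})$, and then extending this to arbitrary module categories by relative tensor product. The conjecture then reduces to exhibiting that canonical structure together with a universal property.

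Recall that $\Rep(\check{G}) = \QCoh(B\check{G})$ is a commutative algebra object in $\mathbf{DGCat}$, so by the factorization-in-families formalism developed earlier in the paper it spreads along $X$ to a factorization algebra category, and localizing at $x \in X$ gives $\Rep(\check{G})_{\Randrpt}$. The structure on $\QCoh(\LocSys_{\check{G}}(D^{\circ}))$ is then defined geometrically: for any finite configuration of points $\{x_i\} \subset X \setminus \{x\}$, restriction of local systems gives a map $\LocSys_{\check{G}}(D^{\circ}) \to \prod_i \LocSys_{\check{G}}(D_{x_i}) = \prod_i B\check{G}$, and pullback of quasicoherent sheaves gives a functor $\Rep(\check{G})^{\otimes I} \to \QCoh(\LocSys_{\check{G}}(D^{\circ}))$. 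These assemble, via the factorization-in-families machinery of the paper (which is what is needed to handle points colliding and degenerating toward $x$), into a factorization module category structure at $x$.

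With this structure in hand, define
$$\bFact(\cC) := \cC \otimes_{\QCoh(\LocSys_{\check{G}}(D^{\circ}))} \QCoh(\LocSys_{\check{G}}(D^{\circ})),$$
where the right factor carries the factorization module structure just constructed. The result inherits a $\Rep(\check{G})$-factorization module category structure whose underlying DG category is $\cC$, so compatibility with the forgetful functors to $\mathbf{DGCat}$ is automatic. Fully faithfulness is the main technical content: by a standard relative tensor product argument it reduces to computing the endomorphisms of $\QCoh(\LocSys_{\check{G}}(D^{\circ}))$ inside $\Rep(\check{G})\mathbf{-FactModCat}$ and showing they recover $\QCoh(\LocSys_{\check{G}}(D^{\circ}))$ itself, i.e. that the canonical action above identifies $\QCoh(\LocSys_{\check{G}}(D^{\circ}))$ with its own commutant.

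I expect this commutant computation to be the principal obstacle. Since $\LocSys_{\check{G}}(D^{\circ})$ is not an Artin stack and is not locally of finite type, categorical descent techniques are unavailable, and one must genuinely exploit the factorization structure --- which is the whole point of the reformulation. A natural auxiliary strategy is to first establish the analogue with $\LocSys_{\check{G}}(D^{\circ})$ replaced by $\LocSys_{\check{G}}^{\restr}(D^{\circ})$, where the geometry is better behaved in the sense of \cite{AGKRRV2}, and then to bootstrap to the full statement. A secondary technical obstacle is that the functoriality of factorization structures under base curve change --- the main theme of the paper --- must be deployed both to transport the factorization structure from the global Ran-over-$X$ picture down to the formal local setting at $x$, and to verify independence of the auxiliary global curve $X$.
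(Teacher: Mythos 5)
There is a genuine gap in your construction of the factorization module structure. You propose that, for a finite configuration $\{x_i\}\subset X\setminus\{x\}$, there is a restriction map
$$\LocSys_{\check{G}}(D^{\circ})\to\prod_i\LocSys_{\check{G}}(D_{x_i})=\prod_i B\check{G},$$
and that pullback along it provides the needed factorization module structure. But no such map exists: a $\check{G}$-local system on the formal punctured disc at $x$ does not restrict to the formal discs at unrelated points $x_i$, which are disjoint formal neighborhoods of $X$ having nothing to do with $D^{\circ}_x$. The paper's construction is essentially different. Rather than decorating $\QCoh(\LocSys_{\check{G}}(D^{\circ}))$ itself, one constructs a genuinely new object: the lax prestack $\LocSys_{\check{G}}(D^{\circ}_{\Ranp})$ over the marked Ran space $\Randrpt$, whose fiber over a configuration $s$ containing $x$ is $\LocSys_{\check{G}}(D_s\times_X U)$, i.e.\ local systems on the multidisc around the \emph{whole} configuration $s$ with a pole allowed only at $x$. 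The factorization module structure over $\Rep(\check{G})\simeq\QCoh(\LocSys_{\check{G}}(D)_{\Randr})$ then comes from factorization of multidiscs along disjoint decompositions, and the core at $s=\{x\}$ is $\QCoh(\LocSys_{\check{G}}(D^{\circ}))$. Your displayed formula $\bFact(\cC):=\cC\otimes_{\QCoh(\LocSys_{\check{G}}(D^{\circ}))}\QCoh(\LocSys_{\check{G}}(D^{\circ}))$ is literally just $\cC$; the paper's formula tensors against $\QCoh(\LocSys_{\check{G}}(D^{\circ}_{\Ranp}))$, which is not $\QCoh(\LocSys_{\check{G}}(D^{\circ}))$ but merely has it as its core. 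Compatibility with the forgetful functors is then a consequence of this core identification, not an automatic triviality.

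A further caution: the statement you are attempting is a \emph{conjecture} in the paper, and the paper does not prove its fully faithfulness. What is actually proved is the restricted-variation analogue $\bFact^{\restr}$, and passage from the restricted to the unrestricted statement is itself conditional on a further geometric conjecture about how $\LocSys^{\restr}_{\check{G}}(D^{\circ})$ sits inside $\LocSys_{\check{G}}(D^{\circ})^{\et}$. You correctly anticipate that the restricted case is the tractable intermediate. However, the fully faithfulness argument carried out there is not a ``commutant computation''; it is a chain of reductions (independence of the ambient curve via \'etale pullback of factorization patterns, reduction to $X=\bA^1$, reduction to module categories of the form $\QCoh(R_i)$ for affine $R_i$ mapping to the moduli stack), terminating in an identification of both sides of the would-be equivalence with quasi-coherent sheaves on a fiber product of affines, the right-hand side being computed via factorization modules over a commutative (twisted-regular) factorization algebra and the horizontal-jets description of such modules. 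So the route through the restricted case is substantially different from what you sketch.
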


\subsubsection{} As in \cite[1.1.1]{text}, the functor will be given as tensor product with a certain factorization $\Rep({\check{G}})$-module category, $\bFact(\QCoh(\LocSys_{\check{G}}(D^{\circ})))$, equipped with a commuting action of $\QCoh(\LocSys_{\check{G}}(D^{\circ})$. 
The fiber at $x$ of this category will identify with $\QCoh(\LocSys_{\check{G}}(D^{\circ}))$, and the action will be given by tensor product of quasi-coherent sheaves. In more detail, $\bFact(\QCoh(\LocSys_{\check{G}}(D^{\circ})))$ is defined as the category of quasi-coherent sheaves on a certain factorization prestack $\LocSys_{\check{G}}(D^{\circ}_{\Ranp})$ degenerating  $\LocSys_{\check{G}}(D)$ to  $\LocSys_{\check{G}}(D^{\circ})$. 

In this paper we prove an analogue of Conjecture \ref{intro conj} in the context of {\it restricted} local geometric Langlands. Before we formulate the statement, we need to introduce the replacement for the left-hand side of the Conjecture \ref{intro conj}.

\subsubsection{The stack of local systems with restricted variation on the puntured disc. }

Although our main theorem is in the de Rham setting, we define and study the stack of local systems with restricted variation on the puntured disc in all three frameworks: Betti, de Rham, and etale. We refer to the introduction of \cite{AGKRRV2} for the overview of the idea of local systems with restricted variation. Here we highlight the features of the local context. 
\begin{itemize}
\item In the Betti situation $\QLisse(D^{\circ})$ is equivalent to $\QLisse(\bG_m)$, so we can directly use results from \cite{AGKRRV2}.

\item Let us turn to the de Rham context. Although it is complicated to define the category of all $D$-modules on the formal punctured disc, distinguishing the subcategory of lisse ones is rather straightforward. We set $\Lisse(D^{\circ})$ to be the category of free $k((t))$-modules of finite rank equipped with a connection. We also define $$\QLisse(D^{\circ}) = \IndLisse(D^{\circ}).$$ Note that this category is already left complete, which is not always the case in the global setting. 

\item In the etale context we define $\Lisse(D^{\circ})$ to be the category of lisse $\ell$-adic sheaves on $\Spec(k((t)))$, and $\QLisse(D^{\circ}) := \IndLisse(D^{\circ}).$ Here characteristic of $k$ could be greater than zero.
\end{itemize}
Results of Katz (\cite{Katz2}, \cite{Katz3}) provide a connection between $\QLisse(D^{\circ})$ and $\QLisse(\bG_m)$ in de Rham and etale situations. Namely, we have a section of a natural restriction map 
\begin{equation}\label{intro section}
		\QLisse(\bG_m) \rightarrow  \QLisse(D^{\circ}).
\end{equation}

For an affine test scheme $S=\Spec(A)$, an $S$-point of $\LocSys_{\check{G}}^{\restr}(D^{\circ})$ is symmetric monoidal functor $$\Rep({\check{G}}) \rightarrow \QLisse(D^{\circ}) \otimes (A\Mmod).$$ 
Note that (\ref{intro section}) implies that there is a section of a natural map 
$$\LocSys_{\check{G}}^{\restr}(\bG_m) \rightarrow \LocSys_{\check{G}}^{\restr}(D^{\circ}).$$
We remark that unlike the global case, the stack $\LocSys_{\check{G}}^{\restr}(D^{\circ})$ is classical. Nevertheless, we still must work in the context of derived algebraic geometry.

As in the global case, the stack $\LocSys_{\check{G}}^{\restr}(D^{\circ})$ splits into a
disjoint union of prestacks $\cZ_{\sigma}$ parameterized by isomorphism classes of semi-simple ${\check{G}}$-local systems $\sigma$ on $D^{\circ}$, and each $\cZ_{\sigma}$ can be written as a quotient of a formal affine scheme by ${\check{G}}$. 

\subsubsection{Main result.}

\begin{thm}\label{intro theorem}
	There exists  a fully faithful functor 
	$$\bFact^{\restr}: \QCoh(\LocSys_{\check{G}}^{\restr}(D^{\circ}))\mathbf{-ModCat} \rightarrow \Rep({\check{G}})\mathbf{-FactModCat}$$
	that preserves the forgetful functors from both sides to $\mathbf{DGCat}$.
	Here 
	\begin{itemize}
		\item[-] On the left-hand side $\QCoh(\LocSys_{\check{G}}^{\restr}(D^{\circ}))$ is viewed as a monoidal category with usual tensor product. 
		\item[-] On the right-hand side $\Rep({\check{G}})$ is the DG category of representation of ${\check{G}}$ equipped with factorization algebra category structure coming from symmetric monoidal structure on $\Rep({\check{G}})$. 
	\end{itemize}
\end{thm}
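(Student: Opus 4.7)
The plan is to imitate the construction sketched in the introduction for the non-restricted Conjecture \ref{intro conj}, but now carried out in the restricted-variation framework where $\LocSys_{\check{G}}^{\restr}(D^{\circ})$ is classical and decomposes as $\sqcup_\sigma \cZ_\sigma$. Concretely, the functor $\bFact^{\restr}$ will be produced as tensor product with a factorization $\Rep(\check{G})$-module category $\cB^{\restr}$ that also carries a commuting action of $\QCoh(\LocSys_{\check{G}}^{\restr}(D^{\circ}))$:
\[
\bFact^{\restr}(\cC) \;:=\; \cC \otimes_{\QCoh(\LocSys_{\check{G}}^{\restr}(D^{\circ}))} \cB^{\restr}.
\]
Here $\cB^{\restr} := \QCoh\bigl(\LocSys_{\check{G}}^{\restr}(D^{\circ}_{\Ranp})\bigr)$ for a factorization prestack over $\Ranp_{X,x}$ whose fiber at the distinguished point $\{x\}$ identifies with $\LocSys_{\check{G}}^{\restr}(D^{\circ})$ and whose fiber away from $x$ encodes local systems on formal neighborhoods that degenerate compatibly with factorization.

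First, I would construct $\LocSys_{\check{G}}^{\restr}(D^{\circ}_{\Ranp})$: for a test $S=\Spec(A)$ with an $S$-point $\underline{x}$ of $\Ranp_{X,x}$, its $S$-points are symmetric monoidal functors $\Rep(\check{G})\to\QLisse\bigl(D_{\underline{x}\cup\{x\}}^{\circ}\bigr)\otimes(A\Mmod)$, where the formal disc is punctured only at $x$. The key technical tool is the section (\ref{intro section}) provided by Katz: it lets one trivialize/extend the restricted-variation condition consistently as points collide with or separate from $x$, so the assignment is genuinely factorizable. Using the decomposition by semisimple types $\sigma$, each stratum $\cZ_\sigma$ is a quotient of a formal affine scheme by $\check{G}$, so $\QCoh(\cZ_\sigma)$ is well-behaved and the construction can be done $\sigma$-by-$\sigma$ and then assembled. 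Next, verify that $\cB^{\restr}$ is a factorization $\Rep(\check{G})$-module category: the $\Rep(\check{G})$-action at each point of $\Ranp$ comes from evaluating the tautological symmetric monoidal functor on the formal jet, and the factorization constraint over disjoint configurations follows from the product decomposition of $\QLisse$ on disjoint formal discs.

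For fully faithfulness, the standard reduction (cf.\ \cite[1.1.1]{text}) identifies fully faithfulness of $\bFact^{\restr}$ with the statement that the fiber at $x$ of $\cB^{\restr}$ recovers $\QCoh(\LocSys_{\check{G}}^{\restr}(D^{\circ}))$ as a $\QCoh(\LocSys_{\check{G}}^{\restr}(D^{\circ}))$-bimodule; this is built into the construction of the factorization prestack, provided the unit-insertion map at $x$ is an equivalence of module categories. Compatibility with the forgetful functor to $\mathbf{DGCat}$ is automatic, since both sides restrict along "evaluation at $x$".

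The main obstacle is Step 1: defining the factorization prestack so that the fiber at a configuration $\underline{x}\cup\{x\}$ really decouples as $\LocSys_{\check{G}}^{\restr}(D^{\circ})$ at $x$ times ordinary (unpunctured) restricted local systems around the other points. In the non-restricted setting this is the heart of the construction in \cite{text}; in the restricted setting the extra subtlety is that $\QLisse$ does not satisfy Künneth on the nose in the constructible regime, so one must work with the refined notion of restricted variation from \cite{AGKRRV2} throughout, and use the Katz section (\ref{intro section}) to produce the coherent family of sections extending the local data to a factorization structure. Once this is correctly set up, the remaining steps (factorization bimodule structure, and fully faithfulness via the fiber at $x$) follow by the same formal manipulations as in the non-restricted case.
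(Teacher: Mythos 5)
Your construction of the factorization prestack differs from the paper's, and the substitution hides a real difficulty. You propose to define $\LocSys_{\check{G}}^{\restr}(D^{\circ}_{\Ranp})$ directly by taking, over a configuration $\underline{x}\cup\{x\}$, symmetric monoidal functors from $\Rep(\check{G})$ into $\QLisse$ of the formal multidisc punctured only at $x$. Making sense of ``$\QLisse$'' of such a varying multidisc, equipping it with a factorization structure, and then showing that the resulting prestack degenerates correctly to $\LocSys_{\check{G}}^{\restr}(D^{\circ})$ at the marked point would be a substantial new piece of foundational work; the Katz section alone does not produce this. The paper avoids the problem entirely: it takes the existing factorization prestack $\LocSys_{\check{G}}(D^{\circ}_{\Ranp})^{\et}$ (whose construction uses only jets, not lisse sheaves) and \emph{pulls it back} along the map $\LocSys_{\check{G}}^{\restr}(D^{\circ}) \to \LocSys_{\check{G}}(D^{\circ})^{\et}$, defining $\bFact^{\restr}(\QCoh(\LocSys_{\check{G}}^{\restr}(D^{\circ})))$ as $\QCoh$ of the fiber product $\LocSys_{\check{G}}(D^{\circ}_{\Ranp})^{\et} \times_{\LocSys_{\check{G}}(D^{\circ})^{\et}} \LocSys_{\check{G}}^{\restr}(D^{\circ})$. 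Producing that map is itself nontrivial (Theorem \ref{map between LS}, via the Levelt--Turrittin decomposition $H\cong \bG_a\times H^{\redd}$ and passing through $\Bun_G(\bG_m)^+$), but it is a prestack-level statement, much more tractable than inventing $\QLisse$ on multidiscs.

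The larger gap is your treatment of fully faithfulness. You assert that ``fully faithfulness of $\bFact^{\restr}$'' reduces to the fiber at $x$ of $\cB^{\restr}$ recovering $\QCoh(\LocSys_{\check{G}}^{\restr}(D^{\circ}))$, and that this is ``built into the construction.'' That is not what fully faithfulness means here: the fiber identification is exactly the ``preserves the forgetful functors'' clause, which is indeed immediate from the construction, but fully faithfulness is the much stronger assertion that
\[
\on{Fun}_{\QCoh(\LocSys_{\check{G}}^{\restr}(D^{\circ}))}(\BC^1,\BC^2)\;\longrightarrow\;\on{FactFun}_{\Rep(\check{G})}(\bFact^{\restr}(\BC^1),\bFact^{\restr}(\BC^2))
\]
is an equivalence for arbitrary module categories $\BC^1,\BC^2$. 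This is the entire content of the theorem and occupies the bulk of the paper: one reduces to $X=\bA^1$ via the \'etale-pullback machinery for factorization patterns, reduces to $\BC^i\cong \QCoh(R_i)$ for affine $R_i$ mapping to $\LocSys_{\check{G}}^{\restr}(D^{\circ})$ by (co)limit arguments exploiting semi-rigidity and 1-affineness, realizes $\bFact^{\restr}(\QCoh(R_i))$ as a parameterized factorization restriction $\mathbf{Res}_{\Phi_i}$, applies the basic adjunction to rewrite the right-hand side as factorization modules over the commutative factorization algebra ${F_{\sigma_2}}^*{F_{\sigma_1}}_*(\omega_U\boxtimes\cO_T)$, and finally computes this geometrically via horizontal meromorphic jets and compares monads. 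None of this is ``formal manipulation''; it relies on Proposition \ref{monomorphism} (injectivity into $\LocSys_{\check{G}}(D^{\circ})^{\et}$), Katz's extension theorem, and the Appendix \ref{appendix: factmodules over commutative alg} description of factorization modules over commutative factorization algebras. Your proposal omits all of this, so as written it does not constitute a proof.
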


\subsubsection{} As in Conjecture \ref{intro conj}, the functor $\bFact^{\restr}$ is given by a certain factorization $\Rep({\check{G}})$-module category, $\bFact^{\restr}(\QCoh(\LocSys_{\check{G}}^{\restr}(D^{\circ})))$, equipped with a commuting action of $\QCoh(\LocSys_{\check{G}}^{\restr}(D^{\circ}))$. We deinfe $\bFact^{\restr}(\QCoh(\LocSys_{\check{G}}^{\restr}(D^{\circ})))$ as the category of quasi-coherent sheaves on the prestack $$\LocSys_{\check{G}}(D^{\circ}_{\Ranp}) \times_{\LocSys_{\check{G}}(D^{\circ})}\LocSys_{\check{G}}^{\restr}(D^{\circ}).$$

\subsection{Motivation}

\subsubsection{} First, we provide pieces of motivation of why one might expect such a statement to be true. In topology, the analogue of  Conjecture \ref{intro conj} and Theorem \ref{intro theorem} are transparent. Namely, the parallel to factorization structure in algebraic geometry is $\bE_2$-structure in topology, so we are interested in the category of $\bE_2$-modules over $\Rep(\check{G})$ (this is a symmetric monoidal category, so, in particular, an $\bE_2$-category). But recall that for any $\bE_2$-category $\mathbf{A}$ one has 
$$\mathbf{A}\Mmod^{\bE_2} \cong (\int_{S^1} \mathbf{A})\Mmod,$$
where on the right-hand side we consider left modules over chiral homology of $\mathbf{A}$. We can also rewrite $$(\int_{S^1} \mathbf{A})\Mmod \cong \HH(\mathbf{A})\Mmod.$$
So for $\mathbf{A} = \Rep(\check{G})$ we get 
$$\Rep(\check{G})\Mmod^{\bE_2} \cong \QCoh(\Loc_{\check{G}}(S^1))\Mmod.$$

\subsubsection{} Next we discuss how Theorem \ref{intro theorem} fits in the context of restricted local Langlands conjecture \ref{restricted Langlands}. One prediction of the local restricted Langlands correspondence is that any $\mathbf{C} \in LG\mathbf{-ModCat}_{\restr}$ is fibered over $\LocSys_G^{\restr}(D^{\circ})$, i.e. one expects there to be a natural functor (parallel to (\ref{spectral decomposition}))
\begin{equation}\label{spectral decomp}
	\D(LG)\mathbf{-ModCat}_{\restr} \dashrightarrow \QCoh(\LocSys_G^{\restr}(D^{\circ})) \mathbf{-ModCat}
\end{equation}
that preserves the forgetful functors from both sides to $\mathbf{DGCat}$. This functor would serve as the spectral decomposition in local restricted geometric Langlands program. Let us outline the construction of this functor.

\subsubsection{} There is a dual statement to Conjecture \ref{intro conj} which is the main theorem of \cite{text}:

\begin{thm} 
	If $G$ is connected and reductive, then there is a fully faithful functor:
	\[
	\bFact: \D(LG)\mathbf{-ModCat} \to \D(\Gr_{G})\mathbf{-FactModCat}
	\]
	that preserves the forgetful functors from both sides to $\mathbf{DGCat}$.
\end{thm}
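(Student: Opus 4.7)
The plan is to realize $\bFact$ as the operation of tensoring with a bimodule category. I would construct a DG category $\bM$ carrying two commuting structures: a strong action of $\D(LG)$ on one side and a factorization $\D(\Gr_{G})$-module category structure on the other. Then for $\mathbf{C} \in \D(LG)\mathbf{-ModCat}$ I would set
\[
\bFact(\mathbf{C}) \;:=\; \mathbf{C} \otimes_{\D(LG)} \bM,
\]
inheriting the factorization $\D(\Gr_{G})$-module structure from $\bM$. Compatibility with the forgetful functors to $\mathbf{DGCat}$ then reduces to the statement that, as a plain $\D(LG)$-module, $\bM$ is equivalent to $\D(LG)$ itself, so that $\mathbf{C} \otimes_{\D(LG)} \bM \simeq \mathbf{C}$.

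To build $\bM$, I would exhibit a factorization prestack $\bY_{\Ranp}$ over $\Ranp(X)$ whose factorization structure is modelled on that of $\Gr_{G,\Ranp}$, and whose fiber over the unary stratum at $x \in X$ recovers $LG$. Concretely, $\bY_{\Ranp}$ should parametrize a finite subset $I \subset X$, a $G$-bundle on the formal neighbourhood of $I \cup \{x\}$, and a trivialization of this bundle on a formal neighbourhood of $x$ disjoint from $I$. Hecke modifications supported on $I$ endow $\D(\bY_{\Ranp})$ with the factorization $\D(\Gr_{G})$-module structure, while modifying the trivialization at $x$ yields the strong $\D(LG)$-action; disjointness of $I$ from $x$ ensures the two actions commute. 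Setting $\bM := \D(\bY_{\Ranp})$ yields the desired bimodule, with factorization structure following from the standard Beilinson-Drinfeld formalism on $\Ranp(X)$, in the version already developed earlier in this paper.

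The main obstacle is the fully faithfulness claim. Unwinding, it amounts to showing that the right adjoint of $\bFact$--computed by restricting a factorization $\D(\Gr_{G})$-module to the fiber at $x$ and extracting its induced $\D(LG)$-module structure--recovers $\mathbf{C}$ from $\bFact(\mathbf{C})$, and that the unit of this adjunction is an equivalence. I would approach this via a descent argument along the $L^+G_{\Ranp}$-torsor underlying $\bY_{\Ranp} \to \Gr_{G,\Ranp}$: the claim reduces to conservativity of the fiber-at-$x$ functor on the essential image of $\bFact$, together with the identification of the comonad on $\D(LG)\mathbf{-ModCat}$ with the identity. To establish the latter I would first verify it on a class of generating $\D(LG)$-modules (e.g.\ those induced from $L^+G$, or from an Iwahori), where the computation can be made explicit using the convolution presentation of $\Gr_{G,\Ranp}$, and then propagate to all of $\D(LG)\mathbf{-ModCat}$ via a factorization descent argument in the spirit of Raskin. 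This last step--propagation from generators to the whole category while keeping track of the factorization structure--is where I expect most of the technical work to lie.
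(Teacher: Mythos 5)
The theorem you are proving is quoted in this paper as the main theorem of \cite{text}; it is not reproved here, and the paper only summarizes the construction: $\bFact$ is implemented by tensoring with a bimodule carrying commuting $\D(LG)$- and factorization $\D(\Gr_G)$-actions whose underlying DG category is $\D(LG)$. Your high-level architecture matches this. However, your concrete description of $\bY_{\Ranp}$ does not produce the required structure. Parametrizing a $G$-bundle on a formal neighborhood of $I \cup \{x\}$ with only a trivialization on the disc around $x$ gives a point (not $LG$) over $\{x\}$, and does not yield a factorization $\D(\Gr_G)$-module: for disjoint $\underline{y}$ and $\underline{z}$ the factorization action map would reduce to the forgetful map $\Gr_{G,\underline{z}} \to \Bun_G(D_{\underline{z}})$, which is not an equivalence. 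The correct space should, over a finite set $\underline{y} \ni x$, parametrize a $G$-bundle on the disc around $\underline{y}$ trivialized away from $\underline{y}$, together with an additional trivialization on the disc around $x$. The fiber at $\{x\}$ is then a $G$-bundle on $D_x$ trivialized both on $D_x^\circ$ and on $D_x$, i.e.\ $LG$; and away from $x$ the space is the Beilinson--Drinfeld Grassmannian, which supplies the factorization module structure.

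Your full faithfulness argument is also thin at the decisive point. ``Verify on a generating class of modules and propagate via a factorization descent argument'' is precisely where the content lives, and naming the analogy to Raskin does not substitute for it. The mechanism this paper uses in its parallel spectral theorem (Theorem \ref{mainthm}), and which one expects underlies \cite{text}, is the basic adjunction for factorization restriction (Theorem \ref{thm-basic-adj}): after reducing to modules of special form, one identifies $\on{FactFun}_{\D(\Gr_G)}(\bFact(\mathbf{C}_1), \bFact(\mathbf{C}_2))$ with factorization modules over an explicit factorization algebra (the geometric counterpart of the regular representation, namely $\D(L^+G)$ inside $\D(\Gr_G)$) and computes directly. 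Replacing your descent step with such a reduction would sharpen the argument considerably.
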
 
Above:
\begin{itemize}
	\item 
	In the left-hand side, $\D(LG)$ is the DG category of D-modules on the loop group $L G$, equipped with the convolution monoidal structure. 
	\item
	On the right-hand side, $\D(\Gr_G)$ is the DG category of D-modules on the affine Grassmannian $\Gr_G$, equipped with the factorization algebra category structure inherited from the factorization structure of the Beilinson--Drinfeld affine Grassmannians .
	\end{itemize}

The fully faithful functor $\bFact$ is also given by  a ``bimodule'' object equipped with commuting actions of $\D(L G)$ (in the usual sense) and $\D(\Gr_G)$ (in the factorization sense). The underlying DG category of this bimodule is just $\D(L G)$.

\begin{cor}
	If $G$ is connected and reductive, then there is a fully faithful functor:
	\[
	\bFact: \D(LG)\mathbf{-ModCat}_{\restr} \to \D(\Gr_{G})\mathbf{-FactModCat}
	\]
	that preserves the forgetful functors from both sides to $\mathbf{DGCat}$.
\end{cor}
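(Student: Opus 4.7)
The plan is to deduce the Corollary essentially formally from the preceding Theorem by restriction. The subcategory $\D(LG)\mathbf{-ModCat}_{\restr} \subset \D(LG)\mathbf{-ModCat}$ is defined by singling out module categories satisfying the restricted variation condition (that is, those coming from the subtler notion of categorical representation introduced to address the Künneth failure in the constructible context). Since restricted module categories form a full subcategory of the ambient one, the composition of this full embedding with the Theorem's functor $\bFact$ will automatically be fully faithful.

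Concretely, the first step is to produce the candidate functor by simply setting $\bFact^{\restr} := \bFact \circ \iota$, where $\iota : \D(LG)\mathbf{-ModCat}_{\restr} \hookrightarrow \D(LG)\mathbf{-ModCat}$ is the inclusion. The second step is to check that this composition preserves the forgetful functors to $\mathbf{DGCat}$: the forgetful functor on $\D(LG)\mathbf{-ModCat}_{\restr}$ is the restriction of the forgetful functor on $\D(LG)\mathbf{-ModCat}$, so the compatibility propagates from the Theorem. The third step is fully faithfulness: for any two objects $\mathbf{C}_1, \mathbf{C}_2$ in $\D(LG)\mathbf{-ModCat}_{\restr}$, the mapping space $\Maps_{\D(LG)\mathbf{-ModCat}_{\restr}}(\mathbf{C}_1, \mathbf{C}_2)$ coincides by definition of the inclusion with $\Maps_{\D(LG)\mathbf{-ModCat}}(\iota(\mathbf{C}_1), \iota(\mathbf{C}_2))$, which by the Theorem maps isomorphically onto $\Maps_{\D(\Gr_G)\mathbf{-FactModCat}}(\bFact(\iota(\mathbf{C}_1)), \bFact(\iota(\mathbf{C}_2)))$.

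The only genuinely non-trivial point to verify is that $\iota$ is indeed fully faithful, i.e.\ that morphisms between restricted $\D(LG)$-module categories are the same whether computed in the restricted or unrestricted ambient 2-category. This should be a tautology from the definition of $\D(LG)\mathbf{-ModCat}_{\restr}$ as a full subcategory, but if in the source definitions the morphism spaces are a priori different, one must appeal to the fact that restricted representations form a full subcategory of representations in the relevant sense. Once this is checked, the Corollary follows. The main conceptual obstacle — constructing a factorization bimodule out of $\D(LG)$ itself that realizes $\bFact$ — has already been handled in the Theorem, so no new geometric or factorization-theoretic input is required here.
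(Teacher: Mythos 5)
The paper itself offers no proof of this Corollary: it is stated immediately after the Theorem and treated as formal. Your reconstruction—precompose $\bFact$ with the inclusion $\iota\colon \D(LG)\mathbf{-ModCat}_{\restr}\hookrightarrow \D(LG)\mathbf{-ModCat}$ and note that fully faithful composed with fully faithful is fully faithful—is the natural reading, and you correctly identify the one substantive hypothesis the argument rests on, namely that $\iota$ is fully faithful and commutes with the forgetful functors to $\mathbf{DGCat}$.

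However, you should be more careful about whether that hypothesis is available. The paper never defines $\D(LG)\mathbf{-ModCat}_{\restr}$; it only gestures at it as a ``subtler notion of categorical representation'' introduced to compensate for the failure of the K\"unneth formula in the constructible setting, citing Gaitsgory's unpublished work. In general that notion is not presented as a full sub-$(\infty,2)$-category of $\D(LG)\mathbf{-ModCat}$: it is a replacement of the $2$-category of module categories, in which both the objects and the mapping categories are modified (roughly, one passes to module categories in the world of sheaves of categories over $\operatorname{Ran}$, or imposes a rigidification that changes morphism spaces). Your phrase ``this should be a tautology from the definition'' therefore overstates the situation. The reason the Corollary is nevertheless essentially free in the present paper is specific to the de Rham context: there the K\"unneth formula does hold for $\D$-modules, so the ``subtler'' notion is expected to collapse back to the ordinary one and $\iota$ becomes an equivalence (or at minimum a fully faithful embedding). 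Stating that de Rham-specific reason explicitly would turn your proposal from ``probably correct'' into ``correct, with the correct justification,'' and it is the kind of remark the paper itself should arguably have included. In a constructible sheaf theory your argument would not go through without genuine work, since $\iota$ need not exist as a functor of $(\infty,2)$-categories, let alone be fully faithful.
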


The desired functor (\ref{spectral decomp}) would make the following diagram commute:
	This follows by definition from the fact that for $f: \Spec(A) \rightarrow \Randrpt$ the diagram 
\[
\begin{tikzcd}
\D(LG)\mathbf{-ModCat}_{\restr} \ar[r, dashrightarrow, ""']\ar[d, hookrightarrow, ""']&   	\QCoh(\LocSys_{\check{G}}^{\restr}(D^{\circ}))\mathbf{-ModCat} \ar[d, hookrightarrow, ""']  \\
\D(\Gr_{G})\mathbf{-FactModCat}\ar[r, rightarrow, ""']&   \Rep({\check{G}})\mathbf{-FactModCat},
\end{tikzcd}
\]

where the bottom functor is given by restriction along the (factorization) Satake functor
\[
\on{Sat}: \Rep({\check{G}})\to  \D(\Gr_G).
\]

\subsubsection{} It will follow from this construction that for any ${\check{G}}$-local system $\sigma$ on the punctured disk and $\BM \in \mathbf{ShvCat}(\on{LocSys}_{{\check{G}}}(D^{\circ} ))$, the fiber DG category $\BM_\sigma$ can be described as
\[
\BM_\sigma \simeq R_{{\check{G}},\sigma}\on{-FactMod}(\check \bFact(\BM)),
\]
where $R_{{\check{G}},\sigma}$ is a factorization algebra object in $\on{Rep}({\check{G}})$ obtained by twisting the regular representation $R_{\check{G}}$ by $\sigma$. As an application, for any categorical representations $\mathbf{C}$ of $LG$ and a ${\check{G}}$-local system $\sigma$ on the punctured disk, we obtain a conjectural description of the fiber of the spectral decomposition of $\mathbf{C}$ at $\sigma$. Namely, we conjecture it to be
\[
\on{Sat}(R_{{\check{G}},\sigma})\on{-FactMod}( \bFact(\mathbf{C}) ).
\]

\subsection{Outline of the argument}
Below we highlight the main ideas that go into the proof of  Theorem \ref{intro theorem}. 

We first notice that the question is of purely local nature, so should be independent of what curve we are using. To that end, for a map of curves $f: X \rightarrow Y$ we develop a notion of pullback of factorization structures and prove that for a fixed $\mathbf{A} \in \mathbf{FactAlg}(\Ranp_Y)$ we have 
\begin{equation}\label{pullback intro}
f^!: \mathbf{A}-\mathbf{FactMod}(\Ranp_{Y, y}) \xrightarrow{\sim} f^!(\mathbf{A})-\mathbf{FactMod}(\Ranp_{X, x}). 
\end{equation}

We also prove that (\ref{pullback intro}) is compatible with the functor $\bFact^{\restr}$ in the sense that the following diagram commutes

\begin{equation}
\begin{tikzcd}
&   	\Rep({\check{G}})-\mathbf{FactMod}(\Ranp_{Y, y}) \ar[d, dash, "\cong"']  \\
\QCoh(\LocSys_{\check{G}}^{\restr}(D^{\circ}))\mathbf{-ModCat}  \ar[ur, rightarrow, "\bFact^{\restr}_Y"]\ar[r, rightarrow, "\bFact^{\restr}_X"']&   \Rep({\check{G}})-\mathbf{FactMod}(\Ranp_{X, x}).
\end{tikzcd}
\end{equation}

Taking $f$ to be a smooth function on $X$ we reduce to the case $X = \bA^1$, $x = 0$. 

Let $\BC_1$ and $\BC_2$ be DG categories acted on by $\QCoh(\LocSys_G^{\restr}(D^{\circ})$.
Theorem \ref{mainthm} asserts that the functor 
\begin{equation}\label{main assertion intro}
\on{Fun}_{\QCoh(\LocSys_{\check{G}}^{\restr}(D^{\circ}))}(\BC^1, \BC^2 ) \rightarrow  \on{FactFun}_{\Rep({\check{G}})}(\bFact^{\restr}(\BC^1), \bFact^{\restr}(\BC^2))
\end{equation}
is an equivalence. Let us illustrate the plan of the proof in the simplest case: when $\BC_i$ for $i=1,2$ are $\Vect$ as plain categories, but the action of  $\QCoh(\LocSys_{\check{G}}^{\restr}(D^{\circ}))$ is via pullback to a point $\pt_{\sigma_i}$ corresponding to a local system on the punctured disc. Denote these $\BC_i$ by $\Vect_{\sigma_i}$.

Then left-hand side of (\ref{main assertion intro}) has a clear geometric meaning: 
$$\on{Fun}_{\QCoh(\LocSys_{\check{G}}^{\restr}(D^{\circ}))}(\BC^1, \BC^2 ) \cong \QCoh(\pt_{\sigma_1} \times_{\LocSys_{\check{G}}^{\restr}(D^{\circ})} \pt_{\sigma_2}).$$

In order to give a geometric description of the category on the right-hand side of (\ref{main assertion intro}), we use tools introduced in \cite{text} for working with factorization modules. Also, to avoid complicated geometry of the formal punctured disc we use \cite[Theorem 2.4.10]{Katz2}, which allows us to extend $\sigma_i$ to local systems $\widetilde{\sigma}_1$ on $\bG_m$. Combining these methods, we get 
$$\on{FactFun}_{\Rep({\check{G}})}(\bFact^{\restr}(\BC^1), \bFact^{\restr}(\BC^2)) \cong {\prescript{}{\widetilde{\sigma}_1}{R_{{\check{G}}}}}_{\widetilde{\sigma}_2}\on{-FactMod}.$$
Here ${\prescript{}{\widetilde{\sigma}_1}{R_{\check{G}}}}_{\widetilde{\sigma}_2}$ is a factorization algebra obtained by twisting the regular representation $R_{\check{G}}$ by $\widetilde{\sigma}_1$ and $\widetilde{\sigma}_2$. Note that $${\prescript{}{\widetilde{\sigma}_1}{R_{\check{G}}}}_{\widetilde{\sigma}_2}$$ is a commutative factorization algebra corresponding to the D-scheme $\Isom(\widetilde{\sigma}_1, \widetilde{\sigma}_2)$. But Beilinson-Drinfeld theory of chiral algebras gives a concise geometric description of the category of modules over such factorization algebras: the answer is the category of quasi-coherent sheaves of the space of horizontal sections of the corresponding D-scheme on the punctured disc. It is left to notice that in the case of the D-scheme $\Isom(\widetilde{\sigma}_1, \widetilde{\sigma}_2)$ it is precisely $$\QCoh(\pt_{\sigma_1} \times_{\LocSys_{\check{G}}^{\restr}(D^{\circ})} \pt_{\sigma_2}).$$

\subsection{Notation and conventions}
 Denote by $X$ a fixed smooth and connected (but not necessarily proper) curve over $k$.
\subsubsection{Lie theory.} Let $G$ be a connected reductive group with a fixed standard Borel $B$ and a maximal torus $T$. Let $(\check{\Lambda}, \check{\Delta}, \Lambda, \Delta)$ denote the root datum corresponding to $(G, T)$.
Langlands duality attaches to every $(G, T)$ a dual group $(\check{G}, \check{T})$ with root datum $( \Lambda, \Delta, \check{\Lambda}, \check{\Delta})$.

We define the loop group ind-scheme $LG$ (resp. the arc group scheme $L^+G$) as 
$$LG(\Spec A) := G(\Spec A((t))),$$
$$L^+G(\Spec A) := G(\Spec A[[t]]).$$
We define the affine Grassmannian as the fppf quotient $$\Gra_G := LG/ L^+G.$$
\subsubsection{Higher categories.} We will substantially use the language of $\infty$-categories as developed in \cite{HTT}. We write {\it category} for an $(\infty, 1)$-category unless specified otherwise. Our main result is a statement about $(\infty, 2)$-categories. We refer to \cite{GR1}, \cite{cospan}, \cite{CF} and \cite{text} for the overview of relevant foundations of $(\infty, 2)$-categories.
We write $\Cat_{\infty}$ for an $(\infty, 1)$-category of $(\infty, 1)$-categories and $\mathbf{Cat}_{\infty}$ for its enhancement to an $(\infty, 2)$-category. We write $\BiCat$ for an $(\infty, 1)$-category of $(\infty, 2)$-categories and $\mathbf{BiCat}_{\infty}$ for its enhancement to an $(\infty, 2)$-category.

\subsubsection{Higher algebra.} We will also use higher algebra as in \cite{HA}. 
We will denote by $\Vect$ the stable category of chain complexes of vector spaces over $k$. This category is equipped with a natural symmetric monoidal structure (in ther sense of $\infty$-categories). 

\subsubsection{DG categories.} By a DG category we mean a category equipped with a module structure over $\Vect$ with respect to the symmetric monoidal structure on the category of presentable stable categories given by the Lurie tensor product. DG categories form an $(\infty, 1)$-category (where 1-morphisms are {\it colimit-preserving} functors) denoted by $\DGCat$, which carries a symmetric monoidal structure via Luire tensor product. Denote by $\mathbf{DGCat}$ its enhancement to an $(\infty, 2)$-category. In particular, we can talk about algebras in $\mathbf{DGCat}$ and modules over them. For an algebra $\BA$ in $\mathbf{DGCat}$ denote the $(\infty,2)$-category of modules over it by $\BA-\mathbf{ModCat}$.
Unless specified otherwise, all monoidal DG categories will be assumed unital. We also utilize the formalism of categorical invariant/coinvatiants developed in \cite{B}.

Given a DG category $\mathbf{C}$ we can talk about t-structures on it. Given a t-structure we denote by 
$$\mathbf{C}^{\leq 0}, \text{  }\mathbf{C}^{\geq 0}, \mathbf{C}^{\heartsuit}$$
the corresponding subcategories according to the cohomological conventions. 

\subsubsection{Derived algebraic geometry.} Most of the geometry in the present paper is taking place at the spectral side of local geometric Langlands, where we must work in the context {\it derived} algebraic geometry as in \cite{GR1}. We begin with the category $\AffSch$ of {\it derived} affine schemes over $k$. All of our geometric objects can be viewed as either a prestack, i.e. a functor 
$$\AffSch^{\op} \rightarrow \Grpd_{\infty},$$
or an lax prestack, i.e. a functor 
$$\AffSch^{\op} \rightarrow \Cat_{\infty}.$$
By a stack we will mean a prestack satisfying etale descent. We denote by $\laxPreSt_{\Corr}$ the category of lax prestacks with morphisms given by correspondences.
\subsection{Organization of the paper} In Section \ref{prelims} we recall the notions and statements regarding factorization structures relevant to the present paper. In Section \ref{section: statement} we define our main geometric object: the stack of local systems with restricted variation on the formal punctured disc, and study its relationship to the existing moduli stack in the de Rham setting. We define the functor in Theorem \ref{intro theorem} and formulate our main result. In Section \ref{section: reductions} we perform a series of reductions of the statement of Theorem \ref{intro theorem}. Namely, we first reduce to the case when $X = \bA^1$. Next, recall that Theorem \ref{intro theorem} asserts that the functor 
\begin{equation}\label{org equation}
\on{Fun}_{\QCoh(\LocSys_G^{\restr}(D^{\circ}))}(\BC^1, \BC^2 ) \rightarrow  \on{FactFun}_{\Rep(G)}(\bFact^{\restr}(\BC^1), \bFact^{\restr}(\BC^2))
\end{equation}
is an equivalence. We reduce to the case when $\BC^1 \cong \QCoh(R_1)$ and $\BC^2 \cong \QCoh(R_2)$, where the action of $\QCoh(\LocSys_G^{\restr}(D^{\circ})$ on $\QCoh(R_i)$ comes from a pullback along a map $R_i \rightarrow \LocSys_G^{\restr}(D^{\circ})$. We then describe both sides of (\ref{org equation}) explicitly. In Section \ref{section: main proof} we conclude the proof of Theorem \ref{intro theorem} using this description. In Appendix \ref{appendix: multidiscs} we recall the notion of multidiscs and describe the correct may to descend multidiscs to the de Rham prestack. We define the factorization prestack degenerating horizontal jets into a fixed prestack $\cY$ to meromorphic horizontal jets into $\cY$ and its parameterized analogue. In Appendix \ref{appendix: etale factorization} we study functoriality of factorization patterns under etale pullbacks of the base curve. In Appendix \ref{paramfact} we establish foundations of parameterized crystals of categories and parameterized factorization patterns. Finally, in Appendix \ref{appendix: factmodules over commutative alg} we describe in geometric terms the category of (parameterized) factorization modules over a {\it commutative} (parameterized) factorization algebra.

\subsection{Acknowledgments} First, I would like to thank Dennis Gaitsgory for proposing this project and for the many helpful discussions throughout. I am grateful to Lin Chen and Yuchen Fu for their help with the foundations of factorization structures. For instance, the contents of Appendix \ref{appendix: etale factorization} were explained to us by Lin Chen. I am also grateful to Kevin Lin, Sasha Petrov, Sam Raskin, and David Yang for fruitful discussions related to this project.
Finally, I thank Kevin Lin for many very helpful comments and corrections on the early version of this paper. 

Part of this work was completed during my stay at the Max Planck Institute for Mathematics in Bonn, I am grateful to MPIM for hospitality. 

\section{Preliminaries on factorization structures }\label{prelims}

In this section we give a somewhat informal introduction to the formalism of factorization geometry.  Everything apart from subsections \ref{factfamilies} and \ref{ranu factorization} has appeared in either \cite{BD}, \cite{R}, \cite{CF} or \cite{text}.

\subsection{Factorization algebras}

\subsubsection{Ran space and crystals of categories.}

\begin{df}\label{def ran}
	The unital Ran space of $X$, denoted by $\Ranp$, is a lax prestack that assigns to $S \in \AffSch$ the category $\Ranp(S)$ of finite subsets of $X(S)$, where morphisms are given by inclusions of subsets.
\end{df}

\begin{rem}
	Definition \ref{def ran} makes sense for any prestack $Y$ in place of $X$. We will use the notation $\Ranp_Y$ in such cases.
\end{rem}

\begin{df}[{\cite{CF}}]  Let $Y$ be a laft\footnote{I.e. as a functor $\on{AffSch}^{\on{op}} \to \on{Cat}_\infty$ it is obtained by left Kan extension along its restriction on the category of affine schemes of finite type $\on{AffSch}_{\on{ft}} \subset \on{AffSch}$.} lax prestack. A \emph{crystal of categories} $\BA$ on $Y$ consists of the following data:
	\begin{itemize}
		\item For any $s:S\to Y$ with $S$ a finite type affine scheme, there is a $\D(S)$-module DG category:
		\[ \BA_s \in \D(S)-\mathbf{ModCat}. \]
		
		\item For any 2-cell
		\[
		\xymatrix{
			& S \ar[rd]^-{s} \ar@{=>}[d]^-\theta \\
			T \ar[ru]^-v \ar[rr]_{t} & & Y,
		}
		\]
		there is a functor
		\[
		\theta_\dagger: \BA_s \to \BA_t
		\]
		intertwining the symmetric monoidal functor $u^!:\D(S) \to \D(T)$, such that if $\theta$ is invertible, then the induced functor
		\[
		\D(T) \otimes_{\D(S)} \BA_s \to \BA_t
		\]
		is an equivalence.
		\item Certain higher compatibilities. (see \cite{CF}.)
	\end{itemize}
\end{df}

There are two notions of morphisms between crystals of categories on a lax prestack: lax ones and strict ones.

\begin{df}[{\cite{CF}}] \label{defn-lax-morphism-cryscat}
	
	A \emph{morphism $F: \BA\to \BB$ between two crystals of categories on $Y$} consists of the following data
	\begin{itemize}
		\item For any $s:S\to Y$ with $S$ being affine, there is a $\D(S)$-linear functor
		\[F_u:\BA_s\to \BB_s;\]
		\item For any 2-cell
		\[
		\xymatrix{
			& S \ar[rd]^-{s} \ar@{=>}[d]^-\theta \\
			T \ar[ru]^-v \ar[rr]_{t} & & Y,
		}
		\]
		with $S$ and $T$ being affine, there is a $\D(S)$-linear natural transformation
		\[
		\xymatrix{
			\BA_s  \ar[r]^-{\theta_\dagger} \ar[d]_-{F_s} &
			\BA_t \ar[d]^-{F_t} \\
			\BB_s  \ar[r]_-{\theta_\dagger} \ar@{=>}[ru] &
			\BB_t
		}
		\]
		which is invertible if $\theta$ is so.
		\item Certain higher compatibilities.
	\end{itemize}
	
	\medskip
	
	A morphism $F: \BA\to \BB$ is \emph{strict} if the above natural transformations are always invertible.
	
	\medskip 
	
	Natural transformations between two morphisms are defined in the obvious way.
	
\end{df}

\begin{ntn}
	Let 
	\[\mathbf{CrysCat}(Y),\; \mathbf{CrysCat}^{\str}(Y)\] 
	be the $(\infty,2)$-category of crystals of categories on $Y$, with 1-morphisms given by morphisms (resp. strict morphisms) between crystals of categories, and 2-morphisms given by natural transformations.
\end{ntn}

\subsubsection{Factorization algebra categories.}
\begin{cnstr}
	\label{constr-Ran-group} 
	
	By definition, $\Ranp \times \Ranp$ is the lax prestack that assignes for any $S$ the category $\Ranp(S) \times \Ranp(S)$ of two finite subsets $P_1,P_2$ of $X(S)$. We have a morphism
	\[
	\wt{\on{add}}: \Ranp\times \Ranp \to \Ranp,\; (P_1,P_2) \mapsto P_1\cup P_2
	\]
	which makes $\Ranp$ an abelian group prestack.

\end{cnstr}

\begin{cnstr}
	\label{constr-Ran-commalg} 
	
	For any affine test scheme $S$, let 
	\[
	(\Ranp \times \Ranp)_{\on{disj}}(S) \subset \Ranp(S) \times \Ranp(S)
	\]
	be the full subcategory where we require the union of the graphs of $P_1$, viewed as a closed subset of $X\times S$, is disjoint from that of $P_2$. This defines a lax prestack $(\Ranp \times \Ranp)_{\on{disj}}$ and an open embedding $j: (\Ranp \times \Ranp)_{\on{disj}} \to \Ranp \times \Ranp$.
	
	\medskip
	
	Consider the morphism $\on{add}:= \wt{\on{add}}\circ j$. The correspondence
\begin{equation}\label{corr ran}
	\xymatrixrowsep{0.5cm}
	\xymatrixcolsep{0.5cm}
	\xymatrix{
		& (\Ranp \times \Ranp)_{\on{disj}} \ar[ld]_-{j} \ar[rd]^-{\on{add}} \\
		\Ranp \times \Ranp & &  \Ranp
	}
\end{equation}
	makes $\Ranp$ a \emph{commutative algebra object} in the category of lax prestacks with morphisms given by correspondences.
	
	\medskip
	
\end{cnstr}

	\begin{df}[{\cite{CF}}] \label{defn-laxunital-factalg-cat}
	A \emph{factorization algebra category} (over $\Ranp$) consists of the following data:
	\begin{itemize}
		\item An object
		\[ \BA\in \mathbf{CrysCat}(\Ranp), \]
		
		\item An isomorphism
		\[\on{unit}:\Vect_{\on{pt}} \to \emptyset_{\on{pt}}^\bullet(\BA)\]
		in $\mathbf{CrysCat}(\on{pt})$;
		
		\item An isomorphism
		\[\on{mult}:j^\bullet(\BA \boxtimes \BA) \to \on{add}^\bullet(\BA)\] 
		in $\mathbf{CrysCat}((\Ranp\times \Ranp)_{\on{disj}})$;
		
		\item Higher compatibilities between the above isomorphisms and the commutative algebra structure on $\Ranp$ defined in Construction \ref{constr-Ran-commalg}.
	\end{itemize}
	
	We will denote a factorization algebra category as above by $\BA$ and treat other data as implicit.
	
\end{df}

\begin{rem} \label{rem-lax unital-factalg-cat}
	Informally, for a crystal of categories $\BA$ on $\Ranp$, a factorization algebra structure on it consists of the following data:
	\begin{itemize}
		\item An equivalence
		\[\on{unit}:\Vect \to \BA_{\emptyset};\]
		\item For any two finite sets $\underline{x}$, $\underline{y}$ of closed points of $X$ such that $\underline{x}\cap \underline{y}=\emptyset$, an equivalence
		\begin{equation}
		\label{eqn-mult-explicit}
		\on{mult}_{\underline{x},\underline{y}}:\BA_{\underline{x}} \otimes \BA_{\underline{y}} \to \BA_{\underline{x}\sqcup \underline{y}};
		\end{equation}
		\item
		For any finite set $\underline{x}$, a commutative diagram
		\[
		\xymatrix{
			& \BA_{\emptyset} \otimes \BA_{\underline{x}} \ar[rd]^-{\on{mult}_{\emptyset,\underline{x}}} \\
			\Vect\otimes \BA_{\underline{x}} \ar[ru]^-{\on{unit}\otimes\on{Id}} \ar[rr]^-\simeq & &
			\BA_{\underline{x}}
		}
		\]
		\item For $\underline{x}\subset \underline{x}', \underline{y}\subset \underline{y}'$, a commutative diagram
		\[
		\xymatrix{
			\BA_{\underline{x}}\otimes  \BA_{\underline{y}}  \ar[rr]^-{\on{ins}_{\underline{x}\subset \underline{x}'} \otimes \on{ins}_{\underline{y}\subset \underline{y}'}} \ar[d]^-{\on{mult}_{\underline{x},\underline{y}}} & &
			\BA_{\underline{x}'}\otimes  \BA_{\underline{y}'} 
			\ar[d]^-{\on{mult}_{\underline{x}',\underline{y}'}} \\
			\BA_{\underline{x}\sqcup \underline{y}} 
			\ar[rr]_-{\on{ins}_{\underline{x}\sqcup \underline{y} \subset \underline{x}'\sqcup \underline{y}'}} & &
			\BA_{\underline{x}' \sqcup \underline{y}'};
		}
		\]
		\item Certain higher compatibilities similar to those for a commutative algebra.
	\end{itemize}
	
\end{rem}

\begin{df}\label{defn-laxunital-laxfactalg-cat}
	Parallel to Definition \ref{defn-laxunital-factalg-cat} we can define \emph{lax factorization algebra categories}, where we do not require $\on{mult}$ to be an isomoprhism.
\end{df}

\begin{df}[{\cite{CF}}] \label{defn-fact-functor}
	Given two (lax) factorization algebra categories $\BA_1,\BA_2$, a \emph{factorization functor from $\BA_1$ to $\BA_2$} consists of the following data:
	\begin{itemize}
		\item A morphism 
		\[\Phi:\BA_1 \to \BA_2\] 
		in $\mathbf{CrysCat}(\Ranp)$;
		\item A commutative diagram
		\begin{equation}
		\label{eqn-defn-fact-functor-1}
		\xymatrix{
			\Vect_{\on{pt}} \ar[r]^-{\on{unit}} \ar[d]_-= &
			\emptyset_{\on{pt}}^\bullet(\BA_1) \ar[d]^-{\emptyset_{\on{pt}}^\bullet(\Phi)} \\
			\Vect_{\on{pt}} \ar[r]^-{\on{unit}}  &
			\emptyset_{\on{pt}}^\bullet(\BA_2)
		}
		\end{equation}
		in $\mathbf{CrysCat}(\on{pt})$;
		\item A commutative diagram
		\begin{equation}
		\label{eqn-defn-fact-functor-2}
		\xymatrix{
			j^\bullet(\BA_1 \boxtimes \BA_1) \ar[r]^-{\on{mult}} \ar[d]_-{j^\bullet(\Phi\boxtimes \Phi)} & \on{add}^\bullet(\BA_1) \ar[d]^-{\on{add}^\bullet(\Phi)} \\
			j^\bullet(\BA_2 \boxtimes \bA_2) \ar[r]^-{\on{mult}}  & \on{add}^\bullet(\BA_2)
		}
		\end{equation}
		in $\mathbf{CrysCat}^{\lax}((\Ranp \times \Ranp)_{\on{disj}})$;
		\item Certain higher compatibilities similar to those for a morphism between commutative algebras.
	\end{itemize}
	
	\medskip
	
	A factorization functor is \emph{strictly unital} if the morphism $\Phi$ is contained in $\mathbf{CrysCat}^{\str}(\Ranp)$, i.e., is a strict morphism. 
	
	\medskip
	
	We will denote a factorization functor as above by $\Phi$ and treat other data as implicit.

	\medskip
	
	Natural transformations between factorization functors are defined in the obvious way. 
	
\end{df}

\begin{ntn} Let
	\[ \on{FactFun}(\BA_1,\BA_2) \]
	be the $\infty$-category of factorization functors $\BA_1 \to \BA_2$.
	
	\medskip
	Let
	\[
	\on{FactFun}_\str(\BA_1,\BA_2) \subset \on{FactFun}(\BA_1,\BA_2) 
	\]
	be the full subcategory of strictly unital ones.
\end{ntn}

\begin{ntn} \label{notn-2-cat-factalgcat}
	Let 
	\[\FactAlgCat \text{  } (\mathbf{LaxFactAlgCat})\]
	be the $(\infty,2)$-category of (lax) factorization algebra categories, factorization functors, and natural transformations.
	
	\medskip
	Let
	\[
	\FactAlgCat^\str \subset \FactAlgCat \text{  } (\mathbf{LaxFactAlgCat}^{\str} \subset \mathbf{LaxFactAlgCat})
	\]
	be the 1-full subcategory containing strictly unital factorization functors.
	
\end{ntn}

\subsubsection{Factorization algebras.}
\begin{df} \label{defn-fact-alg-obj}
	Let $\BA$ be a factorization algebra category. A \emph{factorization algebra $\cA$} in $\BA$ is a factorization functor $\Vect \to \BA$. Let
	\[
	\FactAlg(\BA):= \on{FactFun}(\Vect,\BA)
	\]
	be the $\infty$-category of factorization algebras in $\BA$.
	
\end{df}
\begin{rem}
	In the above definition, we do \emph{not} require the factorization functor $\Vect_{\Ranp} \to \BA$ to be strictly unital, i.e., it is only \emph{lax} unital. It is helpful to keep the following analog in mind: an associative algebra object in a monoidal $\infty$-category $\BC$ is the same as a (right) \emph{lax} monoidal functor $\Vect \to \BC$.
\end{rem}

\begin{rem} \label{rem-factalg-obj}
	Informally, a factorization algebra in $\BA$ consists of the following data:
	\begin{itemize}
		\item A global section of $\BA$, i.e., an object
		\[\cA \in \mathsf{\Gamma}(\Ranp,\BA);\]
		\item An equivalence 
		\[ \on{unit}(\mathsf{e}) \to \cA_\emptyset \]
		in $\BA_\emptyset$, given by the commutative diagram (\ref{eqn-defn-fact-functor-1});
		\item For any two finite sets $\underline{x},\underline{y}$ of closed points of $X$ such that $\underline{x}\cap \underline{y} = \emptyset$, an equivalence
		\[ \on{mult}_{\underline{x},\underline{y}}(\cA_{\underline{x}} \boxtimes \cA_{\underline{y}}) \to \cA_{\underline{x}\sqcup \underline{y}} \]
		in $\BA_{\underline{x}\sqcup \underline{y}}$, given by the commutative diagram (\ref{eqn-defn-fact-functor-2});
		\item For $\underline{x}\subset\underline{x}',\underline{y}\subset\underline{y}'$, a commutative diagram
		\[
		\xymatrix{
			\on{ins}_{\underline{x}\sqcup \underline{y} \subset \underline{x}'\sqcup \underline{y}'}\circ \on{mult}_{\underline{x},\underline{y}}(\cA_{\underline{x}} \boxtimes \cA_{\underline{y}}) \ar[d]^-\simeq \ar[rr]^-\simeq & &
			\on{ins}_{\underline{x}\sqcup \underline{y} \subset \underline{x}'\sqcup \underline{y}'}(\cA_{\underline{x}\sqcup \underline{y}}) \ar[d] \\
			\on{mult}_{\underline{x}',\underline{y}'}( \on{ins}_{\underline{x}\subset\underline{x}'}(\cA_{\underline{x}})\boxtimes \on{ins}_{\underline{y}\subset\underline{y}'}(\cA_{\underline{y}}) ) \ar[r] &
			\on{mult}_{\underline{x}',\underline{y}'}( \cA_{\underline{x}'}\boxtimes \cA_{\underline{y}'} ) \ar[r]^-\simeq & 
			\cA_{\underline{x}'\sqcup \underline{y}'},
		}
		\]
		where recall the left vertical arrow, i.e. the equivalence
		\[ \on{ins}_{\underline{x}\sqcup \underline{y} \subset \underline{x}'\sqcup \underline{y}'}\circ \on{mult}_{\underline{x},\underline{y}} \to \on{mult}_{\underline{x}',\underline{y}'} \circ ( \on{ins}_x\otimes \on{ins}_y), \]
		is part of the data in the definition of $\BA$, and the morphisms
		\[ \on{ins}_{\underline{x}\subset \underline{x}'}(\cA_{\underline{x}}) \to \cA_{\underline{x}'},\; \on{ins}_{\underline{y}\subset \underline{y}'}(\cA_{\underline{y}}) \to \cA_{\underline{y}'},\;\on{ins}_{\underline{x} \sqcup \underline{y} \subset \underline{x}'\sqcup \underline{y}'}(\cA_{\underline{x}\sqcup \underline{y}}) \to \cA_{\underline{x}'\sqcup \underline{y}'} \]
		are part of the data in the definition of $\cA$;
		\item Certain higher compatibilities.
	\end{itemize}
	
\end{rem}

\begin{cnstr} \label{constr-image-fact-alg}
	Let $\Phi: \BA_1 \to \BA_2$ be in $ \FactAlgCat$. We have a functor
	\[ \FactAlg(\BA_1) \to \FactAlg(\BA_2),\; \cA_1 \mapsto \Phi(\cA_1):=\Phi\circ \cA_1.\]
	We call $\Phi(\cA_1)$ the \emph{image of $\cA_1$ under $\Phi$}.
	
\end{cnstr}

\subsection{Factorization modules}

\begin{df} \label{defn-arrow-lax-prestack}
	Let $Y$ be a lax prestack. Define $\on{Arr}(Y)$ to be the lax prestack such that $$\on{Arr}(Y)(S):= \on{Arr}(Y(S)),$$ i.e. the category of arrows in $Y(S)$.
	
	\medskip
	
	Let $p_s,p_t:\on{Arr}(Y) \to Y$ be the morphisms remembering respectively the sources and the targets of the arrows.
\end{df}

\begin{df} \label{defn-mark-Ran}
	Let $x$ be a closed point of $X$ and $\underline{x}:\on{pt}\to \Ranp$ be the corresponding morphism. The \emph{Ran space with a marked point $x$} is the lax prestack $\Ranp$ defined as the following fiber product:
	\[
	\xymatrix{
		\Ranp \ar[r] \ar[d] & \on{Arrow}(\Ranp) \ar[d]^-{p_s} \\
		\on{pt} \ar[r]^-{\underline{x}} & \Ranp.
	}
	\]
	Unless otherwise stated, we always view $\Ranp$ as a lax prestack over $\Ranp$ by the composition 
	\[
	\Ranp \to \on{Arrow}(\Ranp) \xrightarrow{p_t} \Ranp.
	\]
\end{df}

\begin{rem}
	By definition, knowing a $k$-valued point of $\Ranp$ is equivalent to knowing a finite subset $\underline{y}$ of closed points of $X$ such that $\underline{x}\subset \underline{y}$.
	
\end{rem}

\begin{cnstr}
	\label{constr-Ranu-module} 
	
	For $\Ranp$ as above, let $(\Ranp \times \Ranp)_{\on{disj}}$ be the following fiber product:
	\[
	\xymatrix{
		(\Ranp \times \Ranp)_{\on{disj}} \ar[r]^-j \ar[d] & \Ranp \times \Ranp \ar[d]	\\
		(\Ranp \times \Ranp)_{\on{disj}} \ar[r]^-j & \Ranp \times \Ranp,
	}
	\]
	where we abuse notation by denoting the top horizontal morphism, which is an schematic open embedding, by the same letter $j$.
	
	\medskip
	
	We have an obvious schematic étale morphism $\on{add}: (\Ranp \times \Ranp)_{\on{disj}} \to \Ranp$.
	
	\medskip
	
	The correspondence
	\[
	\xymatrixrowsep{0.5cm}
	\xymatrixcolsep{0.5cm}
	\xymatrix{
		& (\Ranp \times \Ranp)_{\on{disj}} \ar[ld]_-{j} \ar[rd]^-{\on{add}} \\
		\Ranp \times \Ranp & &  \Ranp
	}
	\]
	makes $\Ranp$ a \emph{$\Ranp$-module object} in the category of lax prestacks with morphisms given by correspondences. 
\end{cnstr}

\subsubsection{Factorization module categories}

\begin{df} \label{defn-fact-mod-cat}
	Let $\BA$ be a factorization algebra category. A \emph{factorization $\BA$-module category supported on $\Ranp$} consists of the following data:
	\begin{itemize} 
		\item An object
		\[ \BC \in \mathbf{CrysCat}(\Ranp); \]
		\item An equivalence 
		\[
		\on{act}: j^\bullet(\BC\boxtimes \BA) \to \on{add}^\bullet(\BC)
		\]
		in $\mathbf{CrysCat}((\Ranp \times \Ranp)_{\on{disj}})$;
		\item Higher compatibilities between the above morphism and the $\Ranp$-module structure on $\Ranp$ defined in Construction \ref{constr-Ranu-module}.
	\end{itemize}
	
\end{df}

\begin{df}[\cite{CF}]\label{defn-core-factmod}
	Let $\BA$ be a factorization algebra category and $\BC$ be a factorization $\BA$-module category supported on $\Ranp$. We define the \emph{core of $\BC$} to be $\BC_x:=i^\bullet \BC$, where $i: \on{pt} \to \Ranp$ is the morphism corresponding to $\underline{x}\subset \underline{x}$.
	
	\medskip
	
	We also say $\BC_x$ is a \emph{factorization $\BA$-module category concentrated at $x$}.
\end{df}

\begin{df}
	Similar to Definition \ref{defn-fact-functor}, given a morphism $\Phi: \BA_1\to \BA_2$ in $\FactAlgCat$, and factorization $\BA_i$-module categories $\BC_i$, we can define the notion of \emph{factorization functors $\Xi:\BC_1\to \BC_2$ that intertwine the action of $\Phi$}. Let 
	\[
	\on{FactFun}_{\Phi}(\BC_1,\BC_2)
	\]
	be the $\infty$-category of such functors.
	
	\medskip
	When $\Phi$ is strictly unital, we say $\Xi$ is \emph{strictly unital} if its underlying morphism in $\mathbf{CrysCat}(\Ranp)$ is strict. Let
	\[
	\on{FactFun}_{\str,\Phi}(\BC_1,\BC_2) \subset \on{FactFun}_{\Phi}(\BC_1,\BC_2)
	\]
	be the full subcategory of strictly unital factorization functors.
	
	\medskip
	
	When $\Phi=\Id_\BA$ is the identity functor, we also write 
	\[
	\on{FactFun}_{\str,\BA}(\BC_1,\BC_2) \subset \on{FactFun}_{\BA}(\BC_1,\BC_2)
	\]
	for the above inclusion, call their objects \emph{strictly unital factorization $\BA$-linear functors}.

\end{df}

\begin{ntn} \label{factmodcat}
	Similar to Notation \ref{notn-2-cat-factalgcat}, let
	\[ \mathbf{FactModCat} \]
	be the $(\infty,2)$-category such that:
	\begin{itemize}
		\item
		An object is a pair $(\BA,\BC)$ such that $\BA$ is a factorization algebra category and $\BC$ is a factorization $\BA$-module category;
		\item
		A morphism is a pair $(\Phi,\Xi):(\BA_1,\BC_1) \to (\BA_2,\BC_2)$ of factorization functors such that $\Xi$ intertwines the action of $\Phi$.
	\end{itemize}
	
	\medskip
	
	Let
	\[
	\mathbf{FactModCat}^\str \subset \mathbf{FactModCat}  
	\]
	be the 1-full subcategory containing strictly unital factorization functors as morphisms.
	
	\medskip
	
	Consider the obvious forgetful functor
	\begin{equation}
	\label{eqn-factmod-to-factalg}
	\mathbf{FactModCat}  \to  \mathbf{FactAlgCat}
	\end{equation}
	For an object $\BA$ in the target, we denote the fiber at it to be
	\[ \BA\mathbf{-FactModCat}.\]
	
\end{ntn}

\begin{rem}
	By \cite[Lemma B.5.11]{CF} the similarly defined category $$\BA\mathbf{-FactModCat}^{\str}$$ coincides with  $\BA\mathbf{-FactModCat}.$ 
\end{rem}

\subsubsection{Factorization modules.}

\begin{df} Let $\BA$ be a factorization algebra category and $\BC$ be a factorization $\BA$-module category. Similar to Definition \ref{defn-fact-alg-obj}, we define
	\[
	\on{FactMod}(\BA,\BC):= \on{FactFun}( (\Vect, \Vect), (\BA,\BC)). 
	\]
	
	\medskip
	
	For a factorization algebra $\cA\in \FactAlg(\BA)$, define
	\[
	\cA\on{-FactMod}(\BC) := \on{FactMod}(\BA,\BC) \times_{ \FactAlg(\BA) } \{ \cA \},
	\]
	and call its objects \emph{factorization $\cA$-modules in $\BC$}.
\end{df}

\begin{df} For
	\[ (\cA,\cC)\in \on{FactMod}(\BA,\BC),\]
	the \emph{core of $\cC$} is defined to be the restriction of the corresponding global section along $i: \on{pt} \to \Ranp$, i.e., the object 
	\[
	\cC_x \in  \BC_x.
	\]
	We also say $\cC_x$ is a \emph{factorization $\cA$-module concentrated at $\BC_x$}.
\end{df}

\begin{lm}[{\cite{CF}}] \label{lem-mod-for-unit-factalg}
	Let $\BA$ be a factorization algebra category and $\BC$ be a factorization $\BA$-module category. Then the functor
	\[
	\on{unit}_\BA\on{-FactMod}(\BC) \to \BC_x,\; \cC \mapsto \cC_x
	\]
	is an equivalence.
\end{lm}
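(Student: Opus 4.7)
The plan is to construct an explicit quasi-inverse to the restriction functor $\cC \mapsto \cC_x$ and reduce everything to the unit axiom. Informally, a factorization $\on{unit}_\BA$-module in $\BC$ is a global section $\cC$ of $\BC$ over $\Ranp$ equipped, for every decomposition $\underline{y} = \underline{y}_1 \sqcup \underline{y}_2$ with $\underline{y}_1 \supset \underline{x}$ and $\underline{y}_2$ disjoint from $\underline{x}$, with a compatibility equivalence
\[
\on{act}_{\underline{y}_1,\underline{y}_2}\bigl(\cC_{\underline{y}_1}\boxtimes (\on{unit}_\BA)_{\underline{y}_2}\bigr) \;\simeq\; \cC_{\underline{y}_1 \sqcup \underline{y}_2},
\]
subject to higher coherence. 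Since $\on{unit}_\BA$ is the trivial factorization algebra inside $\BA$, each value $(\on{unit}_\BA)_{\underline{y}_2}$ is canonically determined by the unit datum of $\BA$ alone and is rigid.

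Given $\cC_x \in \BC_x$, the candidate inverse extends it to a global section by the formula
\[
\cC_{\underline{y}} \;:=\; \on{act}_{\underline{x},\,\underline{y}\setminus \underline{x}}\bigl(\cC_x \boxtimes (\on{unit}_\BA)_{\underline{y}\setminus \underline{x}}\bigr)
\]
for every $\underline{y}\supset \underline{x}$, which is sensible because $\underline{y}\setminus \underline{x}$ is automatically disjoint from $\underline{x}$. The insertion morphisms $\on{ins}_{\underline{y}\subset \underline{y}'}$ are produced by combining the identity on $\cC_x$ with the insertions on the unit factor inside $\BA$, while the factorization compatibilities are forced by the associativity of the module-correspondence of Construction~\ref{constr-Ranu-module}. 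On underlying objects, mutual inverseness is then immediate: on the one hand, restricting the extended section back to $\underline{x}$ yields $\on{act}_{\underline{x},\emptyset}(\cC_x \boxtimes (\on{unit}_\BA)_\emptyset)\simeq \cC_x$ by the unit axiom of a factorization module category; on the other hand, for a genuine factorization module $\cC$, its own factorization equivalence at the decomposition $(\underline{x},\underline{y}\setminus \underline{x})$ identifies $\cC_{\underline{y}}$ with precisely the above formula applied to $\cC_x$.

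The main obstacle is upgrading the above to an equivalence of $(\infty,1)$-categories, i.e.\ matching all higher coherences, and this is where I would spend most of the effort. The cleanest route is to present the forgetful functor $\on{unit}_\BA\on{-FactMod}(\BC) \to \BC_x$ as monadic in the sense of \cite{CF} and compute that its monad is the identity: the factorization action by the unit is assembled entirely from the rigid unit datum of $\BA$ together with the $\Ranp$-module correspondence on $\Ranp$, both of which act as identities on objects at $\underline{x}$. An equivalent, more geometric, packaging is to exhibit $\on{unit}_\BA\on{-FactMod}(\BC)$ as a (lax) limit indexed by the category of $\underline{y}\supset \underline{x}$ in which the module-over-the-unit condition forces every transition functor to be invertible, causing the limit to collapse to the value at the terminal object $\underline{y}=\underline{x}$, namely $\BC_x$.
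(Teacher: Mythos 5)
The paper does not give its own argument for this lemma; it is cited verbatim from \cite{CF}, so there is no in-text proof to compare against. Evaluating your sketch on its own terms: the overall strategy (the unit factorization algebra is ``rigid,'' so the factorization module structure is forced and determined by the core, and one upgrades this to an $\infty$-equivalence via monadicity or a limit-collapse argument) is the right moral picture and is plausibly close in spirit to what \cite{CF} does.

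There is, however, a concrete gap in the construction of the quasi-inverse, precisely at the step you flag as ``sensible.'' You write
\[
\cC_{\underline{y}} := \on{act}_{\underline{x},\,\underline{y}\setminus\underline{x}}\bigl(\cC_x \boxtimes (\on{unit}_\BA)_{\underline{y}\setminus\underline{x}}\bigr)
\]
and assert that $\underline{y}\setminus\underline{x}$ is automatically disjoint from $\underline{x}$. This is true when $\underline{y}$ is a finite set of closed points of $X$, but the objects at hand are crystals of categories, so one must define $\cC$ on all $S$-points of $\Ranp$ for varying affine $S$. An $S$-point of $\Ranp$ consists of a finite subset $\underline{y}\subset X(S)$ containing the constant map $\underline{x}$, and ``disjointness'' in $(\Ranp\times\Ranp)_{\on{disj}}$ means disjointness of graphs in $X\times S$, which is much stronger than being distinct elements of $X(S)$: a point of $\underline{y}\setminus\underline{x}$ can collide with $\underline{x}$ over part of $S$. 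The map $\on{act}$ is only defined over $(\Ranp\times\Ranp)_{\on{disj}}$, so your formula does not produce a global section of $\BC$ over $\Ranp$ — only over the open sublocus where the non-marked points stay away from $x$. To extend across the collision locus you would need exactly the ``strengthening''/left-Kan-extension machinery of \cite{CF} (cf.\ Theorem-Definition~\ref{thmdefn-factres} and Proposition~\ref{proposition 5.3.3} of the present paper), not a pointwise formula.

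Related to this, your two proposed routes for the higher coherences are stated at a level of generality where the hard part is precisely what is being asserted. For monadicity: one would need to show that the forgetful functor $\on{unit}_\BA\on{-FactMod}(\BC) \to \BC_x$ admits a left adjoint, is conservative and preserves the relevant colimits, and then compute the monad. The left adjoint, via the analogue of Proposition~\ref{proposition 5.3.3}, is $c\mapsto \bigl(i^!_{\Ranp}\bigr)\bigl(j_{\Ranp}\bigr)_!(\on{unit}_\BA\boxtimes c)$; showing this composite is the identity is genuinely nontrivial and, again, is where the disjointness/collision bookkeeping enters. For the limit-collapse route: $\on{unit}_\BA\on{-FactMod}(\BC)$ is a lax limit over $\AffSch_{/\Ranp}$ with partial strictness conditions, not a limit over the poset of finite subsets $\underline{y}\supset\underline{x}$, and $\underline{x}$ is the \emph{initial} object of $\Ranp(\pt)$, not a terminal one; a ``collapse to the initial vertex'' argument for lax limits of categories needs to be justified rather than invoked. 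In short, the plan correctly identifies the ingredients, but the step that would actually carry the weight (working with $S$-families across the collision locus, or rigorously computing the monad) is missing.
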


\subsubsection{Factorization restriction of modules.}
One of the important ingredients in the proof of Theorem \ref{intro theorem} is the following general result established in \cite{CF}.

\begin{thm}[{\cite{CF}}]
	\label{thmdefn-factres}
	Given a morphism $\Phi: \BA_1\to \BA_2$ in the target, there exists a natural  contravariant functor
	\[
	\mathbf{Res}_\Phi: \BA_2\mathbf{-FactMod} \to \BA_1\mathbf{-FactMod},
	\]
	we call it the \emph{factorization restriction functor along $\Phi$}. Moreover, for $\BC_2\in \BA_2\mathbf{-FactMod}$, there is a canonical equivalence
	\begin{equation}
	\label{eqn-thmdefn-factres}
	\mathbf{Res}_\Phi(\BC_2)_x \simeq \Phi(\on{unit}_{\BA_1})\on{-FactMod}(\BC_2).
	\end{equation}
	In particular, if $\Phi$ is strictly unital, then $\mathbf{Res}_\Phi(\BC_2)$ and $\BC_2$ have equivalent cores.
\end{thm}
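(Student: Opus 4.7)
The plan is to construct $\mathbf{Res}_\Phi$ by the analog of restriction of scalars along a morphism of algebras: the underlying crystal of categories of $\mathbf{Res}_\Phi(\BC_2)$ is the same as that of $\BC_2$, but the factorization action gets precomposed with $\Phi$. Concretely, if $\BC_2$ has action datum
\[
\on{act}_2 : j^\bullet(\BC_2 \boxtimes \BA_2) \xrightarrow{\simeq} \on{add}^\bullet(\BC_2)
\]
in $\mathbf{CrysCat}((\Ranp\times\Ranp)_{\on{disj}})$, then the new action is defined as the composition
\[
j^\bullet(\BC_2 \boxtimes \BA_1) \xrightarrow{j^\bullet(\Id\boxtimes \Phi)} j^\bullet(\BC_2 \boxtimes \BA_2) \xrightarrow{\on{act}_2} \on{add}^\bullet(\BC_2).
\]
The higher compatibilities needed to upgrade this to a bona fide factorization $\BA_1$-module structure follow formally from the fact that $\Phi$ is a factorization functor (so it intertwines the multiplication and unit data of $\BA_1$, $\BA_2$) together with the $\BA_2$-module coherences already present on $\BC_2$. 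A morphism $\Xi: \BC_2 \to \BC_2'$ of factorization $\BA_2$-modules is tautologically a morphism of the resulting $\BA_1$-modules, so the construction is functorial in $\BC_2$.

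To make this construction totally clean in the $(\infty,2)$-categorical setting, I would argue that the forgetful functor $\mathbf{FactModCat} \to \mathbf{FactAlgCat}$ from Notation \ref{factmodcat} is a cartesian fibration (the cartesian lift of $\Phi: \BA_1 \to \BA_2$ with target $(\BA_2, \BC_2)$ being $(\Phi, \Id_{\BC_2}) : (\BA_1, \mathbf{Res}_\Phi \BC_2) \to (\BA_2, \BC_2)$). This is straightforward because the data of a morphism in $\mathbf{FactModCat}$ decomposes in the expected way: once $\Phi$ is fixed, a pair $(\Phi, \Xi)$ is the same as a $\Phi$-linear morphism of the underlying crystals of categories, so one can replace the source module by its restriction along $\Phi$ and convert $\Xi$ into a strict morphism on the nose. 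Applying the Grothendieck construction yields the functor $\mathbf{Res}_\Phi$ on fibers.

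For the identification of the core, the key input is Lemma \ref{lem-mod-for-unit-factalg}, which gives
\[
\mathbf{Res}_\Phi(\BC_2)_x \simeq \on{unit}_{\BA_1}\on{-FactMod}(\mathbf{Res}_\Phi(\BC_2)).
\]
The right-hand side unfolds, by definition, to the $\infty$-category of pairs $(\on{unit}_{\BA_1}, \cC)$ in $\on{FactMod}(\BA_1, \mathbf{Res}_\Phi(\BC_2))$; but since the $\BA_1$-action on $\mathbf{Res}_\Phi(\BC_2)$ is obtained from the $\BA_2$-action on $\BC_2$ by precomposition with $\Phi$ (as a morphism in $\mathbf{FactAlgCat}$), such pairs correspond bijectively, together with all higher coherences, to factorization modules in $\BC_2$ over the factorization algebra $\Phi(\on{unit}_{\BA_1}) = \Phi \circ \on{unit}_{\BA_1}$ (here I am just applying Construction \ref{constr-image-fact-alg}). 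This gives the required equivalence
\[
\mathbf{Res}_\Phi(\BC_2)_x \simeq \Phi(\on{unit}_{\BA_1})\on{-FactMod}(\BC_2).
\]
When $\Phi$ is strictly unital, diagram (\ref{eqn-defn-fact-functor-1}) gives $\Phi(\on{unit}_{\BA_1}) \simeq \on{unit}_{\BA_2}$, and invoking Lemma \ref{lem-mod-for-unit-factalg} once more on the $\BA_2$-side yields $\mathbf{Res}_\Phi(\BC_2)_x \simeq (\BC_2)_x$.

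The bookkeeping of higher coherences in step one is the most delicate part; the rest is formal. The cleanest way to avoid spelling them out is to package everything through the cartesian fibration $\mathbf{FactModCat} \to \mathbf{FactAlgCat}$, which is what I would actually write in the paper, reducing the statement to a general fact about modules in a symmetric monoidal $(\infty,2)$-category together with Lemma \ref{lem-mod-for-unit-factalg}.
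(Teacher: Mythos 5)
Your construction fails at the very first step, and the failure is not a bookkeeping issue but a conceptual one. You propose to let $\mathbf{Res}_\Phi(\BC_2)$ have the same underlying crystal of categories as $\BC_2$, with the factorization action given by the composite
\[
j^\bullet(\BC_2 \boxtimes \BA_1) \xrightarrow{\;j^\bullet(\Id\boxtimes \Phi)\;} j^\bullet(\BC_2 \boxtimes \BA_2) \xrightarrow{\;\on{act}_2\;} \on{add}^\bullet(\BC_2).
\]
But Definition \ref{defn-fact-mod-cat} requires the action datum to be an \emph{equivalence} in $\mathbf{CrysCat}((\Ranp \times \Ranp)_{\on{disj}})$, and since $\Phi$ is an arbitrary factorization functor (not an equivalence), $j^\bullet(\Id\boxtimes\Phi)$ is generally not invertible, so the composite is not an equivalence. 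Your proposed data is only a \emph{lax} factorization $\BA_1$-module, not a factorization $\BA_1$-module. The "naive restriction of scalars" picture is exactly the thing that does \emph{not} work here; this is what makes the theorem a theorem rather than a tautology.

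A second symptom of the same problem: the statement says that when $\Phi$ is strictly unital, $\mathbf{Res}_\Phi(\BC_2)$ and $\BC_2$ "have equivalent cores" — i.e.\ they agree at the fiber over the marked point $x$ — not that they agree as crystals of categories over all of $\Ranp$. If your construction were right, the two would be identical everywhere. In fact the underlying crystal of $\mathbf{Res}_\Phi(\BC_2)$ must genuinely differ from $\BC_2$ away from $x$. The actual construction in \cite{CF} takes your lax action datum and then applies a strengthening procedure, which is why $\mathbf{Res}_\Phi$ is computed as a certain limit (compare Remark \ref{restrictino as a limit}, which invokes \cite[Lemma 6.3.2 and 4.4]{CF}). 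Framing the result as saying that $\mathbf{FactModCat} \to \mathbf{FactAlgCat}$ is a $(1,2)$-Cartesian fibration is the right packaging (and is how the $S$-linear version Theorem \ref{thmdefn-factresS} is stated), but your claim that producing the Cartesian lift is "straightforward" is where the real work is being swept under the rug: the lift is not $(\Phi, \Id_{\BC_2})$ with unchanged $\BC_2$, but $(\Phi, \Xi)$ landing in a strengthened module whose construction and universal property constitute the content of the theorem. The reduction to Lemma \ref{lem-mod-for-unit-factalg} for identifying the core is fine once the existence of $\mathbf{Res}_\Phi$ is in hand, but you have not established existence.
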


\begin{pr}[{\cite{CF}}]
	\label{cor-factmod-in-factres}
	Let $\Phi:\BA\to \BA'$ be a morphism in $\mathbf{FactAlgCat}$ and $\BC' \in \BA'\mathbf{-FactMod}$. Let $\cA\in \FactAlg(\BA)$. Then there is a canonical equivalence
	\[
	\cA\on{-FactMod}( \mathbf{Res}_\Phi(\BC') ) \simeq \Phi(\cA)\on{-FactMod}(\BC').
	\]
\end{pr}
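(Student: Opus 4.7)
The plan is to reduce the statement to Theorem \ref{thmdefn-factres} by viewing a factorization algebra as a factorization functor from $\Vect$, and then invoking naturality of the restriction functor with respect to composition.

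First I would recall that by Definition \ref{defn-fact-alg-obj} a factorization algebra $\cA\in \FactAlg(\BA)$ is exactly a factorization functor $\cA: \Vect \to \BA$, and by Construction \ref{constr-image-fact-alg} the image $\Phi(\cA)\in \FactAlg(\BA')$ is just the composition $\Phi\circ \cA$ in $\mathbf{FactAlgCat}$. Thus we have two morphisms in $\mathbf{FactAlgCat}$
\[
\Vect \xrightarrow{\cA} \BA \xrightarrow{\Phi} \BA',
\]
and we may apply Theorem \ref{thmdefn-factres} to each of $\cA$, $\Phi$, and $\Phi\circ \cA$.

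The central step is to use the naturality asserted in Theorem \ref{thmdefn-factres}: since $\mathbf{Res}_{(-)}$ is constructed as a functor out of (an $(\infty,2)$-enhancement of) the arrow category of $\mathbf{FactAlgCat}$, it is compatible with composition, giving a canonical equivalence
\[
\mathbf{Res}_{\Phi \circ \cA}(\BC') \simeq \mathbf{Res}_\cA\bigl( \mathbf{Res}_\Phi(\BC') \bigr)
\]
of factorization $\Vect$-module categories for any $\BC' \in \BA'\mathbf{-FactMod}$. Granting this, I take the core $(-)_x$ of both sides and apply the formula (\ref{eqn-thmdefn-factres}) from Theorem \ref{thmdefn-factres} twice. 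On the right one obtains
\[
\mathbf{Res}_\cA\bigl( \mathbf{Res}_\Phi(\BC') \bigr)_x \simeq \cA(\on{unit}_\Vect)\on{-FactMod}\bigl( \mathbf{Res}_\Phi(\BC') \bigr) = \cA\on{-FactMod}\bigl( \mathbf{Res}_\Phi(\BC') \bigr),
\]
while on the left
\[
\mathbf{Res}_{\Phi\circ \cA}(\BC')_x \simeq (\Phi\circ \cA)(\on{unit}_\Vect)\on{-FactMod}(\BC') = \Phi(\cA)\on{-FactMod}(\BC'),
\]
using that $\cA(\on{unit}_\Vect) = \cA$ and similarly for $\Phi(\cA)$. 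Combining these identifications yields the desired equivalence.

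The only genuine obstacle is the composition-compatibility $\mathbf{Res}_{\Phi \circ \cA} \simeq \mathbf{Res}_\cA \circ \mathbf{Res}_\Phi$. In a $1$-categorical world this is an obvious consequence of functoriality, but here it requires coherence of the $\mathbf{Res}$ construction at the level of $(\infty,2)$-categories, so one must confirm that the construction in Theorem \ref{thmdefn-factres} was made not just pointwise in $\Phi$ but functorially in it. Once this is in hand, no further computation is required --- the result falls out by taking cores.
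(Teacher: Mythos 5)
Your argument is correct and is essentially the intended one. The paper does not itself prove this statement --- it is quoted from \cite{CF} --- but the route you take (factor $\cA$ as a morphism $\Vect \to \BA$ in $\mathbf{FactAlgCat}$, use composition-compatibility of $\mathbf{Res}$, and apply formula (\ref{eqn-thmdefn-factres}) to the two composable arrows $\Vect \xrightarrow{\cA} \BA \xrightarrow{\Phi} \BA'$, then take cores) is exactly how the result follows from Theorem~\ref{thmdefn-factres}. Your caution about the composition-compatibility of $\mathbf{Res}$ is reasonable given the bare phrasing of Theorem~\ref{thmdefn-factres}, but the paper's $S$-linear statement (Theorem-Definition~\ref{thmdefn-factresS}) makes the $(1,2)$-Cartesian fibration structure explicit, so the coherent functoriality $\mathbf{Res}_{\Phi\circ\cA}\simeq\mathbf{Res}_\cA\circ\mathbf{Res}_\Phi$ is indeed built in; this is the form in which \cite{CF} constructs $\mathbf{Res}$. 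Your bookkeeping at the level of $\on{unit}_\Vect$ (identifying $\cA(\on{unit}_\Vect)\simeq\cA$ and $(\Phi\circ\cA)(\on{unit}_\Vect)\simeq\Phi(\cA)$) is correct since $\on{unit}_\Vect$ is the identity factorization functor $\Vect\to\Vect$. No gaps.
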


\begin{df}[{\cite{CF}}]
	Let $\BA$ be a factorization algebra category and $\BC$ be a factorization $\BA$-module category. Let $\phi:\cA_1\to \cA_2$ be a morphism in $\FactAlg(\BA)$, viewed as a 2-morphism in $\mathbf{FactAlgCat}$. Then we have a factorization $\Vect_\Ranp$-linear functor 
	\[
	\mathbf{Res}_{\cA_2}(\Vect_{\Ranp}) \to \mathbf{Res}_{\cA_1}(\Vect_{\Ranp}).
	\]
	The core of it is a functor
	\[
	\cA_2\on{-FactMod}(\BC) \to  \cA_1\on{-FactMod}(\BC),
	\]
	which we denote by $\on{Res}_\phi$ and call it the \emph{factorization restriction functor along $\phi$}.
	
\end{df}

\sssec{The basic adjunction}

Recall that for any $(\infty,2)$-category $\BE$, we have the notion of adjunctions in $\BE$. Namely, it consists of a diagram
\[
F: U \rightleftarrows V: G
\] 
and 2-morphisms $F\circ G \to \Id_V$, $\Id_U \to G\circ F$ satisfying the usual axioms. 

\begin{lm}[{\cite{CF}}]
	Let 
	\[
	\Phi: \BA \rightleftarrows \BA': \Psi
	\]
	be an adjunction in the 2-category $\mathbf{FactAlgCat}$. Then $\Phi$ is strictly unital.
\end{lm}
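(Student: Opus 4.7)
Strict unitality of $\Phi$ amounts to the invertibility of the natural transformations $\alpha^\Phi_\theta : \on{ins}_*\circ \Phi_{\underline{x}} \to \Phi_{\underline{y}}\circ \on{ins}_*$ associated to each 2-cell $\theta : \underline{x}\to \underline{y}$ in $\Ranp$ (i.e., each insertion $\underline{x}\subset \underline{y}$). The plan is to construct explicit inverses via the standard mate calculus applied to the fiberwise adjunctions $\Phi_{\underline{x}}\dashv \Psi_{\underline{x}}$ and $\Phi_{\underline{y}}\dashv \Psi_{\underline{y}}$ induced by $\Phi\dashv \Psi$, and to verify the inverse property using the triangle identities of the adjunction.

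First I would unpack the compatibility conditions that follow from the fact that the unit $\eta: \Id_\BA \to \Psi\Phi$ and counit $\epsilon: \Phi\Psi\to \Id_{\BA'}$ are 2-morphisms in $\mathbf{FactAlgCat}$, hence in $\mathbf{CrysCat}(\Ranp)$. Naturality with respect to the lax cells of $\Phi$ and $\Psi$, combined with the fact that the lax cell of a composite morphism of crystals is the composite of the individual lax cells, yields the identities
\begin{align*}
(\Psi_{\underline{y}}\alpha^\Phi_\theta)\circ(\alpha^\Psi_\theta\Phi_{\underline{x}})\circ(\on{ins}_*\eta_{\underline{x}}) &= \eta_{\underline{y}}\on{ins}_*,\\
(\epsilon_{\underline{y}}\on{ins}_*)\circ(\Phi_{\underline{y}}\alpha^\Psi_\theta)\circ(\alpha^\Phi_\theta\Psi_{\underline{x}}) &= \on{ins}_*\epsilon_{\underline{x}},
\end{align*}
encoding precisely that $\alpha^\Psi_\theta$ is the mate of $\alpha^\Phi_\theta$. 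I would then define the candidate inverse
\[
\bar\alpha := (\epsilon_{\underline{y}}\on{ins}_*\Phi_{\underline{x}})\circ(\Phi_{\underline{y}}\alpha^\Psi_\theta\Phi_{\underline{x}})\circ(\Phi_{\underline{y}}\on{ins}_*\eta_{\underline{x}}) : \Phi_{\underline{y}}\on{ins}_* \to \on{ins}_*\Phi_{\underline{x}}.
\]
To check $\bar\alpha\circ \alpha^\Phi_\theta = \Id$, I would use naturality of $\alpha^\Phi_\theta$ to interchange it past $\Phi_{\underline{y}}\on{ins}_*\eta_{\underline{x}}$, apply the second identity above whiskered with $\Phi_{\underline{x}}$ on the right, and invoke the triangle identity $\epsilon\Phi\circ\Phi\eta = \Id_\Phi$. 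The reverse composite $\alpha^\Phi_\theta\circ\bar\alpha = \Id$ follows symmetrically: via naturality of $\epsilon_{\underline{y}}$ against $\alpha^\Phi_\theta$, the first identity whiskered with $\Phi_{\underline{y}}$ on the left, and the dual triangle identity.

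The computation is essentially bookkeeping of whiskerings and the interchange law, with no deep obstruction. The main subtlety is that we are working in an $(\infty,2)$-category, so the identities above should be understood as holding coherently at all levels. However, invertibility of a 2-cell in an $(\infty,2)$-category is a property rather than additional structure, so it suffices to verify it pointwise for each 2-cell $\theta$ of $\Ranp$; the required higher coherence then follows automatically from the coherence data already encoded in the adjunction $\Phi\dashv\Psi$ in $\mathbf{FactAlgCat}$.
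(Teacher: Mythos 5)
Your proof is correct and is the standard doctrinal‐adjunction argument: the lax cell of $\Psi$ produces, via the mate construction through the fiberwise adjunctions $\Phi_s \dashv \Psi_s$, a candidate oplax cell $\bar\alpha$ for $\Phi$, and the fact that $\eta$ and $\epsilon$ are 2-morphisms in $\mathbf{CrysCat}(\Ranp)$ — i.e. your two compatibility identities — together with the triangle identities forces $\bar\alpha$ and $\alpha^\Phi_\theta$ to be mutually inverse. The paper itself does not reprove this statement (it is quoted from \cite{CF}), so there is no internal proof to compare against; your reconstruction is the natural one, and your final remark that invertibility of a 2-cell in an $(\infty,2)$-category is a property (so the strict 2-categorical mate computation suffices) is exactly the right observation to dispose of the higher-coherence concern.
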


The following result follows from Theorem \ref{thmdefn-factres}.

\begin{thm}[{\cite{CF}}]
	\label{thm-basic-adj}
	Let $\Phi: \BA \rightleftarrows \BA': \Psi$ be an adjunction in the 2-category $\mathbf{FactAlgCat}$. Then there is a canonical adjunction in $\mathbf{BiCat}_\infty$:
	\[
	\mathbf{Res}_\Phi: \BA'\mathbf{-FactMod} \rightleftarrows \BA\mathbf{-FactMod}: \mathbf{Res}_\Psi.
	\]
\end{thm}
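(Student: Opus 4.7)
The plan is to reduce the statement to the formal principle that a 1-contravariant 2-functor between 2-categories preserves adjunctions (swapping left and right). Concretely, I would first establish that the assignment $\BA \mapsto \BA\mathbf{-FactMod}$, together with the factorization restriction $\Phi \mapsto \mathbf{Res}_\Phi$ from Theorem \ref{thmdefn-factres}, promotes to a 2-functor
\[
\mathbf{Res}: (\mathbf{FactAlgCat})^{1\text{-op}} \to \mathbf{BiCat}_\infty.
\]
The 1-functoriality $\mathbf{Res}_{\Psi \circ \Phi} \simeq \mathbf{Res}_\Phi \circ \mathbf{Res}_\Psi$ should already be part of the construction of $\mathbf{Res}$ in Theorem \ref{thmdefn-factres}; the action on 2-morphisms assigns to a natural transformation $\alpha: \Phi \Rightarrow \Phi'$ in $\mathbf{FactAlgCat}$ the evident natural transformation $\mathbf{Res}_{\Phi'} \Rightarrow \mathbf{Res}_\Phi$, built by composing the factorization action isomorphisms with $\alpha$ at the level of module category data.

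Once this 2-functoriality is in hand, the adjunction follows formally. Recall that in $\mathbf{FactAlgCat}$ the adjunction $\Phi \dashv \Psi$ comes packaged with a unit $\eta: \Id_\BA \to \Psi \circ \Phi$ and counit $\epsilon: \Phi \circ \Psi \to \Id_{\BA'}$ satisfying the triangle identities. Applying the 1-contravariant 2-functor $\mathbf{Res}$ preserves the direction of 2-cells but reverses 1-morphisms, producing
\[
\mathbf{Res}(\eta): \mathbf{Res}_\Phi \circ \mathbf{Res}_\Psi \to \Id_{\BA\mathbf{-FactMod}}, \qquad \mathbf{Res}(\epsilon): \Id_{\BA'\mathbf{-FactMod}} \to \mathbf{Res}_\Psi \circ \mathbf{Res}_\Phi,
\]
which serve respectively as the counit and unit of an adjunction $\mathbf{Res}_\Phi \dashv \mathbf{Res}_\Psi$ in $\mathbf{BiCat}_\infty$. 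The triangle identities transport through $\mathbf{Res}$ as well, since they are statements about 2-cells and pasting, all of which are preserved by any 2-functor.

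The hard part of this program is genuinely establishing the 2-functoriality of $\mathbf{Res}$: Theorem \ref{thmdefn-factres} as stated only records the 1-functorial data and the fiber formula \eqref{eqn-thmdefn-factres}, whereas to transport adjunctions we need the coherences governing composition with 2-morphisms and horizontal compositions. One should verify that the construction of $\mathbf{Res}_\Phi$ assembles into a functor out of the arrow category of $\mathbf{FactAlgCat}$, compatibly with the symmetric monoidal and factorization structures on both sides; after that, the adjunction $\mathbf{Res}_\Phi \dashv \mathbf{Res}_\Psi$ is a direct consequence of the abstract formalism of 2-functors and adjunctions, and no genuinely new factorization-theoretic input is required.
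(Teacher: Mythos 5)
Your reduction --- deduce the adjunction from the 2-functoriality of $\mathbf{Res}$ together with the general fact that 2-functors preserve adjunctions --- is exactly the route taken in the paper, which is spelled out most explicitly in the $S$-linear version: Theorem \ref{thm-basic-adjS} ``follows from Theorem-Definition \ref{thmdefn-factresS} and straightening.'' Two remarks are in order.

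First, a variance slip: you write that $\mathbf{Res}$ ``preserves the direction of 2-cells,'' but this contradicts your own (correct) earlier description, where $\alpha: \Phi \Rightarrow \Phi'$ is sent to $\mathbf{Res}_{\Phi'} \Rightarrow \mathbf{Res}_\Phi$. The functor $\mathbf{Res}$ is contravariant on \emph{both} 1- and 2-morphisms; and only with that variance do $\mathbf{Res}(\eta)$ and $\mathbf{Res}(\epsilon)$ acquire the orientations you write down as counit and unit, giving $\mathbf{Res}_\Phi \dashv \mathbf{Res}_\Psi$ with the left adjoint on the correct side. (With covariance on 2-cells the argument would instead output $\mathbf{Res}_\Psi \dashv \mathbf{Res}_\Phi$, which is not what the theorem asserts.)

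Second, you locate the remaining difficulty in ``genuinely establishing the 2-functoriality'' by a direct coherence verification. That is not how the paper gets it: the coherent 2-functor comes for free from the statement in Theorem-Definition \ref{thmdefn-factresS} that the forgetful functor $\mathbf{FactModCat} \to \mathbf{FactAlgCat}$ is a $(1,2)$-\emph{Cartesian fibration}, followed by straightening. This replaces an open-ended coherence check by a single recognition principle. The paper also records a second route, in the Construction preceding Corollary \ref{factres linear}, which avoids straightening altogether: working directly inside the $(1,2)$-Cartesian fibration, one constructs a right adjoint $R: v \to \Phi^{\dagger}(v)$ to the Cartesian lift $L: \Phi^{\dagger}(v) \to v$ by pulling back $\Id_v$ along the counit $\epsilon: \Phi\Psi \to \Id$, and the triangle identities drop out of the universal property of Cartesian arrows. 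If you want to actually run your program you should either invoke the Cartesian-fibration packaging or reproduce this explicit construction; attempting the coherence check ``by hand'' against the definition of crystals of categories would be substantially harder and is not what the references do.
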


\begin{rem}
	Let $\Phi: \BA \rightleftarrows \BA': \Psi$ be an adjunction in the 2-category $\mathbf{FactAlgCat}$. Note that
	\begin{itemize}
		\item[(1)]
		For $\BC'\in \BA'\mathbf{-FactModCat}$, the core of $\mathbf{Res}_\Phi(\BC')$ is equivalent to that of $\BC'$ because $\Phi$ is strictly unital.
		\item[(2)]
		For $\BC\in \BA\mathbf{-FactModCat}$, the core of $\mathbf{Res}_\Psi(\BC)$ is equivalent to $\Psi(\on{unit}_{\bA})\on{-FactMod}(\BC)$.
	\end{itemize}
	Hence we can formulate the basic adjunction as the following equivalence:
	\[
	\on{FactFun}_{\BA}( \BC'_x, \BC_x   ) \simeq \on{FactFun}_{\BA'}( \BC'_x, \Psi(\on{unit}_{\BA})\on{-FactMod}_{\BC_x}),
	\]
	where we use the point of view of factorization module categories \emph{concentrated} at $x$ (rather than \emph{supported} on $\Ranp$).
	
\end{rem}

\subsection{Factorization patterns in families}\label{factfamilies} In the present paper we will need to use a slight generalization of the above formalism. Namely, we need factorization patterns in families parameterized by a fixed prestack $S$ with some condition that make technical work easier: we require $S$ to be 1-affine and the category $\QCoh(S)$ to be semi-rigid (see \cite[Appendix C]{AGKRRV2} for what this means).

First, we informally formulate a $S$-linear version of the notion of crystal of categories on a lax prestack.

\begin{df} Let $Y$ be a laft lax prestack. A \emph{$S$-linear crystal of categories} $\BA$ on $Y$ consists of the following data:
	\begin{itemize}
		\item For any $s:R\to Y$ with $R$ being a finite type affine scheme, there is a $\D(R)\otimes \QCoh(S)$-module DG category:
		\[ \BA_s \in \D(R)\otimes \QCoh(S)-\mathbf{ModCat}. \]
		
		\item For any 2-cell
		\[
		\xymatrix{
			& R \ar[rd]^-{s} \ar@{=>}[d]^-\theta \\
			T \ar[ru]^-v \ar[rr]_{t} & & Y,
		}
		\]
		there is a functor
		\[
		\theta_\dagger: \BA_s \to \BA_t
		\]
		intertwining the symmetric monoidal functor $$u^! \otimes \Id:\D(S)\otimes \QCoh(S) \to \D(T)\otimes \QCoh(S),$$ such that if $\theta$ is invertible, then the induced functor
		\[
		\D(T) \otimes_{\D(S)} \BA_s \to \BA_t
		\]
		is an equivalence.
		\item Certain higher compatibilities. 
	\end{itemize}
\end{df}

\begin{ntn}
	Let 
	\[\mathbf{CrysCat}^S(Y),\; \mathbf{CrysCat}^S(Y)^\str\] 
	be the $(\infty,2)$-category of crystals of categories on $Y$, with 1-morphisms given by morphisms (resp. strict morphisms) between crystals of categories, and 2-morphisms given by natural transformations.
\end{ntn}

\begin{rem}
		Replacing $$\mathbf{CrysCat} \text{ }(\text{resp. } \mathbf{CrysCat}(Y)^\str)$$ 
		by   $$\mathbf{CrysCat}^S \text{ }(\text{resp. } \mathbf{CrysCat}^S(Y)^\str)$$ everywhere we obtain the analogues of the definitions and statements presented in this section. We discuss the proofs and definitions in more detail in Appendix \ref{paramfact}.
\end{rem}

\begin{ex}
	Given $\BA \in \mathbf{FactAlgCat}$ we can construct
	$$\BA^S := \BA \otimes  \QCoh(S) \in  \mathbf{FactAlgCat}^S.$$
\end{ex}

\subsection{Commutative factorization algebra objects}

The identity morphism of $\Ranp$ upgrades to a map of commutative algebras given by the correspondence (\ref{corr ran}) and the map $\widetilde{\add}$. 

\begin{df}\label{multcryscat}
	A multiplicative $S$-linear crystal of categories $\BA$ on $\Ranp$ consists of the following data:
	\begin{itemize}
		\item An object $$\BA \in \mathbf{CrysCat}^S(\Ranp),$$
		\item An isomorphism
		\[\on{unit}:\Vect^S_{\on{pt}} \to \emptyset_{\on{pt}}^\bullet(\BA)\]
		in $\mathbf{CrysCat}^S(\on{pt})$;
		
		\item An isomorphism
		\[\on{mult}:\BA \boxtimes \BA \to {\widetilde{\add}}^\bullet(\BA)\] 
		in $\mathbf{CrysCat}^S((\Ranp\times \Ranp))$;
		
		\item Higher compatibilities.
	\end{itemize}
\end{df}

\begin{rem}
	Note that the data in Definition \ref{multcryscat} has underlying data of lax $S$-linear factorization algebra category.
\end{rem}

\begin{df}
	A commutative $S$-linear factorization algebra category is a multiplicative $S$-linear crystal of categories, whose underlying data of lax $S$-linear factorization algebra category is strict. Denote this category by $\mathbf{CFactAlgCat}^S$.
\end{df}

\begin{df}
	Similarly, for $\BA \in \mathbf{CFactAlgCat}^S$ we can define commutative $S$-linear factorization algebras in $\BA$. Denote this category by $\CFactAlg^S(\BA).$
\end{df}

\begin{ex}\label{ex: commutative fact cat}
	An element $\BC \in \CommAlg(\mathbf{DGCat})$ can be upgraded to an element in $\mathbf{CFactAlgCat}^S$, denoted by $\BC^S$.
\end{ex}

\begin{lm}[{\cite[ Lemma 8.10.1]{cpsii}}]
	For $\BC$ as in Example \ref{ex: commutative fact cat}, the restriction functor to $X$ induces an equivalence
$$ \fact: \CommAlg(\BC^S_{X})\xleftarrow{\sim} \CFactAlg^{S}(\BC^S),$$
where $\BC^S_{X}$ is the pullback of $\BC^S$ along $X \rightarrow \Ranp$.
\end{lm}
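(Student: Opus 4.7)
The plan is to deduce this $S$-linear statement from its non-parameterized predecessor \cite[Lemma 8.10.1]{cpsii} by treating the whole picture as a base change along the symmetric monoidal functor $\Vect \to \QCoh(S)$. The hypothesis that $S$ is 1-affine and that $\QCoh(S)$ is semi-rigid is precisely what makes this reduction work: under these assumptions, $-\otimes \QCoh(S)$ is sufficiently exact to commute with the limits and colimits underlying the definitions of commutative algebras, (commutative) factorization algebra categories, and their objects.

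First I would check that, for $\BC \in \CommAlg(\mathbf{DGCat})$, both sides of the desired equivalence are ``free'' over $\QCoh(S)$ in the following sense: the assignment $\BD \mapsto \BD \otimes \QCoh(S)$ induces natural equivalences $\CFactAlg(\BC)\otimes \QCoh(S)\xrightarrow{\sim} \CFactAlg^{S}(\BC^S)$ and $\CommAlg(\BC_X)\otimes \QCoh(S)\xrightarrow{\sim} \CommAlg(\BC^S_X)$. On the algebra side this is essentially the assertion that the free commutative algebra monad on $\BC_X \otimes \QCoh(S)$ is the base change of the monad on $\BC_X$; semi-rigidity of $\QCoh(S)$ ensures the relevant totalizations are preserved. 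On the factorization side the analogous statement follows from the framework of parameterized factorization patterns developed in Appendix \ref{paramfact}.

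Second, by inspection the restriction functor $\fact$ in the $S$-linear setting is the base change of the non-parameterized restriction functor: pullback along $X \to \Ranp$ commutes with tensoring by $\QCoh(S)$, and passing to (commutative) factorization algebras commutes with restriction of the underlying crystals of categories.

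Combining these two reductions with the non-parameterized equivalence of \cite[Lemma 8.10.1]{cpsii} yields the claimed equivalence. The \emph{main obstacle} is the first step, namely verifying that $-\otimes \QCoh(S)$ commutes with the formation of $\CFactAlg$; this is where 1-affineness of $S$ and semi-rigidity of $\QCoh(S)$ are essential, since they guarantee that $\QCoh(S)$ is dualizable in $\mathbf{DGCat}$ and hence that $-\otimes \QCoh(S)$ preserves the higher coherence data appearing in Definition \ref{multcryscat} and in its factorization analogue.
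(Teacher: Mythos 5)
The paper provides no proof of this lemma; it simply cites \cite[Lemma~8.10.1]{cpsii}. So the task is to assess your blind argument on its own terms.

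Your global strategy — reduce the $S$-linear statement to the non-parameterized one by a form of base change along $\Vect\to\QCoh(S)$ — is reasonable, and the conclusion you want is correct. But the way you set up step~1 does not typecheck, and this is not a cosmetic issue. You write that $\BD\mapsto \BD\otimes\QCoh(S)$ induces equivalences $\CommAlg(\BC_X)\otimes\QCoh(S)\xrightarrow{\sim}\CommAlg(\BC^S_X)$ and $\CFactAlg(\BC)\otimes\QCoh(S)\xrightarrow{\sim}\CFactAlg^S(\BC^S)$. However, $\CommAlg(\BC_X)$ (and likewise the category of commutative factorization algebras) is \emph{not} a DG category: it is presentable but not stable, since commutative algebra objects do not admit a $\Vect$-module structure or a stable $\infty$-categorical enrichment in the relevant sense. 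Consequently the Lurie tensor product $\CommAlg(\BC_X)\otimes\QCoh(S)$ is undefined, and neither 1-affineness of $S$ nor semi-rigidity of $\QCoh(S)$ can repair this, since those hypotheses live entirely inside $\mathbf{DGCat}$. The assertion that ``the free commutative algebra monad on $\BC_X\otimes\QCoh(S)$ is the base change of the monad on $\BC_X$'' is also not the statement you need, and even as stated it does not produce an identification of the two categories of algebra objects.

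The clean route is more direct and avoids this issue entirely: $\BC\otimes\QCoh(S)$ is itself a symmetric monoidal DG category, hence fits into Example~\ref{ex: commutative fact cat} in the non-parameterized sense. Applying the non-parameterized version of \cite[Lemma~8.10.1]{cpsii} to $\BC\otimes\QCoh(S)$ gives $\CommAlg\bigl((\BC\otimes\QCoh(S))_X\bigr)\simeq\CFactAlg(\BC\otimes\QCoh(S))$. One then checks — essentially by unwinding the definitions in Appendix~\ref{paramfact} — that an $S$-linear commutative factorization algebra in $\BC^S$ is the same datum as a plain commutative factorization algebra in the constant crystal $\BC\otimes\QCoh(S)$: an $S$-linear factorization functor $\Vect^S\to\BC^S$ is determined by its restriction along $\Vect\to\Vect^S$, because $\Vect^S=\Vect\otimes\QCoh(S)$ is the free $\QCoh(S)$-module on $\Vect$. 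Finally, $(\BC\otimes\QCoh(S))_X\simeq\BC_X\otimes\QCoh(S)=\BC^S_X$ because pullback of crystals of categories along $X\to\Ranp$ commutes with $-\otimes\QCoh(S)$. This yields the claim without ever forming an ill-defined tensor product with a non-DG category. You should rewrite step~1 along these lines; your step~2 is fine as is.
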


\subsection{A variant: modules over factorization algebra objects on $\Ranu$}\label{ranu factorization}

Fix $x\in X$, let $U = X \setminus x$. Recall that $\Ranu$ is a commutative algebra in the category of lax prestacks with morphisms given by correspondences. Similar to Construction \ref{constr-Ranu-module} we get that $\Ranp_{x}$ is a $\Ranu$-module. 

\begin{ntn}We set 
	\begin{enumerate}
		
		\item $\mathbf{FactAlgCat}^S(\Ranu)$ is the category of $S$-linear crystals of categories on $\Ranu$ with a structure of factorization algebra,
		\item $\mathbf{FactMod}^S_U$ is the category of pairs $(\BA_U, \BM)$, where $\BA_U \in \mathbf{FactAlgCat}^S(\Ranu)$ and $\BM$ is an $S$-linear crystal of categories on $\Ranp$ with the structure of factorization module category over $\BA_U$ (similar to \ref{defn-fact-mod-cat}),
		
		\item For $\mathbf{A}_U \in \mathbf{FactAlgCat}^S(\Ranu)$ set 
		$$\mathbf{A}_U-\mathbf{FactModCat}^S(\Ranp_{x}^{ch}),$$

		\item For $(\BA, \BM ) \in \mathbf{FactModCat}^S_U$ and $A_U \in \FactAlg^S(\BA)$ set 
		$$A_U\FactMod^S(\BM).$$
	\end{enumerate}
\end{ntn}

\begin{rem}\label{Ranu}
	All of the results from this section and Appendix \ref{paramfact} make sense in this setting, where the factorization algebra category/algebra is defined over $\Ranu$, and all the proofs go through on the nose. 
\end{rem}
	
\section{Statement of the result}\label{section: statement}

Since we are going to work only on the spectral side of the equivalence $\bL$, we will change the notation and replace $\check{G}$ by $G$. Let $X$ be a smooth affine curve, $x \in X$ be a closed point. Let $U = X \setminus x$.

Recall the statement of Conjecture \ref{intro conj}:
\begin{conj*}
	There exists  a fully faithful functor 
	$$\bFact: \QCoh(\LocSys_G(D^{\circ}))\mathbf{-ModCat} \rightarrow \Rep(G)\mathbf{-FactModCat}$$
	that is compatible with the forgetful functors from both sides to $\mathbf{DGCat}$.
	Here 
	\begin{itemize}
		\item[-] On the left-hand side $\QCoh(\LocSys_G(D^{\circ}))$ is viewed as a monoidal category with usual tensor product. The left-hand side is the $(\infty, 2)$-category of modules for this monoidal category.
		\item[-] On the right-hand side $\Rep(G)$ is the DG category of representations of $G$ equipped with factorization algebra category structure coming from the symmetric monoidal structure on $\Rep(G)$. 
	\end{itemize}
\end{conj*}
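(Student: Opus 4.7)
The plan is to follow the outline for Theorem~\ref{intro theorem}, but with $\LocSys_G^{\restr}(D^\circ)$ replaced by $\LocSys_G(D^\circ)$ throughout. Define the bimodule object $\bFact(\QCoh(\LocSys_G(D^\circ)))$ as $\QCoh(\LocSys_G(D^\circ_{\Ranp}))$, where $\LocSys_G(D^\circ_{\Ranp})$ is the factorization prestack degenerating $\LocSys_G(D)$ to $\LocSys_G(D^\circ)$ introduced informally earlier. This category inherits a factorization $\Rep(G)$-module structure from the factorization geometry of the degenerate classifying space, together with a commuting action of $\QCoh(\LocSys_G(D^\circ))$ realized through its fiber at $x$. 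The functor $\bFact$ is then tensor product (relative to $\QCoh(\LocSys_G(D^\circ))$) against this bimodule.

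First I would reduce to the case $X=\bA^1$, $x=0$ using the pullback functoriality of factorization structures under maps of curves (developed in the paper's appendices), together with the compatibility of $\bFact$ with such base change. Next I would reduce the fully-faithfulness assertion to testing on pairs $\BC_i\simeq \QCoh(R_i)$ for morphisms $R_i\to \LocSys_G(D^\circ)$; this is the standard Morita-type reduction, modulo technical subtleties arising from the fact that $\LocSys_G(D^\circ)$ is not even locally of finite type. For such inputs the left-hand mapping category has a clean geometric description
\[
\on{Fun}_{\QCoh(\LocSys_G(D^\circ))}(\QCoh(R_1), \QCoh(R_2)) \simeq \QCoh\bigl(R_1 \times_{\LocSys_G(D^\circ)} R_2\bigr).
\]

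Third, I would use Theorem~\ref{thmdefn-factres} together with Proposition~\ref{cor-factmod-in-factres} to identify the right-hand factorization mapping category with the category of factorization modules over a commutative factorization algebra in $\Rep(G)$, namely an Isom-type twist ${}_{\sigma_1}R_G{}_{\sigma_2}$ of the regular representation by $\sigma_1$ and $\sigma_2$. For \emph{commutative} factorization algebras, Beilinson--Drinfeld's theory of chiral algebras identifies the category of modules with quasi-coherent sheaves on the space of horizontal sections of the associated D-scheme; for the Isom D-scheme this should reproduce exactly the fiber product appearing on the left-hand side.

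The main obstacle, and the reason only the restricted variant is proved here, is that in the full (unrestricted) setting we cannot invoke Katz's extension theorem to replace local systems on $D^\circ$ by local systems on a smooth curve such as $\bG_m$. The Isom D-scheme therefore lives intrinsically over the formal punctured disc, and the Beilinson--Drinfeld geometric description of modules over a commutative factorization algebra must be extended from the setting of a smooth curve to the infinite-dimensional, non-laft prestack $\LocSys_G(D^\circ)$. Making precise sense of ``horizontal sections on $D^\circ$'' sufficiently functorially to match the left-hand side, and carrying this through the factorization formalism, is the technical heart of the problem. One might attempt to bypass this by pro-approximating $\LocSys_G(D^\circ)$ by its restricted-variation cousins and deducing the conjecture by a descent/continuity argument from Theorem~\ref{intro theorem}, but controlling the limit on the factorization side seems to require genuinely new input beyond what is available in this paper.
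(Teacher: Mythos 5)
The statement you are addressing is labeled a \emph{conjecture} in the paper and is not proved there: Section~3.1 only \emph{constructs} the candidate functor $\bFact$ (as tensor product against $\QCoh(\LocSys_G(D^\circ_{\Ranp}))$, exactly as you propose), and Lemma~\ref{Connection lemma} shows that the restricted functor $\bFact^{\restr}$ factors through $\bFact$; fully faithfulness of $\bFact$ itself is left open. So your honest assessment that the proof cannot be completed, and your correct identification of Katz's extension theorem (Proposition~\ref{section QLisse}) as the decisive input available only in the restricted setting, match the paper's stance.

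That said, you understate the scope of the obstruction by packaging it entirely into the last step (``horizontal sections on $D^\circ$''). Several of the \emph{earlier} reductions you describe as ``technical subtleties'' would also break down for $\LocSys_G(D^\circ)$ as things stand. The reduction to $\BC_i\simeq\QCoh(R_i)$ in Sections~4.2--4.3 rests on three structural facts about $\LocSys_G^{\restr}(D^\circ)$: it is 1-affine (Corollary~\ref{1-affinnes of LS^r}), $\QCoh(\LocSys_G^{\restr}(D^\circ))$ is compactly generated and semi-rigid (Lemma~\ref{semi-regidity compact generation}), and the stack presents as a disjoint union of $G$-quotients of formal affine schemes (Proposition~\ref{properties of LS}); these feed into the bar-resolution argument, Lemma~\ref{inv vs coinv}, Lemma~\ref{lims and colims in modules}, and the identification $\on{Fun}_{\QCoh(\cdot)}(\QCoh(R_1),\QCoh(R_2))\simeq\QCoh(R_1\times_{\LocSys}R_2)$, which the paper takes from \cite[Lemma 7.8.11]{AGKRRV2} and which requires 1-affineness and semi-rigidity. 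None of these properties are known for $\LocSys_G(D^\circ)$, and compact generation of $\QCoh$ and 1-affineness are genuinely expected to \emph{fail} for a stack of this size. So the reductions themselves, not only the terminal Beilinson--Drinfeld identification, are where the restricted hypothesis is used. A pro-approximation strategy as you sketch at the end faces exactly this: you would need to control both the source-side limit $\QCoh(\LocSys_G(D^\circ))\mathbf{-ModCat}$ (where $\otimes$-excellence of the category is the issue) and the target-side factorization limit simultaneously, which is not addressed by anything in the paper.
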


\subsection{Construction of the functor} In this subsection we present a construction of the functor $\bFact$ from Conjecture \ref{intro conj}.


\subsubsection{} Recall from \cite[9.4]{cpsii} the definition of the factorization lax prestack $\LocSys_G(D)_{\Randr}$: 

\begin{df}
	For any test affine scheme $S$ the category $\Omega^1_X(D)_{\Randr}(S)$ is defined as follows:
	\begin{itemize}
		\item An object is a pair $(s, \omega)$ consisting of an object $s \in \Randr(S)$ and $\omega \in \Gamma(D_{s}, \Omega^1_X \boxtimes \cO_S)$;
		\item A morphism from $(s, \omega)$ to $(s^{\prime}, \omega^{\prime})$ consists of a morphism $s \rightarrow s^{\prime}$ in $\Randr(S)$ and an isomorphism between $\omega$ and $\omega^{\prime}|_{D_s}$. 
	\end{itemize}
\end{df}

\begin{df}
	For any test affine scheme $S$ the category $G(D)_{\Randr}(S)$ is defined as follows:
	\begin{itemize}
		\item An object is a pair $(s, g)$ consisting of an object $s \in \Randr(S)$ and $g \in \Maps(D_s, G)$;
		\item  A morphism from $(s, g)$ to $(s^{\prime}, g^{\prime})$ consists of a morphism $s \rightarrow s^{\prime}$ in $\Randr(S)$ and an isomorphism between $\omega$ and $g^{\prime}|_{D_s}$. 
	\end{itemize}
\end{df}

\begin{df}
	We set $$\LocSys_G(D)_{\Randr}:= (\Lie(G) \otimes \Omega^1_X(D)_{\Randr}) / G(D)_{\Randr}.$$
\end{df}

By \cite[Lemma 9.8.1]{cpsii} we have an equivalence of factorization algebra categories:
\begin{equation}
	\QCoh(\LocSys_G(D)_{\Randr}) \cong \Rep(G).
\end{equation}

\subsubsection{} We will define $\bFact(\QCoh(\LocSys_G(D^{\circ})))$ as the category of quasi-coherent sheaves on the $\LocSys_G(D)_{\Randr}$-module lax prestack to be denoted $\LocSys_G(D^{\circ}_{\Ranp})$.

\begin{df}\label{forms factorizably}
	Define the prestack $\Omega^1(D^{\circ}_{\Ranp})$  degenerating $\Omega^1(D)$ to $\Omega^1(D^{\circ})$ as follows. For any test affine scheme $S$ the category $\Omega^1(D^{\circ}_{\Ranp})(S)$ is defined as follows:
	\begin{itemize}
		\item An object is a pair $(s, \omega)$ consisting of an object $s \in \Randrpt(S)$ and $\omega \in \Gamma(D_{s} \times_X U, \Omega^1_U \boxtimes \cO_S)$;
		\item A morphism from $(s, \omega)$ to $(s^{\prime}, \omega^{\prime})$ consists of a morphism $s \rightarrow s^{\prime}$ in $\Randrpt(S)$ and an isomorphism between $\omega$ and $\omega^{\prime}|_{(D_{s} \times_X U)}$. 
	\end{itemize}
\end{df}

\begin{df}
	Define the prestack  $G(D^{\circ}_{\Ranp})$ degenerating $G(D)$ to $G(D^{\circ})$ as follows.
	For any test affine scheme $S$ the category $G(D^{\circ}_{\Ranp})(S)$ is defined as follows:
	\begin{itemize}
		\item An object is a pair $(s, g)$ consisting of an object $s \in \Randrpt(S)$ and $g \in \Maps(D_{s} \times_X U, G)$;
		\item  A morphism from $(s, g)$ to $(s^{\prime}, g^{\prime})$ consists of a morphism $s \rightarrow s^{\prime}$ in $\Randrpt(S)$ and an isomorphism between $g$ and $g^{\prime}|_{D_{s} \times_X U}$. 
	\end{itemize}
\end{df}

\begin{df}
	Set $$\LocSys_G(D^{\circ}_{\Ranp}):=( \Lie(G) \otimes \Omega^1(D^{\circ}_{\Ranp}) )/ G(D^{\circ}_{\Ranp}).$$
\end{df}
 
 It is clear that $\LocSys_G(D^{\circ}_{\Ranp})$ has a factorization $\LocSys_G(D)_{\Randr}$-module structure. 

\begin{rem}
	Note that 
	\begin{equation}
	\begin{split}
		\LocSys_G(D^{\circ}_{\Ranp})_{x}(S) = \Lie(G) \otimes \Gamma(S \times ( X^{\wedge}_{x})^{\aff} \times_X U, \Omega_U^1 \boxtimes \cO_S) / \Hom(S \times ( X^{\wedge}_{x})^{\aff} \times_X U, G) \cong \\
		\cong \LocSys_G(D^{\circ})(S).
	\end{split}
	\end{equation}
\end{rem}

\begin{rem}
	Consider any $s \in \Randr(\Spec(A))$ and $x_A: \Spec(A) \rightarrow x \rightarrow \Randr$. Note that the map $D_{x_A} \rightarrow D_{s}$ induces a map $$\LocSys_G(D^{\circ}_{\Ranp}) \rightarrow \LocSys_G(D^{\circ}),$$
	and therefore endow $\bFact(\QCoh(\LocSys_G(D^{\circ}))):= \QCoh(\LocSys_G(D^{\circ}_{\Ranp}) )$ with an action of $\QCoh(\LocSys_G(D^{\circ}))$.
\end{rem}
 
 \subsubsection{} The functor in Conjecture \ref{intro conj} is defined by 
 \begin{equation}
 	\bFact(\BM) := \bFact(\QCoh(\LocSys_G(D^{\circ}))) \otimes_{\QCoh(\LocSys_G(D^{\circ}))} \BM.
 \end{equation}

 \subsection{Lisse sheaves} In this subsection we define and study lisse sheaves on $D^{\circ}$. Even though in this paper we work in the de Rham context, in his subsection we give the definition in all three frameworks: Betti, de Rham, and etale. In the etale context we relax the condition $\on{char}(k) = 0.$
 
 \begin{df}
 	\begin{itemize}
 		\item In the Betti context, let $\Lisse(D^{\circ})^{\heartsuit}$ denote the abelian category of local systems of finite rank on $S^1$, i.e. finite-dimensional vector spaces with an automorphism.
 		
 		\item  In the de Rham conext, let $\Lisse(D^{\circ})^{\heartsuit}$ denote the abelian category of finite rank free $k((t))$-modules $V$equipped with a $k$-linear map 
 		$$\nabla:V \rightarrow V dt$$
 		satisfying the Leibniz rule.  Denote by $\Lisse(D^{\circ})$ the bounded derived category of $\Lisse(D^{\circ})^{\heartsuit}$.
 		
 		\item In the etale context, let $\Lisse(D^{\circ})^{\heartsuit}$ denote the abelian category of lisse $\ell$-adic sheaves on $\Spec(k((t)))$. 
 	\end{itemize}
\end{df}

\begin{lm}\label{finite coh dim}
	The category $\Lisse(D^{\circ})^{\heartsuit}$ has finite cohomological dimension.
\end{lm}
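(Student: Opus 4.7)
The plan is to treat the three contexts separately, as each reduces to a concrete cohomological computation on a small complex or for a group of small cohomological dimension.

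In the Betti case, an object of $\Lisse(D^{\circ})^\heartsuit$ is a finite-dimensional $k$-vector space equipped with an automorphism, i.e.\ a finite-dimensional representation of $\bZ$. For $V,W \in \Lisse(D^\circ)^\heartsuit$, one computes $\Ext^i(V,W)$ as the group cohomology $H^i(\bZ, \Hom_k(V,W))$, which vanishes for $i \geq 2$ since $\bZ$ has cohomological dimension $1$; hence the cohomological dimension of $\Lisse(D^{\circ})^\heartsuit$ is at most $1$.

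In the de Rham case, given $V, W \in \Lisse(D^{\circ})^\heartsuit$, the Ext groups are computed by the two-term de Rham complex
$$
\Hom_{k((t))}(V, W) \xrightarrow{\nabla} \Hom_{k((t))}(V, W) \otimes_{k((t))} \Omega^1_{k((t))/k},
$$
and so vanish in degrees $\geq 2$. The only thing to check is that this two-term complex really does compute $\Ext$; this follows from the standard Koszul-type resolution of length one for a free $k((t))$-module with connection, viewed as a module over the ring of differential operators on the formal punctured disc.

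In the etale case, an object is a continuous finite-dimensional $\ell$-adic representation of $G_{k((t))}$, so Ext groups are computed by continuous Galois cohomology with values in internal Hom. Using the exact sequence
$$
1 \to I_{k((t))} \to G_{k((t))} \to G_k \to 1,
$$
the $\ell$-cohomological dimension of $I_{k((t))}$ is finite (tame inertia is pro-cyclic of $\ell$-dimension $1$; wild inertia is pro-$p$ and contributes $0$ when $\ell \neq \Char k$), so $G_{k((t))}$ has finite $\ell$-cohomological dimension whenever $G_k$ does. The main obstacle is precisely this etale case: without a hypothesis on the $\ell$-cohomological dimension of $k$ the conclusion can fail, so one must invoke the paper's standing assumption that $k$ has finite $\ell$-cohomological dimension. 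The Betti and de Rham cases are essentially formal once the correct complex or resolution is identified.
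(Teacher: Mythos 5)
Your proof is essentially correct and follows the same plan as the paper: treat the three contexts separately and reduce each to a concrete two-term cohomological computation. The Betti and de Rham cases match the paper's argument (the paper reduces to the functor $\Hom(\cO,-)$, which is equivalent to your direct $\Ext$ computation since every object of $\Lisse(D^\circ)^\heartsuit$ is dualizable, and then observes that the relevant complex is the two-term $\bZ$-group-cochain complex, resp.\ the two-term de Rham complex). The one place you diverge is the etale case. The paper works directly with $\pi_1^{\et}(\Spec k((t)))$ and describes it as an extension of $\prod_{q\neq p}\bZ_q$ by the pro-$p$ wild inertia; it then kills the wild part because pro-$p$ groups have no continuous cohomology with $\bQ_\ell$-coefficients and is left with cohomology of $\bZ_\ell$, concentrated in degrees $0,1$. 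This description of $\pi_1^{\et}$ presupposes that $k$ is separably closed (otherwise $G_k$ appears as an additional quotient). You instead use the inertia exact sequence $1\to I_{k((t))}\to G_{k((t))}\to G_k\to 1$ and the Hochschild--Serre spectral sequence, which is more general but only yields finiteness of cohomological dimension under an extra hypothesis that $\operatorname{cd}_\ell(G_k)$ be finite. One small inaccuracy: you say one must ``invoke the paper's standing assumption that $k$ has finite $\ell$-cohomological dimension'' --- the paper states no such assumption; what it does is implicitly take $k$ separably closed, which makes the $G_k$ factor disappear and gives the sharper bound $\operatorname{cd}\le 1$ rather than merely finiteness. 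So your argument is correct, just slightly more general and slightly less sharp than the paper's, and the appeal to a ``standing assumption'' should instead be phrased as an explicit hypothesis you are adding.
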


\begin{proof}
	Let $\cO$ be the unit. We need to check that the functor $\Hom_{\Lisse(D^{\circ})}(\cO, -)$ has finite cohomological amplitude. But this functor commutes with colimits, hence it suffices show that for $\cE \in \Lisse(D^{\circ})^{\heartsuit}$ the complex $\Hom_{\QLisse(D^{\circ})}(\cO, \cE)$ is bounded.
	\begin{itemize}
	\item In the Betti context, this object obviously lies in $\Vect^{\geq 0, \leq 1}$. 
	\item In the de Rham context since we can compute cohomology via the de Rham complex this object lies in $\Vect^{\geq 0, \leq 1}$. 
	\item In the etale context, since $\Spec k((t))$ is a $K(\pi, 1)$-scheme, we can can compute $$\Hom_{\QLisse(D^{\circ})}(\cO, \cE)$$ as cohomology of $\pi_1^{\et}(\Spec k((t)))$ with coefficients in the corresponding representation $V$. Let us show that these cohomologies lie only in degrees 0 and 1. If $\cha(k) = 0$ the assertion is immediate. Assume that $\cha(k) = p$ for some prime $p \neq \ell$. The group $\pi_1^{\et}(\Spec k((t)))$ is an extension of $\prod_{q \neq p} \bZ_q$ with a pro-$p$ group of wild inertia. In particular, we have an exact sequence
	$$P \rightarrow \pi_1^{\et}(\Spec k((t))) \rightarrow \bZ_{\ell}$$
	where $P$ is a profinite group whose finite quotients have orders coprime to $q$. Then since $P$ does not have cohomologies with coefficients in $\bQ_{\ell}$-modules, we get that $$\Gamma(\pi_1^{\et}(\Spec e((t))), V ) \cong \Gamma(\bZ_{\ell}, V^P ),$$ 
	and the latter is computes by a two-term complex. 
\end{itemize}
\end{proof}
 
 \begin{df}
 	 Define $\IndLisse(D^{\circ}) := \Ind(\Lisse(D^{\circ}))$. The t-structure on $\Lisse(D^{\circ})$ uniquely extends to a t-structure on $\IndLisse(D^{\circ})$  compatible with filtered colimits.
 \end{df}

\begin{ntn}
	for an Abelian category $C^{\heartsuit}$ denote by $\Ind^{\heartsuit}(C^{\heartsuit})$ the Abelian category of functors between $C^{\heartsuit}$ and $\Sets$ that preserve fiber products.
\end{ntn}

\begin{lm}\label{properties of QLisse}
	The category $\IndLisse(D^{\circ})$ has the following properties:
	\begin{enumerate}
		\item $\IndLisse(D^{\circ})^\heartsuit \cong \Ind^{\heartsuit}(\Lisse(D^{\circ})^{\heartsuit})$,
		\item the category $\IndLisse(D^{\circ})$ is left-complete,
		\item the category $\IndLisse(D^{\circ})$ is the derived category of its heart, i.e. 
		$$\IndLisse(D^{\circ}) \xrightarrow{\sim} D(\Ind^{\heartsuit}(\Lisse(D^{\circ})^{\heartsuit})) := \QLisse(D^{\circ}).$$
	\end{enumerate}
\end{lm}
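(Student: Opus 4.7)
The essential input is Lemma \ref{finite coh dim}, which bounds the cohomological dimension of $\Lisse(D^\circ)^\heartsuit$ inside the stable $\infty$-category $\Lisse(D^\circ)$. Once this is in hand, all three parts follow from standard facts about Ind-completions and t-structures.

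For part (1), the plan is to invoke the general principle that, for a small stable $\infty$-category $\cC$ equipped with a bounded t-structure, the Ind-completion $\Ind(\cC)$ carries an accessible t-structure whose connective part is generated under filtered colimits by $\cC^{\leq 0}$, and whose heart coincides with the $1$-categorical Ind-completion $\Ind^\heartsuit(\cC^\heartsuit)$. Indeed, objects of $\Ind(\cC)^\heartsuit$ can be expressed as filtered colimits of objects of $\cC^\heartsuit$ computed in $\Ind(\cC)$, and such colimits among $0$-truncated objects reduce to ordinary $1$-categorical filtered colimits.

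For part (2), the plan is first to transfer the finite cohomological dimension from $\Lisse(D^\circ)$ to $\IndLisse(D^\circ)$. For a compact $X \in \Lisse(D^\circ)^{\leq 0}$ and $Y \in \IndLisse(D^\circ)^{\geq 2}$, Lemma \ref{finite coh dim} yields vanishing of $\on{Map}(X, Y)$; writing a general $X \in \IndLisse(D^\circ)^{\leq 0}$ as a filtered colimit of compact objects and commuting the mapping space out preserves this vanishing. This shows that the t-structure on $\IndLisse(D^\circ)$ has finite cohomological dimension, which in particular forces $\bigcap_n \IndLisse(D^\circ)^{\leq -n} = 0$. Combined with the standard right-completeness for accessible t-structures compatible with filtered colimits, this yields left-completeness.

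For part (3), the plan is to consider the canonical t-exact realization functor
\[
R : D(\IndLisse(D^\circ)^\heartsuit) \longrightarrow \IndLisse(D^\circ)
\]
provided by the universal property of the derived $\infty$-category. By part (1), $R$ induces an equivalence on hearts. The source is the derived category of a Grothendieck abelian category of finite cohomological dimension (inherited from $\Lisse(D^\circ)^\heartsuit$ via Ind-extension), hence is itself left-complete; the target is left-complete by part (2). Combining left-completeness of both sides with the fact that $R$ is a t-exact equivalence on hearts and with the finite cohomological dimension bound, a Postnikov-tower argument will show that $R$ is an equivalence. The main obstacle lies in this last step, as it requires controlling the interaction of filtered colimits with unbounded totalizations on both sides of $R$; the finite cohomological dimension bound is exactly what makes this tractable.
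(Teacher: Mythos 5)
Your treatment of parts (1) and (3) is in the same spirit as the paper's: the paper proves (3) by comparing $(\geq -n)$-truncations and invoking left-completeness of both sides, which is your realization-functor approach in different clothing, and both you and the paper leave the Postnikov comparison of bounded objects (i.e., the matching of $\Ext$-groups from the heart on the two sides) as a terse remaining step.

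The genuine gap is in your part (2), where you attempt a first-principles proof of left-completeness rather than citing \cite[Theorem E.1.3]{AGKRRV2} as the paper does. You argue that finite cohomological dimension forces $\bigcap_n \IndLisse(D^{\circ})^{\leq -n} = 0$ and that this, combined with right-completeness, yields left-completeness. The last step is not valid: vanishing of $\bigcap_n \cC^{\leq -n}$ together with right-completeness does not by itself imply left-completeness, and the standard criterion tied to this vanishing (Lurie, Higher Algebra, Prop.\ 1.2.1.19) additionally requires that $\cC^{\geq 0}$ be closed under countable products, which you do not verify. Moreover, the intermediate vanishing already follows from compact generation by bounded objects without any cohomological bound, so Lemma \ref{finite coh dim} is not actually doing work in your chain of implications. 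To repair the argument you should either cite the theorem the paper cites, or argue directly: for compact (hence bounded) $K$ and any $X$, the finite cohomological dimension forces the mapping spectra $\on{Maps}(K, \tau^{\leq -n}X)$ to have homotopy groups vanishing in each fixed degree once $n$ is large, hence $\lim_n \on{Maps}(K, \tau^{\leq -n}X) = 0$; by compact generation $\lim_n \tau^{\leq -n}X = 0$, and then $X \xrightarrow{\sim} \lim_n \tau^{\geq -n}X$ follows from the fiber sequences. That is the content the cited theorem packages up, and it is what is missing from your proposal.
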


\begin{proof}
	Part (1) is immediate. Property (2) follows from \cite[Theorem E.1.3]{AGKRRV2} and Lemma \ref{finite coh dim}. We now prove (3). By (1) we have $$\IndLisse(D^{\circ})^{\geq -n} \rightarrow \QLisse(D^{\circ})^{\geq -n}$$
	for every $n$, and by (2) the source is left-complete. It suffices to check that the target is left-complete as well. We claim that for an Abelian category $\cA$ of finite cohomological dimension the category $D(\Ind^{\heartsuit}(\cA))$ is left-complete. Property (3) now follows from Lemma \ref{finite coh dim}. 
\end{proof}
 
 \begin{rem}\label{QLisse vs IndLisse}
 	The category $\QLisse(D^{\circ})$ mimics the category $\QLisse(X)$ of \cite{AGKRRV2}, which is defined as 
 	$$\QLisse(X) \cong D(\Ind^{\heartsuit}(\Lisse(X)^{\heartsuit})).$$
 \end{rem}

\subsubsection{}

In Betti context we have $\QLisse(D^{\circ})$ is obviously a Tannakian category, with a fiber functor given by $\ev_x$ for $x \in S^1$. Moreover, $\QLisse(D^{\circ}) \cong \QLisse(\bG_m)$, so all the desired properties of this Tannakian category were checked in \cite[1.7.4]{AGKRRV2}. We now study $\QLisse(D^{\circ})$ in de Rham and etale frameworks.

Recall that in de Rham context $\Lisse^{\heartsuit}(\bG_m)$ can be identified with coherent modules over $k[t, t^{-1}]$ with connection. Hence by restricting modules we obtain a map 
$$\Lisse^{\heartsuit}(\bG_m) \rightarrow \Lisse^{\heartsuit}(D^{\circ}).$$
In the etale context the inverse image functor defines 
$$\Lisse^{\heartsuit}(\bG_m) \rightarrow \Lisse^{\heartsuit}(D^{\circ}).$$
Since in both contexts it the map is  t-exact we get 
\begin{equation}\label{G_m to disc}
	\QLisse(\bG_m) \rightarrow \QLisse(D^{\circ}).
\end{equation}

\begin{pr}\label{section QLisse}
	There exists a symmetric monoidal $t$-exact section $$\QLisse(D^{\circ}) \rightarrow \QLisse(\bG_m)$$ of the map (\ref{G_m to disc}).
\end{pr}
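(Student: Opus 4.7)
The plan is to construct the section first on the abelian category of lisse sheaves and then derive and Ind-extend. The key input is Katz's extension theorem---\cite[Theorem 2.4.10]{Katz2} in the de Rham setting, and its analogue from \cite{Katz3} in the étale setting.

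First, I would define an exact functor
\[s^\heartsuit: \Lisse(D^\circ)^\heartsuit \to \Lisse(\bG_m)^\heartsuit\]
sending $V$ to its \emph{canonical extension} $\widetilde V$: the (up to canonical isomorphism) unique lisse sheaf on $\bG_m$ whose restriction to $D^\circ$ at $0$ is $V$ and which has a prescribed mild behavior at $\infty$ (regular singular in the de Rham context, tamely ramified in the étale context). Uniqueness in this characterization makes $s^\heartsuit$ functorial, exact, and, by construction, a section of the heart-level restriction map.

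Next, I would check symmetric monoidality. Since the class of lisse sheaves on $\bG_m$ satisfying the prescribed condition at $\infty$ is closed under tensor products and contains the unit, for any $V_1, V_2 \in \Lisse(D^\circ)^\heartsuit$ the tensor product $\widetilde V_1 \otimes \widetilde V_2$ again has the required behavior at $\infty$ and restricts to $V_1 \otimes V_2$; by uniqueness it is canonically isomorphic to $\widetilde{V_1 \otimes V_2}$. This upgrades $s^\heartsuit$ to a symmetric monoidal functor.

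Finally, I would promote $s^\heartsuit$ to the Ind-level. By Lemma \ref{properties of QLisse} we have $\QLisse(D^\circ) \simeq D(\Ind^\heartsuit(\Lisse(D^\circ)^\heartsuit))$, and similarly for $\bG_m$. An exact symmetric monoidal functor between the hearts extends along Ind-completion and then derives to a $t$-exact symmetric monoidal functor $s: \QLisse(D^\circ) \to \QLisse(\bG_m)$. That $s$ splits (\ref{G_m to disc}) follows, by $t$-exactness on both sides, from the corresponding statement on the hearts, which holds by construction. The main obstacle is verifying that Katz's canonical extension is symmetric monoidal; this reduces to the stability of the condition at $\infty$ under tensor products, together with the uniqueness built into Katz's construction.
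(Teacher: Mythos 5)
Your strategy is the same as the paper's: obtain the section on hearts from Katz's canonical extension and then pass to Ind-completions and derived categories, using Lemma~\ref{properties of QLisse} and Remark~\ref{QLisse vs IndLisse}. The Ind-extension and derivation step at the end is correct and matches the paper.

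However, the middle step---your justification of symmetric monoidality---does not go through. You characterize $\widetilde V$ as the \emph{unique} extension of $V$ to $\bG_m$ that is regular singular (resp.\ tame) at $\infty$, and then argue that since this condition is closed under $\otimes$, uniqueness forces $\widetilde V_1\otimes\widetilde V_2\simeq\widetilde{V_1\otimes V_2}$. But regular singularity at $\infty$ alone does not pin down the extension once $V$ has irregular (resp.\ wild) parts, so the uniqueness you are invoking is not available. For a concrete failure in the de Rham setting, let $V_0$ be the trivial rank-one module on $D^\circ$ and $V_1$ the rank-one module $e^{1/t}$ (connection form $-t^{-2}\,dt$), with the obvious extensions $\widetilde V_0=\cO_{\bG_m}$ and $\widetilde V_1=(\cO_{\bG_m},\,d-t^{-2}\,dt)$, both regular singular at $\infty$. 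One checks directly that $H^1_{\dR}(\bG_m,e^{1/t})$ is one-dimensional while $H^1_{\dR}(D^\circ,e^{1/t})=0$, so there is a nonzero class in $\Ext^1_{\bG_m}(\widetilde V_0,\widetilde V_1)$ restricting to zero over $D^\circ$. The corresponding rank-two module $W$ on $\bG_m$ is regular singular at $\infty$, indecomposable, and restricts to the split module $V_0\oplus V_1$; hence $W$ and $\widetilde V_0\oplus\widetilde V_1$ are non-isomorphic extensions of $V_0\oplus V_1$ both satisfying your ``prescribed condition at $\infty$.'' So that condition is not a characterization, and the ``closure under $\otimes$ plus uniqueness'' reduction collapses. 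Katz's canonical extension is singled out by a finer condition at $\infty$, and its $\otimes$-compatibility is precisely the content of \cite[Theorem 2.4.10]{Katz2} (resp.\ \cite[Theorem 1.5.6]{Katz3})---a theorem, not a formal consequence of uniqueness. This is why the paper's proof simply cites Katz for the $t$-exact symmetric monoidal section on hearts; replacing your middle paragraph with that citation repairs the argument without changing the overall strategy.
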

\begin{proof}
	\begin{itemize}
	\item In the de Rham context, by \cite[Theorem 2.4.10]{Katz2} there exists a $t$-exact symmetric monoidal section $\Lisse(D^{\circ})^{\heartsuit} \rightarrow \Lisse(\bG_m)^{\heartsuit}$. This induces the desired section by Remark \ref{QLisse vs IndLisse}.
	\item In the etale context, the same agument works by \cite[Theorem 1.5.6]{Katz3}.
\end{itemize}
\end{proof}

\begin{rem}\label{fiber functor}
	For any point $x \in \bG_m$ the composition $$\QLisse(D^{\circ}) \rightarrow \QLisse(\bG_m) \xrightarrow{\text{ev}_x} \Vect$$ defines a fiber functor, making  $\QLisse(D^{\circ})$ a Tannakian category.
\end{rem}

In order to apply the formalism of \cite{AGKRRV2} in our local context we need to show that $\QLisse(D^{\circ})$ is a well-behaved Tannakian category. Namely, recall the notion of {\it gentle Tannakian category} from \cite[1.7.2]{AGKRRV2}:

Let $\BH$ be a symmetric monoidal category, equipped with a t-structure and a 
conservative t-exact symmetric monoidal functor $\oblv_\BH$ to $\Vect$. Note that the assumptions on $\oblv_\BH$ imply that the t-structure on $\BH$
is compatible with filtered colimits and right-complete. We will additionally assume that $\BH$ is left-complete in its t-structure.  
\begin{df}
Let $\BH$ be as above. We will say that $\BH$ is a {\it gentle Tannakian category}  if the following conditions are satisfied:

\begin{itemize}
	
	\item The following classes of objects in $\BH^\heartsuit$ coincide: 
	
	\smallskip
	
	(i) Objects contained in $\BH^c\cap \BH^\heartsuit$;
	
	\smallskip
	
	(ii) Objects that are sent to compact objects in $\Vect$ by $\oblv_\BH$;
	
	\smallskip
	
	(iii) Dualizable objects.
	
	\medskip
	
	\item The object $\one_\BH$ has the following properties:
	
	\smallskip
	
	(i) The functor $\Hom_\BH(\one_\BH,-)$ has a finite cohomological amplitude;
	
	\smallskip
	
	(ii) For any $\Bh\in \BH^c\cap \BH^\heartsuit$, the cohomologies of $\Hom_\BH(\one_\BH,\Bh)\in \Vect$
	are finite-dimensional.
	
	\medskip
	
	\item The category $\BH^\heartsuit$ is generated under colimits by 
	$\BH^c\cap \BH^\heartsuit$.
	
\end{itemize}
\end{df}

 \begin{lm}\label{gentle}
  The Tannakian category $\QLisse(D^{\circ})$ is gentle.
 \end{lm}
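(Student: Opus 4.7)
The plan is to verify each axiom of the definition of gentle Tannakian category in turn. In the Betti context the identification $\QLisse(D^\circ) \cong \QLisse(\bG_m)$ reduces the statement directly to \cite[1.7.4]{AGKRRV2}, so we concentrate on the de Rham and etale contexts. The fiber functor to be used is $\oblv_\BH: \QLisse(D^\circ) \to \QLisse(\bG_m) \xrightarrow{\ev_x} \Vect$ of Remark \ref{fiber functor}. Left-completeness and t-exactness of $\oblv_\BH$ have already been established (Lemma \ref{properties of QLisse}(2) and Proposition \ref{section QLisse}), and conservativeness holds since the section of Proposition \ref{section QLisse} is a splitting of the restriction $\QLisse(\bG_m) \to \QLisse(D^\circ)$ while $\ev_x$ is conservative on $\QLisse(\bG_m)$.

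For the first axiom, the compact objects in $\QLisse(D^\circ) = \Ind(\Lisse(D^\circ))$ are precisely the objects of $\Lisse(D^\circ)$, so $\QLisse(D^\circ)^c \cap \QLisse(D^\circ)^\heartsuit = \Lisse(D^\circ)^\heartsuit$. Each such object is dualizable (a finite rank module with connection has a dual with the induced connection; a finite rank $\pi_1^{\et}$-representation has a dual representation), and $\oblv_\BH$ sends it to a finite-dimensional vector space. Dualizability implies fiber-compactness since $\oblv_\BH$ is symmetric monoidal. The less obvious direction---that $\oblv_\BH$-finiteness implies compactness---is the main subtle point. To prove it, I would write $V = \colim V_\alpha$ as a filtered colimit of compact subobjects in the heart (using $\QLisse(D^\circ)^\heartsuit = \Ind^\heartsuit(\Lisse(D^\circ)^\heartsuit)$ from Lemma \ref{properties of QLisse}(1)), observe that $\oblv_\BH(V) = \colim \oblv_\BH(V_\alpha)$ must stabilize once $\oblv_\BH(V)$ is finite-dimensional, and then use conservativeness together with t-exactness of $\oblv_\BH$ (which jointly reflect surjections and isomorphisms in the heart) to conclude the original colimit stabilizes.

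For the second axiom, finite cohomological amplitude of $\Hom_\BH(\one_\BH, -)$ is exactly Lemma \ref{finite coh dim}. Finite-dimensionality on compact heart objects is standard: in the de Rham setting, for $V \in \Lisse(D^\circ)^\heartsuit$ with connection $\nabla$, both $\ker(\nabla)$ and $\coker(\nabla)$ are finite-dimensional, computed directly on $k((t))$; in the etale setting, the proof of Lemma \ref{finite coh dim} already reduced the computation to a two-term complex of group cohomology of $\bZ_\ell$ with coefficients in the finite-dimensional space $V^P$. The third axiom is immediate from Lemma \ref{properties of QLisse}(1) and the standard fact that every object of $\Ind^\heartsuit(\cA)$ is a filtered colimit of objects of $\cA$, which are exactly the compact objects in the heart.
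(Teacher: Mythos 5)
Your proof follows essentially the same route as the paper: verify each axiom of a gentle Tannakian category directly, using Lemma \ref{properties of QLisse} for the identification of the heart and left-completeness, Lemma \ref{finite coh dim} for finite cohomological amplitude, and the fiber functor of Remark \ref{fiber functor}. You are in fact a bit more careful than the paper on the implication ``$\oblv_\BH$-finite $\Rightarrow$ compact,'' which the paper dismisses as ``by construction''; your filtered-colimit-of-subobjects argument makes this rigorous. The one spot where you under-argue relative to the paper is the finite-dimensionality of $\Hom_\BH(\one_\BH,\cE)$ in the de Rham case: asserting that $\ker(\nabla)$ and $\coker(\nabla)$ are finite-dimensional ``computed directly on $k((t))$'' elides a genuine point, since for an arbitrary (possibly irregular) connection on a finite free $k((t))$-module the finiteness of $\coker(\nabla)$ is a theorem (equivalent to the Euler characteristic/irregularity formula), not a direct computation; the paper invokes Levelt--Turrittin precisely to reduce to the rank-one case where this does become a hands-on calculation. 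You should cite Levelt--Turrittin (or the index formula) here as the paper does.
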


\begin{proof}
	\begin{itemize}
	\item We first treat te de Rham situation. We begin by checking the first condition. Note that $\QLisse(D^{\circ})^{\heartsuit} \cong \Ind(\Lisse(D^{\circ})^{\heartsuit})$.
	The class of objects $\QLisse(D^{\circ})^c \cap \QLisse(D^{\circ})^{\heartsuit}$ coincides with $\Lisse(D^{\circ})^{\heartsuit}$. 
	
	By construction the class of objects in $\QLisse(D^{\circ})^{\heartsuit} \cong \Ind(\Lisse(D^{\circ})^{\heartsuit})$ that are sent to finite-dimensional vector spaces under the fiber functor from Remark \ref{fiber functor} coincides with $\Lisse(D^{\circ})^{\heartsuit}$. 
	
	Every object of $\Lisse(D^{\circ})^{\heartsuit}$ in $\QLisse(D^{\circ})^{\heartsuit}$ is dualizable, and since every dualizable object is compact the class of dualizable objects coincides with $\Lisse(D^{\circ})^{\heartsuit}$. Hence we checked the first condition in the definition of a gentle Tannakian category.
	
	Let us check the second condition. Let $\cO$ be the unit. We need to check that the functor $\Hom_{\QLisse(D^{\circ})}(\cO, -)$ has finite cohomological amplitude. But this functor commutes with colimits, hence it suffices show that for $\cE \in \Lisse(D^{\circ})^{\heartsuit}$ the complex $\Hom_{\QLisse(D^{\circ})}(\cO, \cE)$ is bounded. The latter statement follows from Lemma \ref{finite coh dim}. We also need to show that cohomologies of $\Hom_{\QLisse(D^{\circ})}(\cO, \cE)$ are finite-dimensional. By Levelt-Turritin theorem (\cite[Theorem 2.1]{BBDE}) we can reduce to the case when $\cE$ is one-dimensional. Let us first treat the case when $\cE$ is non-trivial. Then $H^0$ vanishes tautologically, and $H^1$ is the dual space to the zeroth cohomology of the de Rham complex for $\cE^{\vee}$, hence also vanishes. The case of trivial $\cE$ it is a direct computation to show that both $H^0$ and $H^1$ are one-dimensional. 
	
	The third condition that $\QLisse(D^{\circ})^{\heartsuit}$ is generated under colimits by $\QLisse(D^{\circ})^c \cap \QLisse(D^{\circ})^{\heartsuit}$ is immediate from the fact that the former is $\Ind(\Lisse(D^{\circ})^{\heartsuit})$ and the latter is $\Lisse(D^{\circ})^{\heartsuit}$. 
	
	Lastly, recall that we proved in Lemma \ref{properties of QLisse} that $\QLisse(D^{\circ})$ is left-complete.
	\item In the etale situation, the same arguments work for all desired conditions apart from part (ii) of the second one. However, the fact that cohomologies of $\Hom_{\QLisse(D^{\circ})}(\cO, \cE)$ are finite-dimensional follows from our description given in Lemma \ref{finite coh dim}.
\end{itemize}
\end{proof}

\subsection{Definition and geometry of $\LocSys_G^{\restr}(D^{\circ})$}
 In this subsection we define and study the stack $\LocSys_G^{\restr}(D^{\circ})$. Again, in his subsection we work in all three frameworks: Betti, de Rham, and etale. 

\begin{df}
	We define the prestack of local systems with restricted variation on the punctured disk $\LocSys_G^{\restr}(D^{\circ})(S)$ by sending an affine scheme $S$ to the space of right t-exact symmetric monoidal functors $$\Rep(G) \rightarrow \QLisse(D^{\circ}) \otimes \QCoh(S).$$ 
\end{df}

\begin{rem}\label{AGKRRV}
	Recall that for a gentle Tannakian category $\BH$ in \cite[1.8.1]{AGKRRV2}  the stack $\mathbf{Maps}(\Rep(G), \BH)$  is defined such that for $S$  an affine scheme $S$-points of  $\mathbf{Maps}(\Rep(G), \BH)$  are right t-exact symmetric monoidal functors $$\Rep(G) \rightarrow \BH \otimes \QCoh(S).$$ 
	By \ref{gentle} the stack $\LocSys_G^{\restr}(D^{\circ})$ is a particular case of the stack $\mathbf{Maps}(\Rep(G), \BH)$ defined in \cite{AGKRRV2}.
\end{rem}

\begin{lm}\label{section LS^restr}
	There exists a section of the restriction map $\LocSys_G^{\restr}(\bG_m) \rightarrow \LocSys_G^{\restr}(D^{\circ})$.
\end{lm}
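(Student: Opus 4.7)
The plan is to construct the section pointwise on $S$-points using the symmetric monoidal $t$-exact section established in Proposition \ref{section QLisse}, and then verify that the construction is functorial in $S$.

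Concretely, fix an affine test scheme $S$. An $S$-point of $\LocSys_G^{\restr}(D^{\circ})$ is by definition a right $t$-exact symmetric monoidal functor
\[
F: \Rep(G) \to \QLisse(D^{\circ}) \otimes \QCoh(S),
\]
and the restriction map $\LocSys_G^{\restr}(\bG_m) \to \LocSys_G^{\restr}(D^{\circ})$ sends a functor $F': \Rep(G) \to \QLisse(\bG_m)\otimes\QCoh(S)$ to its composition with the map $\QLisse(\bG_m)\otimes\QCoh(S) \to \QLisse(D^{\circ})\otimes\QCoh(S)$ induced by (\ref{G_m to disc}). Let $s: \QLisse(D^{\circ}) \to \QLisse(\bG_m)$ be the symmetric monoidal $t$-exact section provided by Proposition \ref{section QLisse}. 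Define the section of the restriction map on $S$-points by
\[
F \longmapsto (s \otimes \id_{\QCoh(S)}) \circ F.
\]
Since $s\otimes\id_{\QCoh(S)}$ is symmetric monoidal and $t$-exact (in particular right $t$-exact), the composite is again a right $t$-exact symmetric monoidal functor $\Rep(G) \to \QLisse(\bG_m)\otimes\QCoh(S)$, hence a genuine $S$-point of $\LocSys_G^{\restr}(\bG_m)$. That this defines a section follows from the defining property of $s$: the composition
\[
\QLisse(D^{\circ}) \xrightarrow{\; s\;} \QLisse(\bG_m) \to \QLisse(D^{\circ})
\]
is canonically equivalent to the identity, so tensoring with $\QCoh(S)$ and post-composing with $F$ recovers $F$.

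The only thing left to check is functoriality in $S$, which is automatic: the assignment $S \mapsto \QLisse(?) \otimes \QCoh(S)$ is functorial in $S$ via pullback along $\QCoh(S') \to \QCoh(S)$ for $S \to S'$, and $s \otimes \id$ commutes with such pullbacks because $s$ itself is independent of $S$ and the Lurie tensor product is functorial. Thus the assignment assembles into a morphism of prestacks $\LocSys_G^{\restr}(D^{\circ}) \to \LocSys_G^{\restr}(\bG_m)$ splitting the restriction, as desired. No serious obstacle is expected; the content of the lemma is entirely encoded in Proposition \ref{section QLisse} (i.e.\ Katz's results), and the present statement is merely its Tannakian reformulation.
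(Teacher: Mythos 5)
Your argument is exactly the content of the paper's one-line proof ("Follows from Lemma \ref{section QLisse}"): post-compose $S$-points with $s \otimes \id_{\QCoh(S)}$, where $s$ is the symmetric monoidal $t$-exact section from Proposition \ref{section QLisse}, and note that this preserves right $t$-exactness and symmetric monoidality and splits the restriction by construction. Correct, and same approach, just spelled out.
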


\begin{proof}
		Follows from Lemma \ref{section QLisse}.
\end{proof}

	By Remark \ref{AGKRRV} the stack $\LocSys_G^{\restr}(D^{\circ})$ enjoys the properties of the stack $\mathbf{Maps}(\Rep(G), \BH)$ establised in \cite{AGKRRV2}. We list them in the rest of this subsection.

\begin{pr}\label{properties of LS}
The prestack $\LocSys_G^{\restr}(D^{\circ})$ is an etale stack, equal to a disjoint union of etale stacks each of which can be written as an etale-sheafified quotient by $G$ of an etale stack $Y$ with the following properties:
\begin{itemize}
	\item[a)] $Y$ is locally almost of finite type;
	\item[b)] $Y^{red}$ is an affine, classical, reduced scheme;
	\item[c)] $Y$ is an ind-scheme.
	\item[d)] $Y$ is a formal affine scheme in the sence of \cite[Remark 1.4.7]{AGKRRV2}.
\end{itemize}
\end{pr}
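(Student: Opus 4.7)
The plan is to reduce the entire statement to the general results of \cite{AGKRRV2} about the stack $\mathbf{Maps}(\Rep(G), \BH)$ attached to a gentle Tannakian category $\BH$. By Remark \ref{AGKRRV}, we have a tautological identification
\[
\LocSys_G^{\restr}(D^{\circ}) \cong \mathbf{Maps}(\Rep(G), \QLisse(D^{\circ})),
\]
and by Lemma \ref{gentle} the target $\QLisse(D^{\circ})$ is indeed a gentle Tannakian category. Consequently, every structural assertion of the proposition is a specialization of a corresponding assertion in \cite{AGKRRV2}. No new geometric input beyond Lemma \ref{gentle} should be required.

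First, I would invoke the general fact from \cite{AGKRRV2} that $\mathbf{Maps}(\Rep(G),\BH)$ is an \'etale stack, and that it decomposes as a disjoint union, indexed by isomorphism classes $[\sigma]$ of semisimple $G$-local systems, of the connected components $\cZ_\sigma$. In the present setting a semisimple $G$-local system on $D^{\circ}$ means a symmetric monoidal t-exact functor $\Rep(G)\to \QLisse(D^{\circ})^{\heartsuit}$ sending every object to a semisimple one; the set of isomorphism classes makes sense because such $\sigma$'s form a groupoid inside the underlying groupoid of $\LocSys_G^{\restr}(D^\circ)(k)$.

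Next, for each class $[\sigma]$ I would describe $\cZ_{\sigma}$ using the deformation-theoretic presentation of \cite{AGKRRV2}: the component $\cZ_\sigma$ is realized as an \'etale-sheafified quotient $Y_\sigma/G$, where $Y_\sigma$ parameterizes pairs $(\sigma', \iota)$ with $\sigma'$ an $S$-point of the component and $\iota$ a trivialization of its semisimplification against $\sigma$ (so that the residual action comes from the automorphism group of $\sigma$, which maps to $G$). With this presentation, properties (a)--(d) are literally the assertions established in \cite{AGKRRV2} for the gentle Tannakian setting: $Y_\sigma$ is locally almost of finite type by the deformation theory of the tangent complex, $Y_\sigma^{\mathrm{red}}$ is an affine classical reduced scheme because passing to reduced structure kills higher cohomological/derived phenomena and leaves the \textit{rigidification} of the moduli of semisimple representations, $Y_\sigma$ is an ind-scheme by the pro-unipotent nature of infinitesimal deformations with fixed semisimplification, and finally the ind-scheme structure together with the reduced affine classical underlying scheme is precisely the data of a formal affine scheme in the sense of \cite[Remark 1.4.7]{AGKRRV2}.

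The main obstacle, and really the only nontrivial step, was already handled: verifying in Lemma \ref{gentle} that $\QLisse(D^\circ)$ satisfies all the axioms of a gentle Tannakian category (in particular finite cohomological dimension, finite-dimensionality of $\Ext$'s between compact hearts, and left-completeness). Once that input is secured, the proof is a direct citation of \cite{AGKRRV2}; there are no curve-specific arguments used in that reference beyond the gentle axioms, so the transition from the global setting of $\QLisse(X)$ to the local setting of $\QLisse(D^\circ)$ is automatic.
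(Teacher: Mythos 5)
Your proposal takes the same approach as the paper: reduce to the general result \cite[Theorem 1.8.3]{AGKRRV2} on $\mathbf{Maps}(\Rep(G),\BH)$ for a gentle Tannakian category $\BH$, with the only nontrivial input being Lemma \ref{gentle} that $\QLisse(D^{\circ})$ is gentle. The paper's proof is precisely this two-step citation; your additional unpacking of the decomposition and of properties (a)--(d) is consistent with what is cited but not strictly necessary.
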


\begin{proof}
	Follows from Lemma \ref{gentle} and \cite[Theorem 1.8.3]{AGKRRV2}.
\end{proof}

\begin{cor}\label{locally almost of finite type}
	$\LocSys_G^{\restr}(D^{\circ})$  is a prestack locally almost of finite type.
\end{cor}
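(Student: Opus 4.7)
The plan is to deduce this directly from Proposition \ref{properties of LS}, which already does the heavy lifting. That proposition presents $\LocSys_G^{\restr}(D^{\circ})$ as a disjoint union of etale-sheafified quotients of the form $[Y/G]^{\et}$, where each $Y$ satisfies property (a), i.e. is locally almost of finite type. So essentially nothing new needs to be proved; one only needs to check that the property of being locally almost of finite type is preserved under the two operations in play.

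First, I would observe that $G$ is a connected reductive group over $k$, hence an affine group scheme of finite type. The action groupoid $Y \times G \rightrightarrows Y$ is then locally almost of finite type whenever $Y$ is, and taking the etale sheafification of a groupoid quotient preserves the property of being locally almost of finite type (the relevant point being that laft prestacks are closed under finite limits and etale sheafification; see \cite{GR1}). Therefore each quotient $[Y/G]^{\et}$ appearing in the decomposition is laft.

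Second, I would note that the property of being locally almost of finite type is also stable under arbitrary disjoint unions, since it is a condition checked by restriction to connective affine schemes of finite type (a prestack $\cY$ is laft iff it is left Kan extended from $\AffSch_{\on{ft}}$ and its values there are truncated-compatible). Assembling these two observations against the decomposition provided by Proposition \ref{properties of LS} yields the corollary.

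There is no substantive obstacle: the entire content is packaged in Proposition \ref{properties of LS}, and the corollary is a formal consequence of the stability of the laft property under quotients by finite-type affine group schemes and under disjoint unions.
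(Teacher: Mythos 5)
Your plan correctly identifies that everything hinges on Proposition \ref{properties of LS}, and the reduction you set up (handle the quotient, then the sheafification, then the disjoint union) is the right shape. But the middle step — ``taking the etale sheafification of a groupoid quotient preserves the property of being locally almost of finite type'' — is not a clean general fact, and that is precisely the non-trivial content here. In the \cite{GR1} formalism, étale sheafification of a prestack $\cX$ is only known to produce something laft under an extra hypothesis: one needs each $n$-coconnective truncation ${}^{\leq n}\cX$ to be \emph{truncated} (i.e.\ $m$-truncated for some finite $m$), which is what lets one commute sheafification past truncation and apply the ``locally of finite type'' criterion truncation by truncation. Your appeal to ``laft prestacks are closed under finite limits and étale sheafification'' papers over this. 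The paper makes this work by using property (c) of Proposition \ref{properties of LS} — that each $Y$ is an \emph{ind-scheme} — so that by \cite[Chapter 2, Lemma 1.2.3]{GR2} the truncations ${}^{\leq n}Y$ are truncated, hence ${}^{\leq n}(Y/G)$ is truncated, and only then does the chain \cite[Ch.\ 2, Cor.\ 2.5.7, Cor.\ 2.7.5(a), Cor.\ 2.7.10]{GR1} give that the sheafification has $n$-truncations locally of finite type.

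A second gap: ``locally almost of finite type'' also requires \emph{convergence}, which does not come for free from the sheafified-quotient description and is not addressed by your reduction at all. The paper closes this by citing \cite[2.1.4]{AGKRRV2} for convergence of $\LocSys_G^{\restr}(D^{\circ})$. Without both the truncatedness input (via the ind-scheme property) and the convergence input, the deduction from Proposition \ref{properties of LS} does not go through, so the corollary is not as formal as your proposal suggests.
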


\begin{proof}
	
	By [Chapter 2 Corollary 2.5.7]\cite{GR1}, \cite[Chapter 2 Corollary 2.7.5(a)]{GR1} and \cite[Chapter 2 Corollary 2.7.10]{GR1} for every laft prestack $\cX$ such that $^{\leq n}{\cX}$ is truncated $^{\leq n}{\cX^{\et}}$ is locally of finite type, where $\cX^{\et}$ stands for etale sheafification. 
	
	Now since $Y$ is locally almost of finite type the prestack quotient of $Y$ by $G$ is locally almost of finite type. Moreover, since $Y$ is and ind-scheme by \cite[Chapter 2 Lemma 1.2.3]{GR2} $^{\leq n}Y$ is truncated, and hence the prestack quotient of $^{\leq n}Y$ by $^{\leq n}G$ is truncated.
	
	It is left to notice that by \cite[2.1.4] {AGKRRV2} the stack $\LocSys_G^{\restr}(D^{\circ})$  is convergent.
\end{proof}

\begin{cor}[{\cite[Proposition 7.8.2, Corollary 7.8.8]{AGKRRV2}}]\label{1-affinnes of LS^r}
	The stack $\LocSys_G^{\restr}(D^{\circ})$  is 1-affine. 
\end{cor}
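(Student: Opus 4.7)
The plan is to reduce to previously established 1-affineness results from \cite{AGKRRV2} by exploiting the structural description of $\LocSys_G^{\restr}(D^{\circ})$ given in Proposition \ref{properties of LS}. First I would recall that 1-affineness is preserved under disjoint unions of prestacks, so the statement reduces to verifying 1-affineness for each connected component $\cZ_\sigma$. By Proposition \ref{properties of LS}, each such component is the etale-sheafified quotient $Y/G$, where $Y$ is a formal affine scheme (in the sense of \cite[Remark 1.4.7]{AGKRRV2}) which is locally almost of finite type.

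Next I would invoke the two key results cited in the statement. The first is \cite[Proposition 7.8.2]{AGKRRV2}, which establishes 1-affineness of formal affine schemes of the relevant type; intuitively this works because $\QCoh$ of such a formal affine scheme can be described as modules over a topological algebra, and sheaves of categories over it admit an explicit description in terms of the underlying reduced affine scheme and its formal neighborhood structure. The second is \cite[Corollary 7.8.8]{AGKRRV2}, which propagates 1-affineness from $Y$ to its etale-sheafified quotient $Y/G$ when $G$ is a reductive group, using that $\Rep(G)$ is rigid and that $G$-descent for sheaves of categories is effective. Combining these two inputs yields 1-affineness of each $\cZ_\sigma$ and hence of the entire $\LocSys_G^{\restr}(D^{\circ})$.

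The main obstacle is not really in the argument outlined above, but in verifying that the framework of \cite{AGKRRV2} genuinely applies in our local setting. This compatibility, however, has already been secured earlier in the text: Lemma \ref{gentle} shows that $\QLisse(D^{\circ})$ is a gentle Tannakian category, and Remark \ref{AGKRRV} identifies $\LocSys_G^{\restr}(D^{\circ})$ with the general construction $\mathbf{Maps}(\Rep(G), \BH)$ of \cite{AGKRRV2} for $\BH = \QLisse(D^{\circ})$. Consequently the cited theorems apply verbatim, and the proof reduces to a direct citation, with no further technical content required beyond noting that the conditions on $G$ (connected reductive) and on the target Tannakian category (gentle) have both been verified.
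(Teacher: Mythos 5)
Your proposal is correct and follows the same route as the paper: the paper gives no separate proof for this corollary, treating it as a direct consequence of \cite[Proposition 7.8.2, Corollary 7.8.8]{AGKRRV2} once Lemma \ref{gentle} and Remark \ref{AGKRRV} have been established, which is exactly the reduction you outline. Your unpacking of the intermediate steps (disjoint unions, formal affine scheme, etale-sheafified quotient by $G$) is a reasonable elaboration of the citation rather than a genuinely different argument.
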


\begin{lm}[{\cite[Corollary 7.9.2, Theorem 7.9.6]{AGKRRV2}}]\label{semi-regidity compact generation}
	The category $\QCoh(\LocSys_G^{\restr}(D^{\circ}))$ is a compactly generated semi-rigid category.
\end{lm}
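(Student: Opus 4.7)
The plan is to reduce the statement to the two cited general results of \cite{AGKRRV2}. The key observation, already made in Remark \ref{AGKRRV}, is the identification
$$\LocSys_G^{\restr}(D^{\circ}) \cong \mathbf{Maps}(\Rep(G), \QLisse(D^{\circ})),$$
which together with Lemma \ref{gentle} places our stack within the class of ``$\mathbf{Maps}$-from-$\Rep(G)$'' stacks associated to a gentle Tannakian category. So my strategy is: do not try to re-derive anything intrinsically about $\LocSys_G^{\restr}(D^{\circ})$, but rather verify that all hypotheses of the AGKRRV framework are in force and then invoke its conclusions.

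For compact generation I would apply \cite[Corollary 7.9.2]{AGKRRV2}. The essential input is the structural result of Proposition \ref{properties of LS}: the stack breaks up as an etale-sheafified disjoint union of components of the form $Y/G$ with $Y$ a formal affine scheme. Since a disjoint union of compactly generated categories is compactly generated, it suffices to know that each $\QCoh(Y/G)$ is compactly generated, which is exactly what the AGKRRV analysis of such quotients provides (using that $G$ is reductive and $Y$ is a formal affine scheme laft).

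For semi-rigidity I would cite \cite[Theorem 7.9.6]{AGKRRV2} directly. The proof there uses 1-affineness of each connected component --- supplied in our setting by Corollary \ref{1-affinnes of LS^r} --- together with gentleness of $\BH = \QLisse(D^{\circ})$ to upgrade the monoidal structure on $\QCoh$ to a semi-rigid one; concretely, one needs the unit to be compact and the multiplication functor to admit a continuous right adjoint after base change, which are checked component-by-component on $Y/G$.

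At this stage there is no genuinely hard step: all of the conceptual work --- namely, checking that $\QLisse(D^{\circ})$ is a gentle Tannakian category --- has been carried out in Lemma \ref{gentle} (using Proposition \ref{section QLisse} and the Levelt--Turrittin theorem in the de Rham case, and the structure of $\pi_1^{\et}(\Spec k((t)))$ in the etale case), and the geometric input has been packaged in Proposition \ref{properties of LS} and Corollary \ref{1-affinnes of LS^r}. The ``main obstacle,'' if one exists, is therefore really the gentleness verification that has already occurred upstream; the present lemma itself is a packaging step.
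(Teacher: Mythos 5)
Your proposal matches the paper's approach exactly: the lemma is stated as a direct citation of \cite[Corollary 7.9.2, Theorem 7.9.6]{AGKRRV2}, which applies because Remark~\ref{AGKRRV} and Lemma~\ref{gentle} have already identified $\LocSys_G^{\restr}(D^{\circ})$ as $\mathbf{Maps}(\Rep(G),\BH)$ for the gentle Tannakian category $\BH=\QLisse(D^{\circ})$. You correctly observe that the substantive work is the upstream gentleness verification and that the present lemma is pure packaging.
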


\subsection{$\LocSys_G^{\restr}(D^{\circ})$ vs $\LocSys_G(D^{\circ})$}
The first goal of this subsection is to establish the following statement:
\begin{thm}\label{map between LS}
	There exists a natural morphism $\LocSys_G^{\restr}(D^{\circ}) \rightarrow  \LocSys_G(D^{\circ})^{\et}$,
	where $(-)^{\et}$ stands for sheafification in etale topology.
\end{thm}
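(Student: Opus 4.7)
\medskip

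The plan is to use Tannakian reconstruction to turn a symmetric monoidal functor out of $\Rep(G)$ into a $G$-bundle with connection on $D^{\circ}_S$. An $S$-point of $\LocSys_G^{\restr}(D^{\circ})$ is, by definition, a right t-exact symmetric monoidal functor
$$F: \Rep(G) \to \QLisse(D^{\circ}) \otimes \QCoh(S),$$
and an $S$-point of $\LocSys_G(D^{\circ})$ is (via the presentation as a quotient of connection forms by the group of automorphisms) a $G$-bundle on $D^{\circ}_S$ together with a connection along $D^{\circ}$, equipped with a trivialization of the underlying $G$-torsor. Etale-locally on $S$, such a trivialization exists automatically, which is why the target is replaced by its etale sheafification.

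First I would construct a natural symmetric monoidal right t-exact functor
$$\Phi_S : \QLisse(D^{\circ}) \otimes \QCoh(S) \longrightarrow \QCoh^{\nabla}(D^{\circ}_S),$$
where the target denotes the symmetric monoidal DG category of quasi-coherent $\cO_{D^{\circ}} \boxtimes \cO_S$-modules equipped with a connection along the $D^{\circ}$-direction. On compact generators this sends $(V,M)$ with $V \in \Lisse(D^{\circ})^{\heartsuit}$ a free $k((t))$-module of finite rank with connection and $M \in \QCoh(S)^c$ to the external tensor product $V \boxtimes M$ with its induced connection; one extends by colimits and checks compatibility with the symmetric monoidal structures using that $\Lisse(D^{\circ})^{\heartsuit}$ is rigid symmetric monoidal. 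Composing with $F$ produces a right t-exact symmetric monoidal functor
$$F^{\nabla} := \Phi_S \circ F \colon \Rep(G) \longrightarrow \QCoh^{\nabla}(D^{\circ}_S).$$

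Second, I would apply Tannakian reconstruction. Forgetting the connection, $F^{\nabla}$ becomes a right t-exact symmetric monoidal functor $\Rep(G) \to \QCoh(D^{\circ}_S)$. Since each compact $V \in \Rep(G)^c$ is dualizable, and $F$ and $\Phi_S$ both preserve dualizable objects (by Lemma \ref{gentle}, dualizable objects in $\QLisse(D^{\circ})$ are precisely the finite-rank local systems), $F^{\nabla}$ sends compact objects to dualizable, hence flat, $\cO_{D^{\circ}_S}$-modules. Standard Tannakian reconstruction for affine group schemes then produces a $G$-bundle $\cP_F$ on $D^{\circ}_S$, equipped via the connection data on $F^{\nabla}$ with a connection along the $D^{\circ}$-direction. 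Etale-locally on $S$ the bundle $\cP_F$ is trivializable; choosing such a trivialization expresses $(\cP_F, \nabla)$ as a point of the quotient presentation defining $\LocSys_G(D^{\circ})$, giving an $S$-point of $\LocSys_G(D^{\circ})^{\et}$. The construction is manifestly functorial in $S$, hence defines the desired morphism of prestacks.

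The main obstacle is the construction of $\Phi_S$ and verifying that its composition with $F$ lands in objects to which Tannakian reconstruction applies in the form needed. The delicate point is that $\QCoh(S)$-linearity on the one hand and the flat connection on the other must combine coherently, so that ``free $k((t))$-module of finite rank tensored over $k$ with $M$'' is really identified with a flat $\cO_{D^{\circ}_S}$-module. Once this is set up, preservation of dualizable objects is formal from Lemma \ref{gentle}, and the passage to the etale sheafification on the target is exactly what absorbs the choice of local trivialization in the quotient presentation of $\LocSys_G(D^{\circ})$.
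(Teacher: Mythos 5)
Your approach has a genuine gap at the crucial point. After producing, via Tannakian reconstruction, a $G$-bundle with connection $(\cP_F,\nabla)$ on $D^{\circ}_S=\Spec(A((t)))$, you assert without argument that ``Etale-locally on $S$ the bundle $\cP_F$ is trivializable.'' In the paper's setup, an $S$-point of $\LocSys_G(D^{\circ})$ is a connection form $\omega\in\Lie(G)\otimes\Omega^1(\Spec A((t)))$ modulo gauge, i.e.\ a $G$-bundle with connection on $D^{\circ}_S$ \emph{together with} a trivialization of the underlying torsor; so passing to $\LocSys_G(D^{\circ})^{\et}$ exactly requires etale-local (on $S$) trivializability. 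This is precisely the hard content of the theorem, not a formal consequence of the construction: $G$-bundles on $\Spec A((t))$ for a general $k$-algebra $A$ need not be trivial or etale-locally trivial, and one cannot invoke Tannakian reconstruction to bypass this. Indeed, the paper explicitly flags this issue in the remark following the theorem: $\LocSys_G(\bG_m)$ does \emph{not} map to $\LocSys_G(D^{\circ})^{\et}$, precisely because de Rham local systems on $\bG_m\times S$ do not in general restrict to etale-locally trivializable bundles on $D^{\circ}_S$.

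The paper handles this by a very different route, which is not a technical variant but the essential idea. One uses Katz's theorem (Proposition \ref{section QLisse}) to produce a section $\QLisse(D^{\circ})\to\QLisse(\bG_m)$, giving $\LocSys_G^{\restr}(D^{\circ})\to\LocSys_G^{\restr}(\bG_m)$, and then shows (Theorem \ref{map between LS 2}) that the resulting map to $\Bun_G(\bG_m)$ factors through $\Bun_G(\bG_m)^+$, the locus of bundles extending to $\bA^1$ etale-locally on $S$. This factorization is proved using the structure of the differential Galois group $H\cong\bG_a\times H^{\redd}$ (Lemma \ref{diffeential galois group}), the fact that connected components of $\mathbf{Maps}(\Rep(G),\Rep(H^{\redd}))$ have the form $\pt/\Stab_G(\varphi)$, and triviality of $G$-bundles on $\bG_m$ for connected $G$. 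Only then does one use the pre-existing morphism $\LocSys_G(\bG_m)^+\to\LocSys_G(D^{\circ})^{\et}$ from \cite{GLC}. Your proposal would need to supply a replacement for this entire chain, in particular an argument that a $G$-bundle on $D^{\circ}_S$ arising from a restricted-variation family is etale-locally trivial on $S$; without lifting to $\bG_m$, it is unclear how to produce such an argument.
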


\begin{rem}
	Note that there are morphisms
	$$\LocSys_G^{\restr}(D^{\circ}) \rightarrow  \LocSys_G^{\restr}(\bG_m) \rightarrow  \LocSys_G(\bG_m),$$
	where the last prestack classifies all de Rham $G$-local systems on the open curve $\bG_m$. The difficulty to be overcome is that $\LocSys_G(\bG_m)$ does {\it not} map to $\LocSys_G(D^{\circ})^{\et}$. 
\end{rem}

\subsubsection{} Let $\Bun_G(\bG_m)^+ \subset \Bun_G(\bG_m)$ denote the substack whose $S$-points are principal $G$-bundles on $S \times \bG_m$ which admit an extension to $S \times \bA^1$ etale locally on $S$. By \cite[B.7.15]{GLC} we do have a canonical morphism
$$\LocSys_G(\bG_m)^+ \rightarrow \LocSys_G(D^{\circ})^{\et},$$
where $$\LocSys_G(\bG_m)^+  = \Bun_G(\bG_m)^+  \times_{\Bun_G(\bG_m)} \LocSys_G(\bG_m).$$

Then Theorem \ref{map between LS} is equivalent to the following statement:

\begin{thm}\label{map between LS 2}
	The composite
	\begin{equation}\label{map between LS 2: equation}
		\LocSys_G^{\restr}(D^{\circ}) \rightarrow\LocSys_G^{\restr}(\bG_m) \rightarrow \LocSys_G(\bG_m) \rightarrow \Bun_G(\bG_m)
	\end{equation}
	factors through $\Bun_G(\bG_m)^+$.
\end{thm}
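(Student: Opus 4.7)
The plan is to use Katz's section to reduce to a statement about $\LocSys_G^{\restr}(\bG_m)$, then to exploit the formal-component structure of this stack to exhibit, etale-locally, an extension of the underlying $G$-bundle to $\bA^1$.

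By Lemma \ref{section LS^restr}, the composite (\ref{map between LS 2: equation}) factors as
\[
\LocSys_G^{\restr}(D^{\circ}) \xrightarrow{\mathrm{Katz}} \LocSys_G^{\restr}(\bG_m)\to \LocSys_G(\bG_m)\to \Bun_G(\bG_m),
\]
so it suffices to show that $\LocSys_G^{\restr}(\bG_m)\to \Bun_G(\bG_m)$ lands in $\Bun_G(\bG_m)^+$. Concretely, for an affine test scheme $S$ and a symmetric monoidal right $t$-exact functor $\phi\colon \Rep(G)\to \QLisse(\bG_m)\otimes \QCoh(S)$, post-composition with the forgetful functor $\QLisse(\bG_m)\to \QCoh(\bG_m)$ produces a $G$-bundle $\cP_\phi$ on $\bG_m\times S$, and one must find an etale cover $S'\to S$ together with an extension of $\cP_\phi|_{\bG_m\times S'}$ to $\bA^1\times S'$.

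I would then apply Proposition \ref{properties of LS} (with $\bG_m$ in place of $D^{\circ}$) to decompose $\LocSys_G^{\restr}(\bG_m)$ as a disjoint union of etale-sheafified quotients $[Y_\alpha/G]$, with each $Y_\alpha$ a formal affine scheme whose reduction $Y_\alpha^{\on{red}}$ is a classical affine reduced scheme. Since the extendability of $G$-bundles is etale-local on the base, it is enough to produce such an extension after pulling back to each $Y_\alpha$. On $Y_\alpha^{\on{red}}$ the universal Tannakian functor sends each $V\in \Rep(G)$ to a locally free module over $k[t,t^{-1}]\otimes \Gamma(Y_\alpha^{\on{red}},\cO)$ of rank $\dim V$; since $k[t,t^{-1}]$ is a principal ideal domain, such a module becomes trivializable after an etale (in fact Zariski) cover of $Y_\alpha^{\on{red}}$ and extends trivially to a free $k[t]\otimes \Gamma$-module, compatibly with tensor products, thereby assembling into a $G$-bundle on $\bA^1\times Y_\alpha^{\on{red}}$. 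The extension is then lifted across the ind-nilpotent thickening $Y_\alpha^{\on{red}}\hookrightarrow Y_\alpha$ inductively on square-zero extensions, with obstructions lying in $H^1(\bA^1,\ad(\cP))=0$.

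The main obstacle I foresee is producing the extension in a \emph{simultaneous}, symmetric-monoidal fashion as $V$ ranges over $\Rep(G)$: individually one only extends each $\phi(V)$, but a genuine $G$-bundle on $\bA^1$ requires all these extensions to glue compatibly with the tensor structure. This can likely be arranged by fixing a faithful embedding $G\hookrightarrow \GL_n$ and reducing to the $\GL_n$-case, or by invoking the Tannakian description of $\Bun_G(\bA^1)$ directly and constructing the extension as a symmetric monoidal functor $\Rep(G)\to \QCoh(\bA^1\times S')$ obtained from $\phi$ through the compatibility (\ref{G_m to disc}) between $\QLisse(\bG_m)$ and its counterpart on the disc.
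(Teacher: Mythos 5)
Your approach is genuinely different from the paper's. The paper first invokes the Levelt--Turritin decomposition $H \cong \bG_a \times H^{\redd}$ (Lemma \ref{diffeential galois group}) to factor the map through $\mathbf{Maps}(\Rep(G), \Rep(H^{\redd}))$: the $\Rep(\bG_a)$-factor becomes trivial on the underlying bundle because $\bG_a$-torsors on the affine curve $\bG_m$ are trivial, and then \cite[Proposition 3.5.4]{AGKRRV2} shows the remaining target has connected components of the form $\pt/\Stab_G(\varphi)$ --- no formal thickening at all --- reducing the claim to a statement about $G$-bundles on $\bG_m$ over a single $k$-point. You instead work directly with the formal affine charts $Y_\alpha$ of $\LocSys_G^{\restr}(\bG_m)$, trying to trivialize over $Y_\alpha^{\redu}$ by hand and then deform along the ind-nilpotent thickening.

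The gap is in the trivialization step. You assert that a rank-$n$ locally free module over $k[t,t^{-1}]\otimes\Gamma(Y_\alpha^{\redu},\cO)$ becomes free after an etale (even Zariski) cover of $Y_\alpha^{\redu}$ ``since $k[t,t^{-1}]$ is a PID.'' The PID argument controls modules over $k[t,t^{-1}]$ alone; it says nothing about $k[t,t^{-1}]\otimes\Gamma$. Whether a vector bundle on $\bG_m \times Y_\alpha^{\redu}$ trivializes (or even extends to $\bA^1\times Y_\alpha^{\redu}$) etale-locally on the second factor is precisely the nontrivial content of the theorem, and it fails for arbitrary families of bundles on $\bG_m$. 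What makes it true here is the \emph{restricted variation} structure: over $Y_\alpha^{\redu}$, which is a single $G$-orbit through a fixed semi-simple local system $\sigma$, the universal family is constant up to $G$-twist, so its underlying bundle is induced from the (trivial) bundle of $\sigma$ itself. This is exactly what the paper's factorization through $\Rep(H^{\redd})$ captures; your argument never uses it, so the hole is real. Separately, the $\GL_n$-reduction you suggest for the tensor-compatibility issue does not close the gap you correctly identified: an extension of the associated $\GL_n$-bundle to $\bA^1$ need not carry an extension of the $G$-reduction, so that route needs a genuinely different idea (the Tannakian route you mention is closer, but again requires knowing that the extension can be chosen symmetric monoidally, which is what must be argued). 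The deformation-theoretic lifting at the end is essentially fine modulo bookkeeping (the obstruction sits in an $H^2$, not $H^1$, but both vanish over the affine $\bA^1$), but it cannot repair the base-case issue over $Y_\alpha^{\redu}$.
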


\subsubsection{} Let $H$ be the group associated to $\QLisse(D^{\circ})$ and its fiber functor by the Tannakian
formalism. Two homomorphisms out of $H$ of particular interest to us.
\begin{enumerate}
	\item The map $$H \rightarrow \bG_a$$ corresponding to the full subcategory
	$$\Rep(\bG_a) \cong \QLisse(D^{\circ})_{\triv} \hookrightarrow \QLisse(D^{\circ})$$ generated by the unit $\cO$.
	\item By Chevellay’s theorem, the full abelian subcategory $$\QLisse(D^{\circ})^{\heartsuit, ss} \hookrightarrow\QLisse(D^{\circ})^{\heartsuit}$$ of semi-simple objects is a tensor subcategory. Under the Tannakian formalism, it corresponds to the pro-reductive quotient $$H \rightarrow H^{\redd}.$$
	
\end{enumerate}

\begin{lm}\label{diffeential galois group}
	The map $$H \rightarrow \bG_a \times H^{\redd}$$ is an isomorphism.
\end{lm}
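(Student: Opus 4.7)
The plan is to identify the pro-unipotent radical $N := \ker(H \to H^{\redd})$ with $\mathbb{G}_a$, to construct a splitting of $H \to H^{\redd}$, and to verify that the resulting semidirect product is a direct product.

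First, observe that the full subcategory $\QLisse(D^{\circ})_{\on{unip}} \subseteq \QLisse(D^{\circ})$ of successive extensions of the unit $\cO$ is a Tannakian subcategory whose Tannakian group is precisely $N$. The $\Ext$ computation carried out inside the proof of Lemma \ref{gentle} (together with Lemma \ref{finite coh dim}) shows that $\Ext^1(\cO,\cO) \cong k$ is one-dimensional and that $\Ext^{\geq 2}(\cO,\cO) = 0$. By the standard Tannakian dictionary for pro-unipotent groups this identifies $\QLisse(D^{\circ})_{\on{unip}} \simeq \Rep(\mathbb{G}_a)$, hence $N \cong \mathbb{G}_a$. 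Moreover the homomorphism $H \to \mathbb{G}_a$ from the statement, which is Tannaka-dual to the inclusion of the subcategory $\QLisse(D^{\circ})_{\triv}$ generated by $\cO$, factors through $N$ and induces the identity on $\mathbb{G}_a$.

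Next, I would construct a tensor functor $s : \QLisse(D^{\circ}) \to \QLisse(D^{\circ})^{\heartsuit, ss}$ retracting the inclusion of semisimples. In the Betti setting, $\QLisse(D^{\circ})^{\heartsuit}$ is finite-dimensional vector spaces with an automorphism, and the Jordan decomposition $M = M_s M_u$, which is compatible with tensor products since $(M_1 \otimes M_2)_s = M_{1,s} \otimes M_{2,s}$, gives $s(V,M) := (V,M_s)$. In the de Rham setting, the Turrittin--Levelt theorem (already invoked in the proof of Lemma \ref{gentle}) provides a canonical decomposition of any meromorphic connection into exponential twists of regular singular pieces; combined with the Jordan decomposition of local monodromy on each regular singular summand, this defines the semisimplification. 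In the etale setting, the wild inertia $P$ is the kernel of a tame quotient, and Jordan decomposition of the (topologically generated) tame monodromy plays the analogous role. By Tannakian duality this yields a section $H^{\redd} \to H$ of the projection, so $H \simeq N \rtimes H^{\redd}$.

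Finally, the adjoint action of $H^{\redd}$ on $N = \mathbb{G}_a$ is determined by the induced linear action on $\Lie(N)^{\vee} \cong \Ext^1(\cO,\cO)$; but $\cO$ is the monoidal unit, so $H^{\redd}$ acts trivially on it and hence on $\Ext^1(\cO,\cO) \cong k$. It follows that the semidirect product is direct, giving the desired identification $H \simeq \mathbb{G}_a \times H^{\redd}$. The main obstacle I expect lies in the tensor-functoriality of the semisimplification in the de Rham and etale contexts: unlike the Betti case, one must genuinely invoke Turrittin--Levelt (respectively the structure of $\pi_1^{\on{et}}(\Spec k((t)))$ recalled in the proof of Lemma \ref{finite coh dim}) to know that such a decomposition exists and is compatible with tensor products; once that input is granted, the remaining steps are formal consequences of Tannakian duality and the $\Ext$ computations already in the paper.
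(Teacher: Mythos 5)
Your Step 1 contains a genuine gap that propagates into the rest of the argument. By Tannakian duality, a full tensor subcategory of $\Rep(H)$ closed under subquotients corresponds to a \emph{quotient} group of $H$, not a subgroup. So the Ext computation correctly shows that $\QLisse(D^{\circ})_{\on{unip}}$ is Tannaka-dual to a quotient $H \twoheadrightarrow Q$ with $Q \cong \bG_a$, but it does not identify $Q$ with the kernel $N = \ker(H \to H^{\redd})$. A priori $N$ is only a pro-unipotent group with $H^{1}(N,k)^{H^{\redd}}$ one-dimensional and $H^{2}(N,k)^{H^{\redd}} = 0$ (by Lyndon--Hochschild--Serre, using that $H^{\redd}$ is reductive), which does not pin $N$ down: $H^{1}(N,k)$ could be larger with only a one-dimensional space of invariants. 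This also undermines Step 3, which quietly identifies $\Lie(N)^{\vee}$ with $\Ext^1(\cO,\cO)$; the correct statement is $\Ext^1(\cO,\cO) \cong \left(\Lie(N^{\on{ab}})^{\vee}\right)^{H^{\redd}}$, so the triviality of the $H^{\redd}$-action --- the very thing you want to deduce --- is already needed to read off $\Lie(N)^{\vee}$. The order of Steps 1 and 3 is therefore circular as written.

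Your Step 2 (the semisimplification tensor functor via Turrittin--Levelt plus Jordan decomposition of monodromy) is a plausible ingredient, and you correctly flag its nontriviality, but even granting it, the above issues mean the proof does not close. The paper's proof avoids the group-theoretic decomposition entirely: it works directly with the restriction functor $\Rep(H^{\redd}\times\bG_a)^{\heartsuit} \to \Rep(H)^{\heartsuit}$ and shows it is an equivalence of abelian tensor categories, with essential surjectivity supplied by Levelt--Turrittin and full faithfulness by the observation that, since $H^{\redd}$ is reductive, indecomposables of $\Rep(H^{\redd}\times\bG_a)^{\heartsuit}$ are exactly $V \boxtimes J_n$ with $V$ irreducible and $J_n$ a Jordan block. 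Tannakian reconstruction then gives $H \cong \bG_a\times H^{\redd}$ in one stroke. Both proofs lean on the same external input (the Levelt--Turrittin classification), but the paper uses it once at the level of categories, sidestepping the need to first construct a splitting of the group extension and then argue about the conjugation action.
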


\begin{proof}
	We need to show that the functor
	$$\Rep(H^{\redd}\times \bG_a)^{\heartsuit} \rightarrow \Rep(H)^{\heartsuit}$$ is an equivalence of categories. It is essentially surjective by the Levelt–Turritin theorem (\cite[Theorem 2.1]{BBDE}). To see that
it is fully faithful, note that as $H^{\redd}$ is reductive, every indecomposible object of the LHS
takes the form
$$V \boxtimes J_n,$$
where $V$ is an indecomposible object of $\Rep(H^{\redd})^{\heartsuit}$ and $J_n$ is a Jordan block. 
\end{proof}

\begin{proof}[Proof of Theorem \ref{map between LS 2}]
	Let $\mathbf{Maps}(\Rep(G), \Rep(H^{\redd}))$ denote the prestack obtained by replacing $\QLisse(D^{\circ})$ with $\Rep(H^{\redd})$ in the definition of $\LocSys_G^{\restr}(D^{\circ})$. By Lemma \ref{diffeential galois group} we have a group homomorphism $H^{\redd} \rightarrow H$, which induces 
	$$\LocSys_G^{\restr}(D^{\circ}) \rightarrow \mathbf{Maps}(\Rep(G), \Rep(H^{\redd})).$$
	We claim that the map (\ref{map between LS 2: equation}) factors through $\mathbf{Maps}(\Rep(G), \Rep(H^{\redd}))$. For this it is sufficient to show that the symmetric monoidal functor
	$$\Rep(H^{\redd}) \otimes \Rep(\bG_a) \cong \QLisse(D^{\circ}) \rightarrow \QLisse(\bG_m) \rightarrow D(\bG_m) \rightarrow \QCoh(\bG_m)$$
	factors through $\Rep(H^{\redd})$. This is clear because every $\bG_a$-torsor on $\bG_m$ is trivial.
	
	It remains to show that the morphism
	$$\mathbf{Maps}(\Rep(G), \Rep(H^{\redd})) \rightarrow \Bun_G(\bG_m)$$
	factors through $\Bun_G(\bG_m)^+$. The category $\Rep(H^{\redd})$ is gentle Tannakian so we can apply results from \cite{AGKRRV2} to $\mathbf{Maps}(\Rep(G), \Rep(H^{\redd}))$. Since $H^{\redd}$ is reductive, by \cite[Proposition 3.5.4]{AGKRRV2}, connected components of  $\mathbf{Maps}(\Rep(G), \Rep(H^{\redd}))$ have the form $\pt / \Stab_G(\varphi)$ for some $\varphi: Im(H) \rightarrow G$. Thus we only need to check that the composites
	$$\pt \rightarrow \pt / \Stab_G(\varphi) \rightarrow \Bun_G(\bG_m)$$
	factor through $\Bun_G(\bG_m)^+$. This holds because every $G$-bundle on $\bG_m$ is trivial for a connected $G$.
\end{proof}

\subsubsection{} 

One of the structural result that is not known yet is the following
\begin{conj}\label{conj LS}
	The stack $\LocSys_G^{\restr}(D^{\circ})$ is a disjoint union of formal completions of a collection of pairwise non-intersecting closed substacks of $(\LocSys_G(D^{\circ})^{\et})^{\redd}$.
\end{conj}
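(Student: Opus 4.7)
The plan is to mimic the strategy used in the global setting of \cite{AGKRRV2}, adapted to the local context via the results of Section \ref{section: statement}. Recall from Proposition \ref{properties of LS} that we already have a decomposition
\[
\LocSys_G^{\restr}(D^{\circ}) = \bigsqcup_{\sigma} \cZ_{\sigma},
\]
indexed by isomorphism classes of semi-simple $G$-local systems $\sigma$ on $D^{\circ}$, where each $\cZ_{\sigma}$ is an etale-sheafified quotient of the form $Y_{\sigma}/G$ with $Y_{\sigma}$ a formal affine scheme whose underlying classical reduced scheme is an affine reduced scheme. Using Theorem \ref{map between LS}, each $\cZ_{\sigma}$ acquires a natural morphism
\[
\iota_{\sigma}: \cZ_{\sigma} \longrightarrow \LocSys_G(D^{\circ})^{\et}.
\]
The goal is to show that, for each $\sigma$, the morphism $\iota_{\sigma}$ is a closed embedding onto the formal completion of its reduced image $\cZ_{\sigma}^{\redd}$ inside $(\LocSys_G(D^{\circ})^{\et})^{\redd}$, and that the reduced images for distinct $\sigma$ are pairwise disjoint closed substacks.

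For the first step, I would identify $\cZ_{\sigma}^{\redd}$ explicitly. By the semi-simple replacement procedure (parallel to \cite[Proposition 3.5.4]{AGKRRV2} and the argument used in the proof of Theorem \ref{map between LS 2} via the reductive quotient $H \to H^{\redd}$), the classical reduced locus $\cZ_{\sigma}^{\redd}$ is of the form $\operatorname{pt}/\Stab_G(\sigma)$, where $\Stab_G(\sigma)$ denotes the centralizer in $G$ of the image of $H^{\redd}$ under the homomorphism realizing $\sigma$. The resulting morphism $\operatorname{pt}/\Stab_G(\sigma) \to (\LocSys_G(D^{\circ})^{\et})^{\redd}$ should be a closed embedding: this is a question about a single semi-simple point, and one can extend $\sigma$ to a local system $\widetilde{\sigma}$ on $\bG_m$ using Lemma \ref{section LS^restr} and argue via $\LocSys_G(\bG_m)^+$ and the fact, used in the proof of Theorem \ref{map between LS 2}, that every $G$-bundle on $\bG_m$ is trivial. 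The disjointness of the $\cZ_{\sigma}^{\redd}$ follows because distinct semi-simple local systems on $D^{\circ}$ are not isomorphic as objects of $\QLisse(D^{\circ})$, hence map to distinct reduced points of $\LocSys_G(D^{\circ})^{\et}$.

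The main obstacle, and the heart of the argument, is showing that $\iota_{\sigma}$ identifies $\cZ_{\sigma}$ with the formal completion of $\cZ_{\sigma}^{\redd}$ inside $(\LocSys_G(D^{\circ})^{\et})^{\redd}$. Two things must be checked:
\begin{enumerate}
\item[(i)] $\iota_{\sigma}$ factors through the formal completion, i.e.\ every $S$-point of $\cZ_{\sigma}$ maps to a point of $\LocSys_G(D^{\circ})^{\et}$ whose underlying reduced $S$-point factors through $\cZ_{\sigma}^{\redd}$. This is a consequence of the fact that the $S$-points of $\cZ_{\sigma}$ are, by construction, deformations of $\sigma$ in the restricted world, which by the Tannakian description tensor-split using the reductive part $H^{\redd}$ and a unipotent piece; after passing to reduced $S$-points the unipotent part dies, giving $\sigma$ back.
\item[(ii)] The resulting morphism $\cZ_{\sigma} \to (\LocSys_G(D^{\circ})^{\et})^{\redd, \wedge}_{\cZ_{\sigma}^{\redd}}$ is an equivalence. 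Since both sides are convergent (by Corollary \ref{locally almost of finite type} and the standard properties of formal completions), it suffices to check this on truncated affine test schemes, and then by deformation theory it suffices to compare tangent complexes at the unique geometric point $\sigma$. On the restricted side, the tangent complex is computed by cohomology of the differential Galois group $H$ with coefficients in the adjoint representation $\operatorname{ad}_{\sigma}$, which by Lemma \ref{diffeential galois group} equals group cohomology of $H^{\redd} \times \bG_a$; on the non-restricted side, it is the de Rham cohomology of $\operatorname{ad}_{\sigma}$ on $D^{\circ}$. Both are computed by a two-term complex (cf.\ the proof of Lemma \ref{finite coh dim}), and the identification of Lemma \ref{gentle} together with the Tannakian description makes them naturally isomorphic.
\end{enumerate}

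The expected difficulty is step (ii): one must control deformation theory of $\LocSys_G(D^{\circ})$ which, being non-restricted, is not locally of finite type, and ensure the comparison of tangent complexes is canonical in families over $S$, not just at the closed point. One natural way to circumvent this is to work one connected component at a time and reduce, via the section of Lemma \ref{section LS^restr} and the factorization through $\LocSys_G(\bG_m)^+$ established in the proof of Theorem \ref{map between LS 2}, to the corresponding global statement on $\bG_m$, where the analogous formal completion description is available from \cite{AGKRRV2}. This reduction strategy is, to my mind, the most promising route to a complete proof.
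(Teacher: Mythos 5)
The paper states this as a \emph{conjecture}, explicitly introduced as ``one of the structural result[s] that is not known yet,'' and the later subsection on the connection with Conjecture \ref{intro conj} is flagged as ``dependent on Conjecture \ref{conj LS}.'' There is therefore no proof in the paper to compare against; the paper proves only a weaker result ``in the direction of'' the conjecture, namely Proposition \ref{monomorphism}, the monomorphism statement for $\LocSys_G^{\restr}(D^{\circ})\to\LocSys_G(D^{\circ})^{\et}$.

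Your plan follows the natural template from \cite{AGKRRV2}. The decomposition into $\cZ_\sigma$, the identification $\cZ_\sigma^{\redd}\simeq \pt/\Stab_G(\sigma)$, and disjointness of the reduced images (which follows from Proposition \ref{monomorphism}) are all reasonable. But, as you yourself flag, your step (ii) does not close, and there are two concrete reasons. First, matching tangent complexes at the closed point $\sigma$ does not establish that $\cZ_\sigma$ maps isomorphically to the formal completion: one needs control of the deformation functor of $\LocSys_G(D^{\circ})^{\et}$ over arbitrary (truncated) affine test schemes, and the paper stresses that $\LocSys_G(D^{\circ})$ is not locally of finite type, so the usual formal-completion/cotangent-complex formalism cannot be invoked verbatim; this is precisely why the authors stopped at the monomorphism statement, which they prove by a hands-on comparison of mapping spaces over square-zero extensions rather than by a tangent-complex identification. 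Second, the suggested reduction via Katz's section (Lemma \ref{section LS^restr}) to the global statement on $\bG_m$ is not obviously legitimate for a formal-neighborhood comparison: the section goes $\LocSys_G^{\restr}(D^{\circ})\to\LocSys_G^{\restr}(\bG_m)$, and nothing in the proof of Theorem \ref{map between LS 2} identifies the formal neighborhood of the Katz extension $\widetilde\sigma$ inside $\LocSys_G(\bG_m)$ with the formal neighborhood of $\sigma$ inside $\LocSys_G(D^{\circ})^{\et}$ under restriction; a priori the local non-restricted stack has deformations not seen by $\bG_m$-local systems. So while your outline is sensible as a research program, it does not constitute a proof of the conjecture, and the paper itself records that the gap you have identified is real and open.
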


\subsubsection{} In the rest of this subsection we a result in the direction of Conjecture \ref{conj LS}, which will be useful for us in the future. 

\begin{pr}\label{monomorphism}
		The map $$\LocSys_G^{\restr}(D^{\circ}) \rightarrow \LocSys_G(D^{\circ})^{\et}$$ is a monomorphism (i.e., is a monomorphism of spaces when evaluated
		on any affine scheme).
\end{pr}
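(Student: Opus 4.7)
My plan is to reduce to showing that the diagonal
$$\Delta: \LocSys_G^{\restr}(D^{\circ}) \to \LocSys_G^{\restr}(D^{\circ}) \times_{\LocSys_G(D^{\circ})^{\et}} \LocSys_G^{\restr}(D^{\circ})$$
is an equivalence of prestacks. Unpacking this at the level of an affine test scheme $S$, an $S$-point of the fiber product amounts to a triple $(F_1, F_2, \phi)$, where $F_i: \Rep(G) \to \QLisse(D^{\circ}) \otimes \QCoh(S)$ are right t-exact symmetric monoidal functors and $\phi$ is an isomorphism of their induced $S$-points in $\LocSys_G(D^{\circ})^{\et}$. I must produce a canonical isomorphism $F_1 \simeq F_2$ of symmetric monoidal functors lifting $\phi$.

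The key input I would establish is a symmetric monoidal fully faithful embedding $\iota: \QLisse(D^{\circ}) \hookrightarrow \cC$, where $\cC$ is the ambient DG category of all quasi-coherent $\cO_{D^{\circ}}$-modules equipped with a connection. Such $\iota$ should arise because $\Lisse(D^{\circ})^{\heartsuit}$ is exactly the full subcategory of finite-rank objects in $\cC^{\heartsuit}$, and fully faithfulness extends to ind-completions and is preserved under tensoring with $\QCoh(S)$. The map in the statement is then realized, étale-locally on $S$, by post-composition with $\iota \otimes \on{Id}_{\QCoh(S)}$; this identification uses the factorization through $\LocSys_G(\bG_m)^+$ from Theorem \ref{map between LS 2} together with the morphism $\LocSys_G(\bG_m)^+ \to \LocSys_G(D^{\circ})^{\et}$ from \cite[B.7.15]{GLC}. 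Fully faithfulness of $\iota$ then implies that natural isomorphisms of composites $\iota \circ F_1 \simeq \iota \circ F_2$ correspond bijectively to natural isomorphisms $F_1 \simeq F_2$, and applying this after étale descent on $S$ yields the desired canonical lift of $\phi$.

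A conceptually cleaner alternative I would explore is to exploit the section $\sigma: \LocSys_G^{\restr}(D^{\circ}) \to \LocSys_G^{\restr}(\bG_m)$ from Lemma \ref{section LS^restr}, which is automatically a monomorphism. I could then invoke the global analogue of the present proposition for $\bG_m$ proved in \cite{AGKRRV2}, asserting that $\LocSys_G^{\restr}(\bG_m) \to \LocSys_G(\bG_m)^{\et}$ is a monomorphism, and close the argument using a compatibility among the four stacks sitting over $\LocSys_G(\bG_m)^+$, namely that the relevant square is Cartesian after passing to étale sheafifications.

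The main obstacle will be the bookkeeping around étale sheafification and the indirect construction of the map via $\Bun_G(\bG_m)^+$: one must verify that an isomorphism exhibited only sheaf-locally on $S$ descends canonically, which ultimately amounts to showing that symmetric monoidal natural isomorphisms form an étale sheaf in $S$. A secondary subtlety will be pinning down the precise ambient category $\cC$ so that the Tannakian description of $S$-points of $\LocSys_G(D^{\circ})^{\et}$ holds on the nose; it may be necessary to work instead with a slightly larger pro-completion of $\cC$ to match the definition coming from $\Bun_G(\bG_m)^+$.
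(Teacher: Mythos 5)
Your proposal engages only with the classical half of the paper's argument, and even there it papers over the hardest step; it entirely misses the derived (deformation-theoretic) half.

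\textbf{The derived-algebraic-geometry part is missing.} Since $\LocSys_G(D^{\circ})^{\et}$ (realized concretely as the core of the horizontal jets prestack $(\hormerxJets(\BG))_{x}$ via Example \ref{main example discs}) lives in derived algebraic geometry, one cannot conclude monicity from a check on classical affine schemes. The paper's proof explicitly appeals to convergence of $(\hormerxJets(\BG))_{x}$ to reduce to truncated $A$, and then runs an induction on the level of truncation: for $1$-truncated $A$ it compares the fiber of the map over $\pi_0(A)$ on both sides by identifying each with a relative tangent space, and matches them via the de Rham computation of Proposition \ref{de rham of disc modules with connection}. Nothing in your proposal addresses the square-zero extensions, so the proof as sketched only establishes the claim on discrete rings.

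\textbf{The classical part contains an unsubstantiated jump.} Your key assertion is that fully faithfulness of $\iota: \QLisse(D^{\circ}) \hookrightarrow \cC$ ``extends to ind-completions and is preserved under tensoring with $\QCoh(S)$.'' Tensoring of DG categories (the Lurie tensor product) is not left exact, so tensoring a fully faithful functor with $\QCoh(S)$ is not automatically fully faithful. The paper proves exactly this statement by hand: reducing to compact generators of the form $A \boxtimes c$, checking the right adjoint to $A((t)) \otimes_{k((t))} (-)$ is continuous (so compacts go to compacts), and then computing Hom complexes via Levelt--Turritin to reduce to rank one. Moreover, the target category in the paper is $A((t))$-modules with $A$-linear connection, which is \emph{not} the Lurie tensor product $\cC \otimes \QCoh(\Spec A)$ for $\cC$ the category of $k((t))$-modules with connection — so the step where you ``realize the map by post-composition with $\iota \otimes \Id_{\QCoh(S)}$'' does not match the actual $S$-points of $\LocSys_G(D^{\circ})^{\et}$. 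Your detour through $\Bun_G(\bG_m)^+$ and Theorem \ref{map between LS 2} is also orthogonal to the paper's route, which works directly with the jets description and does not need étale sheafification of the map beyond the definition.

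\textbf{Your alternative route} via the section to $\LocSys_G^{\restr}(\bG_m)$ and the global analogue for $\bG_m$ is a plausible-sounding idea, but as written it rests on an unjustified Cartesianness claim and does not touch the derived issue either; so it would face the same gap.
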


\begin{proof}
	Note that by Example \ref{main example discs} it suffices to show that $$\LocSys_G^{\restr}(D^{\circ}) \rightarrow (\hormerxJets(\BG))_{x}$$  is a monomorphism. We first check the condition on classical affine schemes. 	It is enough to show that $\QCoh(\Spec(A)) \otimes \QLisse(D^{\circ})$ embeds fully faithfully into the category of $A((t))$-modules with an $A$-linear connection. To show this notice that $\QCoh(\Spec(A)) \otimes \QLisse(D^{\circ})$  is compactly generated by objects of the form $A \boxtimes (c[i])$ with $c \in \Lisse(D^{\circ})^{\heartsuit}$, $i \in \bZ$. So to show fully faithfulness it suffices to prove that the functor sends $A \boxtimes c$ with $c\in \Lisse(D^{\circ})^{\heartsuit}$ to compacts in $A((t))$-modules with connection, and that the functor is fully faithful on the full subcategory spanned by these objects.
	
	The first claim follows from the fact that the right adjoint to $A((t)) \otimes_{k((t))} (-)$ commutes with infinite direct sums.
	
	
	The second claim says that 
	$$\Hom_{\QCoh(\Spec(A)) \otimes \QLisse(D^{\circ})} (A \boxtimes c_1, A \boxtimes c_2) \cong \Hom_{A((t))\Mmod \text{with} \nabla}(A((t)) \otimes_{k((t))} c_1, A((t)) \otimes_{k((t))} c_2)$$
	with $c_i \in \Lisse(D^{\circ})^{\heartsuit}$. 
	This is equivalent to proving the equivalence
	$$\Hom_{\QLisse(D^{\circ})}(c_1, A\otimes c_2) \cong \Hom_{ {A((t))\Mmod \text{with} \nabla}}(c_1, A((t)) \otimes_{k((t))}c_2).$$
	By \cite[Theorem 2.1] {BBDE}we can reduce to the case when $c_1$ is irreducible, and then it is dualizable. Hence it suffices to prove
	$$\Hom_{k((t))\Mmod \text{with} \nabla}(\cO, A\otimes c_2) \cong \Hom_{k((t))\Mmod \text{with} \nabla}(\cO, A((t)) \otimes_{k((t))}c_2),$$
	where $\cO$ is $k((t))$ with trivial connection. 
	Using \cite[Theorem 2.1]{BBDE} again we reduce to the case when $c_2$ is irreducible. In that case $c_2 \cong \pi_* \cL$ for $\pi$ a finite covering of the disc and $\cL$ one-dimensional local system. Then by adjunction it suffices to prove the equivalence for $c_2$ one-dimensional, which is a direct computation. 
	
	Note that by construction the prestack $(\hormerxJets(\BG))_{x}$ is convergent, hence to prove the proposition it is enough to show that for a truncated affine scheme $\Spec(A)$ the natural map 
	\begin{equation}\label{monomorphism: formal iso}
		\LocSys_G^{\restr}(D^{\circ})(\pi_0(A)) \times_{(\hormerxJets(\BG))_{x}(\pi_0(A)) }(\hormerxJets(\BG))_{x}(A) \leftarrow 	\LocSys_G^{\restr}(D^{\circ})(A) 
	\end{equation}
	is an isomorphism. If $A$ is 0-truncated the claim is immediate. If $A$ is 1-truncated note that for $f \in \LocSys_G^{\restr}(D^{\circ})(\pi_0(A))$ we have 
	$$\{f \}\times_{(\hormerxJets(\BG))_{x}(\pi_0(A)) }(\hormerxJets(\BG))_{x}(A) \cong \Gamma(D_{\pi_0(A), \nabla} \times_{\bA^1_{\dR}} \bG_{m, \dR}, f^* \mathfrak{g}[1] \otimes_{\pi_0(A)} \pi_1(A)[1]).$$
	On the other side of (\ref{monomorphism: formal iso}) we have 
	$$\{f\} \times_{\LocSys_G^{\restr}(D^{\circ})(\pi_0(A))} \LocSys_G^{\restr}(D^{\circ})(A) \cong \Gamma_{dR}(\pi_0(A)((t)), f^*\mathfrak{g}[1] \otimes_{\pi_0(A)} \pi_1(A)[1]),$$
	and the isomorphism (\ref{monomorphism: formal iso}) follows from Proposition \ref{de rham of disc modules with connection}. For $n$-truncated $A$ we conclude the claim by induction.
	
\end{proof}

\subsection{Main result}

\begin{thm}\label{mainthm}
	There exists  a fully faithful functor 
	$$\bFact^{\restr}: \QCoh(\LocSys_G^{\restr}(D^{\circ}))\mathbf{-ModCat} \rightarrow \Rep(G)\mathbf{-FactModCat}$$
	that commutes with the forgetful functors from both sides to $\mathbf{DGCat}$.
	Here 
	\begin{itemize}
		\item[-] On the left-hand side $\QCoh(\LocSys_G^{\restr}(D^{\circ}))$ is viewed as a monoidal category with usual tensor product. The left-hand side is the $(\infty, 2)$-category of modules for this monoidal category.
		\item[-] On the right-hand side $\Rep(G)$ is the DG category of representation of $G$ equipped with factorization algebra category structure coming from symmetric monoidal structure on $\Rep(G)$. 
	\end{itemize}
\end{thm}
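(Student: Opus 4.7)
The functor $\bFact^{\restr}$ is defined, as indicated after the statement, by tensoring with the factorization $\Rep(G)$-module category
\[
\bFact^{\restr}(\QCoh(\LocSys_G^{\restr}(D^{\circ}))) := \QCoh\bigl(\LocSys_G(D^{\circ}_{\Ranp})\times_{\LocSys_G(D^{\circ})}\LocSys_G^{\restr}(D^{\circ})\bigr),
\]
which carries a commuting action of $\QCoh(\LocSys_G^{\restr}(D^{\circ}))$ via the map $\LocSys_G(D^{\circ}_{\Ranp})_x\to\LocSys_G(D^{\circ})$ at the marked point. The commutativity with the forgetful functors to $\mathbf{DGCat}$ is then a formal consequence (the core at $x$ of the resulting factorization module is the original module category).

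The real content is fully faithfulness, i.e.\ that for $\BC_1,\BC_2\in \QCoh(\LocSys_G^{\restr}(D^{\circ}))\mathbf{-ModCat}$ the natural map
\[
\Fun_{\QCoh(\LocSys_G^{\restr}(D^{\circ}))}(\BC_1,\BC_2) \;\longrightarrow\; \on{FactFun}_{\Rep(G)}(\bFact^{\restr}(\BC_1),\bFact^{\restr}(\BC_2))
\]
is an equivalence. My plan is a three-step reduction followed by a geometric identification. First, using the pullback formalism for factorization structures (the equivalence $f^!:\mathbf{A}\on{-FactMod}(\Ranp_{Y,y})\xrightarrow{\sim} f^!(\mathbf{A})\on{-FactMod}(\Ranp_{X,x})$ for a smooth map $f:X\to Y$) and the asserted compatibility with $\bFact^{\restr}$, I reduce to the universal case $X=\bA^1$, $x=0$. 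Second, since $\QCoh(\LocSys_G^{\restr}(D^{\circ}))$ is compactly generated and semi-rigid (Lemma \ref{semi-regidity compact generation}) and the stack is 1-affine (Corollary \ref{1-affinnes of LS^r}), the $(\infty,2)$-category of modules is generated under colimits by categories of the form $\QCoh(R)$ for $R\to\LocSys_G^{\restr}(D^{\circ})$; so it suffices to check the equivalence on pairs $\BC_i=\QCoh(R_i)$, and further, via descent, on the ``atomic'' pairs $\BC_i=\Vect_{\sigma_i}$ corresponding to (semi-simple) local systems $\sigma_i$ on $D^{\circ}$.

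In the atomic case the left-hand side is just
\[
\Fun_{\QCoh(\LocSys_G^{\restr}(D^{\circ}))}(\Vect_{\sigma_1},\Vect_{\sigma_2}) \;\simeq\; \QCoh(\pt_{\sigma_1}\times_{\LocSys_G^{\restr}(D^{\circ})}\pt_{\sigma_2}).
\]
For the right-hand side, I proceed as sketched in the outline: using the section of Proposition \ref{section QLisse} (Katz), extend each $\sigma_i$ to a local system $\widetilde{\sigma}_i$ on $\bG_m$; invoke the basic adjunction (Theorem \ref{thm-basic-adj}) and the identification of Theorem \ref{thmdefn-factres} to rewrite the factorization mapping category as $\prescript{}{\widetilde{\sigma}_1}{R_G}_{\widetilde{\sigma}_2}\on{-FactMod}$, where $\prescript{}{\widetilde{\sigma}_1}{R_G}_{\widetilde{\sigma}_2}$ is the commutative factorization algebra obtained by twisting the regular representation $R_G$ by $\widetilde{\sigma}_1,\widetilde{\sigma}_2$; this is precisely the $\cD$-scheme $\Isom(\widetilde{\sigma}_1,\widetilde{\sigma}_2)$. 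Then by the Beilinson--Drinfeld description of modules over a commutative factorization algebra (recalled in Appendix \ref{appendix: factmodules over commutative alg}), this category of factorization modules identifies with $\QCoh$ of the horizontal sections of the corresponding $\cD$-scheme on the punctured disc; but these horizontal sections are exactly $\pt_{\sigma_1}\times_{\LocSys_G^{\restr}(D^{\circ})}\pt_{\sigma_2}$ (using Theorem \ref{map between LS} and Proposition \ref{monomorphism} to ensure the identification is intrinsic to $\LocSys_G^{\restr}(D^{\circ})$, independent of the extensions $\widetilde{\sigma}_i$).

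The main obstacle, I expect, is the geometric identification in the last step: one has to check that the two descriptions of the mapping category match not just abstractly, but compatibly with the map induced by $\bFact^{\restr}$, and that the choice of extensions $\widetilde{\sigma}_i$ does not affect the answer. This hinges on Proposition \ref{monomorphism} (to see that the $\sigma_i$ really are determined by their images in $\LocSys_G(D^{\circ})^{\et}$) and on the compatibility between the two appearances of the ``horizontal sections'' interpretation, once via the definition of $\LocSys_G(D^{\circ}_{\Ranp})$ and once via commutative factorization algebras. The reduction steps are essentially formal given the preliminaries, and the extension to general $\BC_i$ then follows by writing the pair $(\BC_1,\BC_2)$ as a colimit of atomic pairs and checking that both sides of the map send such colimits to limits.
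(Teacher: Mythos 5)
Your overall shape matches the paper's strategy — reduction to $X=\bA^1$ via \'etale pullback of factorization structures, reduction to module categories of the form $\QCoh(R_i)$ using 1-affineness and semi-rigidity, reinterpretation of $\on{FactFun}_{\Rep(G)}$ via the basic adjunction and twisted regular representations, and a final geometric match using the commutative-factorization-algebra description and Proposition \ref{monomorphism}. However, there is a genuine gap in the middle of your reduction: you propose to go further than $\BC_i=\QCoh(R_i)$ and reduce ``via descent'' to atomic pairs $\BC_i=\Vect_{\sigma_i}$. That step is not justified and, as far as I can see, false in general: $\QCoh(R)$ is not a colimit of copies of $\Vect$ over the points of $R$ in $\QCoh(\LocSys_G^{\restr}(D^{\circ}))\mathbf{-ModCat}$, and the connected components $\cZ_\sigma$ of the stack are formal affine quotients by $G$, not disjoint unions of points, so there is no descent argument that would see only $\pt_\sigma$'s. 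The paper's outline in the introduction does discuss $\Vect_{\sigma_i}$, but only as an illustration of the simplest case; the actual proof never performs this reduction. Instead it stops at affine $R_1,R_2$ (and a spectator category $\BC_0^2$) and then compares two monads $M_1,M_2$ acting on $\QCoh(T)$ with $T=\Spec(R_1\times R_2)$, showing each is given by $p_*\cO_{R_1\times_{\LocSys_G^{\restr}(D^{\circ})}R_2}\otimes-$.

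A second, smaller underestimate: you describe the reduction steps as ``essentially formal given the preliminaries,'' but the reduction for $\BC^2$ is genuinely delicate because it requires $\bFact^{\restr}$ to commute with small \emph{limits} (Proposition \ref{functor commutes with limits}). This in turn uses dualizability of the restrictions to each $X_{\dR}^I\times\{x\}$, proved via Lemma \ref{retsriction and dualizability} and Corollary \ref{factres linear}. Relatedly, your RHS computation phrased for a single local system $\sigma_i$ needs to be done in families over affine $R_i$; the paper handles this by extending the map $R_i\to\LocSys_G^{\restr}(D^{\circ})$ via the Katz section to a map $f_{\sigma_i}: U_{\dR}\times R_i\to\BG^{\naive}$ (Proposition \ref{extension + torsor locally trivial on A}) before invoking the commutative factorization module description (Theorem \ref{factmod commutative}). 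Fixing your proposal would mean dropping the ``atomic pairs'' step and instead carrying out the monad comparison directly with affine $R_1,R_2$, which is where most of the geometric work (Propositions \ref{restriction}, \ref{affine diagonal}, \ref{forms mod gauge}) actually lives.
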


\subsubsection{}\label{construction of the main functor} As in Conjecture \ref{intro conj}, the functor $\bFact^{\restr}$ is given by a certain factorization $\Rep(G)$-module category, $\bFact^{\restr}(\QCoh(\LocSys_G^{\restr}(D^{\circ})))$, equipped with a commuting action of $\QCoh(\LocSys_G^{\restr}(D^{\circ}))$. We define $\bFact^{\restr}(\QCoh(\LocSys_G^{\restr}(D^{\circ})))$ as the category of quasi-coherent sheaves on the prestack $$\LocSys_G(D^{\circ}_{\Ranp})^{\et} \times_{\LocSys_G(D^{\circ})^{\et}}\LocSys_G^{\restr}(D^{\circ}).$$

\subsection{Connection with Conjecture 1} The contents of this subsection are dependent on Conjecture \ref{conj LS}.

\subsubsection{} Conjecture \ref{conj LS} implies that the pushforward along the map 
$$\LocSys_G^{\restr}(D^{\circ}) \rightarrow \LocSys_G(D^{\circ})^{\et}$$
admits arbitrary base change, therefore

\begin{pr}\label{product formula}
	For any prestack $\cY$ mapping to $\LocSys_G(D^{\circ})^{\et}$ one has 
	$$\QCoh(\LocSys_G^{\restr}(D^{\circ})) \otimes_{\QCoh((\LocSys_G(D^{\circ}) ^{\et})} \QCoh(\cY) \cong \QCoh(\LocSys_G^{\restr}(D^{\circ}) \times_{\LocSys_G(D^{\circ})^{\et}} \cY).$$
\end{pr}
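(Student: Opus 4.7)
The plan is to deduce the statement from a general base change/projection formula for the pushforward along $f: \LocSys_G^{\restr}(D^{\circ}) \to \LocSys_G(D^{\circ})^{\et}$, under the hypothesis that Conjecture~\ref{conj LS} identifies $f$ with a disjoint union of formal-completion embeddings.

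First I would recall the standard categorical fact: given a morphism $f: \cX \to \cZ$ of prestacks such that $f_*: \QCoh(\cX) \to \QCoh(\cZ)$ commutes with arbitrary base change, one automatically obtains the tensor product formula
\[
\QCoh(\cX) \otimes_{\QCoh(\cZ)} \QCoh(\cY) \xrightarrow{\sim} \QCoh(\cX \times_{\cZ} \cY)
\]
for any $\cY \to \cZ$. Indeed, this is dual to the usual projection formula; combined with Lemma~\ref{semi-regidity compact generation} (semi-rigidity of $\QCoh(\LocSys_G^{\restr}(D^{\circ}))$) and the 1-affineness of $\LocSys_G^{\restr}(D^{\circ})$ from Corollary~\ref{1-affinnes of LS^r}, the functor $f^*$ admits a continuous right adjoint $f_*$ on the level of DG categories, and the tensor product formula reduces to checking that $f_*$ satisfies base change.

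Second, using Conjecture~\ref{conj LS}, I would write $f$ as a disjoint union $\bigsqcup_\alpha f_\alpha$ where each $f_\alpha: Z_\alpha^{\wedge} \to \LocSys_G(D^{\circ})^{\et}$ is the inclusion of the formal completion of a closed substack $Z_\alpha \subset (\LocSys_G(D^{\circ})^{\et})^{\redd}$. Since both base change and the formation of fiber products commute with disjoint unions on the source, it suffices to establish the statement for a single formal completion $i: \cX^{\wedge}_Z \hookrightarrow \cX$ of a closed embedding $Z \hookrightarrow \cX$. For such an inclusion, $\QCoh(\cX^{\wedge}_Z)$ is identified with the full subcategory of $\QCoh(\cX)$ of objects set-theoretically supported on $Z$; the pushforward $i_*$ is the fully faithful inclusion, which preserves colimits and is characterized by the idempotent comonad of local cohomology along $Z$. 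Both formation of this comonad and its compatibility with $\QCoh$-pullbacks are preserved under arbitrary base change $\cY \to \cX$, since ``set-theoretically supported on $Z$'' transforms to ``set-theoretically supported on $Z \times_{\cX} \cY$''.

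The main obstacle here is verifying base change for the formal completion $i_*$ in full generality in derived algebraic geometry: one must ensure that the identification of $\QCoh$ of a formal completion with the supported subcategory is stable under derived base change, especially when $\cY$ is not of finite type. This is a standard but somewhat delicate point; it ultimately follows from the ind-closed presentation of $\cX^{\wedge}_Z$ together with Corollary~\ref{locally almost of finite type} which ensures that $\LocSys_G^{\restr}(D^{\circ})$ is laft, so that the formal completion is computed as a sequential colimit of closed embeddings along which pushforward obviously satisfies base change. Taking the colimit and using compactness preservation properties from Lemma~\ref{semi-regidity compact generation} then yields the result.
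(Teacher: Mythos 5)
Your proposal follows the same route as the paper, which simply asserts that Conjecture~\ref{conj LS} gives arbitrary base change for the pushforward along $\LocSys_G^{\restr}(D^{\circ}) \to \LocSys_G(D^{\circ})^{\et}$ and that the tensor product formula then follows. Your fleshed-out version of both implications (base change from the formal-completion presentation given by Conjecture~\ref{conj LS}, and the tensor product formula from base change together with semi-rigidity and 1-affineness) is consistent with what the paper leaves implicit.
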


We are now ready to establish connection with Conjecture \ref{intro conj}.

\begin{lm}\label{Connection lemma}
	The restriction functor $$R: \QCoh(\LocSys_G^{\restr}(D^{\circ}))\mathbf{-ModCat}\rightarrow \QCoh(\LocSys_G(D^{\circ}))\mathbf{-ModCat}$$ is fully faithful. 
\end{lm}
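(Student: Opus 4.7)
The plan is to reduce fully faithfulness of $R$ to a tensor product computation, and then use the geometric results established earlier in the paper.

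Recall the general principle: given a symmetric monoidal functor $\phi:\mathcal{A}\to \mathcal{B}$ of presentable monoidal DG categories, the restriction 2-functor $\phi_*: \mathcal{B}\mathbf{-ModCat}\to \mathcal{A}\mathbf{-ModCat}$ is fully faithful if and only if the multiplication morphism $\mathcal{B}\otimes_{\mathcal{A}}\mathcal{B}\to \mathcal{B}$ is an equivalence. Applied to our setting (and noting that $\QCoh$ is an \'etale sheaf, so we may replace $\LocSys_G(D^{\circ})$ by its \'etale sheafification throughout without changing $\QCoh$), it suffices to verify that the natural map
\[
\QCoh(\LocSys_G^{\restr}(D^{\circ}))\otimes_{\QCoh(\LocSys_G(D^{\circ})^{\et})}\QCoh(\LocSys_G^{\restr}(D^{\circ}))\longrightarrow \QCoh(\LocSys_G^{\restr}(D^{\circ}))
\]
is an equivalence.

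To analyze the left-hand side, I would apply Proposition \ref{product formula}, taking $\cY=\LocSys_G^{\restr}(D^{\circ})$ (mapping to $\LocSys_G(D^{\circ})^{\et}$ via the morphism of Theorem \ref{map between LS}). This gives a canonical identification
\[
\QCoh(\LocSys_G^{\restr}(D^{\circ}))\otimes_{\QCoh(\LocSys_G(D^{\circ})^{\et})}\QCoh(\LocSys_G^{\restr}(D^{\circ}))\;\cong\; \QCoh\!\left(\LocSys_G^{\restr}(D^{\circ})\times_{\LocSys_G(D^{\circ})^{\et}}\LocSys_G^{\restr}(D^{\circ})\right),
\]
under which the multiplication map above corresponds to pullback along the diagonal
\[
\Delta:\LocSys_G^{\restr}(D^{\circ})\longrightarrow \LocSys_G^{\restr}(D^{\circ})\times_{\LocSys_G(D^{\circ})^{\et}}\LocSys_G^{\restr}(D^{\circ}).
\]

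Finally, Proposition \ref{monomorphism} asserts that $\LocSys_G^{\restr}(D^{\circ})\to \LocSys_G(D^{\circ})^{\et}$ is a monomorphism of prestacks, which by definition means that the diagonal $\Delta$ is an equivalence. Therefore $\Delta^*$ is an equivalence on $\QCoh$, the multiplication map is an equivalence, and $R$ is fully faithful as desired.

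The main obstacle here is that the whole argument rests on the validity of Proposition \ref{product formula}, which in turn relies on Conjecture \ref{conj LS} to guarantee that pushforward along $\LocSys_G^{\restr}(D^{\circ})\to \LocSys_G(D^{\circ})^{\et}$ admits arbitrary base change. This is consistent with the preamble to the subsection, which explicitly flags that its contents are dependent on Conjecture \ref{conj LS}. Once that conjecture is granted, the remaining steps are purely formal.
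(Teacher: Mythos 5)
Your proof is correct and follows essentially the same route as the paper: both reduce fully faithfulness to the assertion that $\QCoh(\LocSys_G^{\restr}(D^{\circ}))\otimes_{\QCoh(\LocSys_G(D^{\circ})^{\et})}\QCoh(\LocSys_G^{\restr}(D^{\circ}))\to\QCoh(\LocSys_G^{\restr}(D^{\circ}))$ is an equivalence, then invoke Proposition~\ref{product formula} to identify the tensor product geometrically and Proposition~\ref{monomorphism} to collapse the fiber product. The paper reaches this via the adjunction applied to a colimit of free modules rather than quoting the general ``multiplication map'' criterion, but this is merely a different packaging of the same argument, and your explicit note that Proposition~\ref{product formula} rests on Conjecture~\ref{conj LS} accurately reflects the subsection's caveat.
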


\begin{proof}
	For any $\BC_1, \BC_2 \in \QCoh(\LocSys_G^{\restr}(D^{\circ}))\Mmod$ we need to show that
	$$\Maps_{\QCoh(\LocSys_G^{\restr}(D^{\circ}))\mathbf{-ModCat}}(\BC_1, \BC_2) \cong \Maps_{\QCoh(\LocSys_G(D^{\circ}))\mathbf{-ModCat}}(R(\BC_1), R(\BC_2)).$$
	We can write  $\BC_1 \cong \colim\QCoh(\LocSys_G^{\restr}(D^{\circ})) \otimes \BD_i$ for some plain DG categories $\BD_i$. Since $R$ is continuous we may assume that $\BC_1 \cong \QCoh(\LocSys_G^{\restr}(D^{\circ})) \otimes \BD$ for a plain DG category $\BD$. Then
	$$\Maps_{\QCoh(\LocSys_G(D^{\circ}))\mathbf{-ModCat}}(R(\BC_1), R(\BC_2)) \cong $$
	$$\cong \Maps_{\QCoh(\LocSys_G^{\restr}(D^{\circ}))\mathbf{-ModCat}}(\QCoh(\LocSys_G^{\restr}(D^{\circ})) \otimes_{\QCoh(\LocSys_G(D^{\circ}))} \QCoh(\LocSys_G^{\restr}(D^{\circ})) \otimes \BD, \BC_2).$$
	By Proposition \ref{product formula} the latter is equivalent to 
	$$\Maps_{\QCoh(\LocSys_G^{\restr}(D^{\circ}))\mathbf{-ModCat}}(\QCoh(\LocSys_G^{\restr}(D^{\circ}) \times_{\LocSys_G(D^{\circ})^{\et}} \LocSys_G^{\restr}(D^{\circ})) \otimes \BD, \BC_2).$$
Then Proposition \ref{monomorphism} implies the desired claim.
\end{proof}

\subsubsection{} Combining Proposition \ref{product formula} and Lemma \ref{Connection lemma}, we get 
\begin{lm}
	The composite functor  $$\bFact \circ R: \QCoh(\LocSys_G^{\restr}(D^{\circ}))\mathbf{-ModCat} \rightarrow \Rep(G)\mathbf{-FactModCat}$$
	coincides with $\bFact^{\restr}$.
\end{lm}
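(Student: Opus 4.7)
The plan is to unwind both sides of the claimed equivalence as iterated tensor products of module categories, and then collapse the middle factor using the product formula (Proposition \ref{product formula}). For $\BM\in\QCoh(\LocSys_G^{\restr}(D^{\circ}))\mathbf{-ModCat}$, by construction (see \ref{construction of the main functor}) we have
\[
\bFact^{\restr}(\BM) \;\simeq\; \QCoh\!\bigl(\LocSys_G(D^{\circ}_{\Ranp})^{\et} \times_{\LocSys_G(D^{\circ})^{\et}} \LocSys_G^{\restr}(D^{\circ})\bigr) \otimes_{\QCoh(\LocSys_G^{\restr}(D^{\circ}))} \BM,
\]
whereas $R(\BM)$ is $\BM$ regarded as a $\QCoh(\LocSys_G(D^{\circ})^{\et})$-module via pullback along $\LocSys_G^{\restr}(D^{\circ}) \to \LocSys_G(D^{\circ})^{\et}$, and
\[
\bFact(R(\BM)) \;\simeq\; \QCoh(\LocSys_G(D^{\circ}_{\Ranp})^{\et}) \otimes_{\QCoh(\LocSys_G(D^{\circ})^{\et})} \BM.
\]

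Next, I would apply Proposition \ref{product formula} with $\cY = \LocSys_G(D^{\circ}_{\Ranp})^{\et}$ to identify
\[
\QCoh\!\bigl(\LocSys_G(D^{\circ}_{\Ranp})^{\et} \times_{\LocSys_G(D^{\circ})^{\et}} \LocSys_G^{\restr}(D^{\circ})\bigr)
\;\simeq\;
\QCoh(\LocSys_G^{\restr}(D^{\circ})) \otimes_{\QCoh(\LocSys_G(D^{\circ})^{\et})} \QCoh(\LocSys_G(D^{\circ}_{\Ranp})^{\et}).
\]
Substituting this into the expression for $\bFact^{\restr}(\BM)$ and invoking associativity of the relative tensor product in $\mathbf{DGCat}$, the factor $\QCoh(\LocSys_G^{\restr}(D^{\circ}))$ cancels against the tensor product over itself with $\BM$, leaving
\[
\bFact^{\restr}(\BM) \;\simeq\; \QCoh(\LocSys_G(D^{\circ}_{\Ranp})^{\et}) \otimes_{\QCoh(\LocSys_G(D^{\circ})^{\et})} \BM \;\simeq\; \bFact(R(\BM)).
\]
Naturality in $\BM$ is automatic from the functoriality of relative tensor products, so this identification upgrades to an equivalence of functors.

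The main technical obstacle is entirely packaged into the product formula Proposition \ref{product formula}, which in turn rests on Conjecture \ref{conj LS}: one needs to know that pushforward along $\LocSys_G^{\restr}(D^{\circ}) \to \LocSys_G(D^{\circ})^{\et}$ satisfies arbitrary base change, so that relative tensor products over $\QCoh(\LocSys_G(D^{\circ})^{\et})$ compute quasi-coherent sheaves on the geometric fiber product. Once this is granted, the remainder of the argument is purely formal manipulation of $\mathbf{DGCat}$-valued tensor products, with no further input beyond the definitions of $\bFact$, $\bFact^{\restr}$, and $R$.
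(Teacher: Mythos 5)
Your argument is essentially the argument the paper compresses into the phrase ``Combining Proposition \ref{product formula} and Lemma \ref{Connection lemma}'': you expand $\bFact^{\restr}(\BM)$ via the product formula applied to $\cY=\LocSys_G(D^{\circ}_{\Ranp})^{\et}$, and then let associativity of relative tensor product over commutative monoidal categories do the rest, which is exactly the intended mechanism. The computation is correct: writing $A=\QCoh(\LocSys_G(D^{\circ})^{\et})$, $B=\QCoh(\LocSys_G^{\restr}(D^{\circ}))$, and $\cQ=\QCoh(\LocSys_G(D^{\circ}_{\Ranp})^{\et})$, the product formula gives $\QCoh(\text{fiber product})\simeq B\otimes_A\cQ$, and then $(B\otimes_A\cQ)\otimes_B\BM \simeq \cQ\otimes_A\BM = \bFact(R(\BM))$. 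Two minor points worth surfacing that you elide silently: (1) the paper's definition of $\bFact$ uses $\LocSys_G(D^{\circ}_{\Ranp})$ and $\LocSys_G(D^{\circ})$ \emph{without} the $(-)^{\et}$, so your formula for $\bFact(R(\BM))$ implicitly uses that $\QCoh$ is insensitive to étale sheafification; and (2) the product formula is stated for a single prestack $\cY$, whereas $\LocSys_G(D^{\circ}_{\Ranp})^{\et}$ is a lax prestack over $\Ranp_{X,x}$, so one should apply the formula stratum-by-stratum (or in the crystal-of-categories sense) to carry along the $\Rep(G)$-factorization module structure, not just the underlying DG category. Also, unlike the paper, you do not invoke Lemma \ref{Connection lemma} (fully-faithfulness of $R$); I agree that it is not needed for this particular identification, so its omission is not a gap.
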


\section{Reductions}\label{section: reductions}

\subsection{Reduction to the case of $\bA^1$}

In this subsection we reduce Theorem \ref{mainthm} to the case when $X = \mathbb{A}^1$, $x = 0$. 
 We first record the following corollary of Theorem \ref{main etale pullback}:
 
 \begin{cor}
 	Let $f: X \rightarrow Y$ be a map of curves with  $x \in X, y \in Y$ such that $f(x) = y$ and $f$ is etale at $x$. 
 	Then we have 
 	\begin{equation}\label{pullback etale at a point}
 	\Rep(G)-\mathbf{FactModCat}(\Ranp_{Y, y})  \cong  \Rep(G)-\mathbf{FactModCat}(\Ranp_{X, x}). 
 	\end{equation}
 \end{cor}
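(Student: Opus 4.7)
The plan is to deduce the corollary directly from Theorem \ref{main etale pullback}, which (by the notation set up in the paper's Appendix \ref{appendix: etale factorization}) produces, for a map of curves $f : X \to Y$ that is etale at $x$ with $f(x)=y$, an equivalence of $(\infty,2)$-categories
$$ \mathbf{A}\text{-}\mathbf{FactModCat}(\Ranp_{Y,y}) \;\xrightarrow{\sim}\; f^{!}(\mathbf{A})\text{-}\mathbf{FactModCat}(\Ranp_{X,x}) $$
for any factorization algebra category $\mathbf{A}$ on $\Ranp_Y$. Given this theorem, the only point to verify is that the pullback of the specific factorization algebra category $\Rep(G)$ on $\Ranp_Y$ coincides canonically with $\Rep(G)$ on $\Ranp_X$.

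First I would identify $\Rep(G)$ (as a factorization algebra category on $\Ranp_Y$) with $\QCoh(\LocSys_G(D)_{\Randr_Y})$, as recalled in Section~\ref{section: statement} via \cite[Lemma~9.8.1]{cpsii}. The factorization prestack $\LocSys_G(D)_{\Randr}$ is built from $\Omega^1_Y(D)_{\Randr_Y}$ and $G(D)_{\Randr_Y}$, both of which depend only on the formal disc $D_s$ of a point $s$; in particular, the construction is entirely local on the curve. Hence $f^{!}$ applied to this crystal of categories on $\Randr_Y$ agrees with the analogously defined object on $\Randr_X$, i.e.\ one gets a canonical identification $f^!(\Rep(G)_Y) \simeq \Rep(G)_X$ as factorization algebra categories. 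This is the content that actually uses the fact that $f$ is a map of curves (so formal discs on $Y$ pull back to formal discs on $X$).

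Combining these two steps yields
$$ \Rep(G)\text{-}\mathbf{FactModCat}(\Ranp_{Y,y}) \;\simeq\; f^!(\Rep(G))\text{-}\mathbf{FactModCat}(\Ranp_{X,x}) \;\simeq\; \Rep(G)\text{-}\mathbf{FactModCat}(\Ranp_{X,x}), $$
as desired. The potential difficulty is checking that the identification $f^!(\Rep(G)_Y) \simeq \Rep(G)_X$ is not merely an equivalence of crystals of categories but is actually compatible with the unit and multiplication data of the factorization algebra category structure; however, both pieces of data are built tautologically from local data on formal (multi)discs, which transport cleanly under $f^!$ because $f$ induces isomorphisms of the relevant formal neighborhoods once it is etale at $x$ (and we only need to pull back along etale maps of affine schemes at the level of $\Randr_X \to \Randr_Y$, for which the appendix provides the formalism).
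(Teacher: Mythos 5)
Your overall strategy is right, and the content you pin down---the identification $f^\bullet(\Rep(G)_Y) \simeq \Rep(G)_X$ as factorization algebra categories---is the genuine input beyond the theorem. But there is a gap in your invocation of Theorem \ref{main etale pullback}: the whole of Appendix \ref{appendix: etale factorization}, including the construction of $\stackrel{\circ}{{\Ranp}_{X}}$, $q$, and the functor $f^\bullet$, is under the standing hypothesis that $f$ is etale \emph{everywhere} (the definition of $\stackrel{\circ}{{\Ranp}_{X}}$ needs $V := (X\times X)\setminus(X\times_Y X\setminus\Delta)$ to be open in $X^2$, i.e.\ $\Delta$ open in $X\times_Y X$, i.e.\ $f$ unramified on all of $X$). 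The corollary only assumes $f$ etale at the single point $x$, and this weaker hypothesis is actually used in the subsequent reduction step, since a smooth affine curve need not admit a globally etale map to $\bA^1$. To close the gap you must first replace $X$ by an open subcurve $X' \ni x$ on which $f$ is etale, and argue that restriction along $\Ranp_{X',x}\hookrightarrow\Ranp_{X,x}$ induces an equivalence of factorization module categories for the constant factorization algebra category $\Rep(G)$; this is exactly the open-embedding descent step (the one using $\tilde j^\bullet$) that already appears inside the proof of Theorem \ref{main etale pullback}. Then apply the theorem to $f|_{X'}\colon X'\to Y$.

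On the identification $f^\bullet(\Rep(G)_Y)\simeq\Rep(G)_X$ itself, your detour through $\QCoh(\LocSys_G(D)_{\Randr})$ is correct but heavier than needed. By Example \ref{ex: commutative fact cat}, $\Rep(G)$ is a \emph{constant} crystal of categories whose factorization data is supplied by the symmetric monoidal structure on $\Rep(G)$ alone; both $q^\bullet$ and $p^\bullet$ carry the respective constant crystals to the constant crystal on $\stackrel{\circ}{{\Ranp}_{X}}$ compatibly with unit and multiplication, so the identification is essentially tautological and needs no discussion of how formal discs transport along $f$.
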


\subsubsection{} Note that every affine smooth curve $X$ admits a map $f: X \rightarrow \bA^1$ etale at $x \in X$. So we see that to reduce the problem to the case of  $\bA^1$ it suffices to check that the following diagram commutes: 

\begin{equation}\label{compatibility curves diagram}
	\begin{tikzcd}
	&   	\Rep(G)-\mathbf{FactModCat}(\Ranp_{\bA^1, 0}) \ar[d, dash, "\cong"']  \\
	\QCoh(\LocSys_G^{restr}(D^{\circ}))\mathbf{-ModCat}  \ar[ur, rightarrow, ""']\ar[r, rightarrow, ""']&   \Rep(G)-\mathbf{FactModCat}(\Ranp_{X, x}).
	\end{tikzcd}
\end{equation}

We conclude commutativity of (\ref{compatibility curves diagram}) by the following statement:

\begin{lm}
	Under the isomorphism (\ref{pullback etale at a point}) the object $$\QCoh({\LocSys_G}(D^{\circ}_{\Ranp_{\bA^1}}))$$ maps to 
	$$\QCoh({\LocSys_G}(D^{\circ}_{\Ranp_{X}})).$$
\end{lm}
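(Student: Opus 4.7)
The equivalence \eqref{pullback etale at a point} comes from Theorem \ref{main etale pullback} in Appendix \ref{appendix: etale factorization}. At the level of underlying crystals of $\Rep(G)$-module categories, that pullback is obtained by $!$-pullback along the morphism $\Ranp_{X,x} \to \Ranp_{Y,y}$ induced by $f$, together with compatibility data encoding the $\Rep(G)$-factorization structure (which by Appendix \ref{appendix: etale factorization} is itself transported by étale pullback from $Y$ to $X$). To prove the lemma I will exhibit a canonical isomorphism of lax prestacks
\[
\LocSys_G(D^{\circ}_{\Ranp_X}) \xrightarrow{\sim} \LocSys_G(D^{\circ}_{\Ranp_Y}) \times_{\Ranp_{Y,y}} \Ranp_{X,x},
\]
compatible with the $\LocSys_G(D)_{\Randr}$-module structures on both sides, and then apply $\QCoh$.

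The key geometric input is that $f$ being étale at $x$ gives a canonical isomorphism of formal completions $X^{\wedge}_x \xrightarrow{\sim} Y^{\wedge}_y$, and consequently of formal punctured discs $D^{\circ}_x \xrightarrow{\sim} D^{\circ}_y$. Unpacking Definition \ref{forms factorizably}, for $s \in \Ranp_{X,x}(S)$ decomposed as $s = \{x\} \sqcup s'$ with $s' \subset (X \setminus x)(S)$, the open subscheme $D_s \times_X U$ splits (étale locally) as $D^{\circ}_x \sqcup D_{s'}$, while the analogous splitting on the $Y$ side gives $D^{\circ}_y \sqcup D_{f(s')}$. The first summand matches via the étaleness at $x$, and the second summand matches via the étale-pullback identification of $\LocSys_G(D)_{\Randr}$ established in Appendix \ref{appendix: etale factorization} (which applies since the comparison for $s'$ only uses the factorization algebra side, away from the puncture). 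Assembling these splittings yields the desired comparison on $\Omega^1(D^{\circ}_{\Ranp})$ and $G(D^{\circ}_{\Ranp})$, and hence on the quotient $\LocSys_G(D^{\circ}_{\Ranp})$.

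The main obstacle is global compatibility: checking that the fiberwise identification is functorial in $s$ and preserves the higher coherences of the $\Ranp_{Y,y}$-module structure on $\LocSys_G(D^{\circ}_{\Ranp_Y})$ (Construction \ref{constr-Ranu-module}) as well as the commutative algebra structure on $\Ranp$ (Construction \ref{constr-Ran-commalg}). Once the pointwise identification is in place, this is routine: the formal-disc isomorphism $X^{\wedge}_x \cong Y^{\wedge}_y$ and the étale-pullback identification of $\LocSys_G(D)_{\Randr}$ are themselves natural with respect to inclusions of finite subsets, and the factorization isomorphism near the puncture is controlled entirely by the marked point where $f$ is étale. Passing to $\QCoh$ and invoking the $\Rep(G)$-factorization module structure constructed in Section \ref{construction of the main functor} then concludes that $\QCoh(\LocSys_G(D^{\circ}_{\Ranp_{\bA^1}}))$ is sent to $\QCoh(\LocSys_G(D^{\circ}_{\Ranp_X}))$ under \eqref{pullback etale at a point}.
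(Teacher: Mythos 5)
Your proposal starts from the claim that there is a canonical isomorphism of lax prestacks
\[
\LocSys_G(D^{\circ}_{\Ranp_X}) \xrightarrow{\sim} \LocSys_G(D^{\circ}_{\Ranp_Y}) \times_{\Ranp_{Y,y}} \Ranp_{X,x},
\]
and this is false as stated. If $s \in \Ranp_{X,x}(S)$ contains two distinct points $x_1 \neq x_2$ with $f(x_1) = f(x_2)$, then the multi-disc around $s$ in $X$ and the multi-disc around $f(s)$ in $Y$ (where the images collide) are genuinely different, so the two sides of your claimed isomorphism do not have the same $S$-points. The étale-pullback equivalence of Theorem \ref{main etale pullback} does \emph{not} work over all of $\Ranp_{X,x}$: it factors through the open lax subprestack $\stackrel{\circ}{\Ranp_{X,x}}$ of tuples whose images under $f$ are pairwise distinct, via the roof $\Ranp_{X,x} \xleftarrow{\tilde p} \stackrel{\circ}{\Ranp_{X,x}} \xrightarrow{\tilde q} \Ranp_{Y,y}$ and the functors $\tilde p^{\bullet}, \tilde q^{\bullet}$. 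What must actually be shown is that $\tilde p^{\bullet}\QCoh(\LocSys_G(D^{\circ}_{\Ranp_X})) \simeq \tilde q^{\bullet}\QCoh(\LocSys_G(D^{\circ}_{\Ranp_Y}))$ over $\stackrel{\circ}{\Ranp_{X,x}}$ only; the paper's proof works with $({X_{\dR}^I \times x})^{\circ}$ rather than all of $X_{\dR}^I \times x$ precisely for this reason.

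The other missing ingredient is the actual identification of multi-discs on that locus. Your ``splitting'' heuristic $D_s \times_X U \cong D^{\circ}_x \sqcup D_{s'}$ only makes sense for $s'$ disjoint from $x$ and with pairwise distinct images, and even then it reduces the problem rather than solving it: one still has to compare $D_{x^I,\nabla}$ on $X$ with $D_{y^I,\nabla}$ on $Y$. The paper does this by reducing, via Construction \ref{fake}, to showing that the induced map of reduced graphs $\Gamma_f \colon \Gamma_{x^{I,\redu}} \to \Gamma_{y^{I,\redu}}$ is an isomorphism, which it establishes through a concrete argument: $\Gamma_f$ is finite (by comparing with the finite maps $\pi_x, \pi_y$ from $\bigsqcup_{i \in (I+1)} S^{\redu}$ and using Noetherianity), and bijective on $k$-points (because the source lies in $({X_{\dR}^I \times x})^{\circ}$). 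None of this appears in your sketch, and invoking ``the étale-pullback identification of $\LocSys_G(D)_{\Randr}$ established in Appendix \ref{appendix: etale factorization}'' does not supply it, since that appendix sets up the general pullback formalism but does not prove a comparison for this particular module prestack — that comparison is exactly what this lemma is establishing.
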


\begin{proof}
	Without loss of generality we may assume that $f$ is etale and surjective, and $f^{-1}(y)=\{x\}$. Recall the definition of prestacks in question via Example \ref{main example discs}. Let $({X_{\dR}^I \times x})^{\circ}$ be the pullback

		\[
	\begin{tikzcd}
	({X_{\dR}^I \times x})^{\circ}\ar[r, rightarrow, ""']\ar[d, rightarrow, ""']&   	X_{\dR}^I \times x \ar[d, rightarrow, ""']  \\
	\stackrel{\circ}{{\Ranp}_{X_{\dR}, x}} \ar[r, rightarrow, ""']&   \Ranp_{X_{\dR}, x}.
	\end{tikzcd}
	\]
	Our goal is to show that 
	$$\hormerxJets(\BG^{\naive})_{X_{\dR}^I \times x} \times_{(X_{\dR}^I \times x)} ({X_{\dR}^I \times x})^{\circ}$$
	is isomorphic to 
	$$\hormerxJets(\BG^{\naive})_{Y_{\dR}^I \times y} \times_{(Y_{\dR}^I \times y)} ({X_{\dR}^I \times x})^{\circ}.$$
	Let $x^I: S \rightarrow ({X_{\dR}^I \times x})^{\circ} \subset X_{\dR}^I \times x$ and denote by $y^I$ composition $S \rightarrow Y_{\dR}^I \times y$. We will show that 
	\begin{equation}
		D_{x^I, \nabla} \times_{X_{\dR}} (X - x)_{\dR} \cong D_{y^I, \nabla}  \times_{Y_{\dR}} (Y - y)_{\dR}.
	\end{equation}

To do that it suffices to show that there exists 
\begin{equation}
	D_{x^I, \nabla}  \xrightarrow{\cong}D_{y^I, \nabla} 
\end{equation} that makes the diagram 

	\[
\begin{tikzcd}
D_{x^I, \nabla} \ar[r, rightarrow, ""']\ar[d, rightarrow, "\cong"']&   	X_{\dR} \ar[d, rightarrow, ""']  \\
D_{y^I, \nabla}  \ar[r, rightarrow, ""']&   Y_{\dR}
\end{tikzcd}
\]
commutative. From the Definition \ref{fake} it follows that we need to show that $\Gamma_f: \Gamma_{x^{I, \redu}} \xrightarrow{\cong} \Gamma_{y^{I, \redu}}$. We claim that this map is finite. Indeed, we have 
	\[
\begin{tikzcd}
\bigsqcup_{i \in (I+1)} S^{\redu} \ar[r, rightarrow, "\pi_x"']\ar[dr, rightarrow, "\pi_y"']&   	\Gamma_{x^{I, \redu}} \ar[d, rightarrow, ""']  \\
 &   \Gamma_{y^{I, \redu}}
\end{tikzcd}
\]
with the obvious maps $\pi_x$ and $\pi_y$. Now we note that all schemes are affine, $\cO_{\Gamma_{y^{I, \redu}}}$ is Noetherian, $\pi_y$ is finite, and $\cO_{\Gamma_{x^{I, \redu}}} \subset \cO_{\bigsqcup_{i \in (I+1)} S^{\redu}}$, which gives that $\Gamma_f$ is finite. It is left to check that $\Gamma_f$ is bijection of $k$-points, but this follows from the fact that $x$ mapped to $({X_{\dR}^I \times x})^{\circ} \subset X_{\dR}^I \times x$.
\end{proof}

\begin{cor}
	Under the isomorphism (\ref{pullback etale at a point}) the object $$\QCoh(\LocSys_G^{\restr}(D^{\circ}) \times_{\LocSys_G(D^{\circ})} {\LocSys_G}(D^{\circ}_{\Ranp_{\bA^1}}))$$ maps to 
	$$\QCoh(\LocSys_G^{\restr}(D^{\circ}) \times_{\LocSys_G(D^{\circ})} {\LocSys_G}(D^{\circ}_{\Ranp_{X}})).$$
\end{cor}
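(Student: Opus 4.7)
The plan is to deduce the corollary from the preceding lemma by applying the fiber product $(-)\times_{\LocSys_G(D^{\circ})}\LocSys_G^{\restr}(D^{\circ})$ on both sides, and then passing to $\QCoh$. The only thing to verify is that this fiber product is compatible with the isomorphism of prestacks underlying the lemma, i.e., that the square
\[
\begin{tikzcd}
\LocSys_G(D^{\circ}_{\Ranp_{\bA^1}}) \ar[r] \ar[d, "\cong"'] & \LocSys_G(D^{\circ}) \ar[d, equal] \\
\LocSys_G(D^{\circ}_{\Ranp_{X}}) \ar[r] & \LocSys_G(D^{\circ})
\end{tikzcd}
\]
commutes, where the horizontal arrows are the canonical restriction maps to the core at $x$ (which induce the $\QCoh(\LocSys_G(D^{\circ}))$-module structures entering the definition of $\bFact$), and the left vertical isomorphism is the one constructed in the preceding lemma.

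First I would unpack the horizontal maps. Recall from Section~\ref{construction of the main functor} that the action of $\QCoh(\LocSys_G(D^{\circ}))$ on $\QCoh(\LocSys_G(D^{\circ}_{\Ranp}))$ comes from the map induced by $D_{x_A}\to D_s$ for any $s\in\Randr(A)$ refining a constant family $x_A$. Under the identification $$D_{x^I,\nabla}\times_{X_{\dR}}(X-x)_{\dR}\;\cong\;D_{y^I,\nabla}\times_{Y_{\dR}}(Y-y)_{\dR}$$ established in the proof of the preceding lemma, the degenerate locus where $I$ collapses to a single point over $x$ is matched on both sides with the formal punctured disc $D^{\circ}$ at $x=y$. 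So the two horizontal restriction maps to $\LocSys_G(D^{\circ})$ agree after applying the vertical isomorphism, which gives commutativity of the square.

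Given the commutative square, fibering with $\LocSys_G^{\restr}(D^{\circ})\to\LocSys_G(D^{\circ})$ on the right column produces an isomorphism of prestacks
\[
\LocSys_G^{\restr}(D^{\circ})\times_{\LocSys_G(D^{\circ})}\LocSys_G(D^{\circ}_{\Ranp_{\bA^1}})
\;\cong\;
\LocSys_G^{\restr}(D^{\circ})\times_{\LocSys_G(D^{\circ})}\LocSys_G(D^{\circ}_{\Ranp_{X}}),
\]
compatible with the residual factorization $\Rep(G)$-module structure coming from the right-hand factor, since all constructions are functorial in the base. Applying $\QCoh$ yields the corollary. The only subtle point is to verify that this identification respects the factorization module category structure under (\ref{pullback etale at a point}); but this is automatic because the previous lemma was itself stated at the factorization level, and fibering over $\LocSys_G(D^{\circ})$ (a structure depending only on the core at $x$) is manifestly functorial in factorization module categories.
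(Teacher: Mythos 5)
The paper provides no explicit proof of this corollary, treating it as immediate from the preceding lemma; your proposal fills that gap with the right idea. The essential point you identify — that one must check the identification from the lemma is compatible with the commuting $\QCoh(\LocSys_G(D^{\circ}))$-action defining the fiber products, and that this follows because the equivalence~(\ref{pullback etale at a point}) preserves the core at $x$ — is correct, and the base-change argument (fibering with $\LocSys_G^{\restr}(D^{\circ}) \to \LocSys_G(D^{\circ})$) then goes through.

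One point of imprecision, however: your commutative square is phrased as if there were an isomorphism of lax prestacks $\LocSys_G(D^{\circ}_{\Ranp_{\bA^1}}) \cong \LocSys_G(D^{\circ}_{\Ranp_{X}})$, but no such isomorphism exists — these live over the different Ran spaces $\Ranp_{\bA^1,0}$ and $\Ranp_{X,x}$, and~(\ref{pullback etale at a point}) is not a map of Ran spaces but an equivalence of factorization module categories constructed via the intermediate $\stackrel{\circ}{{\Ranp}_{X}}$. What the lemma's proof actually produces is an isomorphism of the restrictions to the open loci $({X_{\dR}^I \times x})^{\circ}$, i.e., an identification of the two objects in $\Rep(G)\mathbf{-FactModCat}(\stackrel{\circ}{{\Ranp}_{X,x}})$. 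Your square should be interpreted there. Since the constant point $\{x\}\subseteq\{x\}$ lies in this open locus, the core restriction factors through it, and the isomorphism over the open locus is visibly compatible with further restriction to the core. With that correction your argument is sound and matches what the paper implicitly intends.
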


\begin{rem}
	This argument reduced Conjecture \ref{intro conj} to the case of $X = \bA^1, x= 0$.
\end{rem}
For the duration of the main body of the paper we fix a notation $X = \bA^1$, $x= 0$, $U = \bA^1 \setminus x$.

\subsection{Reduction to the affine case I}
Let $\BC_1$ and $\BC_2$ be DG categories acted on by $\QCoh(\LocSys_G^{\restr}(D^{\circ})$.
Theorem \ref{mainthm} asserts that the functor 
\begin{equation}\label{mainassertion}
\on{Fun}_{\QCoh(\LocSys_G^{\restr}(D^{\circ}))}(\BC^1, \BC^2 ) \rightarrow  \on{FactFun}_{\Rep(G)}(\bFact^{\restr}(\BC^1), \bFact^{\restr}(\BC^2))
\end{equation}
is an equivalence.

\subsubsection{} In this subsection we will show that it suffices to prove (\ref{mainassertion}) when $\BC^1 \cong \QCoh(R_1)$ with $R_1$ an affine scheme, where the action of $\QCoh(\LocSys_G^{\restr}(D^{\circ})$ comes from a pullback along some map $R_1 \rightarrow \LocSys_G^{\restr}(D^{\circ})$. We begin with the following immediate property:
\begin{lm}
	The functor $\bFact^{\restr}$ commutes with small colimits.
\end{lm}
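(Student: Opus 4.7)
The strategy is to observe that $\bFact^{\restr}$ is by construction a relative tensor product against a fixed bimodule, and to invoke the standard colimit-preservation properties of such tensor products. Concretely, the construction in \ref{construction of the main functor} gives
$$\bFact^{\restr}(\BM) \cong \bFact^{\restr}(\QCoh(\LocSys_G^{\restr}(D^{\circ}))) \otimes_{\QCoh(\LocSys_G^{\restr}(D^{\circ}))} \BM,$$
where the first factor is regarded as an object carrying commuting actions of $\QCoh(\LocSys_G^{\restr}(D^{\circ}))$ (on the left) and of $\Rep(G)$ in the factorization sense (on the right). So the plan reduces to showing that tensoring on the right with a fixed bimodule preserves colimits.

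Next I would deduce preservation of colimits from the standard model of the relative tensor product. The functor $-\otimes_{\QCoh(\LocSys_G^{\restr}(D^{\circ}))}-$ is realized as the geometric realization of the two-sided bar construction, whose terms are absolute Lurie tensor products in $\mathbf{DGCat}$. The Lurie tensor product preserves colimits separately in each variable, and geometric realizations commute with colimits, so $\BM \mapsto \bFact^{\restr}(\QCoh(\LocSys_G^{\restr}(D^{\circ}))) \otimes_{\QCoh(\LocSys_G^{\restr}(D^{\circ}))} \BM$ preserves colimits when viewed as a functor into $\mathbf{DGCat}$.

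The remaining bookkeeping step is to transport this from $\mathbf{DGCat}$ back to the 2-categories of module categories. Concretely, I would verify that the forgetful functors
$$\QCoh(\LocSys_G^{\restr}(D^{\circ}))\mathbf{-ModCat} \to \mathbf{DGCat}, \qquad \Rep(G)\mathbf{-FactModCat} \to \mathbf{DGCat}$$
appearing in Theorem \ref{mainthm} detect colimits. For the source, this is a formal property of modules over a presentable monoidal DG-category. For the target, one uses the description of $\Rep(G)\mathbf{-FactModCat}$ via crystals of categories on $\Ranp$ with factorization data (cf.\ Notation \ref{factmodcat}): colimits of crystals of categories are computed pointwise over each $S \to \Ranp$, and the factorization isomorphisms are preserved automatically because they are prescribed by invertible natural transformations. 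I do not expect any substantive obstacle; the whole argument is essentially formal and simply packages the fact that $\bFact^{\restr}$ is a tensor functor compatible with the forgetful functors on both sides.
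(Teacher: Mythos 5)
Your proof is correct and supplies exactly the reasoning the paper leaves implicit: the paper states this lemma without proof as "the following immediate property," and the immediacy is precisely that $\bFact^{\restr}$ is the relative tensor product against a fixed bimodule, which preserves colimits by the bar-construction argument, together with the fact that colimits in both $(\infty,2)$-categories of module categories are computed on underlying DG categories (pointwise over $\Ranp$ on the target side). Your argument matches the intended one; no gap.
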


\subsubsection{} Thus we may assume that $\BC^1 \cong \QCoh( \LocSys_G^{\restr}(D^{\circ})) \otimes \BC_0^1$, where $\BC_0^1$ is a plain DG category. 
Suppose $\BM^1:=\BN\otimes \BM_0$ such that the action of $\QCoh( \LocSys_G^{\restr}(D^{\circ})) $ comes from the first factor. Then we have
$$\on{Fun}_{\QCoh(\LocSys_G^{\restr}(D^{\circ}))}(\BM^1,\BM^2)  \simeq \on{Fun}(\BM_0,\on{Fun}_{\QCoh(\LocSys_G^{\restr}(D^{\circ}))}(\BN,\BM^2))$$
and
\[
\on{FactFun}_{\Rep(G)}( \bFact^{\restr}(\BM^1),\bFact^{\restr}(\BM^2) ) \simeq \on{Fun}(\BM_0,\on{FactFun}_{\Rep(G)}( \bFact^{\restr}(\BN),\bFact^{\restr}(\BM^2) )).
\]

\subsubsection{} Thus we may assume that $\BC_0^1$ is $\Vect$. 
By  Proposition \ref{properties of LS} we have $\LocSys_G^{\restr}(D^{\circ}) \cong ((\sqcup Y_i)/G)^{\et}$, where $Y_i$ are formal affine schemes, so we can write $$ \QCoh( \LocSys_G^{\restr}(D^{\circ})) \cong   \QCoh(\sqcup Y_i)^{G, w},$$
where superscript $w$ stands for weak invariants in the sense of \cite{B}.

\begin{lm}\label{inv vs coinv}
	Let $\BA$ be a symmetric monoidal category, $\BC \in \BA\mathbf{-ModCat}$. Assume also that $\BC$ admits a weak action of a group $G$ of finite type. Then $$\BC^{G, w} \cong \BC_{G, w}$$ as $\BA$-modules.
\end{lm}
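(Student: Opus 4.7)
The plan is to reinterpret the weak $G$-action on $\BC$ as a $\Rep(G)$-module category structure in the formalism of \cite{B}, under which by definition
\[
\BC^{G,w} = \Hom_{\Rep(G)}(\Vect,\BC), \qquad \BC_{G,w} = \Vect\otimes_{\Rep(G)}\BC,
\]
where $\Vect$ carries the trivial $\Rep(G)$-module structure coming from the forgetful functor $\oblv:\Rep(G)\to\Vect$ (equivalently, from pullback along $\pt \to BG$). Both sides inherit $\BA$-module structures from $\BC$, and any natural equivalence constructed at the level of $\Rep(G)$-modules will automatically be $\BA$-linear, because the $\BA$-action on $\BC$ commutes with the $\Rep(G)$-action by hypothesis.

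The substantive content of the lemma is therefore the self-duality of $\Vect$ as a $\Rep(G)$-module category, which holds for $G$ connected reductive (the standing assumption on $G$ in this paper). Granting this, one obtains the canonical chain of equivalences
\[
\BC^{G,w} \;=\; \Hom_{\Rep(G)}(\Vect,\BC) \;\simeq\; \Vect^{\vee}\otimes_{\Rep(G)}\BC \;\simeq\; \Vect\otimes_{\Rep(G)}\BC \;=\; \BC_{G,w},
\]
proving the lemma. The self-duality of $\Vect$ over $\Rep(G)$ is equivalent to categorical properness of the morphism $\pt \to BG$, i.e., to the statement that $\oblv:\Rep(G)\to\Vect$ admits a continuous left adjoint which coincides with its right adjoint, up to the canonical trivialization of the dualizing line on $BG$ available for reductive $G$ in characteristic zero. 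This is one of the foundational results of \cite{B}, so the main (and only) step is to invoke it; once in hand, both the equivalence and its $\BA$-linearity are formal, since every morphism in the chain above is constructed purely from the $\Rep(G)$-module structure, which commutes with the $\BA$-action throughout. I do not expect any serious obstacle beyond correctly citing this self-duality.
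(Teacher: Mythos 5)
The paper's proof is a one-line citation: it invokes \cite[Corollary 2.3.5]{B} for the case $\BA=\Vect$ (which gives $\BC^{G,w}\simeq \BC_{G,w}$ as plain DG categories for any affine group $G$ of finite type) and observes the $\BA$-linear version follows directly since the comparison is built from the $G$-action, which commutes with the $\BA$-action. Your proposal tries to re-derive the $\BA=\Vect$ case, but there is a genuine error in the setup. A weak $G$-action on $\BC$ in Beraldo's formalism is \emph{not} a $\Rep(G)$-module category structure on $\BC$; it is a module structure over $\QCoh(G)$ equipped with the convolution monoidal structure. Accordingly, the correct formulas are $\BC^{G,w}\simeq \Hom_{\QCoh(G)}(\Vect,\BC)$ and $\BC_{G,w}\simeq\Vect\otimes_{\QCoh(G)}\BC$, where $\Vect$ is a $\QCoh(G)$-module via restriction along $\{e\}\hookrightarrow G$. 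It is true that $1$-affineness of $BG$ gives an equivalence between the $2$-category of DG categories with weak $G$-action and $\Rep(G)\mathbf{-ModCat}$, but that equivalence sends $\BC$ to $\BC^{G,w}$, not to $\BC$ itself; so the identity $\BC^{G,w}=\Hom_{\Rep(G)}(\Vect,\BC)$ is not ``by definition,'' and your chain of equivalences does not compute the invariants and coinvariants of the weak $G$-action. The self-duality you would actually need concerns $\Vect$ as a module over the convolution category $\QCoh(G)$, which is a different assertion from $\Vect$ being self-dual over $(\Rep(G),\otimes)$.

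A secondary problem is the restriction to $G$ connected reductive. The lemma is stated, and Beraldo's corollary holds, for an arbitrary affine group scheme $G$ of finite type. The relevant input is a rigidity/self-duality property of $\QCoh(G)$ with convolution coming from $G$ being smooth affine (together with translation-invariance of $\omega_G$), not properness of $\pt\to BG$ in the reductive sense. By appealing to reductivity you would prove a weaker statement than the lemma asserts. Once you replace $\Rep(G)$ by $\QCoh(G)$, correctly set up the augmentation module, and use the general finite-type argument, your closing observation about $\BA$-linearity (that every functor in the comparison is built from the $\QCoh(G)$-module structure, which commutes with the $\BA$-action) is essentially what the paper means by ``follows directly from the case $\BA=\Vect$''; but as written the proposal does not reach that point.
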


\begin{proof}
	Follows directly from the case $\BA = \Vect$ (see \cite[Corollary 2.3.5]{B}). 
\end{proof}

\subsubsection{} Hence by Lemma \ref{inv vs coinv} we can reduce to the case $\BC^1 \cong \QCoh(\sqcup Y_i)_{G, w}$. Then we can write $\QCoh(\sqcup Y_i)_{G, w}$ as a colimit of the bar resolution

\begin{equation}\label{bar}
	\begin{tikzcd}\ldots \arrow[r, shift right=1.80ex]\arrow[r, shift left=1.80ex]  \arrow[r, shift left=0.65ex] \arrow[r, shift right=0.65ex] & \QCoh(G) \otimes \QCoh(G) \otimes \QCoh(\sqcup Y_i)\arrow[r, shift left=1.3ex] \arrow[r, shift right=1.3ex] \arrow[r] &\QCoh(G) \otimes \QCoh(\sqcup Y_i) \arrow[r, shift left=0.65ex] \arrow[r, shift right=0.65ex] & \QCoh(\sqcup Y_i), \end{tikzcd}
\end{equation}
where the arrows are given by action, multiplication, and trivial action. Notice that since  $\QCoh( \LocSys_G^{\restr}(D^{\circ}))$ is semi-rigid all arrows in the diagram (\ref{bar}) are $ \QCoh( \LocSys_G^{\restr}(D^{\circ}))$-linear. Therefore we may assume that $$\BC^1 \cong \QCoh(G^n) \otimes\QCoh(\sqcup Y_i),$$ and thus that $$\BC^1 \cong \QCoh(G^{n} \times Y_i).$$

\subsubsection{} Now each $Y_i \times G^n$ is a formal affine scheme, so by \cite[Corollary 7.3.4]{AGKRRV2} we can write$$ \QCoh(G^{n} \times Y_i) \cong  \colim_{\AffSch_{/Y_i \times G^n}} \QCoh(S_i)$$ with affine $S_i$, where the maps are given by pushforwards. Using semi-rigidity of $\QCoh( \LocSys_G^{\restr}(D^{\circ}))$ we see that these maps are $ \QCoh( \LocSys_G^{\restr}(D^{\circ})$-linear, hence we obtain the desired reduction.

\subsection{Reduction to the affine case II} In this subsection we will further reduce to the case when $\BC^2 \cong \QCoh(R_2) \otimes \BC_0^2$. Here $R_2$ is an affine scheme, the action of $\QCoh(\LocSys_G^{\restr}(D^{\circ}))$ comes from a map $$R_2 \rightarrow \LocSys_G^{\restr}(D^{\circ}),$$ and $\BC_0^2$ is a plain DG category.

\begin{pr}\label{functor commutes with limits}
	The functor $\bFact^{\restr}$ commutes with small limits.
\end{pr}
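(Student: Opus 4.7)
The plan is to test the claim pointwise over $\Ranp_x$ and then to exploit the semi-rigidity of $\QCoh(\LocSys_G^{\restr}(D^{\circ}))$ established in Lemma \ref{semi-regidity compact generation}. Since limits in $\Rep(G)\mathbf{-FactModCat}$ are formed in the underlying $(\infty,2)$-category $\mathbf{CrysCat}(\Ranp_x)$ --- the factorization, multiplicativity, and unitality compatibilities being themselves expressed as limit conditions --- and since limits in $\mathbf{CrysCat}(\Ranp_x)$ are in turn computed fiberwise over test affine schemes $s : S \to \Ranp_x$ of finite type, it suffices to show that for every such $s$ the functor
\[
\BM \;\longmapsto\; \bFact^{\restr}(\BM)_s
\]
from $\QCoh(\LocSys_G^{\restr}(D^{\circ}))\mathbf{-ModCat}$ to $\D(S)\mathbf{-ModCat}$ preserves small limits.

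Next, unwinding the construction of Section \ref{construction of the main functor}, this fiber is given by
\[
\bFact^{\restr}(\BM)_s \;\simeq\; \QCoh(\cX_s) \otimes_{\QCoh(\LocSys_G^{\restr}(D^{\circ}))} \BM,
\]
where $\cX_s$ denotes the fiber at $s$ of the prestack $\LocSys_G(D^{\circ}_{\Ranp})^{\et} \times_{\LocSys_G(D^{\circ})^{\et}} \LocSys_G^{\restr}(D^{\circ})$, regarded as a module over $\QCoh(\LocSys_G^{\restr}(D^{\circ}))$ via the projection onto the second factor. Thus the question reduces to showing that the endofunctor $\QCoh(\cX_s) \otimes_{\QCoh(\LocSys_G^{\restr}(D^{\circ}))} (-)$ on $\QCoh(\LocSys_G^{\restr}(D^{\circ}))\mathbf{-ModCat}$, followed by the forgetful functor to $\D(S)\mathbf{-ModCat}$, commutes with limits.

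The latter is where semi-rigidity enters: for a semi-rigid symmetric monoidal category $\BA$, limits of $\BA$-module categories are computed on underlying DG categories, and tensoring over $\BA$ with a fixed module $\BN$ commutes with such limits. This reflects the general fact that semi-rigidity identifies the relative tensor product $(-)\otimes_\BA \BN$ with an internal Hom functor out of a suitable dual, and internal Homs preserve limits in their target. Applying this with $\BA = \QCoh(\LocSys_G^{\restr}(D^{\circ}))$ and $\BN = \QCoh(\cX_s)$ completes the argument.

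The main technical obstacle is the last step: justifying precisely the general limit-preservation property of tensor products over semi-rigid monoidal categories, and confirming that $\QCoh(\cX_s)$ falls into the class of modules for which this applies. This rests on the formalism of \cite[Appendix C]{AGKRRV2} together with the 1-affineness of $\LocSys_G^{\restr}(D^{\circ})$ (Corollary \ref{1-affinnes of LS^r}), which allows the tensor products in question to be interpreted geometrically as $\QCoh$ of appropriate fiber products.
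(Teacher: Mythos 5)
Your overall strategy matches the paper's: reduce the claim to showing that, after appropriate fiberwise restriction, the bimodule $\bFact^{\restr}(\QCoh(\LocSys_G^{\restr}(D^{\circ})))$ has dualizable fibers, then invoke semi-rigidity of $\QCoh(\LocSys_G^{\restr}(D^{\circ}))$ to conclude. The first half of your argument (working fiberwise, identifying the fiber as $\QCoh(\cX_s)\otimes_{\QCoh(\LocSys_G^{\restr}(D^{\circ}))}\BM$) is sound and is essentially the paper's reduction.

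However, the last step as you have written it contains a genuine gap, and moreover you assert something false as a general principle. It is not true that ``for a semi-rigid symmetric monoidal $\BA$, tensoring over $\BA$ with a fixed module $\BN$ commutes with limits'': already for $\BA=\Vect$, the Lurie tensor product $\BN\otimes(-)$ fails to preserve limits unless $\BN$ is a dualizable DG category. What semi-rigidity of $\BA$ actually buys is a criterion: an $\BA$-module category $\BN$ is dualizable \emph{as an $\BA$-module} if (and only if) its underlying plain DG category is dualizable. So the entire content of the proposition rests on checking that the fibers $\bFact^{\restr}(\QCoh(\LocSys_G^{\restr}(D^{\circ})))_{X_{\dR}^I\times\{x\}}$ are dualizable as plain DG categories — and this is precisely what you leave unverified. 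Your appeal to 1-affineness of $\LocSys_G^{\restr}(D^{\circ})$ and the geometric interpretation of the tensor product as $\QCoh$ of a fiber product does not by itself establish dualizability; $\QCoh$ of a fiber product of laft prestacks need not be dualizable without further hypotheses.

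The paper fills exactly this gap by a nontrivial detour: it first identifies $\bFact^{\restr}(\QCoh(\LocSys_G^{\restr}(D^{\circ})))$ with a factorization restriction $\mathbf{Res}_{\Phi^{\univ}}^{\LocSys_G^{\restr}(D^{\circ})}(\Vect^{\LocSys_G^{\restr}(D^{\circ})})$ (Proposition \ref{restriction}), and then deduces dualizability of its fibers from a general lemma (Lemma \ref{retsriction and dualizability}) stating that factorization restriction along an adjunction between factorization algebra categories with dualizable strata, applied to a module with dualizable strata, again produces dualizable strata. This lemma in turn rests on writing the restriction as a limit of dualizable categories with ambidextrous transition functors and on $\mathbf{DGCat}$-linearity of $\mathbf{Res}^S_{\Phi}$ (Corollary \ref{factres linear}). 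None of this is automatic, and you need to supply it (or an equivalent) to close the argument.
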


\begin{lm}\label{retsriction and dualizability}
	Let $S$ be a prestack such that $\QCoh(S)$ is semi-rigid. 
		Let 
	\[
	\Phi: \BA \rightleftarrows \BA': \Psi
	\]
	be an adjunction in $\mathbf{FactAlgCat}^S$. Let $\BC' \in \BA'\mathbf{-FactMod}^S$. Assume that restrictions of $\BA$ and $\BA'$ to $X_{\dR}^I $ for every $I$ and restriction of $\BC'$ to every $X_{\dR}^I \times \{x\}$ for every $I$ are dualizable as plain DG categories. 
	Then restriction of $\mathbf{Res}_{\Phi}(\BC')$ to $X_{\dR}^I \times \{x\} $ is dualizable as a plain DG category for every $I$. 
\end{lm}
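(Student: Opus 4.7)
The approach is to reduce dualizability of $\mathbf{Res}_\Phi(\BC')|_{X_{\dR}^I \times \{x\}}$ to the assumed dualizability of the data $\BA$, $\BA'$, and $\BC'$ over the relevant strata, by producing an explicit stratum-wise description of the factorization restriction. The starting observation is that by the lemma preceding Theorem \ref{thm-basic-adj}, any left adjoint in $\mathbf{FactAlgCat}^S$ is automatically strictly unital; in particular $\Phi$ is strictly unital, so the comparison map identifies $\Phi(\on{unit}_\BA) \simeq \on{unit}_{\BA'}$. Applying Lemma \ref{lem-mod-for-unit-factalg} in combination with the equivalence (\ref{eqn-thmdefn-factres}) then shows that the core of $\mathbf{Res}_\Phi(\BC')$ is equivalent to $\BC'_x$, which is dualizable by hypothesis. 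The question is how to promote this core-level statement to all strata.

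The key step is to extend (\ref{eqn-thmdefn-factres}) from the single-point fiber to all restrictions over $X_{\dR}^I \times \{x\}$. Following the construction of $\mathbf{Res}_\Phi$ in \cite{CF}, I expect the value of $\mathbf{Res}_\Phi(\BC')$ over the stratum $X_{\dR}^I \times \{x\}$ to be realized as a totalization of a cosimplicial diagram whose $n$-th term is assembled, via insertion-restriction maps in the factorization structure, from $\BC'$ restricted to configurations containing $x$ together with tensor powers of the restrictions of $\BA$ and $\BA'$ to various $X_{\dR}^{I'}$. The strict unitality of $\Phi$, combined with the semi-rigidity hypothesis on $\QCoh(S)$, should cause this cosimplicial object to behave like the bar resolution one encounters in ordinary restriction of scalars along a unital map of algebras, so the dualizability of each term propagates through the totalization.

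Given such an explicit description, the conclusion follows from stability of dualizability under the operations at play: tensor products (for the individual cosimplicial terms, using the assumed dualizability of $\BA|_{X_{\dR}^I}$, $\BA'|_{X_{\dR}^I}$, and $\BC'|_{X_{\dR}^I \times \{x\}}$), together with the finite (co)limits appearing in the description, all of which are well-behaved in the semi-rigid context of $\QCoh(S)$. The main obstacle is precisely the first half of the second paragraph: producing the stratum-wise cosimplicial description rigorously by unpacking the construction of $\mathbf{Res}_\Phi$ in \cite{CF}, and verifying that strict unitality of $\Phi$ is enough to ensure the totalization collapses to a form where dualizability can be read off. Once the formula is in hand, the dualizability claim itself is essentially bookkeeping with the hypotheses.
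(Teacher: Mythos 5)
Your overall strategy — realizing $\mathbf{Res}_\Phi(\BC')|_{X_{\dR}^I \times \{x\}}$ as a limit built from restrictions of $\BA$, $\BA'$, $\BC'$ — matches the paper's approach, which invokes Remark \ref{restrictino as a limit} (the limit formula from \cite{CF}) to obtain exactly such a description, combined with the dualizability hypotheses and \cite[Lemma A.3.4]{C} to conclude the terms are dualizable. You correctly flag the difficulty of deriving the stratum-wise formula, but there is an additional gap you do not acknowledge, and it is the decisive one.

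You assert that dualizability propagates because the operations involved are "finite (co)limits", but the limit computing factorization restriction is a totalization over a generally infinite diagram, and dualizability of DG categories is \emph{not} stable under infinite limits. This is exactly where the paper's proof invokes Corollary \ref{factres linear} ($\mathbf{DGCat}$-linearity of $\mathbf{Res}^S_\Phi$), which you do not mention. The linearity lets one move an arbitrary test object $\BD$ inside the limit: $\mathbf{Res}^S_\Phi(\BC')|_I \otimes \BD \simeq \mathbf{Res}^S_\Phi(\BC' \otimes \BD)|_I \simeq \lim_\alpha (D_\alpha \otimes \BD)$. Since each $D_\alpha$ is dualizable, each functor $D_\alpha \otimes (-)$ preserves limits, hence so does the limit of these functors, and therefore $\mathbf{Res}^S_\Phi(\BC')|_I \otimes (-)$ preserves limits, which characterizes dualizability in $\mathbf{DGCat}$. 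Without the $\mathbf{DGCat}$-linearity input you cannot move $\BD$ past the limit and the argument does not close. Your opening observation about the core agreeing with $\BC'_x$ is a correct sanity check but is only the $I = \emptyset$ case and plays no role in the paper's proof.
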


\begin{proof}
	By Remark \ref{restrictino as a limit} and the conditions on  $\BC'$, $\BA$ and $\BA'$, and \cite[Lemma A.3.4]{C} we know that $\mathbf{Res}_{\Phi}(\BC')_{X_{\dR}^I \times \{x\}}$ is given as a limit of dualizable categories. Then the desired assertion follows from Corollary \ref{factres linear}.
\end{proof}

\begin{proof}[Proof of Proposition \ref{functor commutes with limits}]
	Recall (see \ref{construction of the main functor}) that $$\bFact^{\restr}(\BM):= \BM \otimes_{\QCoh(\LocSys_G^{\restr}(D^{\circ}))} \bFact^{\restr}(\QCoh(\LocSys_G^{\restr}(D^{\circ}))).$$
	Since by Lemma \ref{semi-regidity compact generation}  $\QCoh(\LocSys_G^{\restr}(D^{\circ}))$ is semi-rigid  it suffices to check that $$\bFact^{\restr}(\QCoh(\LocSys_G^{\restr}(D^{\circ})))_{X_{\dR}^I \times \{x\}}$$ is dualizable as a plain DG category for every $I$. 
	
	We claim that by Proposition \ref{restriction} we are in position to use Lemma \ref{retsriction and dualizability}. Indeed, the category $\Vect^{\LocSys_G^{\restr}(D^{\circ})}$ is dualizable on every stratum since by Lemma \ref{semi-regidity compact generation} the category $\QCoh(\LocSys_G^{\restr}(D^{\circ}))$ is dualizable. Dualizability of $$\Rep(G) ^{\LocSys_G^{\restr}(D^{\circ})}$$ on every strata follows from \cite[Corollary 6.18.2]{cpsi}and \cite[Lemma A.3.4]{C}.
	\end{proof}

\subsubsection{} By Barr-Beck-Lurie  theorem we can write $\BC^2 $ as a colimit of the diagram 

$${\begin{tikzcd}\ldots \arrow[r, shift left=1.3ex] \arrow[r, shift right=1.3ex] \arrow[r] &\QCoh( \LocSys_G^{\restr}(D^{\circ})) \otimes\QCoh( \LocSys_G^{\restr}(D^{\circ})) \otimes \BC^2\arrow[r, shift left=0.65ex] \arrow[r, shift right=0.65ex] & \QCoh( \LocSys_G^{\restr}(D^{\circ})) \otimes \BC^2, \end{tikzcd}}$$
where the maps are given by action and multiplication on $\QCoh( \LocSys_G^{\restr}(D^{\circ})) $. By definition of semi-rigidity and by  \cite[Lemma C.5.3]{AGKRRV2} these maps admit continuous $\QCoh( \LocSys_G^{\restr}(D^{\circ})) $-linear right adjoints. 

\begin{lm}\label{lims and colims in modules}
	Let $\BA$ be a semi-rigid symmetric monoidal category. Let $I$ be an $\infty$-category, let $i \mapsto \BC_i$ be a functor $I \rightarrow \BA\mathbf{-ModCat}$. For $\alpha: i \rightarrow j \in I$ denote the corresponding functor between $\BC_i$ and $\BC_j$ by $F_{\alpha}$. Assume that for each $\alpha$ the functor $$\Oblv(F_{\alpha}): \Oblv(\BC_i) \rightarrow \Oblv(\BC_j)$$ admits continuous right adjoint $\Oblv(G_{\alpha})$, which is then upgrades to an $\BA$-linear functor $G_{\alpha}$ by semi-rigidity. Then
	$$\underset{I, F}{\colim \text{ }}  \BC_i \cong \lim_{I^{\op}, G} \BC_i$$
	in $\BA\Mmod$. 
\end{lm}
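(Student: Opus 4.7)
The plan is to first establish the statement in plain $\DGCat$ via a classical adjoint-functor argument, and then to transport the equivalence back to $\BA\mathbf{-ModCat}$ using semi-rigidity of $\BA$.

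First, I would recall the classical fact in $\DGCat$: given a diagram $F: I \to \DGCat$ whose transition functors $F_\alpha$ admit continuous right adjoints $G_\alpha$, there is a canonical equivalence
\[
\underset{I, F}{\colim}\; \BC_i \iso \lim_{I^{\op}, G} \BC_i.
\]
The comparison map is constructed by observing that the structural morphisms $\iota_i: \BC_i \to \colim_I \BC_j$ are themselves continuous and so admit continuous right adjoints $\iota_i^R$, which by a routine diagram chase assemble into a compatible cone on the diagram $G$. One then checks this is an equivalence, e.g.\ by restricting both sides to compact objects and using the explicit description of limits and colimits of presentable stable categories.

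Next, I would upgrade this to $\BA\mathbf{-ModCat}$. The semi-rigidity of $\BA$ (used in the form of \cite[Appendix C]{AGKRRV2}, cf.\ \cite[Lemma C.5.3]{AGKRRV2}) supplies two structural properties. First, the continuous right adjoint of any $\BA$-linear functor between $\BA$-module categories is automatically $\BA$-linear. Second, the forgetful functor $\Oblv: \BA\mathbf{-ModCat} \to \DGCat$ preserves both limits and colimits. The second property identifies the underlying DG categories of $\colim_{I, F}\BC_i$ and $\lim_{I^{\op}, G}\BC_i$ with the corresponding colimit/limit in $\DGCat$; the first property ensures that the equivalence constructed in the previous paragraph is $\BA$-linear, since every morphism that enters the construction (the cocone maps, their right adjoints, and the $G_\alpha$'s themselves) is $\BA$-linear.

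The main delicate point will be invoking the correct form of semi-rigidity --- specifically, the fact that the continuous right adjoint of an $\BA$-linear functor actually lives in $\BA\mathbf{-ModCat}$ rather than merely in $\DGCat$, and the fact that the forgetful functor commutes with the relevant colimits. Both are direct consequences of the semi-rigid monoidal structure on $\BA$ and the dualizability of $\BA$ as a plain DG category; once these are in hand, the identification of $\colim_{I,F} \BC_i$ with $\lim_{I^{\op}, G} \BC_i$ in $\BA\mathbf{-ModCat}$ follows formally from the $\DGCat$ version, with no further computation required.
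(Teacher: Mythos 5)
Your proof is correct and follows essentially the same route as the paper's: first establish the statement at the level of underlying DG categories, then use that $\Oblv$ commutes with both limits and colimits together with semi-rigidity to promote the comparison map to an $\BA$-linear functor, and finally conclude. The one place where you are slightly more terse than the paper is at the very end: the paper explicitly constructs the $\BA$-linear functor $\colim_{I,F}\BC_i \to \lim_{I^{\op},G}\BC_i$ (as the cocone built from the $\BA$-linear left adjoints of the evaluation functors), checks that $\Oblv$ sends it to the $\DGCat$-equivalence, and invokes conservativity of $\Oblv$; your phrase ``the identification...follows formally'' compresses this last step, which is worth spelling out since ``a map that becomes an equivalence after $\Oblv$'' requires the conservativity argument to conclude.
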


\begin{proof}
	Let us first check that the underlying DG categories are equivalent. Since $$\Oblv:\BA\mathbf{-ModCat}\rightarrow \DGCat_{\text{cont}}$$ commutes with colimits we have $$\Oblv(\underset{I, F}{\colim \text{ }}  \BC_i ) \cong \underset{I, \Oblv(F)}{\colim \text{ }} \Oblv(\BC_i).$$
	On the other hand since $\Oblv$ also commutes with limits we have that 
	$$\Oblv(\lim_{I^{\op}, G} \BC_i) \cong \lim_{I^{\op}, \Oblv(G)} \Oblv(\BC_i).$$
	But by \cite[Lemma 1.3.3]{G}we get 
	\begin{equation}\label{equation lemma on limis and colimits}
		\underset{I, \Oblv(F)}{\colim \text{ }} \Oblv(\BC_i) \cong \lim_{I^{\op}, \Oblv(G)} \Oblv(\BC_i),
	\end{equation}
	where the functor from left-hand side to right-hand side corresponds to the system of functors 
	$$\prescript{\prime}{}{\ins_{j, \DGCat_{\text{cont}}}}: \Oblv(\BC_j) \rightarrow \lim_{I^{\op}, \Oblv(G)} \Oblv(\BC_i),$$
	which are left adjoints to tautological functors $$\ev_{j, \DGCat_{\text{cont}}}: \lim_{I^{\op}, \Oblv(G)} \Oblv(\BC_i)  \rightarrow \Oblv(\BC_j).$$
	The latter functors are images under $\Oblv: \BA\mathbf{-ModCat} \rightarrow \DGCat_{\text{cont}}$ of 
	$$\ev_{j, \BA\mathbf{-ModCat}}: \lim_{I^{\op}, G} \BC_i  \rightarrow \BC_j,$$
	hence by semi-rigidity left adjoints $\prescript{\prime}{}{\ins_i}$ upgrade to $\BA$-linear functors left adjoint to $\ev_{j, \BA\mathbf{-ModCat}}$.
	
	Therefore there exists an $\BA$-linear map $$\underset{I, F}{\colim \text{ }}  \BC_i  \rightarrow \lim_{I^{\op}, G} \BC_i$$ such that its image under $\Oblv$ is (\ref{equation lemma on limis and colimits}). But since (\ref{equation lemma on limis and colimits}) is an equivalence and $\Oblv$ is conservative we are done. 
	
\end{proof}

\subsubsection{} By Lemma \ref{lims and colims in modules} we can write $\BC^2$ as a small limit in $\QCoh(\LocSys_G^{\restr}(D^{\circ}))\mathbf{-ModCat}$ of categories of the form $$\QCoh( \LocSys_G^{\restr}(D^{\circ})) \otimes \BD_i,$$ where each $\BD_i$ is a plain DG category. Hence by Lemma \ref{functor commutes with limits} we can reduce to the case $$\BC^2 \cong \QCoh( \LocSys_G^{\restr}(D^{\circ})) \otimes \BC^2_0,$$
were $\BC^2_0$ is a plain DG category. 
Note that we have 
\begin{equation*}
	\begin{split}
	\QCoh( \LocSys_G^{\restr}(D^{\circ})) \otimes \BC^2_0 &\cong  \\
	\cong &\on{Fun}_{\QCoh( \LocSys_G^{\restr}(D^{\circ}))}(\QCoh( \LocSys_G^{\restr}(D^{\circ})), \BC^2_0 \otimes \QCoh( \LocSys_G^{\restr}(D^{\circ})))
	\end{split}
\end{equation*}
as $\QCoh( \LocSys_G^{\restr}(D^{\circ}))$-modules. As in the previous step, we can rewrite  $\QCoh( \LocSys_G^{\restr}(D^{\circ}))$ as $$\colim  \QCoh(S_i)$$ with affine $S_i$. Hence
\begin{equation*}
	\begin{split}
	\QCoh( \LocSys_G^{\restr}(D^{\circ})) \otimes \BC^2_0 & \cong \\
	\cong &\on{Fun}_{\QCoh( \LocSys_G^{\restr}(D^{\circ}))}(\underset{}{\colim} \QCoh(S_i), \BC^2_0 \otimes \QCoh( \LocSys_G^{\restr}(D^{\circ})))
	\end{split}
\end{equation*}
 as modules over $\QCoh( \LocSys_G^{\restr}(D^{\circ}))$ , which can be rewritten as 
 $$\lim\on{Fun}_{\QCoh( \LocSys_G^{\restr}(D^{\circ}))}(\QCoh(S_i), \BC^2_0 \otimes \QCoh( \LocSys_G^{\restr}(D^{\circ}))) \cong \lim \QCoh(S_i) \otimes \BC^2_0.$$
 Here the last equality follows from the fact that $\QCoh(S_i)$ is self-dual as $\QCoh( \LocSys_G^{\restr}(D^{\circ}))$ -module (\cite[C.2.7]{AGKRRV2}). Therefore by Lemma \ref{functor commutes with limits} we obtained the desired reduction.

 \subsubsection{} So equation (\ref{mainassertion}) becomes
 
\begin{equation}\label{main assertion step 2}
\begin{split} 
& \QCoh(R_1) \otimes_{\QCoh(\LocSys_G^{\restr}(D^{\circ}))}  \QCoh(R_2) \otimes \BC_0^2  \cong \\
& \cong \on{FactFun}_{\Rep(G)}(\bFact^{\restr}(\QCoh(R_1) ), \bFact^{\restr}(\QCoh(R_2) \otimes \BC_0^2)).
\end{split}
\end{equation}
Since $\LocSys_G^{\restr}(D^{\circ})$ is 1-affine (Lemma \ref{1-affinnes of LS^r}) and $\QCoh(\LocSys_G^{\restr}(D^{\circ}))$ is semi-rigid, we can use \cite[Lemma 7.8.11] {AGKRRV2}to rewrite left-hand side of the equation above as 
$$\QCoh(R_1 \times_{\LocSys_G^{\restr}(D^{\circ}) }R_2) \otimes  \BC_0^2.$$

\subsection{Factorization modules for the twisted regular representation} 
The rest of the section will be dealing with rewriting right-hand side of the  equation (\ref{main assertion step 2}). To simplify the notation we assume that $\BC_0^2 = \Vect$, and denote by $T$ the affine scheme $\Spec(R_1 \times R_2)$. 

\subsubsection{}Then by Corollary \ref{ambidextrous adjunction S-linear} and Corollary \ref{ambidextrous adjunction S-linear 2} we can rewrite the right-hand side of  (\ref{main assertion step 2}) as 
\begin{equation}\label{right-hand side of the main assertion step 3}
	\on{FactFun}_{\Rep(G)}^T(\bFact^{\restr}(R_1) \otimes \QCoh(R_2), 
	\bFact^{\restr}(R_2 )\otimes \QCoh(R_1)). 
\end{equation}
We will use Theorem-Definition \ref{Basic Adjunction} to simplify (\ref{right-hand side of the main assertion step 3}). We will realize the source and the target as parameterized factorization restriction.

\subsubsection{} Recall from Lemma \ref{section LS^restr} we have a map $$\LocSys_G^{\restr}(D^{\circ}) \rightarrow \LocSys_G^{\restr}(U),$$
which gives a map 
$$U_{\dR} \times \LocSys_G^{\restr}(D^{\circ}) \rightarrow \BG \times U_{\dR}\times \LocSys_G^{\restr}(D^{\circ}),$$
where the map to the third component is projection.
This induces  a map 
$$\Jets_{\nabla,  \LocSys_G^{\restr}(D^{\circ}) }(U_{\dR} \times \LocSys_G^{\restr}(D^{\circ}) ) \rightarrow \Jets_{\nabla,  \LocSys_G^{\restr}(D^{\circ}) }( \BG \times U_{\dR}\times \LocSys_G^{\restr}(D^{\circ})).$$
Pulling back we obtain a map of $\LocSys_G^{\restr}(D^{\circ})$-linear factorization algebra  categories
$$\Phi^{\univ}: \Rep(G)^{\LocSys_G^{\restr}(D^{\circ})} \rightarrow \Vect^{\LocSys_G^{\restr}(D^{\circ})}.$$
Let $f_{\sigma_1}$ denote the morphism $R_1\rightarrow \LocSys_G^{\restr}(D^{\circ})$. Restricting $\Phi^{\univ}$ to $R_1$ we get 
$$\Phi^{\univ}_{1}: \Rep(G)^{R_1} \rightarrow \Vect^{R_1}.$$
let $$\Phi_1:= \Phi^{\univ}_{1} \otimes \QCoh(R_2):\Rep(G)^{T} \rightarrow \Vect^{T}.$$

\begin{rem}
	Note that $\Phi^{\univ}$ (and thus $\Phi_1$) admit continuous right adjoints given by pushforward of quasi-coherent sheaves.
\end{rem}

\begin{pr}\label{restriction}
	There exists an equivalence 
	$$\mathbf{Res}_{\Phi^{\univ}}^{\LocSys_G^{\restr}(D^{\circ})}(\Vect^{\LocSys_G^{\restr}(D^{\circ})}) \cong \mathbf{Fact}^{\restr}(\QCoh(\LocSys_G^{\restr}(D^{\circ})))$$
as elements in $\Rep(G)^{\LocSys_G^{\restr}(D^{\circ})}\mathbf{-FactModCat}^{\LocSys_G^{\restr}(D^{\circ})}$.
\end{pr}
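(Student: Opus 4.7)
The plan is to use the parameterized geometric description of factorization modules over \emph{commutative} factorization algebra categories (the content of Appendix~\ref{appendix: factmodules over commutative alg}, combined with the parameterized formalism of Appendix~\ref{paramfact}) to identify both sides as quasi-coherent sheaves on the same $\LocSys_G^{\restr}(D^{\circ})$-family of prestacks, with matching $\Rep(G)^{\LocSys_G^{\restr}(D^{\circ})}$-factorization module structure.

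First I would record that $\Phi^{\univ}$ is a morphism of \emph{commutative} $\LocSys_G^{\restr}(D^{\circ})$-linear factorization algebra categories in the sense of Definition~\ref{multcryscat}. Indeed, both $\Rep(G)^{\LocSys_G^{\restr}(D^{\circ})}$ (coming from $\Rep(G) \cong \QCoh(\BG)$) and $\Vect^{\LocSys_G^{\restr}(D^{\circ})}$ arise via Example~\ref{ex: commutative fact cat} from symmetric monoidal DG categories; and the construction of $\Phi^{\univ}$ is manifestly symmetric monoidal pullback along a morphism of $\LocSys_G^{\restr}(D^{\circ})$-families of horizontal jets prestacks, namely the section induced by the tautological $G$-local system on $U \times \LocSys_G^{\restr}(D^{\circ})$ provided by Lemma~\ref{section LS^restr}. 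In particular, $\Phi^{\univ}$ is strictly unital.

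Second, I would invoke the parameterized version of Appendix~\ref{appendix: factmodules over commutative alg}. For a commutative $\LocSys_G^{\restr}(D^{\circ})$-linear factorization algebra category associated to an $\LocSys_G^{\restr}(D^{\circ})$-family of prestacks $\cY$, the geometric description identifies a distinguished class of its factorization module categories, supported on $\Ranp_x$ and of $\QCoh$-type, with prestacks mapping to $\hormerxJets(\cY)$; modules then correspond to $\QCoh$ of such mapping prestacks. For $\cY = \BG \times \LocSys_G^{\restr}(D^{\circ})$ one recovers $\LocSys_G(D^{\circ}_{\Ranp}) \times \LocSys_G^{\restr}(D^{\circ})$, while for $\cY = \on{pt} \times \LocSys_G^{\restr}(D^{\circ})$ one recovers $\LocSys_G^{\restr}(D^{\circ})$ itself. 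By the functoriality of this description with respect to morphisms of commutative factorization algebra categories, the factorization restriction $\mathbf{Res}_{\Phi^{\univ}}$ corresponds to base change along the induced geometric map of horizontal meromorphic jets prestacks. Applied to $\Vect^{\LocSys_G^{\restr}(D^{\circ})}$ viewed as module over itself (which under the dictionary corresponds to the trivial prestack $\LocSys_G^{\restr}(D^{\circ}) \to \LocSys_G^{\restr}(D^{\circ})$), the result is the fiber product
\[
\LocSys_G(D^{\circ}_{\Ranp}) \times_{\LocSys_G(D^{\circ})} \LocSys_G^{\restr}(D^{\circ}),
\]
where the right map is the one of Theorem~\ref{map between LS}. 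Quasi-coherent sheaves on the etale sheafification of this prestack are by definition $\bFact^{\restr}(\QCoh(\LocSys_G^{\restr}(D^{\circ})))$ (see~\ref{construction of the main functor}).

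The main obstacle is the careful parameterized formulation of Appendix~\ref{appendix: factmodules over commutative alg}, and in particular the verification that factorization restriction along a morphism induced by a geometric section translates into base change in the horizontal meromorphic jets picture — this is a structural functoriality statement which amounts to tracing the horizontal jets construction of Appendix~\ref{appendix: multidiscs} through the parameterized factorization formalism of Appendix~\ref{paramfact}. A secondary point is a small compatibility check ensuring that the dictionary is compatible with the passage to the etale sheafification appearing in the definition of $\bFact^{\restr}$.
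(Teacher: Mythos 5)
Your proposal is not on the same track as the paper's proof, and it has a genuine gap that is not just a matter of "careful parameterized formulation."

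The central problem is that you are invoking a dictionary that does not exist in the paper. Appendix~\ref{appendix: factmodules over commutative alg} (Theorem~\ref{factmod commutative}) describes factorization \emph{modules} over a commutative factorization \emph{algebra} $A$ as $\QCoh$ of the fiber of $\hormerxJets(\cY)$ — an equivalence of DG categories. What is needed here is a geometric description of an entire factorization module \emph{category} over the commutative factorization algebra \emph{category} $\Vect^{\LocSys_G^{\restr}(D^{\circ})}$, together with the functoriality of that description under $\mathbf{Res}_{\Phi^{\univ}}$. These are two categorical levels apart, and nothing in the paper (or in your sketch) supplies the upgrade. In fact the paper deliberately avoids needing such a dictionary for Proposition~\ref{restriction} and reserves Appendix~\ref{appendix: factmodules over commutative alg} for computing cores only (later, in Section~\ref{section: main proof}).

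The paper's actual mechanism is quite different and you are missing its two load-bearing steps. First, there is no global morphism of prestacks from $\LocSys_G^{\restr}(D^{\circ}) \times \Randrpt$ to $\LocSys_G(D^{\circ}_{\Ranp})$: a point of the latter factors through $\BG^{\naive}$, i.e.\ requires the underlying $G$-bundle on $D_s \times_X U$ to be trivial, and this holds only etale locally on $S$ (Proposition~\ref{extension + torsor locally trivial on A}); the section $\Psi_s$ must be glued by descent along a \v{C}ech cover. You call this a "small compatibility check," but it is precisely what makes $\Psi_s$ exist at all and why $\bFact^{\restr}$ involves an etale sheafification. Second, and more importantly, the proof hinges on $\Psi_s$ being \emph{affine} (Proposition~\ref{affine diagonal} and Corollary~\ref{G affine}), which occupies most of the subsection. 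Affineness is what makes ${\Psi_s}_*$ a well-defined continuous right adjoint to $\Psi_s^*$, giving an adjunction $(\Phi^{\univ},\Psi_s^*) \rightleftarrows (\Phi^{\univ,R},{\Psi_s}_*)$ in $\mathbf{FactModCat}^{\LocSys_G^{\restr}(D^{\circ})}$ that induces equivalences on cores, whence Corollary~\ref{B.8.4} finishes the proof. Your proposed route — "restriction corresponds to base change of horizontal jets prestacks" — would, if made precise, implicitly require exactly such a pushforward–pullback adjunction, hence the same affineness statement; so you would not avoid the hard work, and as written the argument does not supply it.
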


\begin{cor}\label{restriction R}
	There exists an equivalence 
	$$\mathbf{Res}_{\Phi_1}^{T}(\Vect^{T}) \cong \bFact^{\restr}(\QCoh(R_1) ) \otimes \QCoh(R_2)$$
	as elements in $\Rep(G)^{T}\mathbf{-FactModCat}^{T}$.
	
\end{cor}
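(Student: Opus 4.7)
The plan is to deduce the corollary from Proposition \ref{restriction} by base change of the parameter prestack along the composition
$$g \;:=\; f_{\sigma_1}\circ \proj_{R_1}\colon T\longrightarrow R_1\longrightarrow \LocSys_G^{\restr}(D^{\circ}),$$
combined with the definition $\bFact^{\restr}(\BM):=\BM\otimes_{\QCoh(\LocSys_G^{\restr}(D^{\circ}))}\bFact^{\restr}(\QCoh(\LocSys_G^{\restr}(D^{\circ})))$ from \S\ref{construction of the main functor}. The key input I would first record is a parameterized base-change property for the factorization restriction functor: for a morphism $g\colon S'\to S$ of prestacks satisfying the semi-rigidity hypotheses of Appendix \ref{paramfact}, and a morphism $\Psi\colon \BA\to\BA'$ in $\mathbf{FactAlgCat}^S$, the pullback-of-parameters functor $g^*\colon \mathbf{FactAlgCat}^S\to\mathbf{FactAlgCat}^{S'}$ intertwines $\mathbf{Res}$:
$$g^*\bigl(\mathbf{Res}^S_{\Psi}(\BC')\bigr)\;\simeq\;\mathbf{Res}^{S'}_{g^*\Psi}(g^*\BC')$$
in $(g^*\BA)\mathbf{-FactModCat}^{S'}$. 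This should be essentially tautological from the constructions of Appendix \ref{paramfact}: both sides are built termwise from the underlying crystal-of-categories data and $g^*$ commutes with the relevant $!$-pullbacks over $\Ranp$ and $(\Ranp\times\Ranp)_{\on{disj}}$.

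Applying this base-change statement to $\Psi=\Phi^{\univ}$, $\BC'=\Vect^{\LocSys_G^{\restr}(D^{\circ})}$, and the map $g$ above, and noting that by construction $g^*\Phi^{\univ}=(f_{\sigma_1}^*\Phi^{\univ})\otimes\QCoh(R_2)=\Phi^{\univ}_{1}\otimes\QCoh(R_2)=\Phi_1$, while $g^*\Vect^{\LocSys_G^{\restr}(D^{\circ})}\simeq\Vect^{T}$, I obtain
$$\mathbf{Res}^T_{\Phi_1}(\Vect^T)\;\simeq\;g^*\bigl(\mathbf{Res}^{\LocSys_G^{\restr}(D^{\circ})}_{\Phi^{\univ}}(\Vect^{\LocSys_G^{\restr}(D^{\circ})})\bigr).$$
Feeding Proposition \ref{restriction} into the right-hand side,
$$\mathbf{Res}^T_{\Phi_1}(\Vect^T)\;\simeq\;g^*\bFact^{\restr}(\QCoh(\LocSys_G^{\restr}(D^{\circ}))).$$

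Finally, decomposing $g=f_{\sigma_1}\circ\proj_{R_1}$ and unwinding the definition of $\bFact^{\restr}$ as a tensor product over $\QCoh(\LocSys_G^{\restr}(D^{\circ}))$, one computes
\begin{equation*}
\begin{split}
g^*\bFact^{\restr}(\QCoh(\LocSys_G^{\restr}(D^{\circ})))\;&\simeq\;\bigl(f_{\sigma_1}^*\bFact^{\restr}(\QCoh(\LocSys_G^{\restr}(D^{\circ})))\bigr)\otimes\QCoh(R_2)\\
&\simeq\;\bFact^{\restr}(\QCoh(R_1))\otimes\QCoh(R_2),
\end{split}
\end{equation*}
which is the desired identification in $\Rep(G)^T\mathbf{-FactModCat}^T$. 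The main obstacle is the parameterized base-change property itself: one must verify that the $(\infty,2)$-categorical factorization restriction functor of Appendix \ref{paramfact}, including the full factorization module structure rather than just the underlying crystal of categories, is natural under pullback of the parameter prestack. Once this naturality is in hand, the remaining manipulations are purely formal.
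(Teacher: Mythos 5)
Your route is correct in outline but genuinely different from the paper's, and it hinges on a lemma that the paper's Appendix~\ref{paramfact} does not actually provide. The paper does \emph{not} deduce Corollary~\ref{restriction R} from Proposition~\ref{restriction} by base change; instead it constructs (in~\S\ref{proof of geometric prop}) the explicit affine map $\Psi_s$, base-changes it to $T$, obtains the adjunction $(\Phi_1,\Psi_s^*)\colon(\Rep(G)^T,\bFact^{\restr}(\QCoh(R_1))\otimes\QCoh(R_2))\rightleftarrows(\Vect^T,\Vect^T)$ in $\mathbf{FactModCat}^T$, checks that $\Psi_s^*$ and $\Psi_{s*}$ are equivalences on cores, and then invokes Corollary~\ref{B.8.4}. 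That argument runs verbatim over $\LocSys_G^{\restr}(D^{\circ})$ to prove the Proposition and over $T$ to prove the Corollary --- it never passes through a base-change compatibility for $\mathbf{Res}$.

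The step you flag --- that $g^*$ intertwines $\mathbf{Res}^S$ and $\mathbf{Res}^{S'}$ --- is \emph{not} tautological. By Remark~\ref{restrictino as a limit}, $\mathbf{Res}^S_{\Phi}$ is computed as a limit, and $g^*$ is termwise $\QCoh(T)\otimes_{\QCoh(\LocSys_G^{\restr}(D^{\circ}))}(-)$; for that to commute with the limit one must know $\QCoh(T)$ is dualizable as a $\QCoh(\LocSys_G^{\restr}(D^{\circ}))$-module. This is in fact true in the present situation (by 1-affineness and semi-rigidity, cf.\ Corollary~\ref{1-affinnes of LS^r}, Lemma~\ref{semi-regidity compact generation}, and the self-duality cited from \cite[C.2.7]{AGKRRV2}), and one also needs the individual terms of the limit to be dualizable in the sense of Lemma~\ref{retsriction and dualizability} so that the termwise identification persists. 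But nothing in Appendix~\ref{paramfact} states the base-change compatibility you are asserting; Corollaries~\ref{ambidextrous adjunction S-linear} and~\ref{ambidextrous adjunction S-linear 2} concern adding a tensor factor $\QCoh(S')$, not pulling back along $S'\to S$. So this is a real gap to fill, not a formality --- though a fillable one given the dualizability just noted. If you want to pursue this route, prove the base-change lemma first (probably as an addendum to Theorem-Definition~\ref{Basic Adjunction}), then the rest of your reduction $g^*\bFact^{\restr}(\QCoh(\LocSys_G^{\restr}(D^{\circ})))\simeq\bFact^{\restr}(\QCoh(R_1))\otimes\QCoh(R_2)$ and $g^*\Phi^{\univ}\simeq\Phi_1$ is indeed an immediate unwinding of definitions, as you say. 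The payoff of your approach is that Corollary~\ref{restriction R} becomes a genuine corollary; the paper's approach avoids the base-change lemma at the cost of re-running the adjunction argument.
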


\subsubsection{} We postpone the proof of the Proposition \ref{restriction} to the Subsection \ref{proof of geometric prop} and finish the description of (\ref{right-hand side of the main assertion step 3}).
It follows that we have
\begin{equation*}
\begin{split}
	 &  \on{FactFun}_{\Rep(G)}^T(\bFact^{\restr}(R_1) \otimes \QCoh(R_2), 
	\bFact^{\restr}(R_2 )\otimes \QCoh(R_1)). \\
	\cong  &\on{FactFun}_{\Rep(G)}^T(\mathbf{Res}_{\Phi_1}(\Vect^T), 
	\bFact^{\restr}(R_2)\otimes \QCoh(R_1)), \\
	 \cong  &\on{FactFun}_{\Vect}^T(\Vect^T, 
	\mathbf{Res}_{\Phi_1^R}(\bFact^{\restr}(R_2)\otimes \QCoh(R_1)) )\\
	 \cong	&\mathbf{Res}_{\Phi_1^R}(\bFact^{\restr}(R_2)\otimes \QCoh(R_1))\\
	 \cong &\Phi_{1}^R( \omega_{U} \boxtimes \cO_{T})\FactMod^T (\bFact^{\restr}(R_2)\otimes \QCoh(R_1)),
	\end{split}
\end{equation*}
where
\begin{itemize}
	\item
	The first equivalence is due to Proposition \ref{restriction R}.
	\item
	The second equivalence is due to Theorem-Definition \ref{Basic Adjunction}.
	\item
	The third equivalence is due to \cite[Lemma B.6.6]{text}.
	\item
	The last equivalence is due to Remark \ref{Basic Adjunction Rem}.
\end{itemize}
Finally, note that by Proposition \ref{restriction R} we have 
$$\bFact^{\restr}(R_2)\otimes \QCoh(R_1) \cong \mathbf{Res}_{\Phi_2}(\Vect^T),$$
so by Corollary \ref{Basic Adjunction Cor} the expression (\ref{right-hand side of the main assertion step 3}) becomes

$$\Phi_2\circ \Phi_{1}^R( \omega_{U} \boxtimes \cO_{T})\FactMod^T (\Vect^T).$$

\subsubsection{}Now we give an explicit description of $\Phi_2 \circ \Phi_{1}^R (\omega_{U} \boxtimes \cO_{T})$. 
Then chasing the diagram 

\[
\begin{tikzcd}
\CommAlg(\D(U) \otimes \QCoh(T) ) \arrow[d, "{F_{\sigma_1}}_*"] \ar[r, "\sim", "\fact"']&   \CFactAlg^{T}(\Vect^T)\arrow[d, "\Phi_{1}^R"]    \\
\CommAlg(\Rep(G) \otimes \D(U) \otimes \QCoh(T) )  \ar[r, "\sim", "\fact"']\arrow[d, "{F_{\sigma_2}}_*"]  &  \CFactAlg^{T}(\Rep(G)^T) \arrow[d, "\Phi_{2}"]\\
\CommAlg(\D(U) \otimes \QCoh(T))  \ar[r, "\sim", "\fact"']&   \CFactAlg^{T}(\Vect^T)
\end{tikzcd}
\]
we obtain 

$$\Phi_2 \circ \Phi_{1}^R (\omega_{\Ranu} \boxtimes \cO_{T}) \cong \fact ({F_{\sigma_2}}^*{F_{\sigma_1}}_* (\omega_{U} \boxtimes \cO_{T})).$$

\subsubsection{} So we reduced the statement (\ref{mainassertion}) to 
\begin{equation}\label{main assertion after reductions}
\QCoh(R_1 \times_{\LocSys_G^{\restr}(D^{\circ}) }R_2)
\cong{F_{\sigma_2}}^*{F_{\sigma_1}}_* (\omega_{U} \boxtimes \cO_{T})\FactMod^T(\Vect^T).
\end{equation}

\subsection{Proof of Proposition 4.4.4}\label{proof of geometric prop}

\begin{pr}\label{extension + torsor locally trivial on A}
	Let $S$ be an affine scheme with a map $\sigma: S \rightarrow \LocSys_G^{\restr}(D^{\circ})$. Consider a post-composition of this map with the section  $\LocSys_G^{\restr}(D^{\circ}) \rightarrow  \LocSys_G^{\restr}(U)$ and the corresponding morphism $$f_{\sigma}: (U)_{\dR} \times S \rightarrow \BG.$$
	Then etale locally on $S$ the morphism $f_{\sigma}$ factors through $\BG^{\naive}$. 
\end{pr}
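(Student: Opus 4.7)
The plan is to leverage the decomposition of $\LocSys_G^{\restr}(D^{\circ})$ into formal neighborhoods of semi-simple local systems (Proposition~\ref{properties of LS}) and reduce to two tasks: triviality of the underlying $G$-bundle at the reduced level, and extension along nilpotent thickenings using vanishing of $H^1$ on $\bG_m$. First, since $\LocSys_G^{\restr}(D^{\circ})$ is classical and, by topological invariance of the étale site, étale covers of $S$ correspond to étale covers of $S^{cl}$, I may assume $S$ is classical. By Proposition~\ref{properties of LS}, étale-locally on $S$ the map $\sigma$ lifts through a formal affine scheme $Y_{\sigma_0}$ whose reduced part consists of a single $G$-orbit corresponding to a chosen semi-simple local system $\sigma_0$. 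Composing with the section yields the underlying $G$-bundle $\mathcal{E}$ on $U\times S$ that we wish to trivialize étale-locally on $S$.

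At the reduced level, the argument in the proof of Theorem~\ref{map between LS 2} shows that the induced map $S^{\mathrm{red}} \to \Bun_G(U)$ factors as
\[
S^{\mathrm{red}} \to \pt/\Stab_G(\sigma_0) \to \mathbf{Maps}(\Rep(G), \Rep(H^{\redd})) \to \Bun_G(U).
\]
The last composite sends a $\Stab_G(\sigma_0)$-torsor $P$ on $S^{\mathrm{red}}$ to the pullback from $S^{\mathrm{red}}$ to $U\times S^{\mathrm{red}}$ of the $G$-bundle $P \times^{\Stab_G(\sigma_0)} G$, where one uses that the $G$-bundle underlying $\sigma_0$ on $U = \bG_m$ is trivial (since every $G$-bundle on a smooth affine curve is trivial for connected $G$). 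Trivializing $P$ étale-locally on $S^{\mathrm{red}}$ and lifting this cover to an étale cover of $S$ makes $\mathcal{E}|_{U \times S^{\mathrm{red}}}$ trivial.

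Finally, I extend this trivialization to $U \times S$ inductively along the filtration $S^{\mathrm{red}} = S_0 \subset S_1 \subset \cdots$ by powers of the nilradical. At each step, the obstruction to lifting a bundle trivialization from $U\times S_i$ to $U\times S_{i+1}$ lies in $H^1(U \times S_i, \fg \otimes I_{i+1})$, where $I_{i+1}$ is the ideal of the square-zero extension; this vanishes because $U = \bG_m$ and $S_i$ are both affine, so the Künneth formula combined with the vanishing of higher quasi-coherent cohomology on affine schemes annihilates the obstruction. The main difficulty I anticipate is bookkeeping around the ind-scheme structure of $Y_{\sigma_0}$: exhausting the formal neighborhood requires the inductive trivializations to assemble compatibly across the filtered system of classical closed subschemes of $Y_{\sigma_0}$, but since each finite stage has vanishing obstruction this can be arranged one thickening at a time.
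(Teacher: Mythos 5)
Your proposal identifies the right ingredients---the factorization through $\mathbf{Maps}(\Rep(G), \Rep(H^{\redd}))$ established in the proof of Theorem \ref{map between LS 2}, the description of its connected components as $\pt/\Stab_G(\varphi)$, and triviality of $G$-bundles on $\bG_m$ for connected $G$---but then routes them through an unnecessary detour. The point you miss is that the factorization of $\LocSys_G^{\restr}(D^{\circ}) \to \Bun_G(\bG_m)$ through $\mathbf{Maps}(\Rep(G), \Rep(H^{\redd}))$ is a morphism of prestacks, valid for any affine test scheme $S$, not merely at the reduced level. Consequently $S$ itself (not only $S^{\mathrm{red}}$) maps to some component $\pt/\Stab_G(\varphi)$; lifting \'etale-locally along the smooth cover $\pt \to \pt/\Stab_G(\varphi)$ then exhibits the $G$-bundle on $\bG_m \times S$ as pulled back from a fixed, hence trivial, $G$-bundle on $\bG_m$. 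This is precisely the paper's argument, and it makes both the restriction to $S^{\mathrm{red}}$ and the subsequent deformation-theoretic re-extension superfluous.

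The detour also introduces a gap in your extension step. The individual obstruction groups $H^1(\bG_m \times S_i, \fg \otimes I_{i+1})$ do vanish by K\"unneth and affineness, as you say, but the filtration of a classical affine $S$ by powers of its nilradical need neither terminate nor exhaust $S$ when $S$ is not Noetherian, and the concern you flag about the ind-scheme structure of $Y_{\sigma_0}$ is not the right diagnosis of the problem. A fix would be to note that, $S$ being affine and $Y_{\sigma_0}$ being a formal affine scheme (a filtered colimit of finite-type, hence Noetherian, affine thickenings $Y_n$ of $Y^{\mathrm{red}}$), the map $S \to Y_{\sigma_0}$ factors through some $Y_n$, where the nilradical really is nilpotent; one would trivialize on $\bG_m \times Y_n$ and pull back to $S$. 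But once the factorization is recognized at the prestack level, the entire extension argument and its patch are moot.
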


\begin{proof}
	Since $\BG^{\naive} \rightarrow \BG$ is formally etale it suffices to show that the underlying $G$-torsor on $U \times S$ is trivial etale locally on $S$. Recall from the proof of Theorem \ref{map between LS 2} that the map 
	$$\LocSys_G^{\restr}(D^{\circ}) \rightarrow\LocSys_G^{\restr}(\bG_m) \rightarrow \Bun_G(\bG_m)$$ factors through $$\mathbf{Maps}(\Rep(G), \Rep(H^{\redd})) \rightarrow \Bun_G(\bG_m).$$
	Then, as in the proof of Theorem \ref{map between LS 2}, we notice that connected components of  $$\mathbf{Maps}(\Rep(G), \Rep(H^{\redd}))$$ have the form $\pt / \Stab_G(\varphi)$ for some $\varphi: Im(H) \rightarrow G$. Thus we only need to check that the composites
	$$\pt \rightarrow \pt / \Stab_G(\varphi) \rightarrow \Bun_G(\bG_m)$$
	define trivial $G$-bundles. This holds because every $G$-bundle on $\bG_m$ is trivial for a connected $G$.
\end{proof}
\subsubsection{} We will construct an adjoint pair of functors between $\Vect^{\LocSys_G^{\restr}(D^{\circ}) }$ and $$\bFact^{\restr}(\QCoh(\LocSys_G^{\restr}(D^{\circ}))$$ given by pullback and pushforward along a certain map
\begin{equation}\label{main section}
\Psi_s: 	\LocSys_G^{\restr}(D^{\circ})  \times (\Randrpt)^{\et} \rightarrow 	\LocSys_G^{\restr}(D^{\circ}) \times_{\LocSys_G(D^{\circ})^{\et}} \LocSys_G(D^{\circ}_{\Ranp})^{\et}.
\end{equation}
Let us now construct $\Psi_s$. 

\begin{cnstr}
	Consider an affine scheme $S =\Spec(A)$ with maps $\sigma: S \rightarrow \LocSys_G^{\restr}(D^{\circ})$ and $t: S \rightarrow  (\Randrpt)^{\et}$. By Proposition \ref{extension + torsor locally trivial on A} we can find an etale covering $S^{\prime}$ of $S$ such that the corresponding morphism  $$f_{\sigma}: (U)_{\dR} \times S^{\prime} \rightarrow \BG$$ factors through $\BG^{\naive}$. We then get an $S^{\prime}$-point of $\LocSys_G(D^{\circ}_{\Ranp})^{\et}$ by taking 
	\[
	\begin{tikzcd}
	D_{t^{\prime}, \nabla} \times_{X_{\dR}} U_{\dR}  \ar[r, rightarrow, ""'] &   (U)_{\dR} \times S^{\prime} \ar[r, rightarrow, "f_{\sigma}"']& \BG^{\naive},
	\end{tikzcd}
	\]
	where $t^{\prime}: S^{\prime}\rightarrow S \rightarrow (\Randrpt)^{\et}$ 
	
	This upgrades to a map from the Chech nerve of $S^{\prime} \rightarrow S$ to $\LocSys_G(D^{\circ}_{\Ranp})^{\et}$. Hence we obtain an $S$-point of $\LocSys_G(D^{\circ}_{\Ranp})^{\et}$, and the fact that the composition $$S \rightarrow \LocSys_G(D^{\circ}_{\Ranp})^{\et} \rightarrow \LocSys_G(D^{\circ})^{\et}$$ agrees with the composition
	$$S \rightarrow \LocSys_G^{\restr}(D^{\circ}) \rightarrow \LocSys_G(D^{\circ})^{\et}$$ is immediate. 
\end{cnstr}

\begin{pr}\label{affine diagonal}
	The morphism 
	\begin{equation}\label{main equation}
	\LocSys_G^{\restr}(D^{\circ}) \times_{\LocSys_G(D^{\circ})^{\et}} \LocSys_G(D^{\circ}_{\Ranp})^{\et} \rightarrow \LocSys_G^{\restr}(D^{\circ})  \times (\Randrpt)^{\et}
	\end{equation} has affine diagonal. 
\end{pr}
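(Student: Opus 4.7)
The plan is to reduce the problem in two stages and then verify affineness of a concrete relative isomorphism scheme. The three main steps are: (1) reduce via base change to the analogous statement for the universal morphism $\LocSys_G(D^{\circ}_{\Ranp})^{\et} \to \LocSys_G(D^{\circ})^{\et} \times \Randrpt^{\et}$; (2) interpret the fiber of the corresponding diagonal as an isomorphism scheme of $G$-local systems on $D_t \times_X U$ with prescribed restriction to $D_x^{\circ}$; and (3) show that this isomorphism scheme is affine over the base. The main obstacle is step (3), which reduces to the affineness of horizontal sections of $G$-connections on (families of) formal and punctured discs.

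For step (1), the morphism in question fits into a cartesian square exhibiting it as the base change of the universal morphism above along $\LocSys_G^{\restr}(D^{\circ}) \times \Randrpt^{\et} \to \LocSys_G(D^{\circ})^{\et} \times \Randrpt^{\et}$. Since the property of having affine diagonal is stable under base change, it suffices to prove the universal case.

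For step (2), an affine test scheme $S$ mapping to the fiber product
\[
\LocSys_G(D^{\circ}_{\Ranp})^{\et} \times_{\LocSys_G(D^{\circ})^{\et} \times \Randrpt^{\et}} \LocSys_G(D^{\circ}_{\Ranp})^{\et}
\]
yields, etale-locally on $S$, a point $t \in \Randrpt(S)$, two $G$-local systems $\sigma_1, \sigma_2$ on $D_t \times_X U$, and an isomorphism $\phi: \sigma_1|_{D_x^{\circ}} \simeq \sigma_2|_{D_x^{\circ}}$. The fiber of the diagonal over such a point is represented by the $S$-scheme $\Isom^{\phi}(\sigma_1, \sigma_2)$ of extensions of $\phi$ to an isomorphism $\sigma_1 \simeq \sigma_2$ on all of $D_t \times_X U$. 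Since affineness descends along etale covers of the base, it suffices to show that $\Isom^{\phi}(\sigma_1, \sigma_2)$ is affine over $S$.

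For step (3), realize $\Isom^{\phi}(\sigma_1, \sigma_2)$ as the fiber over $\phi$ of the restriction map
\[
\Isom(\sigma_1, \sigma_2) \to \Isom(\sigma_1|_{D_x^{\circ}}, \sigma_2|_{D_x^{\circ}}),
\]
so it suffices to show that source and target of this map are affine over $S$. Etale-locally on $S$ one trivializes the underlying $G$-bundles and presents $\sigma_i$ by a connection form $\omega_i \in (\Omega^1 \otimes \fg)(D_t \times_X U)$; the isom scheme is then cut out of $\on{Maps}(-, G)$ by the horizontality equation $g \cdot \omega_1 = \omega_2$ (gauge transformation), and the resulting scheme of horizontal sections of the associated affine ``adjoint'' group scheme is affine --- a standard fact from the theory of $\cD$-schemes (cf.~\cite{BD}), which one can also see concretely by invoking Levelt--Turrittin decomposition \cite[Theorem 2.1]{BBDE} to reduce to the trivial-connection case, where horizontal sections are the constant $G$-valued maps. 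The required uniformity over $\Randrpt$ is addressed by stratifying the Ran space by the combinatorial type of the configuration of points in $t$ and noting that the above description applies uniformly on each stratum.
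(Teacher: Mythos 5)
Your reduction in step (1) to the universal morphism is valid (affine diagonal is stable under base change), and step (2)'s moduli interpretation of the diagonal fiber as $\Isom^{\phi}(\sigma_1,\sigma_2)$ is correct. These are genuinely different from the paper's route: the paper instead uses the presentation $\LocSys_G^{\restr}(D^{\circ}) = (\cY/G)^{\et}$ with $\cY$ formal affine, pulls back to affine $Y_i'$ that lift to $(\Omega^1(D^{\circ})/G(D))^{\et}$, and invokes Proposition~\ref{forms mod gauge} to identify the fiber product with $(\Omega^1(D^{\circ}_{\Ranp})/G(D)_{\Randrpt})^{\et}$, whose affine diagonal over $\Randrpt$ it proves directly in Lemma~\ref{forms mod gauge affine diagonal}. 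In other words, the paper never needs the universal-case statement; it works through an explicit atlas provided by the \emph{restricted} structure.

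The gap is in your step (3). First, the assertion that the scheme of horizontal sections of the isom $\cD$-scheme on $D_x^{\circ}$ (or on $D_t \times_X U$) is affine over $S$ ``by a standard fact from the theory of $\cD$-schemes'' is precisely the content that needs a proof here, not something that can be cited: Beilinson--Drinfeld's affineness of horizontal sections applies to the unramified disc, and extending it to the punctured disc in families over $\Randrpt$ is nontrivial. The paper does exactly this work by writing $\Omega^1(D^{\circ}_{\Ranp}) = \colim_n \Omega^{1,\leq n}(D^{\circ}_{\Ranp})$ as an ind-affine ind-scheme over $\Randrpt$ with \emph{affine} terms, so that any affine test scheme factors through a single affine piece. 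Second, your appeal to Levelt--Turrittin is a fiberwise argument over $\Spec k$ and does not directly control the family over an arbitrary affine $S$. Third, and most seriously, ``stratifying the Ran space by combinatorial type'' cannot close the argument: affineness of a morphism $Y \to S$ is \emph{not} a property that can be checked on a locally closed stratification of $S$ (closed strata are not affine-local), so knowing that $\Isom^{\phi}$ is affine over each stratum of $\Randrpt$ does not yield affineness over $\Randrpt$. This is the precise reason the paper's argument replaces the stratumwise description with the globally-defined ind-affine $\Omega^{1,\leq n}(D^{\circ}_{\Ranp})$, which behaves uniformly as the points of the configuration collide with $x$.

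In short: the reduction to an $\Isom$-affineness statement is the right idea and offers a cleaner framing than the paper's, but the key affineness assertion is left unproven, and the stratification device used to obtain uniformity over $\Randrpt$ does not work for the property at issue. To repair step (3), one would essentially have to re-prove the paper's Proposition~\ref{forms mod gauge} and Lemma~\ref{forms mod gauge affine diagonal}.
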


\begin{cor}\label{G affine}
	The morphism $\Psi_s$ is affine.
\end{cor}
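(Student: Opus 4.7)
The plan is to exploit the fact that $\Psi_s$ is, by construction, a section of the morphism
\[
p: \LocSys_G^{\restr}(D^{\circ}) \times_{\LocSys_G(D^{\circ})^{\et}} \LocSys_G(D^{\circ}_{\Ranp})^{\et} \rightarrow \LocSys_G^{\restr}(D^{\circ})  \times (\Randrpt)^{\et}
\]
of Proposition \ref{affine diagonal}. Indeed, unwinding the construction of $\Psi_s$: on an $S$-point $(\sigma, t)$ one produces (etale locally, by Proposition \ref{extension + torsor locally trivial on A}) a map $D_{t,\nabla}\times_{X_{\dR}} U_{\dR} \to \BG^{\naive}$ from the local trivialization of $f_\sigma$, and this map lives tautologically over $t \in (\Randrpt)^{\et}$ while the first coordinate $\sigma$ is preserved; hence $p\circ \Psi_s = \id$. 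I would first record this as a lemma.

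Given this, the corollary will follow from the general formal principle: \emph{any section of a morphism with affine diagonal is itself affine.} Concretely, writing $Y$ for the source of $p$ and $X$ for its target, the square
\[
\begin{tikzcd}
X \arrow[r, "\Psi_s"] \arrow[d, "\Psi_s"'] & Y \arrow[d, "\Delta_p"] \\
Y \arrow[r, "{(\id_Y,\, \Psi_s\circ p)}"'] & Y \times_X Y
\end{tikzcd}
\]
is Cartesian: on $R$-points, the pullback consists of an $R$-point $y$ of $Y$ together with an identification $y = \Psi_s(p(y))$, which is equivalent data to an $R$-point $x$ of $X$ via $y=\Psi_s(x)$. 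Since affineness is stable under base change of prestacks and $\Delta_p$ is affine by Proposition \ref{affine diagonal}, the left vertical $\Psi_s$ is affine.

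The only thing one has to be careful about is that the Cartesian-square argument be interpreted correctly in the setting of (derived) prestacks, where ``affine morphism'' means a morphism representable and affine on affine test schemes; both verifications (the factorization-of-points diagram being a pullback, and base-change stability of the class of affine morphisms) are entirely formal in this setting, so I do not anticipate any genuine obstacle. The real content of the corollary lies in Proposition \ref{affine diagonal}; the passage to affineness of $\Psi_s$ itself is a short formal consequence.
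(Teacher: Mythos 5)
Your proof is correct and reproduces exactly the argument the paper leaves implicit: by construction $\Psi_s$ is a section of the morphism (\ref{main equation}), and a section of any morphism with affine diagonal is affine, via the Cartesian square you exhibit together with stability of affine morphisms under base change. This is precisely why Corollary \ref{G affine} is stated as an immediate consequence of Proposition \ref{affine diagonal}.
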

The rest of the subsection is dedicated to the proof of Proposition \ref{affine diagonal}. Proposition \ref{restriction} will follow immediately, as will be explained at the end of the subsection. Consider the following general setup:
\begin{lm}\label{XYZ}
	Let $\cX$, $\cY$, $\cZ$ be prestacks over a base prestack $B$ with $\cX \rightarrow \cY \leftarrow \cZ$. Let $G_X$, $G_Y$, $G_Z$ be group prestacks over $B$ with $G_X \rightarrow G_Y \leftarrow G_Z$ acting compatibly on $\cX$, $\cY$, $\cZ$ respectively. Assume that $$(G_Z)^{\et} \rightarrow (G_Y / G_X)^{\et}$$ is surjective in etale topology. 
	Then 
	$$(\cX / G_X)^{\et} \times_{(\cY / G_Y)^{\et}} (\cZ / G_Z)^{\et} \cong ((\cX \times_{\cY} \cZ) / (G_X \times_{G_Y} G_Z))^{\et} $$ as prestacks over $B^{\et}$. 
\end{lm}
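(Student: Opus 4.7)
The plan is to construct a canonical morphism
\[
\Phi \colon ((\cX \times_{\cY} \cZ) / (G_X \times_{G_Y} G_Z))^{\et} \longrightarrow (\cX / G_X)^{\et} \times_{(\cY / G_Y)^{\et}} (\cZ / G_Z)^{\et}
\]
over $B^{\et}$ and show it is an equivalence of prestacks. The group prestack $G_X \times_{G_Y} G_Z$ acts on $\cX \times_{\cY} \cZ$ via the diagonal action, and the two projections are equivariant with respect to $G_X$ and $G_Z$ and compatible via $G_Y$, so they descend to prestack-level quotient maps landing in the fiber product $(\cX/G_X) \times_{(\cY/G_Y)} (\cZ/G_Z)$; etale-sheafifying then produces $\Phi$.

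Since both sides are etale sheaves of groupoids over $B^{\et}$, it suffices to show that $\Phi(S)$ is an equivalence for every affine test scheme $S$, and this can be verified etale-locally on $S$. Etale-locally, an $S$-point of the target is represented by a triple $(x, z, g)$ with $x \in \cX(S)$, $z \in \cZ(S)$, and $g \in G_Y(S)$ satisfying $g \cdot y_x = y_z$ in $\cY(S)$ (where $y_x,y_z$ denote the images of $x,z$), while an $S$-point of the source is represented etale-locally by an element $(x', z') \in (\cX \times_{\cY} \cZ)(S)$. For essential surjectivity of $\Phi(S)$, I would invoke the hypothesis that $(G_Z)^{\et} \to (G_Y / G_X)^{\et}$ is an etale surjection: after a further etale refinement it yields $g_X \in G_X(S)$ and $g_Z \in G_Z(S)$ with $g = \text{image}(g_Z) \cdot \text{image}(g_X)^{-1}$ in $G_Y(S)$; then $(g_X^{-1} \cdot x,\, g_Z^{-1} \cdot z)$ defines an $S$-point of $\cX \times_{\cY} \cZ$ lifting the original triple.

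For full faithfulness, given $(x_1,z_1),(x_2,z_2) \in (\cX \times_{\cY} \cZ)(S)$ together with an isomorphism of their images in the target, the isomorphism unpacks etale-locally as data of $g_X \in G_X(S)$, $g_Z \in G_Z(S)$ with $g_X \cdot x_1 = x_2$ and $g_Z \cdot z_1 = z_2$, plus a compatibility datum in $(\cY/G_Y)^{\et}$ forcing $\text{image}(g_X) = \text{image}(g_Z)$ in $G_Y(S)$; the pair $(g_X, g_Z)$ then lies in $(G_X \times_{G_Y} G_Z)(S)$ and provides the required isomorphism in the source. The main technical obstacle will be the careful bookkeeping around etale sheafification of groupoid-valued prestacks: one must verify that etale-local manipulations of these quotients assemble into genuine morphisms after sheafification, and that the hypothesis on $(G_Z)^{\et} \to (G_Y/G_X)^{\et}$ is applied at the correct point so that the local lifts cohere across the covering. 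The substantive geometric content is concentrated in this surjectivity hypothesis, which is exactly what powers essential surjectivity of $\Phi(S)$.
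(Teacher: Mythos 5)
Your proposal is correct in substance, and it uses the surjectivity hypothesis in exactly the same place, but it takes a more explicit route than the paper. The paper's proof first observes (using that sheafification commutes with finite limits) that $(\cX\times_\cY\cZ)^{\et}\cong\cX^{\et}\times_{\cY^{\et}}\cZ^{\et}$, and then notes the single identity
\[
(\cX^{\et} \times_{\cY^{\et}} \cZ^{\et}) \times_{((\cX / G_X)^{\et} \times_{(\cY / G_Y)^{\et}} (\cZ / G_Z)^{\et})} (\cX^{\et} \times_{\cY^{\et}} \cZ^{\et})  \cong (\cX^{\et} \times_{\cY^{\et}} \cZ^{\et}) \times (G_X \times_{G_Y} G_Z)^{\et},
\]
which exhibits $\cX^{\et}\times_{\cY^{\et}}\cZ^{\et}\to(\cX/G_X)^{\et}\times_{(\cY/G_Y)^{\et}}(\cZ/G_Z)^{\et}$ as a $(G_X\times_{G_Y}G_Z)^{\et}$-torsor over its image. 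With this in hand the entire lemma reduces, by descent, to a single remaining point: étale surjectivity of that map onto the target, which is where the hypothesis on $(G_Z)^{\et}\to(G_Y/G_X)^{\et}$ enters. Your proof instead builds the comparison morphism $\Phi$ and verifies fully faithfulness and essential surjectivity by hand, at the level of $S$-points after étale refinement. The two are equivalent in content --- your explicit full faithfulness computation (identifying $\Hom$-spaces via the $(g_X,g_Z)$ pairs) is exactly what the fiber-square isomorphism packages --- but the paper's descent formulation absorbs the ``bookkeeping around etale sheafification'' that you flag as the main technical obstacle, which is the payoff of that approach. One small imprecision to tighten: ``it suffices to check $\Phi(S)$ is an equivalence \ldots\ and this can be verified étale-locally'' should be stated as: full faithfulness is a statement about Hom-sheaves (hence may be checked étale-locally once both sides are known to be sheaves), and essential surjectivity need only hold after an étale cover; as written the phrase could be misread as claiming the equivalence of the full groupoid $\Phi(S)$ may be checked on a cover, which is not literally true.
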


\begin{proof}
	Since sheafification commutes with finite limits \cite[Proposition 6.2.2.7]{HTT} $$(\cX \times_{\cY} \cZ )^{\et} \cong \cX^{\et} \times_{\cY^{\et}} \cZ^{\et}$$
	and there is a tautological map to $(\cX / G_X)^{\et} \times_{(\cY / G_Y)^{\et}} (\cZ / G_Z)^{\et}$ such that 
	$$(\cX^{\et} \times_{\cY^{\et}} \cZ^{\et}) \times_{((\cX / G_X)^{\et} \times_{(\cY / G_Y)^{\et}} (\cZ / G_Z)^{\et})} (\cX^{\et} \times_{\cY^{\et}} \cZ^{\et})  \cong (\cX^{\et} \times_{\cY^{\et}} \cZ^{\et}) \times (G_X \times_{G_Y} G_Z)^{\et}.$$
	Therefore it suffices to show that $$\cX^{\et} \times_{\cY^{\et}} \cZ^{\et} \rightarrow (\cX / G_X)^{\et} \times_{(\cY / G_Y)^{\et}} (\cZ / G_Z)^{\et} $$ is surjective in etale topology. Equivalently, we need to show that 
	$$(\cX \times_{\cY} \cZ)^{\et} \rightarrow (\cX / G_X \times_{\cY / G_Y} \cZ / G_Z)^{\et}$$ is surjective in etale topology. Let $T$ be a test affine scheme mapping to the target. Passing to a covering $T^{\prime}$ we obtain a $T^{\prime}$-point of $\cX / G_X \times_{\cY / G_Y} \cZ / G_Z$, and since $(G_Z)^{\et} \rightarrow (G_Y / G_X)^{\et}$ is surjective, passing to a cover $T^{\prime \prime}$ we can lift the $T^{\prime \prime}$-point of $(\cX / G_X \times_{\cY / G_Y} \cZ / G_Z)^{\et}$ to $(\cX \times_{\cY} \cZ)^{\et}$.

\end{proof}

\begin{pr}\label{forms mod gauge}
	Recall from Definition \ref{forms factorizably} that $\Omega^1(D^{\circ}_{\Ranp})$ is the prestack degenerating $\Omega^1(D)$ to $\Omega^1(D^{\circ})$ and  $G(D^{\circ}_{\Ranp})$ is the prestack degenerating $G(D)$ to $G(D^{\circ})$. Then 
	\begin{equation}
	(\Omega^1(D^{\circ}) / G(D))^{\et} \times_{\LocSys_G(D^{\circ})^{\et}} \LocSys_G(D^{\circ}_{\Ranp})^{\et} \cong (\Omega^1(D^{\circ}_{\Ranp}) / G(D)_{\Randrpt})^{\et}.
	\end{equation}
\end{pr}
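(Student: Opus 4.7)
I plan to deduce the proposition from Lemma~\ref{XYZ}. Take, as lax prestacks over the base $B = \Randrpt$,
\begin{align*}
\cX &= \Omega^1(D^\circ), & \cY &= \Omega^1(D^\circ), & \cZ &= \Omega^1(D^\circ_\Ranp), \\
G_X &= G(D), & G_Y &= G(D^\circ), & G_Z &= G(D^\circ_\Ranp),
\end{align*}
with $\cX \to \cY$ and $G_X \to G_Y$ the identity and the tautological inclusion respectively, and with $\cZ \to \cY$, $G_Z \to G_Y$ the restriction-to-the-marked-point maps (obtained by restricting a function on $D_s\cap U$ to $D_{x_S}\cap U = D_{x_S}^\circ$). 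Under this dictionary, the input of the lemma matches the left-hand side of the proposition, so the substance of the proof is to (i) identify the two fiber products $\cX\times_\cY\cZ$ and $G_X\times_{G_Y}G_Z$ with $\Omega^1(D^\circ_\Ranp)$ and $G(D)_\Randrpt$ respectively, and (ii) verify the etale-surjectivity hypothesis of the lemma.

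\textbf{Step (i): the fiber products.} Since $\cX \to \cY$ is the identity, $\cX\times_\cY\cZ = \Omega^1(D^\circ_\Ranp)$ tautologically. For the group fiber product, an $S$-point over $s\in\Randrpt(S)$ consists of a pair $(g_1,g_2)$ with $g_1\in\Maps(D_{x_S},G)$ and $g_2\in\Maps(D_s\cap U,G)$ that agree on $D_{x_S}\cap U = D_{x_S}^\circ$. Gluing $g_1$ and $g_2$ along this overlap produces a unique map $D_s\to G$, and conversely any such map restricts to a pair of the prescribed form. This gives a natural bijection with $G(D)_\Randrpt(S)$, as required.

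\textbf{Step (ii): the etale-surjectivity hypothesis.} This is the only non-formal input. I would factor the map $G_Z \to G_Y/G_X$ as
\[
G(D^\circ_\Ranp) \xrightarrow{\operatorname{res}_x} G(D^\circ) \to G(D^\circ)/G(D).
\]
The second arrow is fppf-surjective as the quotient morphism for the action of $G(D)$ on $G(D^\circ)$, hence in particular etale-locally surjective. The first arrow admits a canonical section sending $g\in G(D^\circ)(S)$ to the pair $(\underline{x}_S, g)\in G(D^\circ_\Ranp)(S)$, where $\underline{x}_S\in\Randrpt(S)$ is the minimal object (the marked point itself, for which $D_{\underline{x}_S}\cap U = D_{x_S}^\circ$). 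Composing, the map $(G_Z)^\et \to (G_Y/G_X)^\et$ is etale-locally surjective, and Lemma~\ref{XYZ} delivers the desired isomorphism.

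\textbf{Main obstacle.} The steps above are essentially formal once the set-up is in place. The one piece that I expect to require genuine care in writing is checking the naturality of the gluing bijection $G_X\times_{G_Y}G_Z\cong G(D)_\Randrpt$ and of the section of $\operatorname{res}_x$ with respect to the lax structure of $\Randrpt$ (i.e.\ compatibility with the insertion maps $\underline{y}\subset\underline{y}'$ inside each $\Randrpt(S)$). This is tedious but routine and poses no serious obstacle; the factorization structure on $G(D)_\Randrpt$ is in fact precisely what makes these compatibilities automatic.
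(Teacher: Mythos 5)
Your overall strategy --- applying Lemma~\ref{XYZ} with $\cX = \cY = \Omega^1(D^\circ)$, $\cZ = \Omega^1(D^\circ_\Ranp)$, $G_X = G(D)$, $G_Y = G(D^\circ)$, $G_Z = G(D^\circ_\Ranp)$ over the base $\Randrpt$, and then identifying the two fiber products --- is exactly the structure of the paper's proof. However, your Step~(ii) has a genuine error, and Step~(i) contains an unproved (if fillable) gap.

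The error in Step~(ii): the proposed ``canonical section'' of $\operatorname{res}_x\colon G(D^\circ_\Ranp) \to G(D^\circ)$, sending $g$ to $(\underline{x}_S, g)$, is \emph{not} a morphism over $\Randrpt$. Everything in the application of Lemma~\ref{XYZ} lives over the base $B = \Randrpt$; the target of the surjectivity claim is $(G(D^\circ) \times \Randrpt)/(G(D) \times \Randrpt)$, and a lift to $G(D^\circ_\Ranp)$ must be produced over the \emph{given} $s\in\Randrpt(S)$, not over the minimal point $\underline{x}_S$. In other words, starting from $g\colon D_{x_S}^\circ \to G$, you would need to extend it to a map $D_s\cap U \to G$ --- an extension problem that does not in general have a solution and that your map does not even attempt. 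So the surjectivity hypothesis of Lemma~\ref{XYZ} is not verified, and this is precisely where the real content of the proposition lives. The paper instead interposes the auxiliary prestack $G[t,t^{-1}]$, which \emph{does} map compatibly over $\Randrpt$ to $G(D^\circ_\Ranp)$ (a global map $\bG_m \to G$ restricts to any $D_s\cap U$) and also maps to $G(D^\circ)\times\Randrpt$, and proves that $G[t,t^{-1}] \to (G(D^\circ)/G(D))^\et$ is etale-surjective by identifying this quotient via Beauville--Laszlo with $G$-bundles on $\bA^1$ trivialized on $\bG_m$, then invoking triviality of $G$-bundles on $\bA^1$ extending to $\bP^1$ (\cite[Appendix]{DS}).

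Additionally, in Step~(i) you assert that gluing $(g_1,g_2)$ along $D_{x_S}^\circ$ produces a unique map $D_s\to G$. This is the assertion that $D_s$ is the pushout of $D_{x_S}\leftarrow D_{x_S}^\circ\rightarrow D_s\cap U$, which is a Beauville--Laszlo-type statement and is exactly what the paper verifies by an explicit computation showing
$$A((t))/A[[t]] \;\cong\; A[t]^{\wedge}_{(I_{\Gamma_{f^{\redu}}})}[t^{-1}]\,/\,A[t]^{\wedge}_{(I_{\Gamma_{f^{\redu}}})}.$$
This is routine but should not be elided; as written it is an unproved claim rather than a formality.
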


\begin{proof}
	First, we will prove that 
	\begin{equation}\label{intermediate}
	(\Omega^1(D^{\circ}) / G(D))^{\et} \times_{\LocSys_G(D^{\circ})^{\et}} \LocSys_G(D^{\circ}_{\Ranp})^{\et} \cong (\Omega^1(D^{\circ}_{\Ranp}) / G(D) \times_{G(D^{\circ})} G(D^{\circ}_{\Ranp}))^{\et}.
	\end{equation}
	By Lemma \ref{XYZ} it is enough to show that the natural map $$(G(D^{\circ}_{\Ranp})^{\et} \rightarrow (G(D^{\circ}) \times \Randrpt) / (G(D) \times \Randrpt))^{\et}$$ is surjective. 
	
	Consider a prestack $G[t, t^{-1}]$ defined by $$G[t, t^{-1}](\Spec(A)) = \Maps(\Spec(A[t, t^{-1}]), G).$$ Note that $G[t, t^{-1}]\times \Randrpt $  maps to $G(D^{\circ}) \times \Randrpt $ and to $G(D^{\circ}_{\Ranp})$ such that the diagram 
	\[
	\begin{tikzcd}
	&   	G(D^{\circ}_{\Ranp})\ar[d, rightarrow, ""']  \\
	G[t, t^{-1}]\times \Randrpt \ar[ur, rightarrow, ""']\ar[r, rightarrow, ""']&   G(D^{\circ}) \times \Randrpt 
	\end{tikzcd}
	\]
	is commutative. So it suffices to check that the map 
	$$G[t, t^{-1}]\rightarrow (G(D^{\circ})/ G(D) )^{\et}$$ is surjective in etale topology. We claim that $$(G[t, t^{-1}] / G[t])^{\et} \cong (G(D^{\circ})/ G(D) )^{\et}, $$
	where $G[t](\Spec(A)) =\Maps(\Spec(A[t]), G).$  Indeed, by the Beauville-Laszlo theorem $(G(D^{\circ})/ G(D) )^{\et}$ is equivalent to the stack parameterizing $G$-torsors on $\bA^1$ with a trivialization on $\bG_m$. But all such torsors extend to $\bP^1$, and hence by \cite[Appendix]{DS} are trivializable. Therefore we obtain the desired isomorphism by choosing a trivialization.
	
	Next, we claim that $$G(D) \times_{G(D^{\circ})} G(D^{\circ}_{\Ranp}) \cong G(D)_{\Randrpt}.$$
	This follows by definition from the fact that for $f: \Spec(A) \rightarrow \Randrpt$ the diagram 
	\[
	\begin{tikzcd}
	\Spec(A((t)))\ar[r, rightarrow, ""']\ar[d, rightarrow, ""']&   	\Spec(A[t]^{\wedge}_{(I_{\Gamma_{f^{\redu}}})}[t^{-1}])\ar[d, rightarrow, ""']  \\
	\Spec(A[[t]]) \ar[r, rightarrow, ""']&   \Spec(A[t]^{\wedge}_{(I_{\Gamma_{f^{\redu}}})}) 
	\end{tikzcd}
	\]
	is a pushout square. To see this (or, equivalently, that the corresponding diagram of rings is a pullback square), it suffices to show that 
	\begin{equation}\label{R[t^{-1}]/R equation}
	A((t)) / A[[t]] \cong A[t]^{\wedge}_{(I_{\Gamma_{f^{\redu}}})}[t^{-1}] / A[t]^{\wedge}_{(I_{\Gamma_{f^{\redu}}})}.
	\end{equation}
	Note that $A[[t]] / (t) \cong A$, and since $ (I_{\Gamma_{f^{\redu}}}) \subset (t)$ we also have $A[t]^{\wedge}_{(I_{\Gamma_{f^{\redu}}})} / (t) \cong A$. Hence  $$A[[t]] / (t^n) \cong  A[t]^{\wedge}_{(I_{\Gamma_{f^{\redu}}})} / (t^n).$$
	Then writing left-hand side of (\ref{R[t^{-1}]/R equation}) as 
	\[
	\begin{tikzcd}
	\colim(0 \ar[r, rightarrow, ""'] &   A[[t]]/(t)\ar[r, rightarrow, "t"']& A[[t]]/(t^2) \ar[r, rightarrow, "t"'] & ...),
	\end{tikzcd}
	\]
	and right-hand side of (\ref{R[t^{-1}]/R equation}) as 
	\[
	\begin{tikzcd}
	\colim(0 \ar[r, rightarrow, ""'] &  A[t]^{\wedge}_{(I_{\Gamma_{f^{\redu}}})}/(t)\ar[r, rightarrow, "t"']& A[t]^{\wedge}_{(I_{\Gamma_{f^{\redu}}})}/(t^2) \ar[r, rightarrow, "t"'] & ...)
	\end{tikzcd}
	\]
	we get the result. So we can rewrite the right-hand side of (\ref{intermediate}) as
	$$(\Omega^1(D^{\circ}_{\Ranp}) /  G(D)_{\Randrpt})^{\et}.$$
\end{proof}

\begin{lm}\label{forms mod gauge affine diagonal}
	The morphism $$\Omega^1(D^{\circ}_{\Ranp}) / G(D)_{\Randrpt}\rightarrow \Randrpt$$ has an affine diagonal.
\end{lm}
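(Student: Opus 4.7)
The plan is to check affineness fpqc-locally after pulling back along an atlas of the target. The map
\[
\Omega^1(D^{\circ}_{\Ranp}) \to \Omega^1(D^{\circ}_{\Ranp})/G(D)_{\Randrpt}
\]
is a $G(D)_{\Randrpt}$-torsor, so its square provides a cover
\[
\Omega^1(D^{\circ}_{\Ranp})\times_{\Randrpt}\Omega^1(D^{\circ}_{\Ranp}) \to (\Omega^1(D^{\circ}_{\Ranp})/G(D)_{\Randrpt})\times_{\Randrpt}(\Omega^1(D^{\circ}_{\Ranp})/G(D)_{\Randrpt})
\]
of the target of the diagonal. By the standard manipulation of the diagonal of a quotient stack, the pullback of the diagonal along this cover is the transporter prestack $\Isom(\omega_1,\omega_2)$, whose $S$-points over $(s,\omega_1,\omega_2)$ are the $g\in G(D)_{\Randrpt}(S)$ with $g\cdot\omega_1=\omega_2$. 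Thus it suffices to prove $\Isom(\omega_1,\omega_2)$ is affine over $\Omega^1(D^{\circ}_{\Ranp})\times_{\Randrpt}\Omega^1(D^{\circ}_{\Ranp})$.

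Next, I would realize $\Isom(\omega_1,\omega_2)$ as the equalizer of the two morphisms
\[
G(D)_{\Randrpt}\times_{\Randrpt} S \rightrightarrows \Omega^1(D^{\circ}_{\Ranp})\times_{\Randrpt} S,\qquad g\mapsto g\cdot\omega_1,\quad g\mapsto \omega_2.
\]
The prestack $\Omega^1(D^{\circ}_{\Ranp})\to\Randrpt$ is representable by a (pro-)affine scheme of sections: its fiber over $s\in\Randrpt(S)$ is the relative $\Spec$ of the symmetric algebra on the dual of the $A$-module $\Gamma(D_s\times_X U,\,\Omega^1_U\boxtimes\cO_S)$. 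Consequently the above equalizer cuts out a closed subprestack of $G(D)_{\Randrpt}\times_{\Randrpt} S$. Closed subprestacks of affine prestacks are affine, so the problem reduces to showing that $G(D)_{\Randrpt}\to\Randrpt$ is affine.

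Finally, for $s\in\Randrpt(S)$ with $S=\Spec A$, write $D_s=\colim_n D_s^{(n)}$ as the colimit of its finite-order infinitesimal thickenings, each $D_s^{(n)}\to S$ being finite. Then
\[
G(D)_{\Randrpt}\times_{\Randrpt} S \;\simeq\; \lim_n \Maps_S(D_s^{(n)},G\times S),
\]
and each $\Maps_S(D_s^{(n)},G\times S)$ is the Weil restriction of the affine group scheme $G\times S$ along the finite morphism $D_s^{(n)}\to S$, which is affine over $S$. A cofiltered limit of affine $S$-schemes along affine transition morphisms is again affine (it is the relative $\Spec$ of the filtered colimit of structure sheaves), so $G(D)_{\Randrpt}\to\Randrpt$ is affine. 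The main subtlety is verifying that these pro-affine limits and Weil restrictions along formal thickenings behave as affine prestacks in the paper's derived/prestack conventions; once that bookkeeping is in place, the lemma is a routine computation of the diagonal of a quotient stack.
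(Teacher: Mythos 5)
Your overall reduction is the same one the paper uses: express the diagonal of the quotient as the transporter/$\Isom$ prestack, observe it is the equalizer of two maps from $G(D)_{\Randrpt}\times_{\Randrpt}S$ to $\Omega^1(D^{\circ}_{\Ranp})\times_{\Randrpt}S$, and conclude from the affineness of $G(D)_{\Randrpt}$ over $\Randrpt$ \emph{provided} that the equalizer is affine over its source. However, the step where you establish this last point contains a genuine error.

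You assert that $\Omega^1(D^{\circ}_{\Ranp})\to\Randrpt$ is representable by a ``(pro-)affine scheme of sections'' given by $\Spec\Sym$ of the dual of $\Gamma(D_s\times_X U,\Omega^1_U\boxtimes\cO_S)$. This is false: that space of sections consists of $1$-forms with poles of \emph{arbitrary} order at the marked point, so the fiber of $\Omega^1(D^{\circ}_{\Ranp})$ over $s:\Spec A\to\Randrpt$ is (in the simplest case) the Laurent-series functor $B\mapsto B((t))\,dt$, which is an ind-affine scheme and not an affine or pro-affine scheme. In particular the base change $B\mapsto A((t))\otimes_A B$ does not agree with the actual functor. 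A cofiltered limit of affine schemes along affine maps is indeed affine, but a \emph{filtered colimit} is not, and the pole order gives exactly such a filtered colimit. Consequently the sentence ``the above equalizer cuts out a closed subprestack of $G(D)_{\Randrpt}\times_{\Randrpt}S$'' does not follow as written: it needs precisely that $\Omega^1(D^{\circ}_{\Ranp})\to\Randrpt$ is separated (or has affine diagonal), which is what must be proved and cannot be deduced from a nonexistent affine representability.

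The paper closes this gap by exhibiting $\Omega^1(D^{\circ}_{\Ranp})$ as the colimit $\colim_n\Omega^{1,\leq n}(D^{\circ}_{\Ranp})$ over bounded pole order and noting that any map from an affine $S$ to $\Omega^1(D^{\circ}_{\Ranp})$ factors through some finite stage; each $\Omega^{1,\leq n}(D^{\circ}_{\Ranp})\to\Randrpt$ is affine, hence has affine diagonal, and this is enough to conclude that $\Omega^1(D^{\circ}_{\Ranp})\to\Randrpt$ has affine diagonal (without being affine). Your Weil-restriction argument for affineness of $G(D)_{\Randrpt}\to\Randrpt$ is fine but is in any case assumed known in the paper; the substantive missing ingredient in your write-up is the ind-scheme/pole-filtration argument for the affine diagonal of $\Omega^1(D^{\circ}_{\Ranp})$.
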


\begin{proof}
	Note that since $G(D)_{\Randrpt}$ is affine over $\Randrpt$ it suffices to check that the map  $$\Omega^1(D^{\circ}_{\Ranp})\rightarrow \Randrpt$$ has an affine diagonal.
	Indeed, consider a prestack $\Omega^{1, \leq n}(D^{\circ}_{\Ranp})$ defined as follws.
	For any test affine scheme $S$ the category $\Omega^{1, \leq n}(D^{\circ}_{\Ranp})(S)$ is defined as:
	\begin{itemize}
		\item An object is a pair $(s, \omega)$ consisting of an object $s \in \Ranp(S)$ and $\omega \in \Gamma(D_{s}, \Omega^{1, \leq n}_U \boxtimes \cO_S)$;
		\item A morphism from $(s, \omega)$ to $(s^{\prime}, \omega^{\prime})$ consists of a morphism $s \rightarrow s^{\prime}$ in $\Ranp(S)$ and an isomorphism between $\omega$ and $\omega^{\prime}|_{D_{s}}$. 
	\end{itemize}
	Here $\Omega^{1, \leq n}_U $ stands for the sheaf of 1-forms with a pole of order $\leq n$. 
	Since $j_*(\Omega^1_U) \cong \colim_n \Omega^{1, \leq n}_U$ we have 
	$$\Gamma(D_{s}\otimes_X U, \Omega^{1}_U \boxtimes \cO_S) \cong \colim_n \Gamma(D_{s}, \Omega^{1, \leq n}_U \boxtimes \cO_S).$$
	Therefore we have that for any affine $S$ a map $S \rightarrow \Omega^1(D^{\circ}_{\Ranp})$ factors through $\Omega^{1, \leq n}(D^{\circ}_{\Ranp})$  for some $n$. Thus it suffices to check that $$\Omega^{1, \leq n}(D^{\circ}_{\Ranp}))\rightarrow \Randrpt$$ has an affine diagonal, which is immediate. 
\end{proof}

\begin{lm}\label{etale covering}
	Let $f:\cX \rightarrow \cY$ be a morphism of prestacks which are sheaved in etale topology. Suppose that there exists a moprhism $\cY^{\prime} \rightarrow \cY$ which is affine and an etale covering. Then if the moprhism $$f^{\prime}: \cX^{\prime} = \cX \times_{\cY} \cY^{\prime} \rightarrow \cY^{\prime}$$ has affine diagonal then so does $f$. 
\end{lm}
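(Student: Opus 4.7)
The plan is to reduce the statement to the standard fact that affineness of a morphism between étale sheaves is local on the target in the étale topology. By definition, $\Delta_f$ is affine means that for every affine scheme $T$ with a morphism $T \to \cX \times_\cY \cX$, the fiber product $\cX \times_{\cX \times_\cY \cX} T$ is an affine scheme. To descend this from $f'$ to $f$, I would form the pullback diagram
\[
\begin{tikzcd}
\cX' \ar[r] \ar[d, "\Delta_{f'}"'] & \cX \ar[d, "\Delta_f"] \\
\cX' \times_{\cY'} \cX' \ar[r] & \cX \times_\cY \cX,
\end{tikzcd}
\]
and observe that $\cX' \times_{\cY'} \cX' \simeq (\cX \times_\cY \cX) \times_\cY \cY'$ is obtained from $\cX \times_\cY \cX$ by base change along $\cY' \to \cY$. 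In particular, the bottom horizontal arrow is itself an affine étale covering, and $\Delta_{f'}$ is the base change of $\Delta_f$ along this affine étale covering.

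Given an arbitrary $T \to \cX \times_\cY \cX$ from an affine scheme $T$, set $T' := T \times_{\cX \times_\cY \cX} (\cX' \times_{\cY'} \cX')$. Since the map $\cX' \times_{\cY'} \cX' \to \cX \times_\cY \cX$ is affine, $T' \to T$ is a morphism of affine schemes; since it is an étale covering, it is an étale cover of $T$ by an affine scheme. The further base change of $\Delta_f$ along $T \to \cX \times_\cY \cX$, restricted to $T'$, coincides with the base change of $\Delta_{f'}$ along the map $T' \to \cX' \times_{\cY'} \cX'$, and is therefore an affine scheme by the hypothesis that $\Delta_{f'}$ is affine. Thus the étale sheaf $\cX \times_{\cX \times_\cY \cX} T$ becomes an affine scheme after an affine étale base change $T' \to T$.

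I would conclude by invoking effective descent of affine schemes along étale (in fact, fpqc) coverings of affine schemes: a morphism of étale sheaves with affine target that becomes representable by an affine scheme after an affine étale cover of the target is itself represented by an affine scheme. Applied here, this shows $\cX \times_{\cX \times_\cY \cX} T$ is affine for every affine $T \to \cX \times_\cY \cX$, hence $\Delta_f$ is affine. There is no real obstacle; the only point that requires care is ensuring the relevant prestacks $\cX \times_\cY \cX$ and their pullbacks remain étale sheaves, which follows from the hypothesis together with the fact that étale sheafification commutes with finite limits (as already invoked in Lemma~\ref{XYZ}).
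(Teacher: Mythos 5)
Your proof is correct and follows essentially the same approach as the paper: both identify that $\cX' \times_{\cY'} \cX' \to \cX \times_\cY \cX$ is an affine étale covering along which $\Delta_{f'}$ is the base change of $\Delta_f$, pull back to an arbitrary affine $T$, and conclude by descent of affineness along (affine) étale covers; the paper simply cites \cite[Chapter 2, Proposition 2.4.2]{GR1} for the final descent step, which you unpack explicitly.
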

\begin{proof}
	Note that $$\cX^{\prime} \times_{\cY^{\prime}} \cX^{\prime} \rightarrow \cX \times_{\cY} \cX$$ is an affine etale covering, and pullback of $\Delta_f$ along this map gives affine morphism $\Delta_{f^{\prime}}$. 
	For an affine scheme $T$ mapping to $\cX \times_{\cY} \cX$ consider the pullback along this map of the Cartesian diagram 
	\[
	\begin{tikzcd}
	\cX^{\prime}\ar[r, rightarrow, ""']\ar[d, rightarrow, ""']&   	\cX^{\prime} \times_{\cY^{\prime}} \cX^{\prime}\ar[d, rightarrow, ""']  \\
	\cX \ar[r, rightarrow, ""']&   \cX \times_{\cY} \cX.
	\end{tikzcd}
	\]
	We are in position to apply \cite[Chapter 2 Proposition 2.4.2]{GR1}, which gives the result. 
\end{proof}

\begin{lm}\label{affine diagonal after pullback to affines}
	Let $f: \cX \rightarrow \cY$ be a morphism of prestacks. Assume that for every affine $S$ mapping to $\cY$ the morphism $f_S: \cX \times_{\cY} S \rightarrow S$ has an affine diagonal. Then $f$ has an affine diagonal.
\end{lm}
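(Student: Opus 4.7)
\medskip

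The plan is to unwind the definition of ``affine diagonal'' and reduce, via base change, to the hypothesis applied to a suitable affine $S$. Recall that the diagonal $\Delta_f: \cX \to \cX \times_{\cY} \cX$ being affine means that for every affine scheme $T$ equipped with a map $t:T\to \cX \times_{\cY} \cX$, the fiber product $T\times_{\cX \times_{\cY} \cX} \cX$ is representable by an affine scheme. So I would fix such a pair $(T,t)$ and show affineness of that pullback by producing it as a base change of $\Delta_{f_S}$ for an appropriate choice of $S$.

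Concretely, given $t:T \to \cX\times_{\cY}\cX$, let $s: T \to \cY$ be the composite of $t$ with either projection to $\cY$ (the two composites agree by the definition of the fiber product). Apply the hypothesis to this $s$: the morphism $f_S: \cX_S := \cX\times_{\cY}S \to S$ (with $S=T$) has affine diagonal $\Delta_{f_S}: \cX_S \to \cX_S \times_S \cX_S$. The key identification is
\[
\cX_S \times_S \cX_S \;\simeq\; (\cX\times_{\cY}\cX)\times_{\cY} S,
\]
which is immediate from rewriting the iterated fiber product. Under this identification, the pair $(t, \on{id}_T)$ defines a $T$-point $\wt t: T \to \cX_S \times_S \cX_S$.

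Now I would compute the pullback of $\Delta_{f_S}$ along $\wt t$. A direct diagram chase shows
\[
T \times_{\cX_S \times_S \cX_S, \wt t} \cX_S \;\simeq\; T \times_{\cX\times_{\cY}\cX, t} \cX,
\]
because the ``diagonal compatibility'' built into $\wt t$ forces the two projections $T\to \cX$ to agree on the fiber. Since by hypothesis $\Delta_{f_S}$ is affine, the left-hand side is affine, hence so is the right-hand side. As $(T,t)$ was arbitrary, this proves that $\Delta_f$ is affine.

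I do not anticipate a real obstacle here; the entire argument is a formal manipulation of fiber products together with the compatibility of ``affineness of a morphism'' with arbitrary base change. The only point that deserves care is the identification $\cX_S \times_S \cX_S \simeq (\cX\times_{\cY}\cX)\times_{\cY} S$ and the resulting comparison of the two pullback squares; once these are in place, the conclusion follows immediately.
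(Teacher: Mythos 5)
Your proof is correct, and it is the standard formal argument. The paper states this lemma without proof, so there is nothing to compare against; your write-up supplies exactly what is implicit in the text. The only thing I would phrase slightly more crisply is the verification of the fiber-product identification: under $\cX_S \times_S \cX_S \simeq (\cX\times_{\cY}\cX)\times_{\cY} S$ the diagonal $\Delta_{f_S}$ is precisely the base change of $\Delta_f$ along $(\cX\times_{\cY}\cX)\times_{\cY} S \to \cX\times_{\cY}\cX$, and $\wt t$ is the canonical lift of $t$ through this projection. The comparison $T\times_{\cX_S\times_S\cX_S,\,\wt t}\cX_S \simeq T\times_{\cX\times_{\cY}\cX,\,t}\cX$ is then the statement that pulling back $\Delta_f$ along $t$ can be done in two stages, rather than a ``diagram chase forcing the two projections to agree,'' which is a slightly more roundabout justification. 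But the conclusion and the structure of the argument are right.
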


\begin{proof}[Proof of Proposition \ref{affine diagonal}]
	Recall that $\LocSys_G^{\restr}(D^{\circ})$ is isomorphic to an etale sheafified quotient by $G$ of a certain etale stack $\cY$. By Lemma \ref{etale covering} then it suffices to check that 
	\begin{equation}\label{cY morphism}
	\cY \times_{\LocSys_G(D^{\circ})^{\et}} \LocSys_G(D^{\circ}_{\Ranp})^{\et} \rightarrow \cY \times (\Randrpt)^{\et}
	\end{equation}
	has an affine diagonal. 
	
	By Lemma \ref{affine diagonal after pullback to affines}, to see this  it suffices to show that for every affine $S$ mapping to $\cY \times (\Randrpt)^{\et}$ pullback of  (\ref{cY morphism}) to $S$ has an affine diagonal. 
	
	But recall that $\cY$ can be written as a filtered colimit $\colim_i Y_i$, where $Y_i$ are affine schemes. Therefore a map from an affine scheme $S$ to $\cY \times (\Randrpt)^{\et}$ factors through $Y_i  \times (\Randrpt)^{\et}$ for some $i$. So it suffices to show that 
	\begin{equation}
	Y_i \times_{\LocSys_G(D^{\circ})^{\et}} \LocSys_G(D^{\circ}_{\Ranp})^{\et} \rightarrow Y_i \times (\Randrpt)^{\et}
	\end{equation}
	has an affine diagonal. Consider an etale covering $Y_i^{\prime}$ of $Y_i$ such that a map $Y_i^{\prime} \rightarrow \LocSys_G(D^{\circ})^{\et}$ lifts to $(\Omega^1(D^{\circ}) / G(D))^{\et}$. 
	By Lemma \ref{etale covering} it is enough to show that 
	\begin{equation}
	Y_i^{\prime}\times_{\LocSys_G(D^{\circ})^{\et}} \LocSys_G(D^{\circ}_{\Ranp})^{\et} \rightarrow Y_i^{\prime}\times (\Randrpt)^{\et}
	\end{equation}
	has an affine diagonal. But by Proposition \ref{forms mod gauge} 
	$$Y_i^{\prime}\times_{\LocSys_G(D^{\circ})^{\et}} \LocSys_G(D^{\circ}_{\Ranp})^{\et} \cong Y_i^{\prime} \times_{(\Omega^1(D^{\circ}) / G(D))^{\et}} (\Omega^1(D^{\circ}_{\Ranp}) / G(D)_{\Randrpt})^{\et},$$
	and since $(\Omega^1(D^{\circ}) / G(D))^{\et}$ has an afffine diagonal and $(\Omega^1(D^{\circ}_{\Ranp}) / G(D)_{\Randrpt})^{\et} \rightarrow \Randrpt$ has an affine diagonal (Lemma \ref{forms mod gauge affine diagonal}) the result follows.
\end{proof}

\begin{cnstr}
	
	Note that we have an adjunction in $\mathbf{FactAlgCat}^T$:
	\begin{equation} \label{eqn-Phi-adj}
	\Phi^{\univ} : \Rep(G)^T \rightleftarrows \Vect^T: \Phi^{\univ, R}.
	\end{equation}

	\medskip
By Corollary \ref{G affine} we have an adjunction in $\mathbf{FactModCat}^T$:
	\[
	(\Phi^{\univ} , \Psi_s^*): (\Rep(G)^T, \bFact^{\restr}(R_1) \otimes \QCoh(R_2)) \rightleftarrows (\Vect^T, \Vect^T): (\Phi^{\univ, R} , {\Psi_s}_*).
	\]
\end{cnstr}

\begin{proof}[Proof of Proposition \ref{restriction}]
	It is clear that $\Psi_s^*$ and ${\Psi_s}_*$ induce the equivalences between the cores. Then the claim follows from  Corollary \ref{B.8.4}.
\end{proof}

\section{Proof of the main theorem}\label{section: main proof}
Consider the following diagram:

	\[
\begin{tikzcd}
\QCoh(T)\ar[r, "\sim", "\Id"']&   \QCoh(T)    \\
\QCoh(R_1 \times_{\LocSys_G^{\restr}(D^{\circ}) }R_2) \arrow[u, " "]  \ar[r, "", ""']  &  {F_{\sigma_2}}^*{F_{\sigma_1}}_* (\omega_{U} \boxtimes \cO_{T})\FactMod^T  ,\arrow[u, ""]
\end{tikzcd}
\]
where the left arrow is $*$-pushforward along $p:R_1 \times_{\LocSys_G^{\restr}(D^{\circ}) }R_2 \rightarrow T$  and the right arrow is taking the fiber at $x$.  Commutativity follows from the fact that $(-)_{\Rep(G)}$ and all reduction steps preserved forgetful functors.

Passing to left adjoints, we obtain a diagram that commutes up to a natural transformation 

	\[
\begin{tikzcd}
\QCoh(T) \arrow[d, " "]\ar[r, "\sim", "\Id"']&   \QCoh(T) \arrow[d, " "]\arrow[Rightarrow, dl, " "]    \\
\QCoh(R_1 \times_{\LocSys_G^{\restr}(D^{\circ}) }R_2)  \ar[r, "", ""']  &  {F_{\sigma_2}}^*{F_{\sigma_1}}_* (\omega_{U} \boxtimes \cO_{T})\FactMod^T.
\end{tikzcd}
\]

Let $M_1$ and $M_2$ denote the monads acting on $\QCoh(T)$ coming from left- and right-hand sides of the diagrams above. We have a map $$M_2 \rightarrow M_1.$$
We will show that this is an equivalence, which will imply the equation (\ref{main assertion after reductions}).

\subsection{Description of $M_1$}
\begin{lm}
	The diagonal $\Delta: \LocSys_G^{\restr}(D^{\circ})  \rightarrow \LocSys_G^{\restr}(D^{\circ})  \times \LocSys_G^{\restr}(D^{\circ}) $ is affine schematic.
\end{lm}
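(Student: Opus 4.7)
The plan is to reduce to the explicit description of $\LocSys_G^{\restr}(D^{\circ})$ provided by Proposition \ref{properties of LS} and compute the $\Isom$-functor directly. Write
\[
\LocSys_G^{\restr}(D^{\circ}) \cong \bigsqcup_\alpha (Y_\alpha/G)^{\et},
\]
where each $Y_\alpha$ is a formal affine scheme (in particular, an ind-affine ind-scheme with closed transition maps, hence having affine schematic diagonal). Since being affine schematic is local on the target for the etale topology and the components are disjoint, it suffices to check that for any affine scheme $S$ and any pair of maps $s_1,s_2\colon S\to (Y_\alpha/G)^{\et}$, the $\Isom$-space
\[
\Isom(s_1,s_2) := S\times_{(Y_\alpha/G)^{\et}\times (Y_\alpha/G)^{\et}} (Y_\alpha/G)^{\et}
\]
is representable by an affine scheme affine over $S$.

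First I would pass to an etale cover $\widetilde{S}\to S$ over which $s_1,s_2$ lift to honest $\widetilde{S}$-points $\widetilde{s}_1,\widetilde{s}_2\colon \widetilde{S}\to Y_\alpha$. Since etale sheafification preserves finite limits, the pullback of $\Isom(s_1,s_2)$ to $\widetilde{S}$ identifies with the $\Isom$-functor computed for the prestack quotient $Y_\alpha/G$, namely
\[
\Isom(\widetilde{s}_1,\widetilde{s}_2) \cong \{\, g\in G\times \widetilde{S} \;:\; g\cdot \widetilde{s}_1 = \widetilde{s}_2\,\} \cong (G\times \widetilde{S})\times_{Y_\alpha\times Y_\alpha} Y_\alpha,
\]
where the map $G\times \widetilde{S}\to Y_\alpha\times Y_\alpha$ is $(g,s)\mapsto (g\cdot \widetilde{s}_1(s),\widetilde{s}_2(s))$ and $Y_\alpha\to Y_\alpha\times Y_\alpha$ is the diagonal. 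Because $G$ is an affine algebraic group and $Y_\alpha$ has affine schematic diagonal (as an ind-affine ind-scheme along closed embeddings), this fiber product is a closed subscheme of the affine scheme $G\times \widetilde{S}$, hence affine over $\widetilde{S}$.

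Finally, I would descend: affine schematic morphisms satisfy etale descent on the target, so the affineness of $\Isom(\widetilde{s}_1,\widetilde{s}_2)\to \widetilde{S}$ promotes to affineness of $\Isom(s_1,s_2)\to S$. I expect the only slightly delicate point to be the bookkeeping between the prestack quotient $Y_\alpha/G$ and its etale sheafification $(Y_\alpha/G)^{\et}$, but this is handled uniformly by the fact that etale sheafification commutes with finite limits and that ``affine schematic'' is an etale-local property on the base. No obstacle of substance arises: the argument is a direct unwinding of the structural description of $\LocSys_G^{\restr}(D^{\circ})$ combined with the affineness of $G$ and of the diagonal of a formal affine scheme.
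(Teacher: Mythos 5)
Your argument is correct and is essentially the paper's argument: both rely on Proposition \ref{properties of LS} to reduce to a quotient $(Y/G)^{\et}$ with $Y$ formal affine and $G$ affine, then compute the fiber of $\Delta$ over an etale cover coming from $Y$ (or $Y\times Y$), landing in a fiber product of $G\times(-)$ against the (affine) diagonal of $Y$, and finally invoke etale descent of affineness. The paper packages this as the general statement that $\cY/G$ has affine diagonal whenever $\cY$ does and $G$ is affine, while you unwind the $\Isom$-space directly, but the ingredients and the structure of the reduction are the same.
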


\begin{proof}
	We will prove that for any prestack $\cY$ with an affine diagonal and an affine group $G$ acting on it, the prestack $\cY / G$ has an affine diagonal. The statement of the lemma will follow then from Proposition \ref{properties of LS}. 
	
	Indeed, consider a cover of $\cY / G \times \cY / G$ given by $\cY \times \cY$. By \cite[Chapter 2 Proposition 2.4.1(b)]{GR1}it suffices to check that the pullback of $\Delta$ to this cover is an affine morphism. We have 
	$$(\cY \times \cY) \times_{\cY / G \times \cY / G} \cY / G \cong G \times \cY,$$
	and the pullback of the diagonal map of $\cY / G$ is then the composition 
	 					\[
	\begin{tikzcd}
	G \times \cY \ar[r, rightarrow,  "\Delta"]
	& (G \times \cY) \times (G \times \cY)\arrow[r,  " \proje \times \action"]
	&  \cY \times \cY,
	\end{tikzcd}
	\] 
	where both maps are affine. 
\end{proof}

\begin{rem}
	In particular this shows that $p$ is affine schematic and $p_*$ is continuous. 
\end{rem}
Hence by \cite[Chapter 3 Proposition 3.3.3]{GR1} we get that $$M_1 \cong p_*(\cO_{R_1 \times_{\LocSys_G^{\restr}(D^{\circ}) }R_2}) \otimes -.$$

\subsection{Description of $M_2$}

Note that ${F_{\sigma_2}}^*{F_{\sigma_1}}_* (\omega_{U} \boxtimes \cO_{T})$ lies in $$\QLisse(U) \otimes \QCoh(T) \subset \D_{\indhol}(U) \otimes \QCoh(T)$$ since $\sigma_1$ and $\sigma_2$ are restricted families of local systems. Therefore we can apply Theorem \ref{factmod commutative} and Remark \ref{factmod commutative U}. 

\begin{lm}
	The commutative algebra ${F_{\sigma_2}}^*{F_{\sigma_1}}_* (\omega_{U} \boxtimes \cO_{T})$ comes from a prestack $$({U}_{\dR} \times T) \times_{\BG \times {U}_{\dR} \times T} ({U}_{\dR} \times T)$$ affine over ${U}_{\dR} \times T$, where the maps in the fiber product are given by $F_{\sigma_1}$ and $F_{\sigma_2}$.
\end{lm}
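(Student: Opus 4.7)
The plan is to interpret the assertion as a direct consequence of base change applied to a single Cartesian square together with the commutative factorization algebra dictionary provided by Theorem \ref{factmod commutative}. Form the pullback square
\[
\begin{tikzcd}
Z \ar[r, "p_1"] \ar[d, "p_2"'] & U_{\dR} \times T \ar[d, "F_{\sigma_1}"] \\
U_{\dR} \times T \ar[r, "F_{\sigma_2}"'] & \BG \times U_{\dR} \times T,
\end{tikzcd}
\]
so by definition $Z = (U_{\dR} \times T)\times_{\BG\times U_{\dR}\times T}(U_{\dR} \times T)$. What must be shown is: (a)~$p_2$ is affine (schematic) over $U_{\dR}\times T$; (b)~the base change identity $F_{\sigma_2}^{*}\,F_{\sigma_1,*} \simeq p_{2,*}\,p_1^{*}$ holds on $\omega_{U}\boxtimes \cO_T$; and (c)~the resulting commutative algebra $p_{2,*}(\omega_{U}\boxtimes \cO_T|_Z) \in \CommAlg(\D(U)\otimes\QCoh(T))$ is precisely the one assigned to the affine morphism $Z \to U_{\dR}\times T$ by Theorem \ref{factmod commutative} (combined with Remark \ref{factmod commutative U}).

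For (a), the morphism $F_{\sigma_i}\colon U_{\dR}\times T \to \BG \times U_{\dR}\times T$ factors as $(F_{\sigma_i, \BG}, \Id_{U_{\dR}\times T})$, so it suffices to argue that the map to $\BG$ is affine. By Proposition \ref{extension + torsor locally trivial on A} there exists an \'etale cover $T' \to T$ over which the corresponding $G$-bundle on $U\times T'$ is trivial, and hence $F_{\sigma_i}$ factors through $\BG^{\naive}$. Since $\BG^{\naive}\to \BG$ is formally \'etale and the chosen section identifies the pullback with a trivial $G$-torsor, the morphism is affine after an affine \'etale base change; affineness of the diagonal $\BG\to \BG\times\BG$ (which is $G$) then gives that $p_2$ (hence $p_1$) is affine over $U_{\dR}\times T$. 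For (b), once we know the square above is a pullback in the appropriate $\infty$-categorical sense, base change for $!$-pullback and $*$-pushforward along schematic affine morphisms of de Rham prestacks applies to the object $\omega_{U}\boxtimes \cO_T$; since $p_1$ is the base change of the affine morphism $F_{\sigma_2}$, the base change identity is standard in $\D(-)\otimes\QCoh(T)$.

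For (c), the object $F_{\sigma_1,*}(\omega_U\boxtimes \cO_T)$ lies in $\QLisse(U)\otimes \QCoh(T)$ (as noted immediately before the lemma, using that $\sigma_1,\sigma_2$ are restricted families), and pulling back by $F_{\sigma_2}$ preserves this subcategory since $F_{\sigma_2}^{*}$ is a symmetric monoidal pullback. The resulting commutative algebra structure on $F_{\sigma_2}^{*}F_{\sigma_1,*}(\omega_U\boxtimes \cO_T)$ agrees, under the base change isomorphism of (b), with the algebra structure on $p_{2,*}(\cO_Z)$ (i.e.~$p_{2,*}$ of the dualizing sheaf of $Z$ over $U_{\dR}\times T$). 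By Theorem \ref{factmod commutative} and Remark \ref{factmod commutative U}, the commutative algebras in $\D(U)\otimes\QCoh(T)$ of this form correspond precisely to affine prestacks over $U_{\dR}\times T$, and this correspondence matches the algebra of functions on $Z$ with $Z$ itself.

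The main obstacle is (a), establishing that $Z \to U_{\dR}\times T$ is affine: one must combine the \'etale-local triviality of Proposition \ref{extension + torsor locally trivial on A} with the factorization through $\BG^{\naive}$ and descend the affineness through the \'etale cover. Once that is in place, (b) and (c) are formal consequences of base change and the commutative-algebra/affine-prestack dictionary already set up in the appendix.
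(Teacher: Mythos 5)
Your proposal takes the same route as the paper: identify the Cartesian square, invoke affineness of $F_{\sigma_1}$, apply base change to get $F_{\sigma_2}^{*}F_{\sigma_1,*}(\omega_U\boxtimes\cO_T)\cong \pi_*\cO_Z$, and read this off as the algebra of the affine prestack $Z$. The paper's proof is exactly this, stated in two lines, so the overall structure matches.

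However, your justification of step (a) contains a genuine logical error. You claim that since $F_{\sigma_i}$ factors as $(F_{\sigma_i,\BG},\Id)$, ``it suffices to argue that the map to $\BG$ is affine.'' This reduction is wrong: a morphism $Y\to\BG$ from a non-affine $Y$ (such as $U_{\dR}\times T$) is never affine, since the fiber product with $\pt\to\BG$ is a $G$-torsor over $Y$, hence not affine unless $Y$ is. The correct reason $F_{\sigma_i}$ is affine is that it is the \emph{graph} of the map $U_{\dR}\times T\to\BG$, and the graph $\Gamma_f\colon Y\to Z\times Y$ is always the base change of the diagonal $\Delta_Z\colon Z\to Z\times Z$ along $f\times\Id_Y$; since $\Delta_{\BG}$ is affine (its fibers are $\Isom$-schemes of $G$-torsors, hence forms of $G$), the graph is affine. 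You do gesture at the affineness of $\Delta_{\BG}$ at the very end of (a), but you do not connect it cleanly, and the appeal to Proposition \ref{extension + torsor locally trivial on A}, étale covers, and $\BG^{\naive}$ is an unnecessary detour that plays no role here — étale-local triviality is used elsewhere in the paper (to construct $\Psi_s$), not to establish affineness of $F_{\sigma_i}$. Finally, your point (c) slightly misattributes the role of Theorem \ref{factmod commutative}: that theorem is applied \emph{after} this lemma to compute the factorization-module category, not within the proof of the lemma, which only needs the elementary identification of $\pi_*\cO_Z$ with the algebra of an affine morphism.
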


\begin{proof}
	Since ${F_{\sigma_1}}$ is affine by base change $${F_{\sigma_2}}^*{F_{\sigma_1}}_* (\omega_{U} \boxtimes \cO_{T}) \cong \pi_* \cO_{({U}_{\dR} \times T) \times_{\BG \times {U}_{\dR} \times T} ({U}_{\dR} \times T)}$$ for $\pi: ({U}_{\dR} \times T) \times_{\BG \times {U}_{\dR} \times T} ({U}_{\dR} \times T) \rightarrow {U}_{\dR} \times T.$
	
\end{proof}
So it is left to compute the fiber at $x$ of  $\hormerxJetsF_{\nabla, T \times {U}_{\dR}}^{\mer, x}(({U}_{\dR} \times T) \times_{\BG \times {U}_{\dR} \times T} ({U}_{\dR} \times T))$.

\begin{lm}
	The fiber of $$\hormerxJetsF_{\nabla, T \times {U}_{\dR}}^{\mer, x}(({U}_{\dR} \times T) \times_{\BG \times {U}_{\dR} \times T} ({U}_{\dR} \times T))$$ at $x$ is isomorphic to $R_1 \times_{\LocSys_G^{\restr}(D^{\circ}) }R_2$. 
\end{lm}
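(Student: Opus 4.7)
The plan is to unwind both sides in terms of their functor of points and exhibit a natural identification, using Proposition \ref{monomorphism} as the key technical input.

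First, I would compute the $S$-points of the source. By the definition of the horizontal meromorphic jets prestack (Appendix \ref{appendix: multidiscs}), for a test affine scheme $S$ an $S$-point of $\hormerxJetsF_{\nabla, T \times U_{\dR}}^{\mer, x}(\cY)$ whose image in $X_{\dR}$ equals $x$ consists of a map $t: S \to T$ together with a section of $\cY$ over $D^\circ_{S, \nabla} \times_{X_{\dR}} U_{\dR}$ (the horizontal meromorphic disc at $x$ crossed with $U_{\dR}$), compatibly with the structure map $\cY \to T \times U_{\dR}$. Applied to our $\cY = (U_{\dR} \times T) \times_{\BG \times U_{\dR} \times T} (U_{\dR} \times T)$, and unwinding the fiber product over $\BG$, this amounts to a triple $(t, \sigma_1|_{D^\circ}, \sigma_2|_{D^\circ}, \phi)$ where $\sigma_i|_{D^\circ}$ is the $G$-local system on the formal punctured disc obtained by restricting $f_{\sigma_i}$ to $D^\circ_{S, \nabla} \times_{X_{\dR}} U_{\dR}$, and $\phi$ is a horizontal isomorphism between them, i.e. an isomorphism of the two local systems.

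Second, I would compute the $S$-points of the target. By definition, an $S$-point of $R_1 \times_{\LocSys_G^{\restr}(D^\circ)} R_2$ consists of $r_i : S \to R_i$ together with an identification of the two resulting $S$-points of $\LocSys_G^{\restr}(D^\circ)$. By Proposition \ref{monomorphism}, the map $\LocSys_G^{\restr}(D^\circ) \to \LocSys_G(D^\circ)^{\et}$ is a monomorphism; hence such an identification of $S$-points of $\LocSys_G^{\restr}(D^\circ)$ is the same as an identification of the corresponding $S$-points of $\LocSys_G(D^\circ)^{\et}$, which is precisely an isomorphism of the associated $G$-local systems on the formal punctured disc at $x$.

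Third, I would match the two descriptions. Given $t = (r_1, r_2): S \to T$, the composites $S \to R_i \to \LocSys_G^{\restr}(D^\circ)$ classify exactly the restrictions $\sigma_i|_{D^\circ}$ appearing in the first step (this is essentially the defining property of the section constructed in Lemma \ref{section LS^restr} together with the construction of $\Psi_s$ in \eqref{main section}, which was used to set up the whole reduction). Thus a horizontal meromorphic isomorphism $\phi$ on the source is the same datum as an identification in $\LocSys_G^{\restr}(D^\circ)(S)$ on the target. Assembling these equivalences naturally in $S$ gives the required isomorphism of prestacks.

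The main obstacle will be pinning down the third step: rigorously verifying that the local system on $D^\circ$ classified by an $S$-point of $R_i$ (via the restricted Tannakian formalism of $\LocSys_G^{\restr}(D^\circ)$) coincides with the $G$-torsor with connection on $D^\circ_{S, \nabla} \times_{X_{\dR}} U_{\dR}$ obtained from $f_{\sigma_i}$ by pullback. Etale-locally on $S$ this is immediate from Proposition \ref{extension + torsor locally trivial on A}, which trivializes the underlying $G$-bundle and reduces to a comparison of connection forms; one then concludes by etale descent, using that both sides are etale sheaves, and the monomorphism from Proposition \ref{monomorphism} to transport the local statement from $\LocSys_G(D^\circ)^{\et}$ back to $\LocSys_G^{\restr}(D^\circ)$.
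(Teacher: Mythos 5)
Your proposal is correct and rests on the same key ingredient as the paper's proof (Proposition \ref{monomorphism}), but it works at a lower level of abstraction. The paper disposes of the first step in one line by observing that the parameterized meromorphic‐horizontal‐jets construction is a right adjoint (Lemma \ref{description of R_B^R}) and therefore commutes with fiber products; this immediately gives
\[
\hormerxJetsF_{\nabla, T \times U_{\dR}}^{\mer, x}\bigl((U_{\dR} \times T) \times_{\BG \times U_{\dR} \times T} (U_{\dR} \times T)\bigr)_x \;\cong\; T \times_{T \times (\hormerxJets(\BG))_x} T \;\cong\; R_1 \times_{(\hormerxJets(\BG))_x} R_2,
\]
and the claim then follows since both maps $R_i \to (\hormerxJets(\BG))_x$ factor through $\LocSys_G^{\restr}(D^\circ)$ and the latter maps monomorphically into the former. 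You instead unwind $S$-points by hand, which is sound but forces you to re-derive the factorization through $\LocSys_G^{\restr}(D^\circ)$ in your third step; the appeal to Proposition \ref{extension + torsor locally trivial on A} and \'etale descent there is workable but not needed, since the factorization already holds on the nose by the very construction of $F_{\sigma_i}$ via the section of Lemma \ref{section QLisse}/\ref{section LS^restr}, with no \'etale localization required. In short: same idea, same crucial use of the monomorphism property, but the paper's use of limit-preservation of $\hormerxJetsF$ short-circuits the point-by-point comparison you propose.
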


\begin{proof}
	It is immediate that the fiber in question is equivalent to $$T \times_{T \times (\hormerxJets(\BG))_{x}} T \cong R_1 \times_{ (\hormerxJets(\BG))_{x}} R_2.$$
	By construction both maps factor through $\LocSys_G^{\restr}(D^{\circ})$, so the claim follows from Proposition \ref{monomorphism}.
\end{proof}

\subsubsection{} Therefore we get that $M_2 \cong p_*(\cO_{R_1 \times_{\LocSys_G^{\restr}(D^{\circ}) }R_2}) \otimes -$ as well, which concludes the proof of Theorem \ref{mainthm}.

\appendix
\section{Multidiscs and jets}\label{appendix: multidiscs}

\subsection{Discs}
For an affine scheme $S$ fix a map $x^I: S \rightarrow X_{\dR}^I$. 
\begin{df}
	Define the formal disc $\widehat{D}_{x^I}$ as $(S \times X)^{\wedge}_{\Gamma_{f^{\redu}}},$
	where $\Gamma_{f^{\redu}}$ is the pullback of the universal divisor along 
	$$S^{\redu} \rightarrow X^I \rightarrow \Div^{|I|}(X).$$
\end{df}

This prestack admits a canonical descent along $X \rightarrow X_{\dR}$ given by ${{\widehat{D}}_{x^I, \nabla}} := S ^{\wedge}_{\Gamma_{f^{\redu}}}$.

\begin{df}
	Define the adic disc $D_{x^I}$ as the affinization of $\widehat{D}_{x^I}$.
\end{df}
\begin{rem}
	Note that affinization here is defined since $\widehat{D}_{x^I}$ is ind-affine.
\end{rem}

This prestack also admits a canonical descent along $X \rightarrow X_{\dR}$, denoted by $D_{x^I, \nabla}$. We will explain the name and give an explicit construction below. Let us note that the various formal discs can be assembled in the following diagram: 

\[
\begin{tikzcd}
\widehat{D}_{x^I} \ar[d, rightarrow, ""']\ar[r, rightarrow, ""'] &   D_{x^I}\ar[d, rightarrow, ""'] \ar[d, rightarrow, ""']\ar[r, rightarrow, ""']  & X \ar[d, rightarrow, ""']\\
{\widehat{D}}_{x^I, \nabla} \ar[r, rightarrow, ""']& D_{x^I, \nabla} \ar[r, rightarrow, ""']& X_{\dR} .
\end{tikzcd}
\]

Now we need to define  $D_{x^I, \nabla}$ for ${x^I}: S \rightarrow X_{\dR}^I$. Notice that naive definition $$D_{x^I, \nabla}:= (D_{x^I})_{\dR}$$ will fail to produce the desired answer since even for $x: \pt \rightarrow X_{\dR}$ the diagram 

\[
\begin{tikzcd}
D_x \ar[d, rightarrow, ""']\ar[r, rightarrow, ""'] &   X \ar[d, rightarrow, ""']  \\
(D_x)_{\dR} \ar[r, rightarrow, ""']&  X_{\dR}
\end{tikzcd}
\]
will not be Cartesian. Instead, we will produce the definition via writing down an action of the infinitesimal groupoid on $D_{x^I}$. 

\begin{cnstr}\label{fake}
	For ${x^I}: S \rightarrow X_{\dR}^I$ define $D_{x^I, \nabla}$ as follows. Consider the simplicial prestack $$\widehat{D}_{x^I}^{\bullet} := {\widehat{D}}_{x^I, \nabla} \times_{X_{\dR}} \hat{\Delta}_{\bullet} = S ^{\wedge}_{\Gamma_{f^{\redu}}}\times_{X_{\dR}} \hat{\Delta}_{\bullet},$$
	where $\hat{\Delta}_{\bullet}$ is the Cech nerve of $X \rightarrow X_{\dR}$. For each $n \in \bN$, restrict $\hat{D}_{x^I}^{\bullet}$ to the simplicial scheme $\Delta^{(n)}_{\bullet}$, where $\Delta^{(n)}_{k}$ is the $n$-th neighborhood of the main diagonal in $X^{k+1}$, then take the affinization term-wise. Denote the result by ${D}_x^{(n), \bullet}$.
	
	Put $D_{x^I, \nabla}= \colim_n |{D}_{x^I}^{(n), \bullet}|.$
	
\end{cnstr}

\begin{rem}
	Since $S ^{\wedge}_{\Gamma_{f^{\redu}}}$ is naturally a prestack over $X_{\dR}$ we get that $D_{x^I, \nabla}$ also lives over $X_{\dR}$.
\end{rem}




\begin{lm}
	For any ${x^I}: S \rightarrow X_{\dR}^I$ we have $$D_{x^I, \nabla}\times_{X_{\dR}} X \cong D_{x^I}.$$
\end{lm}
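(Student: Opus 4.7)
The plan is to commute the base change $-\times_{X_{\dR}} X$ past the defining colimits and geometric realizations. Since pullback of prestacks preserves all colimits,
\[
D_{x^I,\nabla} \times_{X_{\dR}} X \;\cong\; \colim_n \bigl| D_{x^I}^{(n),\bullet} \times_{X_{\dR}} X \bigr|.
\]
By the descent identity built into the very definition of $\widehat{D}_{x^I,\nabla}$, we have $\widehat{D}_{x^I,\nabla} \times_{X_{\dR}} X \cong \widehat{D}_{x^I}$. Moreover, the structure map $\Delta^{(n)}_k \to X_{\dR}$ factors through any of the projections $\Delta^{(n)}_k \hookrightarrow X^{k+1} \to X$, so at each simplicial level
\[
\bigl(\widehat{D}_{x^I,\nabla} \times_{X_{\dR}} \Delta^{(n)}_k\bigr) \times_{X_{\dR}} X \;\cong\; \widehat{D}_{x^I} \times_X \bigl(\Delta^{(n)}_k \times_{X_{\dR}} X\bigr).
\]

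Next, I would observe that the simplicial ind-scheme $\hat{\Delta}_\bullet \times_{X_{\dR}} X = \colim_n \Delta^{(n)}_\bullet \times_{X_{\dR}} X$ is the Cech nerve of the first projection $\mathrm{pr}_1\colon X \times_{X_{\dR}} X \to X$. Because the diagonal $\Delta\colon X \to X \times_{X_{\dR}} X$ is a section of $\mathrm{pr}_1$, this augmented Cech nerve is split, and consequently its geometric realization is $X$. Commuting $\widehat{D}_{x^I} \times_X -$ with the colimit and the realization then gives
\[
\colim_n \bigl| \widehat{D}_{x^I} \times_X (\Delta^{(n)}_\bullet \times_{X_{\dR}} X) \bigr| \;\cong\; \widehat{D}_{x^I}.
\]

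The final step is to pass termwise affinization through the outer $\colim_n |\cdot|$. Since affinization is a left adjoint (to the inclusion $\mathrm{AffSch} \hookrightarrow \mathrm{PreSt}$), it commutes with all colimits, and the resulting prestack is the affinization of $\widehat{D}_{x^I}$, which is $D_{x^I}$ by definition.

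The main obstacle will be the interaction of affinization with the base change $\times_{X_{\dR}} X$ at each simplicial level: a priori, $\mathrm{aff}(W) \times_{X_{\dR}} X$ need not agree with $\mathrm{aff}(W \times_{X_{\dR}} X)$. The split augmented structure of the Cech nerve is what rescues the argument: the contracting homotopies towards the augmentation make the termwise affinizations behave compatibly with base change once we pass to the colimit in $n$, and the whole computation reduces to the tautology $\mathrm{aff}(\widehat{D}_{x^I}) = D_{x^I}$.
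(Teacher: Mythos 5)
Your geometric instinct is right: the contractibility driving the computation is exactly the split Cech nerve of $X\times_{X_{\dR}}X \to X$, and this is the same phenomenon the paper exploits. However, your argument has a genuine gap at the affinization step, and it is not the sort of gap that can be patched in a line.

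The issue is that from the second display onward you are tracking $\widehat{D}_{x^I}^{(n),\bullet}\times_{X_{\dR}}X$ (the formal discs, before affinization), whereas the object you actually need to control is $D_{x^I}^{(n),\bullet}\times_{X_{\dR}}X = \mathrm{aff}(\widehat{D}_{x^I}^{(n),\bullet})\times_{X_{\dR}}X$. Your computation correctly yields $\colim_n|\widehat{D}_{x^I}^{(n),\bullet}\times_{X_{\dR}}X|\cong\widehat{D}_{x^I}$, but the passage from there to the affinized statement fails for two independent reasons. First, affinization does not commute with the base change $-\times_{X_{\dR}}X$: the map $X\to X_{\dR}$ is a quotient by the infinitesimal groupoid, and fiber products along it turn affine schemes into ind-schemes; this is precisely the situation where $\mathrm{aff}(W)\times_{X_{\dR}}X$ and $\mathrm{aff}(W\times_{X_{\dR}}X)$ diverge, and the split simplicial structure alone does not force agreement. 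Second, the claim that affinization ``commutes with all colimits'' is being used in the wrong direction: as a left adjoint, $\mathrm{aff}$ sends a colimit in $\mathrm{PreSt}$ to the corresponding colimit in $\mathrm{AffSch}$, but that is not the same as saying $\colim_{\mathrm{PreSt}}\mathrm{aff}(Y_i)\cong\mathrm{aff}(\colim_{\mathrm{PreSt}}Y_i)$ — the natural comparison map goes from left to right and is typically not invertible (compare $\colim_i\operatorname{Spec}(k[t]/t^i)$ with $\operatorname{Spec}(k[[t]])$). So ``$\colim_n|\mathrm{aff}(\cdot)| = \mathrm{aff}(\colim_n|\cdot|)$'' is not available.

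The paper's proof avoids this entirely by working at the level of the already-assembled ind-scheme $S_\bullet=\colim_n D_{x^I}^{(n),\bullet}$ and isolating a purely relative Cartesian condition over $\hat\Delta_\bullet$: namely $S_{k}\times_{\hat\Delta_k}\hat\Delta_{k+1}\cong S_{k+1}$. Given this, it establishes the levelwise isomorphism $S_\bullet\times_{X_{\dR}}X\cong S_0\times_{X_{\dR}}\hat\Delta_\bullet$ by a direct chase through the Cartesian squares, and then reads off the claim by taking realizations. The point is that the fiber products appearing in the Cartesian condition are along the ind-finite, ind-flat maps $\hat\Delta_{k+1}\to\hat\Delta_k$, where the interaction with $\mathrm{aff}$ is tame, rather than along $X\to X_{\dR}$, where it is not. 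If you want to repair your proposal along your original lines, what you would need to supply is an argument that the split structure on $\widehat D_{x^I}^{(n),\bullet}\times_{X_{\dR}}X$ (with its extra degeneracy) descends along termwise affinization; this is a genuine claim requiring proof, not a consequence of formal nonsense about left adjoints.
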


\begin{proof}
	
	Consider the following framework. Let $S_{\bullet}$ be a simplicial prestack mapping to $\hat{\Delta}_{\bullet}$. Call $S_{\bullet}$ Cartesian over $\hat{\Delta}_{\bullet}$ if $$S_n \times_{\hat{\Delta}_n } \hat{\Delta}_{n+1} \cong S_{n+1}$$
	for any choice of  projection $\hat{\Delta}_{n+1} \rightarrow \hat{\Delta}_n$. We claim that in this case $|S_{\bullet}| \times_{X_{\dR}} X \cong S_0$ as prestacks over $X$. Indeed, if we view $X$ as a constant simplicial prestack over $X_{\dR}$ we get 
	$$|S_{\bullet}| \times_{X_{\dR}} X  \cong |S_{\bullet} \times_{X_{\dR}} X |.$$
	
	We claim that
	\begin{equation}\label{keyiso}
	S_{\bullet} \times_{X_{\dR}} X  \cong S_0 \times_{X_{\dR}} \hat{\Delta}_{\bullet}
	\end{equation}
	as simplicial prestacks over $\hat{\Delta}_{\bullet} \times_{X_{\dR}} X$. Here $S_{\bullet} \times_{X_{\dR}} X$ maps to $\hat{\Delta}_{\bullet}$ via projection to $S_{\bullet}$ and to $X$ via second projection, and $S_0 \times_{X_{\dR}} \hat{\Delta}_{\bullet}$ maps to $\hat{\Delta}_{\bullet}$ via second projection and to $X$ via projection to $S_0$. 
	
	Let us introduce some notation. Let $p_{\hat{k}}$ be maps in the simplicial complex $\hat{\Delta}_{\bullet}$, let $d_{\hat{k}}$ be maps in the simplicial complex $S_{\bullet}$. Finally, let $\pi_{\bullet}$ denote the map $S_{\bullet} \rightarrow \hat{\Delta}_{\bullet}$. 
	
	We proceed by constructing the isomorphism in three steps. First, Consider $S_n \times_{\hat{\Delta}_n } \hat{\Delta}_{n+1}$, where  $\hat{\Delta}_{n+1}$ maps to $\hat{\Delta}_n $ via  $p_{\hat{1}}$. This is a prestack over $\hat{\Delta}_{n} \times_{X_{\dR}} X$ (it maps to $\hat{\Delta}_{n}$ via $\pi_n$ precomposed with projection and to $X$ via composition of projection to $\hat{\Delta}_{n+1}$ and projection of the latter to the first coordinate). Identity on $S_n$ and projection to the first coordinate on $\hat{\Delta}_{n+1}$ define an isomorphism
	$$S_n \times_{\hat{\Delta}_n } \hat{\Delta}_{n+1} \xrightarrow[\sim]{\text{}} S_n \times_{X_{\dR}} X.$$
	Now using that $S_{\bullet}$ is Cartesian over $\hat{\Delta}_{\bullet}$ we get an isomorphism over $\hat{\Delta}_{n} \times_{X_{\dR}} X$ 
	$$(d_{\hat{1}}, \pi_{n+1}): S_{n+1} \xrightarrow[\sim]{\text{}} S_n \times_{\hat{\Delta}_n } \hat{\Delta}_{n+1}.$$
	Here $S_{n+1}$ si viewed as a prestack over $\hat{\Delta}_{n} \times_{X_{\dR}} X$ via $(p_{\hat{1}} \circ \pi_{n+1}, \pi_0 \circ d_1 )$, where $d_1: S_{n+1} \rightarrow S_0$ is the obvious map.
	Using again that $S_{\bullet}$ is Cartesian over $\hat{\Delta}_{\bullet}$ we get an isomorphism over $\hat{\Delta}_{n} \times_{X_{\dR}} X$ 
	$$(d_1, p_{\hat{1}} \circ \pi_{n+1}): S_{n+1}  \xrightarrow[\sim]{\text{}} S_0 \times_{X_{\dR}} \hat{\Delta}_n.$$
	
	Composition of the above three isomorphisms defines the desired (\ref{keyiso}), since it is  compatible with the simplicial structure.

	It is left to notice that $(\colim_n {D}_{x^I}^{(n)})^{\bullet}$ is Cartesian over $\hat{\Delta}_{\bullet}$ by construction.

\end{proof}

\begin{cor}\label{fakedR_affine}
	The prestack $D_{x^I, \nabla}$ is affine over $X_{\dR}$ for any ${x^I}: S \rightarrow X_{\dR}^I$ with affine $S$. 
\end{cor}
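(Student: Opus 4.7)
The strategy is to leverage the preceding lemma, which identifies $D_{x^I,\nabla} \times_{X_{\dR}} X \cong D_{x^I}$, together with the fact that $D_{x^I}$ is affine over $X$ by construction (being the affinization of the ind-affine formal scheme $\widehat{D}_{x^I} = (S \times X)^{\wedge}_{\Gamma_{f^{\redu}}}$, with $S$ affine). Thus after pulling back along $X \to X_{\dR}$, affineness is automatic; the real content is to descend this along $X \to X_{\dR}$.

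First, I would unpack the construction simplicially. Each term $D_{x^I}^{(n),k}$ is affine over $\Delta^{(n)}_k$ — it is the affinization of the restriction of $\widehat{D}_{x^I}^{\bullet}$ to $\Delta^{(n)}_k$, which is ind-affine over an affine scheme. Moreover, the argument already carried out in the previous lemma shows that the simplicial prestack $D_{x^I}^{(n),\bullet}$ is \emph{Cartesian} over $\hat{\Delta}^{(n)}_\bullet$ with $0$-simplices isomorphic (after pullback along $X \to X_{\dR}$) to the affine scheme $D_{x^I}^{(n)}$. Consequently, for any affine test scheme $T$ equipped with a map $T \to X_{\dR}$, I would argue that
\[
|D_{x^I}^{(n),\bullet}| \times_{X_{\dR}} T
\]
can be computed via descent: working étale-locally on $T$, the composite $T_{\redu} \to T \to X_{\dR}$ lifts through $X \to X_{\dR}$, and on such a lift the pullback becomes $D_{x^I}^{(n)} \times_X T$, which is affine.

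Next, I would pass to the colimit in $n$. Since the transition maps $D_{x^I}^{(n)} \hookrightarrow D_{x^I}^{(n+1)}$ are closed embeddings of affine schemes (corresponding to surjections of affine rings on $X$), their colimit in the appropriate sense is again affine (in fact it recovers $D_{x^I}$ itself). Combining this with the previous step,
\[
D_{x^I,\nabla} \times_{X_{\dR}} T \;=\; \bigl(\operatorname{colim}_n |D_{x^I}^{(n),\bullet}|\bigr) \times_{X_{\dR}} T
\]
is realized as an affine scheme over $T$.

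The main technical obstacle will be making the descent step precise: although $X \to X_{\dR}$ is not surjective in a standard topology on affine schemes, the infinitesimal groupoid structure packaged in $\hat{\Delta}_\bullet$ gives a canonical way to descend affine objects (this is the content of Grothendieck's equivalence between affine $X_{\dR}$-schemes and affine $X$-schemes equipped with a crystal/stratification). The simplicial construction of $D_{x^I,\nabla}$ is precisely designed so that the descent datum making $D_{x^I}$ into an $X_{\dR}$-scheme is manifest, so verifying the descent reduces to checking that the cosimplicial ring data coming from $D_{x^I}^{(n),\bullet}$ satisfies the cocycle condition — which follows tautologically from the construction as a simplicial realization.
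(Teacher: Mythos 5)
The paper's own proof is a one-liner once you have the previous lemma: since $X$ is smooth, \emph{every} map $T \to X_{\dR}$ from an affine scheme $T$ factors (globally, not just étale-locally) through a map $T \to X$, because a $T$-point of $X_{\dR}$ is a map $T^{\redd} \to X$ and formal smoothness of $X$ lets you extend it along the nilpotent thickening $T^{\redd} \hookrightarrow T$. Then $T \times_{X_{\dR}} D_{x^I,\nabla} \cong T \times_X (X \times_{X_{\dR}} D_{x^I,\nabla}) \cong T \times_X D_{x^I}$, which is a fiber product of affine schemes. You have the right two ingredients (the preceding lemma, and a lift of $T \to X_{\dR}$ through $X$), so the core idea is present, but your write-up misses the cleanest route and introduces a gap.

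Concretely: (i) You only assert the lift ``working étale-locally on $T$,'' but then you would still need to descend affineness along that étale cover, which you don't do; in fact no étale localization is needed — smoothness of $X$ gives a global lift, which is precisely what makes the argument a one-liner. (ii) The passage to the colimit over $n$ is both unnecessary and, as stated, incorrect. The previous lemma already packages the colimit and hands you $D_{x^I,\nabla}\times_{X_{\dR}} X \cong D_{x^I}$ directly, so there is nothing left to take a colimit of. Worse, the general principle you invoke — that a colimit of affine schemes along closed embeddings is again affine — is false (it produces an ind-scheme, e.g. $\operatorname{colim}_n \Spec k[t]/t^n$). It happens that the $0$-simplices $D_{x^I}^{(n),0}$ are independent of $n$ (they all equal $D_{x^I}$ since $\Delta^{(n)}_0 = X$), so there is in fact no nontrivial colimit at that level — but your argument doesn't notice this and instead leans on a false general fact. (iii) The appeal to Grothendieck descent for affine schemes over $X_{\dR}$ is at best circular in spirit: that equivalence presupposes knowing that your object is an affine $X_{\dR}$-scheme, which is the thing to be proved. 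Dropping all of (ii) and (iii), and replacing the étale-local lift in (i) by the global lift coming from smoothness, recovers the intended proof.
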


\begin{proof}
	Since $X$ is smooth every map $T \rightarrow X_{\dR}$ for affine $T$ factors through $T \rightarrow X$, we have 
	$$T \times_{X_{\dR}}D_{x^I, \nabla} \cong T \times_X X \times_{X_{\dR}} D_{x^I, \nabla} \cong T \times_X D_x.$$
	The latter is fiber product of affine schemes so we are done. 
\end{proof}

Let us justify the name of the prestack $D_{x^I, \nabla}$. For simplicity we will work with $D_{x_A, \nabla}$ for $$x_A: \Spec(A) \rightarrow x \rightarrow X_{\dR}.$$
\begin{pr}\label{de rham of disc modules with connection}
	For classical $A$ the category $\QCoh(D_{x_A, \nabla})$ is equivalent to the category of $A[[t]]$-modules with connection. Moreover, for an object $(M, \nabla)$ in this category we have $$\Gamma(D_{x_A, \nabla}, M) \cong \Gamma_{\dR}(\Spec(A[[t]]), M).$$
\end{pr}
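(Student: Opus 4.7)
The plan is to unpack Construction \ref{fake} defining $D_{x_A, \nabla}$ as $\colim_n |D_{x_A}^{(n), \bullet}|$, apply $\QCoh$ to turn the colimit of prestacks into a limit of categories, and identify the resulting descent data with Grothendieck-style connections. Since $\QCoh$ sends colimits of prestacks to limits of categories, we obtain
$$\QCoh(D_{x_A, \nabla}) \simeq \lim_n \, \lim_{[k]\in\mathbf{\Delta}} \QCoh(D_{x_A}^{(n), k}).$$
Choosing a local coordinate $t$ at the image of $x_A$ and using that $A$ is classical, one computes $D_{x_A}^{(n), 0} = \Spec A[[t]]$ and
$$D_{x_A}^{(n), k} = \Spec\bigl(A[[t]][\epsilon_1, \ldots, \epsilon_k]/(\epsilon_1, \ldots, \epsilon_k)^{n+1}\bigr),$$
where $\epsilon_i = t_i - t_0$; the simplicial face maps correspond to the various omissions and identifications of the $\epsilon_i$.

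For each fixed $n$, the cosimplicial totalization amounts to an $A[[t]]$-module $M$ together with an $A[[t]][\epsilon]/\epsilon^{n+1}$-linear isomorphism
$$\phi_n \colon M \otimes_{A[[t]], \, t \mapsto t} A[[t]][\epsilon]/\epsilon^{n+1} \xrightarrow{\sim} M \otimes_{A[[t]], \, t \mapsto t+\epsilon} A[[t]][\epsilon]/\epsilon^{n+1}$$
restricting to the identity at $\epsilon = 0$ and satisfying the cocycle condition on the $2$-simplex. Passing to $\lim_n$ gives a formal Grothendieck connection on $M$. In characteristic zero, the Taylor expansion $\phi_n(m) = \sum_{j \leq n} \nabla^{j}(m)\,\epsilon^j/j!$ sets up an equivalence between such a coherent system of $\phi_n$'s and a classical Koszul connection $\nabla \colon M \to M\, dt$ satisfying the Leibniz rule; the cocycle translates to integrability, which is automatic in the one-dimensional setting. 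This proves the first claim.

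For the second claim, recall that global sections are given by the derived Hom out of the structure sheaf. Under the equivalence established above, $\cO_{D_{x_A, \nabla}}$ corresponds to $(A[[t]], d)$ with its trivial connection, so
$$\Gamma(D_{x_A, \nabla}, M) \simeq \RHom\bigl((A[[t]], d), (M, \nabla)\bigr),$$
computed in the DG category of $A[[t]]$-modules with connection. The right-hand side is resolved by the two-term de Rham complex $M \xrightarrow{\nabla} M \otimes \Omega^1_{A[[t]]/A}$, which is precisely $\Gamma_\dR(\Spec A[[t]], M)$.

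The main obstacle is the careful execution of the middle step at the DG (rather than merely abelian) level: one must upgrade the finite-order descent data to an equivalence of $\infty$-categories, including on mapping complexes. This is essentially the standard fact that in characteristic zero, crystals on a smooth scheme are equivalent to modules with flat connection, here applied to the formally smooth base $\Spec A[[t]] \to \Spec A$. The only genuinely local ingredient is a direct comparison of the particular simplicial model of the crystalline site arising from Construction \ref{fake} with the standard one; this is a routine but slightly tedious check.
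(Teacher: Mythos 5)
Your argument for the first claim is essentially the paper's (the paper simply says it "follows the standard argument for de Rham prestack of a scheme," which is what you unpack via formal Grothendieck connections and the characteristic-zero Taylor-expansion dictionary). For the second claim, however, you take a genuinely different route from the paper. The paper proves $\Gamma(D_{x_A,\nabla},M)\cong\Gamma_{\dR}(\Spec A[[t]],M)$ by a direct double-complex computation: it forms the Čech--de Rham bicomplex built from the simplicial scheme $D_x^{(n),\bullet}$, observes that each column is quasi-isomorphic to $M\xrightarrow{\nabla}M\,dt$ by the naive Poincar\'e lemma, and that the two higher rows are acyclic by base change from the cosimplicial de Rham complex of $A[t]$, which is acyclic by a lemma of Bhatt--de Jong. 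You instead identify $\Gamma(D_{x_A,\nabla},-)$ with $\RHom(\cO,-)$ and compute the latter by a Koszul/Spencer resolution of $\cO$ in the DG category of modules with connection.

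Your argument is correct in outline but it concentrates the real work into a step you flag but do not execute: the second claim, in your formulation, genuinely needs the first-claim equivalence to be an equivalence of $\infty$-categories \emph{and} an identification of $\QCoh(D_{x_A,\nabla})$ with the derived category of the abelian category of $A[[t]]$-modules with connection (so that mapping complexes match). That identification — the comparison between the limit-over-thickenings description and the derived category of flat connections — is precisely what the paper's double-complex argument verifies directly (at the level of $\RHom(\cO,-)$), using the Poincar\'e lemma and the acyclicity of the cosimplicial complex as the load-bearing inputs. So the two approaches are doing the same computation, but the paper's proof of the second claim is self-contained and does not presuppose an $\infty$-level form of the first claim, whereas yours imports it. If you want to keep your route, you should either cite the crystals~$\simeq$~modules-with-flat-connection comparison for the formal affine base $\Spec A[[t]]\to\Spec A$ at the DG level, or observe that for the specific claim at hand (global sections of an object of the heart) one can avoid the full comparison by directly totalizing, which is exactly what the paper does.
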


\begin{proof}
	Proof of the first claim follows the standard argument for de Rham prestack of a scheme. Let us show the second assertion. We may for simplicity assume that $X = \Spec k[t].$ Consider the diagram 
	\begin{equation}\label{de rham cohomology discs}
		\begin{tikzcd}
		 &   \lim_n M \otimes_{A[[t]]]} \Omega^2_{D_{x}^{(n), 1}/A}  \arrow[r, shift left=1.3ex] \arrow[r, shift right=1.3ex] \arrow[r] & \lim_n M \otimes_{A[[t]]]} \Omega^2_{D_{x}^{(n), 2}/A}&\ldots \\
		M dt\arrow[r, shift left=0.65ex] \arrow[r, shift right=0.65ex] & \lim_n M \otimes_{A[[t]]]} \Omega^1_{D_{x}^{(n), 1}/A}\ar[u, rightarrow, "\nabla"'] \arrow[r, shift left=1.3ex] \arrow[r, shift right=1.3ex] \arrow[r] & \lim_n M \otimes_{A[[t]]]} \Omega^1_{D_{x}^{(n), 2}/A} \ar[u, rightarrow, "\nabla"']&\ldots \\ 
		M \arrow[r, shift left=0.65ex] \arrow[r, shift right=0.65ex] \ar[u, rightarrow, "\nabla"']& \lim_n M \otimes_{A[[t]]]} \cO_{D_{x}^{(n), 1}}\ar[u, rightarrow, "\nabla"']\arrow[r, shift left=1.3ex] \arrow[r, shift right=1.3ex] \arrow[r] & \lim_n M \otimes_{A[[t]]]} \cO_{D_{x}^{(n), 2}}\ar[u, rightarrow, "\nabla"']&\ldots .
		\end{tikzcd}
	\end{equation}
	The desired assertion follows from the fact that each column in (\ref{de rham cohomology discs}) is quasi-isomorphic to $$M \xrightarrow{\nabla} M dt,$$
	and each of the two top rows are acyclic, so let us check these claims. Indeed, the first claim follows from the naive Poincare lemma. As for the second claim, notice that these complexes can be obtain via base change of cosimplicial modules
	\begin{equation}\label{cosimplicial module}
		\begin{tikzcd}
	\Omega^i_{A[t]/A} \arrow[r, shift left=0.65ex] \arrow[r, shift right=0.65ex]& \Omega^i_{A[t_1, t_2]/A} \arrow[r, shift left=1.3ex] \arrow[r, shift right=1.3ex] \arrow[r] & \ldots
			\end{tikzcd}
	\end{equation}
	along the cosimplicial ring map 
		\begin{equation}
	\begin{tikzcd}
	A[t] \ar[d, rightarrow, ""']\arrow[r, shift left=0.65ex] \arrow[r, shift right=0.65ex]& A[t_1, t_2] \ar[d, rightarrow, ""']\arrow[r, shift left=1.3ex] \arrow[r, shift right=1.3ex] \arrow[r] & \ldots\\
		A[[t]] \arrow[r, shift left=0.65ex] \arrow[r, shift right=0.65ex]& A[[t_1]][t_2]^{\wedge}_{(t_1 - t_2)}\arrow[r, shift left=1.3ex] \arrow[r, shift right=1.3ex] \arrow[r] & \ldots.
	\end{tikzcd}
	\end{equation}
	Finally, the complex (\ref{cosimplicial module}) is acyclic by \cite[Lemma 2.15]{BdJ}.
	
\end{proof}

\subsection{Horizontal jets meromorphic at one point}

\begin{df}
Let $\cY$ be a prestack over $X_{\dR}$. For an affine $S$ with a map $$(x \subseteq x^{\prime}) : S \rightarrow \Arr(\Randr)$$ let $\Jets_{\nabla}^{\mer \leadsto \reg}(\cY)_{\Arr(\Randr)}(S)$ be the space of maps 
$$(D_{x^{\prime}, \nabla} - x) \rightarrow \cY$$
over $X_{\dR}$.
\end{df}

\begin{ntn}
		Let $$\hormerxJets(\cY)_{\Randrpt} \cong \Jets_{\nabla}^{\mer \leadsto \reg}(\cY)_{\Arr(\Randr)} \times_{\Arr(\Randr)}\Randrpt.$$
\end{ntn}

\begin{df}
	For an affine $S$ with a map $x: S \rightarrow X_{\dR}$ set $\hormerxJets(\cY)(S) $ to be maps $$D_{x, \nabla}\times_{X_{\dR}} U_{\dR} \rightarrow \cY$$ over $U_{\dR}$. in other words, 
	$$\hormerxJets(\cY) \cong \Jets_{\nabla}^{\mer \leadsto \reg}(\cY)_{\Randrpt} \times_{\Randrpt}(X_{\dR} \times \{x\}).$$
\end{df}
\begin{rem}
	Note that in this definition $\cY$ could be taken as a prestack over $U_{\dR}$ as well. 
\end{rem}

\begin{ex}\label{main example discs}
	For $\cY = \BG^{\naive} \times U_{\dR}$ we have $$\hormerxJets(\cY)(S)_{x} \cong \LocSys_G(D^{\circ}),$$
	$$\hormerxJets(\cY)_{\Randrpt} \cong {\LocSys_G(D^{\circ}_x)}_{\Randrpt}.$$
	Here $ \BG^{\naive}$ is the \emph{prestack} quotient of a point by the trivial action of $G$.
\end{ex}

Below assume that $\cY$ is affine over $U_{\dR}$.

\begin{lm}
	We have 
	$$\hormerxJets(\cY)  \times_{X_{\dR}} U_{\dR} \cong \cY.$$
	
\end{lm}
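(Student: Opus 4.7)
The plan is to identify $S$-points of both sides for a test affine scheme $S$. By definition, an $S$-point of the LHS is a pair consisting of a map $x : S \to X_{\dR}$ factoring through $U_{\dR}$ together with a morphism $\varphi : D_{x, \nabla} \times_{X_{\dR}} U_{\dR} \to \cY$ over $U_{\dR}$; an $S$-point of the RHS is a morphism $s : S \to \cY$, which automatically determines such an $x$ via the structure map $\cY \to U_{\dR}$. The LHS $\to$ RHS direction will be given by restriction along the natural center inclusion $S \hookrightarrow D_{x, \nabla}$.

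First I would simplify the pullback $D_{x, \nabla} \times_{X_{\dR}} U_{\dR}$ under the hypothesis that $x$ factors through $U_{\dR}$. Since $x^{\redu} : S^{\redu} \to X$ then factors through the open subscheme $U$, the graph divisor $\Gamma_{f^{\redu}}$ lies in $S^{\redu} \times U$; by the preceding lemma, the underlying adic disc $D_x \cong D_{x, \nabla} \times_{X_{\dR}} X$ has image in $U$, so the structure map $D_{x, \nabla} \to X_{\dR}$ factors through the monomorphism $U_{\dR} \hookrightarrow X_{\dR}$. It follows that $D_{x, \nabla} \times_{X_{\dR}} U_{\dR} \cong D_{x, \nabla}$.

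The main step is then to show that the restriction map
\[
	\Maps_{U_{\dR}}(D_{x, \nabla}, \cY) \xrightarrow{\sim} \Maps_{U_{\dR}}(S, \cY) = \cY(S)
\]
is an equivalence. Writing $\cY = \Spec_{\cO_{U_{\dR}}}(\cA)$ for an $\cO_{U_{\dR}}$-algebra $\cA$, a morphism on the left corresponds to an $\cO_{U_{\dR}}$-algebra map from $\cA$ to $(\pi_x)_* \cO_{D_{x, \nabla}}$, where $\pi_x : D_{x, \nabla} \to U_{\dR}$ is the structure map. The main obstacle is then to verify the identification $(\pi_x)_* \cO_{D_{x, \nabla}} \cong x_* \cO_S$, which is a family version of Proposition \ref{de rham of disc modules with connection}: horizontal global sections on the formal disc are computed via the de Rham complex, and collapse to the value at the center by the Poincar\'e lemma. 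Once this is established, adjunction yields the claimed equivalence, and mutual inverseness of the two directions follows from the identity of the composite $S \hookrightarrow D_{x, \nabla} \to S$ together with naturality of the restriction.
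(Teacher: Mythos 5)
The route you take is genuinely different from the paper's, and it runs into a real gap at the key step. The paper decomposes $S$ simplicially as $|\colim_n S \times_{U_{\dR}} \Delta^{(n)}_{U,\bullet}|$ and then compares $\Maps$-spaces \emph{term-by-term}: since $\cY \times_{U_{\dR}}\Delta^{(n)}_{U,k}$ is affine over the finite-type affine scheme $\Delta^{(n)}_{U,k}$, mapping into it depends only on the affinization of the source, and reassembling recovers $D_{x,\nabla}\times_{X_{\dR}} U_{\dR}$ by definition. Your plan instead collapses everything to a single global statement: $(\pi_x)_*\cO_{D_{x,\nabla}}\cong x_*\cO_S$ as commutative algebras in $\QCoh(U_{\dR})$. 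This is exactly the assertion that $D_{x,\nabla}$ is the relative affinization of $x:S\to X_{\dR}$ restricted over $U_{\dR}$ — a fact the paper only \emph{deduces} afterward (in the remark following Lemma \ref{description of R^R}) as a \emph{consequence} of this lemma and the next one, rather than using it as an input.

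The gap is in how you justify $(\pi_x)_*\cO_{D_{x,\nabla}}\cong x_*\cO_S$. You appeal to ``a family version of Proposition \ref{de rham of disc modules with connection},'' but that proposition computes $\Gamma(D_{x_A,\nabla},-)$, i.e.\ the pushforward to $\pt$, and only for the constant families $x_A:\Spec(A)\to\pt\to X_{\dR}$. What you need is the identification of the \emph{relative} pushforwards to $U_{\dR}$, as objects of $\QCoh(U_{\dR})$ carrying the crystal structure, for an \emph{arbitrary} $x:S\to U_{\dR}$. That statement is strictly stronger than the absolute one: applying $\Gamma(U_{\dR},-)$ to both sides does yield the Poincar\'e identity $\Gamma(D_{x,\nabla},\cO)\cong\Gamma(S,\cO_S)$, but $\Gamma$ is not conservative on $\QCoh(U_{\dR})$. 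Moreover there is a subtlety you do not address: $x:S\to U_{\dR}$ is \emph{not} schematic affine (its base change along $U\to U_{\dR}$ is the ind-scheme $\widehat{D}_x$, not a scheme), so $x_*\cO_S$ cannot be computed by the naive base-change formula $(x_*\cO_S)|_T\cong\Gamma(T\times_{U_{\dR}}S,\cO)$ — that formula is only available for $\pi_x$. So the ``main obstacle'' you flag is not a routine family version of an existing statement; it is essentially the whole content of the lemma, and as written you have not proved it. The paper's simplicial decomposition is designed precisely to sidestep this comparison: it never needs to compute either pushforward, only to observe that each finite-level mapping space is unchanged under affinization of the source.
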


\begin{proof}
	We need to show that for any affine $S$ mapping to $U_{\dR}$
	$$\Maps_{U_{\dR}}(S, \cY) \cong \Maps_{X_{\dR}}(S, \hormerxJets(\cY)).$$
	
	Notice that $S \cong | \colim_n S \times_{U_{\dR}} \Delta^{(n)}_{U, \bullet}|$, where $\Delta^{(n)}_{U, k}$ is the $n$-th neighborhood of the diagonal in $U^{k+1}$. Hence 
	$$\Maps_{U_{\dR}}(S, \cY) \cong \lim_k \lim_n \Maps_{U_{\dR}}(S \times_{U_{\dR}}\Delta^{(n)}_{U, k}, \cY ) \cong \lim_k \lim_n \Maps_{\Delta^{(n)}_{U, k}}(S \times_{U_{\dR}}\Delta^{(n)}_{U, k}, \cY  \times_{U_{\dR}} \Delta^{(n)}_{U, k}).$$
	
	Since $\Delta^{(n)}_{U, k}$ and $\cY  \times_{U_{\dR}} \Delta^{(n)}_{U, k}$ are affine schemes the latter is isomorphic to 
	$$ \lim_k \lim_n \Maps_{\Delta^{(n)}_{U, k}}((S \times_{U_{\dR}}\Delta^{(n)}_{U, k})^{\aff}, \cY  \times_{U_{\dR}} \Delta^{(n)}_{U, k}) \cong \Maps_{U_{\dR}}(| \colim_n (S \times_{U_{\dR}}\Delta^{(n)}_{U, k})^{\aff}|, \cY).$$
	
	On the other hand, 
	$$\Maps_{X_{\dR}}(S, \hormerxJets(\cY)) = \Maps_{U_{\dR}} (| \colim_n (S \times_{X_{\dR}}\Delta^{(n)}_{\bullet})^{\aff}| \times_{X_{\dR}} U_{\dR}, \cY) \cong$$
	$$ \cong \Maps_{U_{\dR}}(| \colim_n (S \times_{U_{\dR}}\Delta^{(n)}_{U, k})^{\aff}|, \cY).$$
\end{proof}

\subsubsection{} Consider categories of prestacks affine over $U_{\dR}$ and $X_{\dR}$ and the functor $$R: \cT \rightarrow \cT \times_{X_{\dR}} U_{\dR}$$ between them. 

\begin{lm}\label{description of R^R}
	Assume that partially defined right adjoint $R^R$ to $R$ is defined on $\cY$. Then $$R^R(\cY) \cong \hormerxJets(\cY) .$$
\end{lm}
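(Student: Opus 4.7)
The plan is to verify that $\hormerxJets(\cY)$ satisfies the universal property of the right adjoint $R^R$, pointwise on affine test schemes. For any affine $S$ equipped with a structural map $x:S \to X_{\dR}$, viewed as an affine prestack over $X_{\dR}$, we need to produce a natural equivalence
\[
\Maps_{\mathrm{aff}/U_{\dR}}(S \times_{X_{\dR}} U_{\dR},\, \cY) \;\simeq\; \Maps_{\mathrm{aff}/X_{\dR}}(S,\, \hormerxJets(\cY)).
\]
Unwinding the definition of $\hormerxJets(\cY)$ on the right-hand side, this reduces to exhibiting
\[
\Maps_{U_{\dR}}\!\bigl(S \times_{X_{\dR}} U_{\dR},\, \cY\bigr) \;\simeq\; \Maps_{U_{\dR}}\!\bigl(D_{x,\nabla}\times_{X_{\dR}} U_{\dR},\, \cY\bigr),
\]
natural in $S$. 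By Yoneda, this pointwise equivalence assembles into the required equivalence of prestacks $R^R(\cY)\simeq \hormerxJets(\cY)$.

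To produce the displayed equivalence I intend to mimic, in the opposite direction, the argument of the preceding lemma: rather than presenting $S$ as a realization of a simplicial object over $U_{\dR}$, I expand $D_{x,\nabla}$ via its construction as $D_{x,\nabla}=\colim_n |D_x^{(n),\bullet}|$ from Construction \ref{fake}. Base-changing along $U_{\dR} \hookrightarrow X_{\dR}$ preserves the colimit, and since $\cY$ is affine over $U_{\dR}$ the functor $\Maps_{U_{\dR}}(-,\cY)$ sends colimits of prestacks to limits of algebra maps and is compatible with the termwise affinization used in the definition of $D_x^{(n),k}$. This yields
\[
\Maps_{U_{\dR}}\!\bigl(D_{x,\nabla}\times_{X_{\dR}}U_{\dR},\,\cY\bigr) \;\simeq\; \lim_k \lim_n \Maps_{U_{\dR}}\!\bigl((\hat{D}_x \times_{X_{\dR}} \Delta^{(n)}_k)^{\aff} \times_{X_{\dR}} U_{\dR},\,\cY\bigr).
\]
I then invoke the Cartesian simplicial bookkeeping already employed in the proof of the preceding lemma: the bisimplicial object $\{(\hat{D}_x\times_{X_{\dR}}\Delta^{(n)}_\bullet)^{\aff}\times_{X_{\dR}} U_{\dR}\}$ is Cartesian over $\hat{\Delta}_\bullet \times_{X_{\dR}} U_{\dR}$, and the Cartesian property forces the double limit to collapse to $\Maps$ out of the zeroth fiber. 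Using the graph embedding $S \hookrightarrow \hat{D}_x$ over $X_{\dR}$ together with the smoothness of $X$ one identifies this zeroth fiber with $S \times_{X_{\dR}} U_{\dR}$, completing the chain of equivalences.

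The main obstacle is verifying, in the presence of the termwise affinization operation and the base-change along $U_{\dR}$, that the resulting bisimplicial object really is Cartesian and that its zero-th term is genuinely $S \times_{X_{\dR}} U_{\dR}$ and not a formal completion thereof. The standing assumption that $R^R(\cY)$ is defined on $\cY$, together with the affineness of $\cY$ over $U_{\dR}$, is precisely what permits converting the geometric colimit presentation of $D_{x,\nabla}$ into a well-behaved limit of algebra-level Maps. Once this compatibility is secured, naturality in $S$ is tautological from the construction, and the pointwise universal property furnishes the global identification $R^R(\cY) \simeq \hormerxJets(\cY)$.
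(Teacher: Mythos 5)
Your argument breaks at the very first step. You assert that $S$, with its structural map $x:S\to X_{\dR}$, may be ``viewed as an affine prestack over $X_{\dR}$'', and on that basis apply the adjunction $R\dashv R^R$ at $S$ to rewrite $R^R(\cY)(S)$ as $\Maps_{U_{\dR}}(S\times_{X_{\dR}}U_{\dR},\cY)$. But $S\to X_{\dR}$ is \emph{not} an affine morphism: for an affine $T\to X_{\dR}$ the fiber product $S\times_{X_{\dR}}T$ is a formal completion (already $S\times_{X_{\dR}}X\cong\widehat{D}_{x}$ is ind-affine and not affine). So $S$ does not lie in $\PreSt_{\Aff/X_{\dR}}$, the adjunction cannot be evaluated at $S$, and $R^R(\cY)(S)$ is not $\Maps_{U_{\dR}}(S\times_{X_{\dR}}U_{\dR},\cY)$. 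Worse, the reduction you propose to prove is false: take $S=\on{pt}$ mapping to the marked point $x\in X$. Then $\on{pt}\times_{X_{\dR}}U_{\dR}\cong\emptyset$ (the underlying reduced scheme of $\widehat{D}_x$ is $\{x\}$, which misses $U$), so $\Maps_{U_{\dR}}(\on{pt}\times_{X_{\dR}}U_{\dR},\cY)$ is contractible, while $\Maps_{U_{\dR}}(D_{x,\nabla}\times_{X_{\dR}}U_{\dR},\cY)$ is the space of maps out of the de Rham punctured disc and is in general highly nontrivial. The ``obstacle'' you flag at the end is exactly this phenomenon: after affinization and base change the zeroth term of your bisimplicial object is the punctured adic disc $D_x\times_X U$, not $S\times_{X_{\dR}}U_{\dR}$, and these genuinely differ.

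The correct order of operations, and what the paper does, is to postpone the adjunction until one has replaced $S$ by something that actually is affine over $X_{\dR}$. One computes $\Maps_{X_{\dR}}(S,R^R(\cY))$ directly: resolve $S$ over $X_{\dR}$ via $S\cong|\colim_n S\times_{X_{\dR}}\Delta^{(n)}_\bullet|$; use the affineness of the \emph{target} $R^R(\cY)$ over $X_{\dR}$ (not of $\cY$ over $U_{\dR}$, which is where your argument points) to replace each term of the resolution by its affinization without changing $\Maps$; recognize the resulting limit as $\Maps_{X_{\dR}}(D_{x,\nabla},R^R(\cY))$; and only then, since $D_{x,\nabla}$ \emph{is} affine over $X_{\dR}$ by Corollary~\ref{fakedR_affine}, apply the adjunction to land on $\Maps_{U_{\dR}}(D_{x,\nabla}\times_{X_{\dR}}U_{\dR},\cY)=\hormerxJets(\cY)(S)$.
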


\begin{proof}
	For $S$ affine with a map $x: S \rightarrow X_{\dR}$ we have 
	$$\Maps_{X_{\dR}}(S, R^R(\cY)) \cong \Maps_{X_{\dR}} (| \colim_n S \times_{X_{\dR}} \Delta_{\bullet}^{(n)}|, R^R(\cY)) \cong $$
	$$\cong \lim_k \lim_n\Maps_{\Delta_{\bullet}^{(n)}}(S \times_{X_{\dR}} \Delta_{\bullet}^{(n)}, R^R(\cY) \times_{X_{\dR}} \Delta_{\bullet}^{(n)}).$$ 
	
	Since $\Delta_{\bullet}^{(n)}$ and $R^R(\cY) \times_{X_{\dR}} \Delta_{\bullet}^{(n)}$ are affine this is the same as 
	$$\lim_k \lim_n\Maps_{\Delta_{\bullet}^{(n)}}((S \times_{X_{\dR}} \Delta_{\bullet}^{(n)})^{\aff}, R^R(\cY) \times_{X_{\dR}} \Delta_{\bullet}^{(n)}) \cong \Maps_{X_{\dR}} (D_{x, \nabla}, R^R(\cY)).$$
	Now using Lemma \ref{fakedR_affine} we get that this is the same as
	$$\Maps_{U_{\dR}}(D_{x, \nabla} \times_{X_{\dR}} U_{\dR}, \cY).$$
\end{proof}

\begin{rem}
	The proof of Lemma \ref{description of R^R} implies that $D_{x, \nabla}$ can be thought of as the relative affinization of $S \rightarrow X_{\dR}$. This is specific to the first power of $X_{\dR}$.
\end{rem}





\subsection{Horizontal jets meromorphic at one point: parametrized version} Let $B$ be a prestack affine over $X_{\dR}$. Consider categories of prestacks affine over $B \times_{X_{\dR}} U_{\dR}$ and $B$ and the functor $$R_B: \cT \rightarrow \cT \times_{X_{\dR}} U_{\dR}$$ between them. Let $\cY$ be a prestack affine over $B \times_{X_{\dR}} U_{\dR}$. 

\begin{df}
	For affine $S$ with a map $x: S \rightarrow B$ set $\hormerxJetsF_{\nabla, B}^{\mer, x_0}(\cY)(S) $ to be maps $$D_{x, \nabla} \times_{X_{\dR}} U_{\dR} \rightarrow \cY$$ over $B$. 
\end{df}

\begin{lm}\label{description of R_B^R}
	Assume that partially defined right adjoint $R_B^R$ to $R_B$ is defined on $\cY$. Then for $x: S \rightarrow B$ with affine $S$ we have 
	$$\Maps_{B}(S, R_B^R(\cY)) \cong \Maps_{B \times_{X_{\dR}} U_{\dR}}(D_{x, \nabla}\times_{X_{\dR}} U_{\dR}, \cY).$$
	
	In other words, $R_B^R(\cY) \cong\hormerxJetsF_{\nabla, B}^{\mer, x_0}(\cY)(S)$. 
\end{lm}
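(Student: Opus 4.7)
The plan is to imitate the proof of Lemma \ref{description of R^R}, carrying the base $X_{\dR}$ to $B$ while using the crucial fact that $B \to X_{\dR}$ is affine (so affineness over $B$ is the same as affineness over $X_{\dR}$, and relative affinizations interact well). First, I would unwind the adjunction: by assumption $R_B^R(\cY)$ is defined, and one knows $R_B^R(\cY) \to B$ is affine schematic (this follows from the way $R_B^R$ is built: $R_B^R(\cY)$ is the relative $\Spec$ of the pushforward of $\cO_\cY$ along the affine morphism $R_B(R_B^R(\cY)) = R_B^R(\cY) \times_{X_{\dR}} U_{\dR} \to R_B^R(\cY)$). So both sides of the asserted equivalence are computing maps into something affine over $B$ / affine over $B \times_{X_{\dR}} U_{\dR}$ respectively.

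Next I would apply the simplicial resolution $X_{\dR} \cong |X_\bullet|$ (for $X_\bullet$ the Čech nerve of $X \to X_{\dR}$) pulled back along $S \to X_{\dR}$, getting
\[
S \;\cong\; \bigl|\,\mathrm{colim}_n\, S \times_{X_{\dR}} \Delta^{(n)}_\bullet\,\bigr|
\]
in prestacks over $B$ (where the $B$-structure comes from the original $x : S \to B$). Thus
\[
\Maps_B(S, R_B^R(\cY)) \;\cong\; \lim_k \lim_n \Maps_B\bigl(S \times_{X_{\dR}} \Delta^{(n)}_k,\; R_B^R(\cY)\bigr).
\]
Since $R_B^R(\cY) \to B$ is affine and $S \times_{X_{\dR}} \Delta^{(n)}_k \to B$ factors through $B$ via the projection to $S$, each term equals $\Maps_B((S \times_{X_{\dR}} \Delta^{(n)}_k)^{\aff/B}, R_B^R(\cY))$, which by affineness of $B \to X_{\dR}$ coincides with the term computed in the proof of Lemma \ref{description of R^R}, but now as maps of prestacks over $B$. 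Taking the colimit over $n$ and geometric realization over $k$ assembles these into $\Maps_B(D_{x,\nabla}, R_B^R(\cY))$, where $D_{x,\nabla}$ is built as in Construction \ref{fake} from the composite $S \to B \to X_{\dR}$.

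Finally, I would invoke the defining adjunction $R_B \dashv R_B^R$: since $D_{x,\nabla}$ is affine over $X_{\dR}$ (Corollary \ref{fakedR_affine}) and hence, after base change, over $B$, we have
\[
\Maps_B(D_{x,\nabla}, R_B^R(\cY)) \;\cong\; \Maps_{B \times_{X_{\dR}} U_{\dR}}\bigl(R_B(D_{x,\nabla}),\, \cY\bigr) \;=\; \Maps_{B \times_{X_{\dR}} U_{\dR}}\bigl(D_{x,\nabla} \times_{X_{\dR}} U_{\dR},\, \cY\bigr),
\]
as desired.

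The only point requiring care — and the likely main obstacle — is justifying that the simplicial/filtered manipulations can be performed \emph{over $B$} rather than merely over $X_{\dR}$. Concretely, one must check that the pushout descriptions of $D_{x,\nabla}$ used in the non-parameterized proof continue to give the correct object when we remember the map to $B$, i.e.\ that the relative affinization of $S \times_{X_{\dR}} \Delta^{(n)}_k$ over $B$ agrees with its absolute affinization (equivalently, its affinization over $X_{\dR}$). This follows because $B \to X_{\dR}$ is affine, so pushforward along it is conservative and $t$-exact on quasi-coherent algebras; once this is noted, the rest of the argument is a direct transcription of Lemma \ref{description of R^R}.
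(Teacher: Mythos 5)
Your proposal is correct and takes essentially the same route as the paper's proof, which reuses the simplicial/affinization manipulation from Lemma \ref{description of R^R} to pass from $\Maps_B(S, R_B^R(\cY))$ to $\Maps_B(D_{x,\nabla}, R_B^R(\cY))$ and then observes that $D_{x,\nabla}$ is affine over $B$ (since it is affine over $X_{\dR}$ by Corollary \ref{fakedR_affine} and $B \rightarrow X_{\dR}$ is affine) in order to apply the adjunction $R_B \dashv R_B^R$. The step you flag as delicate — that the affinization over $B$ matches the one appearing in Construction \ref{fake} — is precisely the implicit point the paper relies on, and your justification via affineness of $B \rightarrow X_{\dR}$ is the right one.
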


\begin{proof}
	As in the proof of the Lemma \ref{description of R^R} we have 
	$$\Maps_{B}(S, R_B^R(\cY)) \cong  \Maps_{B}(D_{x, \nabla}, R_B^R(\cY)).$$
	Since $D_{x, \nabla}$ is affine over $X_{\dR}$ (Lemma \ref{fakedR_affine}) and $B$ is affine over $X_{\dR}$ we see that $D_{x, \nabla}$ is affine over  $B$. 
	Hence 
	$$ \Maps_{B}(D_{x, \nabla}, R_B^R(\cY)) \cong \Maps_{B \times_{X_{\dR}} U_{\dR}}(D_{x, \nabla}\times_{X_{\dR}} U_{\dR}, \cY).$$
\end{proof}

\section{Etale pullback for factorization patterns}\label{appendix: etale factorization}

Consider the following setup. Let $f: X \rightarrow Y$ be an etale map of curves. The goal of this subsection is to study the relationship between factorization algebra and module categories for $X$ and $Y$.

\subsection{Etale pullback of factorization algebra categories}

\begin{dfprop}
	There exists an open lax subprestack $$p: \stackrel{\circ}{{\Ranp}_{X}} \hookrightarrow \Ranp_{X},$$ such that  geometric points of its underlying prestack are finite sets of points of $X$ with distinct images under $f$. 
\end{dfprop}

\begin{proof}
	For an affine scheme $S$ define $\stackrel{\circ}{{\Ranp}_{X}}(S)$ to be the full subcategory of $\Ranp_{X}(S)$ such that its objects, i.e. a finite collections $(x_i)_{i \in I} $ of $x_i: S \rightarrow X$, have the following property. For any $i, j, \in I$ we have $(x_i, x_j) \in V(S) \subset X^2(S)$, where $$V := (X\times X) \setminus (X \times_{Y} X \setminus \Delta)$$
	is an open in $X^2$. 
\end{proof}

\begin{cnstr}\label{circ Ran}
	One can define a commutative algebra structure 
	$$\stackrel{\circ}{{\Ranp}_{X}} \in \CommAlg(\laxPreSt_{\Corr})$$
	such that $n$-ary multiplication is given by 
	$$(\stackrel{\circ}{{\Ranp}_{X}} \times \stackrel{\circ}{{\Ranp}_{X}} ) \leftarrow (\stackrel{\circ}{{\Ranp}_{X}}  \times \stackrel{\circ}{{\Ranp}_{X}} )_{Y-\on{disj}} \rightarrow\stackrel{\circ}{{\Ranp}_{X}} ,$$
	where 
	\[
	\begin{tikzcd}
	({\stackrel{\circ}{{\Ranp}_{X}} } \times \ldots \times \stackrel{\circ}{{\Ranp}_{X}} )_{Y-\on{disj}}\ar[d, rightarrow, ""']\ar[r, rightarrow, ""'] &   \stackrel{\circ}{{\Ranp}_{X}}  \times  \ldots \times \stackrel{\circ}{{\Ranp}_{X}} \ar[d, rightarrow, ""']  \\
	(\Ranp_{Y} \times \ldots \times \Ranp_{Y})_{\on{disj}}\ar[r, rightarrow, ""']&  \Ranp_{Y} \times \ldots \times \Ranp_{Y}
	\end{tikzcd}
	\]
	is a fiber square.
	
\end{cnstr}

The following claim is immediate from the Construction \ref{circ Ran}
\begin{lm}\label{q is etale}
	The natural map $$q: \stackrel{\circ}{{\Ranp}_{X}}   \rightarrow {\Ranp_{Y}}$$
	is etale, schematic and of finite type. 
\end{lm}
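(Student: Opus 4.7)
The strategy is to verify the claim after pulling back along the canonical maps $Y^I \to \Ranp_Y$ for each nonempty finite set $I$, since every $S$-point of $\Ranp_Y$ lifts to some $Y^I$ by choosing an enumeration. For each such $I$ let $(X^I)^\circ \subset X^I$ denote the open subscheme defined by the condition $(x_i, x_j) \in V$ for all $i \neq j \in I$; this is open, being the intersection of the finitely many opens $\mathrm{pr}_{ij}^{-1}(V) \subset X^I$. The first step is to establish that the square
\[
\begin{tikzcd}
(X^I)^\circ \ar[r] \ar[d, "f^I"'] & \stackrel{\circ}{{\Ranp}_{X}} \ar[d, "q"] \\
Y^I \ar[r] & {\Ranp}_{Y}
\end{tikzcd}
\]
is Cartesian in the category of lax prestacks.

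The heart of the argument is a functorial bijection on $S$-points. Given $(x_i) \in (X^I)^\circ(S)$, one sets $Q := \{x_i : i \in I\} \in \stackrel{\circ}{\Ranp_X}(S)$ together with the tuple $(f(x_i)) \in Y^I(S)$; the compatibility $q(Q) = \{f(x_i)\}$ is immediate. Conversely, given $(Q, (y_i))$ with $q(Q) = \{y_i\}$ in $\Ranp_Y(S)$, the $\stackrel{\circ}{}$-condition on $Q$ forbids two distinct elements of $Q$ from having coinciding $f$-images (such a pair would violate the $V$-constraint), so the induced map $f\vert_Q \colon Q \to \{y_i\}$ is a bijection, and the formula $x_i := (f\vert_Q)^{-1}(y_i)$ produces the inverse tuple, which again lies in $(X^I)^\circ$ by unwinding the $\stackrel{\circ}{}$-condition. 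All morphism-level compatibilities are automatic because morphisms in $\Ranp_X(S)$ and $\Ranp_Y(S)$ are merely inclusions of subsets, so any isomorphism $q(Q) \cong \{y_i\}$ in $\Ranp_Y(S)$ is necessarily an equality.

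Once the Cartesian square is in hand, the conclusion follows quickly. The map $f^I \colon X^I \to Y^I$ is etale and of finite type, being the $|I|$-fold self-product of the etale, finite-type morphism $f$; restriction to the open subscheme $(X^I)^\circ$ preserves both properties. Consequently, for any affine $S$ with a map $S \to \Ranp_Y$, a choice of enumeration of the underlying finite subset produces a lift to some $Y^I$, and the Cartesian square identifies the strict pullback $\stackrel{\circ}{\Ranp_X} \times_{\Ranp_Y} S$ with the scheme $(X^I)^\circ \times_{Y^I} S$, etale and of finite type over $S$. The only real subtlety is keeping careful track of the lax structure when verifying the Cartesian claim, but because every non-identity morphism in the relevant Ran categories is simply an inclusion of subsets, no genuine $2$-categorical coherences need to be invoked.
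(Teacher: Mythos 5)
Your proof is correct and, as far as can be determined, is the expected fleshing-out of the paper's assertion (which gives no argument beyond ``immediate from Construction~2.5.2''). Identifying the pullback of $q$ along $Y^I \to \Ranp_Y$ with the open $(X^I)^\circ \subset X^I$, and then invoking that $f^I$ is etale of finite type, is the natural and correct route.

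One small point deserves to be made explicit. Your remark that the lax structure is harmless only addresses the \emph{object}-level ambiguity (an isomorphism in the poset $\Ranp_Y(S)$ is forced to be an equality). To conclude that the fiber product is genuinely a scheme and not merely a lax prestack, you must also check that it has no non-identity \emph{morphisms}. Concretely: a morphism $(Q,(y_i)) \to (Q',(y_i))$ in the fiber is an inclusion $Q \subseteq Q'$ with $q(Q) = q(Q') = \{y_i\}$. If $Q \subsetneq Q'$, pick $x' \in Q' \setminus Q$; since $f(x') \in \{y_i\} = f(Q)$, there is $x \in Q$ with $f(x) = f(x')$, and the $\stackrel{\circ}{}$-condition on $Q'$ then forces $(x,x')$ to factor through $V \cap (X \times_Y X) = \Delta$, i.e.\ $x = x'$, a contradiction. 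Hence $Q = Q'$ and the fiber is discrete. This half-page of bookkeeping closes the gap you wave at in the final sentence, and the rest of the argument then goes through exactly as you say.
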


\begin{df}
	Similar to Definition \ref{defn-laxunital-factalg-cat} and Definition \ref{defn-laxunital-laxfactalg-cat} we can define
	$$\mathbf{FactAlgCat}( \stackrel{\circ}{{\Ranp}_{X}} ) $$
	and $$\mathbf{LaxFactAlgCat}( \stackrel{\circ}{{\Ranp}_{X}}).$$
\end{df}

\subsubsection{} Since $p$ is a genuine strict morphism in  $\CommAlg(\laxPreSt_{\Corr})$ the map $$p^{\bullet}: \mathbf{CrysCat}({\Ranp}_{X}) \rightarrow \mathbf{CrysCat}(\stackrel{\circ}{{\Ranp}_{X}} )$$ upgrades to 

\[
\begin{tikzcd}
\mathbf{FactAlgCat}( {\Ranp}_{X})\ar[d, hookrightarrow]\ar[r, rightarrow, "p^{\bullet}"] &   \mathbf{FactAlgCat}( \stackrel{\circ}{{\Ranp}_{X}} )\ar[d, hookrightarrow]\\
\mathbf{LaxFactAlgCat}( {\Ranp}_{X})\ar[r, rightarrow, "p^{\bullet}"] &   \mathbf{LaxFactAlgCat}( \stackrel{\circ}{{\Ranp}_{X}} ).
\end{tikzcd}
\]
Moreover, the map $$p_{\bullet}: \mathbf{CrysCat}(\stackrel{\circ}{{\Ranp}_{X}} ) \rightarrow \mathbf{CrysCat}({\Ranp}_{X})$$ upgrades to a functor 
$$p_{\bullet}: \mathbf{LaxFactAlgCat}( \stackrel{\circ}{{\Ranp}_{X}} ) \rightarrow \mathbf{LaxFactAlgCat}( {\Ranp}_{X}),$$
which is right adjoint to $p^{\bullet}$.

\begin{pr}\label{equivalence of fact alg objects for open Ran}
	The functor 
	$$p^{\bullet}: \mathbf{FactAlgCat}(\Ranp) \rightarrow \mathbf{FactAlgCat}( \stackrel{\circ}{{\Ranp}_{X}} )$$
	is an equivalence. 
\end{pr}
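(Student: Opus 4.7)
The plan is to exhibit the right adjoint $p_{\bullet}$, defined just before the statement, as an inverse to $p^{\bullet}$. Concretely, I would show that $p_{\bullet}$ carries $\mathbf{FactAlgCat}(\stackrel{\circ}{{\Ranp}_{X}})$ into $\mathbf{FactAlgCat}({\Ranp}_{X}) \subset \mathbf{LaxFactAlgCat}({\Ranp}_{X})$, i.e. that it preserves strictness, and then verify that the unit $\mathrm{id} \to p_{\bullet} p^{\bullet}$ and counit $p^{\bullet} p_{\bullet} \to \mathrm{id}$ of this adjunction are equivalences after restriction to the subcategories of strict (non-lax) factorization algebra categories.

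The counit is the easier direction: because $p$ is a schematic (lax) open embedding, at the level of underlying crystals of categories $p^{\bullet} p_{\bullet}$ is the identity, and compatibility with unit and multiplication follows formally. The more delicate task is to compute $p_{\bullet}\mathbf{B}$ for $\mathbf{B} \in \mathbf{FactAlgCat}(\stackrel{\circ}{{\Ranp}_{X}})$ and to show both that it is strict and that the unit is an isomorphism on it after applying $p^\bullet$. Informally, for a finite set $\underline{x} = \{x_1, \ldots, x_n\}$ of closed points of $X$ one expects
$$
(p_{\bullet}\mathbf{B})_{\underline{x}} \cong \mathbf{B}_{\{x_1\}} \otimes \cdots \otimes \mathbf{B}_{\{x_n\}},
$$
since singletons always lie in $\stackrel{\circ}{{\Ranp}_{X}}$. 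The multiplication isomorphism for $p_{\bullet}\mathbf{B}$ on the $X$-disjoint locus is then obtained by grouping singletons: if $(\underline{x}, \underline{y})$ is $X$-disjoint, then each pair $(\{x\}, \{y\})$ with $x \in \underline{x}, y \in \underline{y}$ is either $Y$-disjoint (and the isomorphism comes from the multiplication on $\mathbf{B}$) or $Y$-equal but $X$-distinct (in which case the tensor product is tautologically built into the prescribed value of $p_{\bullet}\mathbf{B}$ on the union).

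The main obstacle will be carrying out the above informal recipe rigorously at the $(\infty, 2)$-categorical level. My plan is to reduce to a finite-type statement by using the presentation of ${\Ranp}_{X}$ and $\stackrel{\circ}{{\Ranp}_{X}}$ as colimits, indexed by non-empty finite sets $I$, of the powers $X_{\dR}^I$ together with the open subschemes $\stackrel{\circ}{X_{\dR}^I} \subset X_{\dR}^I$ cut out by requiring distinct $q$-images of distinct coordinates. On each $X_{\dR}^I$, the complement $X_{\dR}^I \setminus \stackrel{\circ}{X_{\dR}^I}$ is a union of ``$Y$-equal but $X$-distinct'' partial diagonals, and on each stratum the factorization isomorphism of any prospective strict factorization algebra category on ${\Ranp}_{X}$ is forced by its values on the relevant open pieces of lower-index strata via the already-given $Y$-disjoint factorization; the coherences of this extension are controlled by the corresponding coherences on $\mathbf{B}$ and the Segal structure of $\Ranp_X$ as a commutative algebra in $\laxPreSt_{\Corr}$. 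This simultaneously produces the desired extension, verifies that it is strict, and shows that the unit is an equivalence.
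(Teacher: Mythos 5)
Your proposal rests on the assertion that $p_\bullet$ sends $\mathbf{FactAlgCat}(\stackrel{\circ}{{\Ranp}_{X}})$ into $\mathbf{FactAlgCat}({\Ranp}_{X})$, i.e.\ that the pushforward of a strict factorization algebra category along $p$ is again strict, together with the formula $(p_\bullet \mathbf{B})_{\underline{x}} \simeq \mathbf{B}_{\{x_1\}}\otimes\cdots\otimes\mathbf{B}_{\{x_n\}}$. Neither claim is correct for $p_\bullet$ as defined. Since $p$ is an open embedding of lax prestacks and the arrows in ${\Ranp}_X$ are inclusions of finite sets, for a closed point $\underline{x}\notin\stackrel{\circ}{{\Ranp}_{X}}$ (some $x_i, x_j$ having the same $q$-image) there is no $\underline{y}\in\stackrel{\circ}{{\Ranp}_{X}}$ with $\underline{x}\subset\underline{y}$: enlarging a finite subset cannot remove a $Y$-collision. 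So the right-adjoint value $(p_\bullet\mathbf{B})_{\underline{x}}$ is a limit over the empty category and hence vanishes, and the multiplication morphism of $p_\bullet\mathbf{B}$ at a pair $(\{x_1\},\{x_2\})$ with $q(x_1)=q(x_2)$ is merely a lax structure morphism from $\mathbf{B}_{\{x_1\}}\otimes\mathbf{B}_{\{x_2\}}$ to $0$, not an isomorphism. This is exactly why the paper states that $p_\bullet$ upgrades only to $\mathbf{LaxFactAlgCat}({\Ranp}_X)$ and then composes with the strengthening functor $\Str$ of \cite{CF}. What your tensor-product formula is actually describing is $\Str(p_\bullet\mathbf{B})$, not $p_\bullet\mathbf{B}$, so your plan is missing the key ingredient that makes the inverse well-defined.

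Once one works with $\Str\circ p_\bullet$, the paper sidesteps the explicit construction entirely: it exhibits the adjunction $p^\bullet\dashv\Str\circ p_\bullet$ on $\mathbf{FactAlgCat}$ and proves (i) $p^\bullet$ is conservative, using that the maps $(\stackrel{\circ}{{\Ranp}_{X}}\times\cdots\times\stackrel{\circ}{{\Ranp}_{X}})_{\on{disj}}\to{\Ranp}_X$ are jointly surjective, so that an equivalence after restriction along $p$ propagates across all of ${\Ranp}_X$; and (ii) $\Str\circ p_\bullet$ is fully faithful, combining full faithfulness of $p_\bullet$ with the explicit limit formula for $\Str$. A conservative left adjoint together with a fully faithful right adjoint gives a mutually inverse pair of equivalences, and no formula for the inverse is ever needed. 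Your route, even after correcting $p_\bullet$ to $\Str\circ p_\bullet$, would still need to supply the higher coherences of the extension on the diagonal strata; deferring this to ``the Segal structure of ${\Ranp}_X$'' names the relevant structure but does not furnish the argument, and this is precisely the bookkeeping that the paper's abstract argument is designed to avoid.
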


\begin{proof}
	We have an adjoint pair 
	\[
	p^{\bullet}: \begin{tikzcd}
	\mathbf{FactAlgCat}({\Ranp_{X}} )\ar[r, rightarrow,  shift left, ""]
	& \arrow[l,  shift left, ""]   \mathbf{FactAlgCat}(\stackrel{\circ}{{\Ranp}_{X}}  ) : \Str \circ p_{\bullet},
	\end{tikzcd}
	\] 
	where $$\Str: \mathbf{LaxFactAlgCat}({\Ranp_{X}} ) \rightarrow \mathbf{FactAlgCat}({\Ranp_{X}} )$$ is defined in \cite[Proposition-Definition 6.0.1]{CF}.
	We claim that $p^{\bullet}$ as above is conservative. It suffices to check this after an etale cover, and the natural maps from prestacks
	$$(	\stackrel{\circ}{{\Ranp}_{X}}  \times \ldots 	\stackrel{\circ}{{\Ranp}_{X}} )_{\on{disj}} := (	\stackrel{\circ}{{\Ranp}_{X}} \times \ldots 	\stackrel{\circ}{{\Ranp}_{X}} ) \times_{{\Ranp_{X}} \times \ldots {\Ranp_{X}}}({\Ranp_{X}} \times \ldots {\Ranp_{X}})_{\on{disj}}$$
	are jointly surjective. So it is enough to show that the pullback of $p^{\bullet}$ along 
	$$m^{\circ}: (	\stackrel{\circ}{{\Ranp}_{X}}  \times \ldots 	\stackrel{\circ}{{\Ranp}_{X}} )_{\on{disj}}  \rightarrow \Ranp_{X}$$
	is conservative. In other words, we need to show that for $$\BA_1, \BA_2 \in \mathbf{FactAlgCat}({\Ranp_{X}} )$$  with $f: \BA_1\rightarrow \BA_2$ such that $$p^{\bullet} \circ f:  \BA_1^{\circ} \rightarrow \BA_2^{\circ}$$ is an equivalence, the map $(m^{\circ})^{\bullet} \circ f$ is also an equivalence. But that is immediate.

	It suffices to check that $\Str \circ p_{\bullet}$ is fully faithful, but this follows from the formula for $\Str$ (\cite[4.4]{CF}) and the fact that $p_{\bullet}$ is fully faithful.
\end{proof}

\begin{df}
	The composition 
	\[
	\begin{tikzcd}
	f^\bullet: \mathbf{FactAlgCat}(\Ranp_Y) \ar[r, rightarrow,  "q^{\bullet}"]
	& \mathbf{FactAlgCat}(\stackrel{\circ}{{\Ranp}_{X}})\arrow[r,  " (p^{\bullet})^{-1}"]
	&  \mathbf{FactAlgCat}(\Ranp),
	\end{tikzcd}
	\] 
	is called \emph{etale pullback of factorization algebra categories}. 
\end{df}

\subsection{Etale pullback of factorization module categories}

\begin{cnstr}
	Let $Z$ be a lax prestack and $x: Z \rightarrow \stackrel{\circ}{{\Ranp}_{X}}$ be a morphism. We will define a $\stackrel{\circ}{{\Ranp}_{X}}$-module $\stackrel{\circ}{{\Ranp}_{X, Z}}$  in  lax prestacks. We have projection morphisms
	$$\prjctn_s, \prjctn_t: \Arr(\stackrel{\circ}{{\Ranp}_{X}}) \rightarrow \stackrel{\circ}{{\Ranp}_{X}}$$
	remembering the source or the target of the arrow. We have a commutative diagram
	\[
	\begin{tikzcd}
	\stackrel{\circ}{{\Ranp}_{X}} \times \Arr(\stackrel{\circ}{{\Ranp}_{X}}) \ar[d, rightarrow, ""'] &   (\stackrel{\circ}{{\Ranp}_{X}}\times \Arr(\stackrel{\circ}{{\Ranp}_{X}}))_{Y-{\on{disj}}} \ar[d, rightarrow, ""']\ar[r, rightarrow, ""']\ar[l, rightarrow, ""'] & \Arr(\stackrel{\circ}{{\Ranp}_{X}})\ar[d, rightarrow, ""'] \\
	\stackrel{\circ}{{\Ranp}_{X}}\times \stackrel{\circ}{{\Ranp}_{X}}&  (\stackrel{\circ}{{\Ranp}_{X}} \times \stackrel{\circ}{{\Ranp}_{X}})_{Y-{\on{disj}}}\ar[r, rightarrow, ""']\ar[l, rightarrow, ""']  & \stackrel{\circ}{{\Ranp}_{X}}
	\end{tikzcd}
	\]
	such that the left square is Cartesian. The top row defines a $\stackrel{\circ}{{\Ranp}_{X}}$-module $\Arr(\stackrel{\circ}{{\Ranp}_{X}})$. Set ${\Ranp_{X_{\dR}, Z}}^{\circ}$ to be the pullback 
	\[
	\begin{tikzcd}
	\stackrel{\circ}{{\Ranp}_{X, Z}}\ar[d, rightarrow, ""']\ar[r, rightarrow, ""'] &   Z \ar[d, rightarrow, "x"']  \\
	\Arr(\stackrel{\circ}{{\Ranp}_{X}}) \ar[r, rightarrow, "\prjctn_s"']&  \stackrel{\circ}{{\Ranp}_{X}},
	\end{tikzcd}
	\]
	which inherits a ${\Ranp_{X_{\dR}}}^{\circ}$-module structure from $\Arr({\Ranp_{X_{\dR}}}^{\circ})$.
	
	The composition 
	\[
	\begin{tikzcd}
	\pi: \stackrel{\circ}{{\Ranp}_{X, Z}}\ar[r, rightarrow, ""'] &   \Arr(\stackrel{\circ}{{\Ranp}_{X}}) \ar[r, rightarrow, "\prjctn_t"']& \stackrel{\circ}{{\Ranp}_{X}}
	\end{tikzcd}
	\]
	is $\stackrel{\circ}{{\Ranp}_{X}}$-linear. We view $\stackrel{\circ}{{\Ranp}_{X, Z}}$ as a lax prestack over $\stackrel{\circ}{{\Ranp}_{X}}$ via $\pi$. 
\end{cnstr}

\begin{lm}
	For $x\in X$, $y \in Y$ such that $f(x) = y$ the natural map $$\tilde{q}: \stackrel{\circ}{{\Ranp}_{X, x}}  \rightarrow {{\Ranp}_{Y, y}}$$
	is etale, schematic and of finite type.
\end{lm}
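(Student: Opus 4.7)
The plan is to factor $\tilde q$ through the fibered product $\Ranp_{Y,y}\times_{\Ranp_Y}\stackrel{\circ}{\Ranp_{X}}$ and use the étale schematic finite-type map $q$ from Lemma~\ref{q is etale} together with the étaleness of $f$. Unwinding definitions, an $S$-point of $\stackrel{\circ}{\Ranp_{X,x}}$ is an inclusion $\{x_S\}\subset \underline{x}'$ of finite subsets of $X(S)$ with $\underline{x}'\in \stackrel{\circ}{\Ranp_{X}}(S)$, and $\tilde q$ sends it to $(\{y\}\subset f(\underline{x}'))$. The commutative square
\[
\begin{tikzcd}
\stackrel{\circ}{\Ranp_{X,x}} \ar[r, "p_t"] \ar[d, "\tilde q"'] & \stackrel{\circ}{\Ranp_{X}} \ar[d, "q"] \\
\Ranp_{Y,y} \ar[r, "p_t"'] & \Ranp_{Y}
\end{tikzcd}
\]
yields a canonical morphism $\iota:\stackrel{\circ}{\Ranp_{X,x}}\to \Ranp_{Y,y}\times_{\Ranp_Y}\stackrel{\circ}{\Ranp_{X}}$.

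The projection $\Ranp_{Y,y}\times_{\Ranp_Y}\stackrel{\circ}{\Ranp_{X}}\to \Ranp_{Y,y}$ is étale, schematic, and of finite type as a base change of $q$. It therefore suffices to prove that $\iota$ is an open-and-closed immersion in the schematic sense. Given an affine $S$ with a map $\sigma$ to the target, $\sigma$ amounts to a pair $(\underline{y}'\ni y,\,\underline{x}'\in \stackrel{\circ}{\Ranp_X}(S))$ with $f(\underline{x}')=\underline{y}'$; since $\underline{x}'$ has pairwise distinct $f$-images and $y\in f(\underline{x}')$, there is a unique section $x_S'':S\to f^{-1}(y)$ realizing the intersection. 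The fibered product $S\times_{\text{target}}\stackrel{\circ}{\Ranp_{X,x}}$ is then the equalizer of $x_S''$ with the constant section $x_S:S\to\{x\}\subset f^{-1}(y)$, i.e.\ the preimage of the point $x$ under $x_S''$. Because $f$ is étale and separated, $f^{-1}(y)$ is a finite discrete scheme over $k(y)$ and $\{x\}\hookrightarrow f^{-1}(y)$ is an open-and-closed immersion; consequently the preimage is an open-and-closed subscheme of $S$. This shows $\iota$ is schematic and representable by open-and-closed immersions, in particular étale and of finite type; composing with the base change of $q$ yields the desired properties of $\tilde q$.

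The main technical point is carrying out the identification of the fibered product and the equalizer condition at the level of lax prestacks. This is handled by passing to the finite-level presentation $\stackrel{\circ}{\Ranp_X}=\colim_{I}\stackrel{\circ}{X^I}$: for each finite set $I$ with a distinguished element $i_0$, one checks that $\stackrel{\circ}{X^I_x}$ sits inside $Y^I_y\times_{Y^I}\stackrel{\circ}{X^I}$ as the pullback along the $i_0$-th coordinate of the clopen immersion $\{x\}\hookrightarrow f^{-1}(y)$, and that these identifications are compatible with the transition maps indexed by surjections of marked finite sets. Once this bookkeeping is in place, the argument above goes through verbatim.
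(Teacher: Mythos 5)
The paper itself supplies no proof for this lemma: as with the unmarked Lemma~\ref{q is etale}, it is treated as an immediate consequence of Construction~\ref{circ Ran}. Your argument fills in a correct, complete proof, and the strategy — factoring $\tilde q$ through the base change $\Ranp_{Y,y}\times_{\Ranp_Y}\stackrel{\circ}{\Ranp_X}\to\Ranp_{Y,y}$ of $q$ and showing the comparison map $\iota$ is a clopen immersion — is the natural one. The essential geometric input, namely that $f$ étale (hence unramified and quasi-finite, $X,Y$ being separated finite-type curves) makes $\{x\}\hookrightarrow f^{-1}(y)$ open-and-closed, is correctly identified and correctly exploited.

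One point is worth stating more carefully, since the phrase ``pairwise distinct $f$-images'' is slightly imprecise. The actual condition defining $\stackrel{\circ}{\Ranp_X}(S)$ is that for each $i,j$ the graph $(x_i,x_j)$ factors through the open $V = X^2\setminus(X\times_Y X\setminus\Delta)$, which allows $f(x_i)$ and $f(x_j)$ to coincide at some geometric points of $S$ while remaining distinct at others. The uniqueness of the section $x_S''$ nevertheless goes through exactly as you need: if $f(x_i)=y_S=f(x_j)$ as $S$-points, then $(x_i,x_j):S\to X\times_Y X$ lands in $V\cap(X\times_Y X)=\Delta$, forcing $x_i=x_j$. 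The same observation is also what lets you identify the fiber with an equalizer: if a $T$-point $g:T\to S$ satisfies $x_T\in\underline{x}'|_T$, say $x_i\circ g=x_T$, then $f(x_i\circ g)=y_T=f(x_S''\circ g)$ forces $x_i\circ g=x_S''\circ g$, so the lifting condition is precisely $x_S''\circ g=x_T$, i.e. $g$ factors through $(x_S'')^{-1}(\{x\})\subset S$. With that spelled out the argument is airtight, and in fact the direct $S$-point computation makes the reduction to the finite-level presentation $\colim_I X^I$ in your last paragraph unnecessary, though certainly also valid.
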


\begin{df}
	Similar to Notation \ref{factmodcat} we can  define $$\mathbf{FactModCat}(\stackrel{\circ}{{\Ranp}_{X}} ,  \stackrel{\circ}{{\Ranp}_{X, Z}}  ).$$
\end{df}

We have the following statement analogous to Proposition \ref{equivalence of fact alg objects for open Ran}:

\begin{pr}
	Pullback $(p^{\bullet}, \tilde{p}^{\bullet})$ for $$ \tilde{p}: \stackrel{\circ}{{\Ranp}_{X, Z}}  \rightarrow \Ranp_{X, Z}$$ induces an equivalence
	$$\mathbf{FactModCat}(\stackrel{\circ}{{\Ranp}_{X}} ,  \stackrel{\circ}{{\Ranp}_{X, Z}}  ) \cong \mathbf{FactModCat}({\Ranp_{X}},  {\Ranp_{X, Z}} ).$$
\end{pr}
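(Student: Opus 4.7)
The plan is to mimic the proof of Proposition \ref{equivalence of fact alg objects for open Ran}, extending it from factorization algebra categories to factorization module categories. First I would upgrade the pair $(p^{\bullet}, \tilde{p}^{\bullet})$ to a morphism at the level of the 2-categories $\mathbf{FactModCat}$, using the fact that $(p, \tilde{p})$ is a genuine strict morphism of the relevant $\mathbf{ECMod}^{\vee}$-algebras in $\laxPreSt_{\Corr}$ (this is automatic from the construction of $\stackrel{\circ}{{\Ranp}_{X, Z}}$ as a pullback along $\prjctn_s$, so the module structure is strictly compatible with $p$).

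Next, I would produce a right adjoint. For crystals of categories, pushforward along $p$ and $\tilde{p}$ gives an adjoint pair
\[
(p_{\bullet}, \tilde{p}_{\bullet}): \mathbf{CrysCat}(\stackrel{\circ}{{\Ranp}_{X}}) \times \mathbf{CrysCat}(\stackrel{\circ}{{\Ranp}_{X, Z}}) \rightleftarrows \mathbf{CrysCat}(\Ranp_X) \times \mathbf{CrysCat}(\Ranp_{X, Z}),
\]
which is right adjoint to $(p^{\bullet}, \tilde{p}^{\bullet})$ and upgrades to the lax version of module categories. Composing with the strengthening functor $\mathbf{Str}$ of \cite[Prop-Def 6.0.1]{CF} applied in the module setting (which is constructed by the same formula as in the algebra case, using the lax prestack $(\Ranp_X, \Ranp_{X,Z})^{\bullet}$ over $\mathbf{ECMod}^{\vee}$ instead of $\Ranp^{\bullet}_X$ over $\mathbf{EComm}^{\vee}$), we obtain the candidate right adjoint
\[
\mathbf{Str} \circ (p_{\bullet}, \tilde{p}_{\bullet}): \mathbf{FactModCat}(\stackrel{\circ}{{\Ranp}_{X}}, \stackrel{\circ}{{\Ranp}_{X, Z}}) \to \mathbf{FactModCat}(\Ranp_X, \Ranp_{X, Z}).
\]

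It remains to show (i) $(p^{\bullet}, \tilde{p}^{\bullet})$ is conservative and (ii) the right adjoint is fully faithful. For (i), as in the algebra case, I would check conservativity after further pullback along the disjoint multiplication maps; the analogue for modules is the map
\[
(\stackrel{\circ}{{\Ranp}_{X}} \times \cdots \times \stackrel{\circ}{{\Ranp}_{X}} \times \stackrel{\circ}{{\Ranp}_{X, Z}})_{Y\text{-}\on{disj}} \to \Ranp_{X, Z},
\]
which together with its algebra counterpart is jointly surjective (since an arbitrary finite subset of $X$ containing the marked part can be separated into groups with pairwise distinct images in $Y$ after passing to a Zariski cover of the test scheme, using that $f$ is etale). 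Pullback along these jointly surjective maps detects isomorphisms. For (ii), fully faithfulness of $\mathbf{Str} \circ (p_{\bullet}, \tilde{p}_{\bullet})$ reduces, via the explicit formula for $\mathbf{Str}$, to fully faithfulness of $(p_{\bullet}, \tilde{p}_{\bullet})$, which holds because both $p$ and $\tilde{p}$ are open embeddings of lax prestacks.

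The main obstacle I anticipate is checking that $\mathbf{Str}$ behaves well in the module setting and is compatible with $(p^{\bullet}, \tilde{p}^{\bullet})$, i.e. establishing the module analogue of the compatibility between $\mathbf{Str}$ and $p^{\bullet}$. This is a diagram chase at the level of the $\mathbf{ECMod}^{\vee}$-operadic structure, verified by reducing to the generators $(\stackrel{\circ}{X}, \stackrel{\circ}{Z})^{\bullet}(\tilde{u})$ of the relevant lax prestack, using base change and the fact that $p^{\bullet}$ and $\tilde{p}^{\bullet}$ commute with the limits used in the strengthening formula; once this is in place, both (i) and (ii) follow formally as above.
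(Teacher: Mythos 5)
Your proposal is correct and takes essentially the same approach the paper intends. The paper in fact supplies no proof of this proposition at all; it merely states that it is ``analogous to Proposition~4.4.4'' (the factorization algebra category version) and moves on. Your argument is a faithful, detailed working out of exactly that analogy: the same adjunction $(p^{\bullet},\tilde p^{\bullet}) \dashv \mathbf{Str}\circ(p_{\bullet},\tilde p_{\bullet})$ constructed by composing pushforward with the strengthening functor of \cite{CF}, the same conservativity argument via joint surjectivity of the disjoint loci (now with a marked $\stackrel{\circ}{{\Ranp}_{X,Z}}$-factor), and the same observation that fully faithfulness of the right adjoint reduces to fully faithfulness of $(p_{\bullet},\tilde p_{\bullet})$ plus the formula for $\mathbf{Str}$. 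The one place where your proposal does more than the paper is in explicitly flagging that one must verify the $\on{ECMod}^{\vee}$-operadic compatibility of strengthening with pullback in the module setting---this is indeed the point that the paper sweeps under the word ``analogous,'' and it is worth spelling out as you do.

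Two minor remarks. First, when you designate the part $P_0$ of the partition containing the marked configuration $x(z)$, you should note that this is unobstructed because $x$ lands in $\stackrel{\circ}{{\Ranp}_X}$ by hypothesis, so $x(z)$ already has pairwise distinct $f$-images and can sit inside a single factor; a point of $x(z)$ colliding with a point outside $x(z)$ under $f$ is then handled by moving the latter to another factor. Second, the paper's proof of the algebra version says ``after an etale cover'' where you say ``Zariski cover''; both are fine here since the separation happens along the open $V \subset X\times X$, but matching the paper's phrasing is safest.
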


\begin{df}
	For $x \in X, y \in Y$ such that $f(x) = y$ the composition 
	\[
	\begin{tikzcd}
	\mathbf{FactModCat}(\Ranp_Y, \Ranp_{Y, y} ) \ar[r, rightarrow,  "(q^{\bullet}{,}\tilde{q}^{\bullet})"]
	& \mathbf{FactModCat}(\stackrel{\circ}{{\Ranp}_{X}} ,  \stackrel{\circ}{{\Ranp}_{X, x}}  ) \\
	\arrow[r,  "(p^{\bullet}{,}\tilde{p}^{\bullet})"]&  \mathbf{FactModCat}(\Ranp_X, {\Ranp_{X, x}}),
	\end{tikzcd}
	\] 

	is called \emph{etale pullback of factorization algebra and module categories}. 
\end{df}

For a fixed $\mathbf{A} \in \mathbf{FactAlgCat}(\Ranp_Y)$ we thus have 
\begin{equation}\label{etale pullback of fact modules}
f^{\bullet}: \mathbf{A}-\mathbf{FactModCat}(\Ranp_{Y, y}) \rightarrow f^{\bullet}(\mathbf{A})-\mathbf{FactModCat}(\Ranp_{X, x}). 
\end{equation}

\begin{lm}\label{pullback vs restriction}
	The etale pullback of factorization modules functor (\ref{etale pullback of fact modules}) is compatible with factorization restriction defined in \cite[Theorem-Definition 6.0.2]{CF}, i.e. for $$N: \mathbf{C} \rightarrow \mathbf{A}$$ with $\mathbf{C}, \mathbf{A} \in \mathbf{FactAlg}$ we have 
	\[
	\begin{tikzcd}
	\mathbf{A}-\mathbf{FactModCat}(\Ranp_{Y, y})  \ar[d, rightarrow, ""']\ar[r, rightarrow, "\Res_N"'] &   \mathbf{C}-\mathbf{FactModCat}(\Ranp_{Y, y})  \ar[d, rightarrow, ""']  \\
	f^{\bullet}(\mathbf{A})-\mathbf{FactModCat}(\Ranp_{X, x}) \ar[r, rightarrow, "\Res_{f^{\bullet}N}"']&  f^{\bullet}(\mathbf{C})-\mathbf{FactModCat}(\Ranp_{X, x}).
	\end{tikzcd}
	\]
\end{lm}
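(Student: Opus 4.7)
The plan is to establish the compatibility separately for each of the two constituent pullbacks, $q^\bullet$ and $(p^\bullet)^{-1}$, that together make up $f^\bullet$, and then compose. Both the morphisms $(p,\tilde p)$ and $(q,\tilde q)$ are \emph{strict} morphisms of the underlying commutative algebra / module objects in $\laxPreSt_{\on{Corr}}$: for $q$ this is the content of the construction of the $\Ranp_Y$-module structure on $\stackrel{\circ}{\Ranp_{X,x}}$ (using that $q$ is étale schematic of finite type, so disjointness on $X$ agrees with disjointness over $Y$ on the relevant loci), while for $p$ this is built into the definition of $\stackrel{\circ}{\Ranp_X}$ as an open sublocus of $\Ranp_X$ preserved by the factorization correspondences.

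The factorization restriction functor $\mathbf{Res}$ from \cite[Theorem-Definition 6.0.2]{CF} is constructed functorially out of the $\ECMod$-algebra structure on $(\Ranp,\Ranp_{x})$ and a morphism of factorization algebra categories $N$. Consequently, any strict morphism of such $\ECMod$-algebras induces, by pullback, a canonical commuting square between $\mathbf{Res}_N$ and $\mathbf{Res}_{N'}$ where $N'$ is the pulled-back morphism. Applying this to the strict morphism $(q,\tilde q)$ yields the diagram
\[
\begin{tikzcd}
\BA\mathbf{-FactModCat}(\Ranp_{Y,y}) \ar[d,"q^\bullet{,}\tilde q^\bullet"'] \ar[r,"\mathbf{Res}_N"] & \BC\mathbf{-FactModCat}(\Ranp_{Y,y}) \ar[d,"q^\bullet{,}\tilde q^\bullet"] \\
q^\bullet(\BA)\mathbf{-FactModCat}(\stackrel{\circ}{\Ranp_{X,x}}) \ar[r,"\mathbf{Res}_{q^\bullet N}"] & q^\bullet(\BC)\mathbf{-FactModCat}(\stackrel{\circ}{\Ranp_{X,x}})
\end{tikzcd}
\]
together with a canonical 2-isomorphism filling it. The analogous compatibility for $(p^\bullet,\tilde p^\bullet)$ is established the same way, and since $(p^\bullet,\tilde p^\bullet)$ is an equivalence (Proposition \ref{equivalence of fact alg objects for open Ran} together with its module-category analogue), the same square holds with arrows reversed for $((p^\bullet)^{-1},(\tilde p^\bullet)^{-1})$. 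Concatenating the two squares produces the desired 2-commutative square for $f^\bullet$.

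The only non-formal input in the above is verifying that the strengthening functor $\on{Str}$ (through which $(p^\bullet)^{-1}$ factors on the level of right adjoints) interacts correctly with $\mathbf{Res}$. This is the main point to check, and it reduces, by the explicit formula for $\on{Str}$ as a limit over arrows in $\mathbf{P}^\otimes$ (see \cite[\S 4.4]{CF}), to the fact that $p_\bullet$ and $\tilde p_\bullet$ commute with the limits defining $\mathbf{Res}$; this in turn is automatic because both are right adjoints and because the relevant Cartesian squares of lax prestacks remain Cartesian after restriction to the open subloci indexed by $\mathbf{P}^\otimes$. I expect this last verification to be the main technical obstacle, but it is essentially a diagram-chase using the explicit descriptions of $\mathbf{Res}$ and $\on{Str}$; no new geometric input beyond the étaleness of $f$ at $x$ (which guarantees $q$ and $\tilde q$ are étale schematic of finite type) is required.
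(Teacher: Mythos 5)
Your overall strategy mirrors the paper's: factor $f^{\bullet}$ through $\stackrel{\circ}{{\Ranp}_{X}}$ and verify each constituent square separately. But there are two substantive problems.

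First, your justification for the $q^{\bullet}$ square is too weak. You invoke ``functoriality'' of the $\mathbf{Res}$ construction with respect to strict morphisms in $\laxPreSt_{\Corr}$, but this is precisely what needs proof: $\mathbf{Res}$ is defined as contravariant transport along a $(1,2)$-Cartesian fibration, and pullback of crystals of categories does not a priori carry Cartesian lifts to Cartesian lifts. The paper supplies the concrete mechanism you are missing: $\mathbf{Res}_N$ is computed by an explicit limit formula (\cite[Lemma 6.3.2, \S 4.4]{CF}), and $q^{\bullet}$ commutes with those limits, so it preserves the Cartesian lift. Without this, ``consequently, any strict morphism \ldots induces a canonical commuting square'' is an assertion, not an argument.

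Second, you misidentify the ``main technical obstacle.'' You single out the interaction between $\on{Str}$ and $\mathbf{Res}$, needed to control $(p^{\bullet})^{-1}=\on{Str}\circ p_{\bullet}$. But Proposition \ref{equivalence of fact alg objects for open Ran} and its module-category analogue say $(p^{\bullet},\tilde p^{\bullet})$ is an \emph{equivalence} of $(1,2)$-Cartesian fibrations; equivalences automatically preserve Cartesian arrows and hence commute with $\mathbf{Res}$, so the $(p^{\bullet})^{-1}$ square is formal and the paper accordingly reduces immediately to the $q^{\bullet}$ square. Your attempted justification --- that $p_{\bullet}$ and $\tilde p_{\bullet}$ commute with the relevant limits ``because both are right adjoints'' --- is also not quite right as stated (one would need to know these are limits of the particular diagram shape computing $\mathbf{Res}$, on the correct side), but, more to the point, it is an argument for a claim that requires no argument. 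The genuinely non-formal input is the $q^{\bullet}$ square, which you do not actually establish.
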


\begin{proof}

	It suffices to show that the following diagram commutes:
	
	\begin{equation}\label{pullback vs res diagram}
	\begin{tikzcd}
	\mathbf{A}-\mathbf{FactModCat}(\Ranp_{Y, y})  \ar[d, rightarrow, "q^{\bullet}"']\ar[r, rightarrow, "\Res_N"'] &   \mathbf{C}-\mathbf{FactModCat}(\Ranp_{Y, y})  \ar[d, rightarrow, "q^{\bullet}"']  \\
	q^{\bullet}(\mathbf{A})-\mathbf{FactModCat}(\stackrel{\circ}{{\Ranp}_{X, x}}) \ar[r, rightarrow, "\Res_{q^{\bullet}N}"']&  q^{\bullet}(\mathbf{C})-\mathbf{FactModCat}(\stackrel{\circ}{{\Ranp}_{X, x}}).
	\end{tikzcd}
	\end{equation}

	But this follows from the fact that $q^{\bullet}$ commutes with limits and the formula for factorization restriction (\cite[Lemma 6.3.2]{CF} and \cite[4.4]{CF}). 
\end{proof}

\begin{thm}\label{main etale pullback}
	The map (\ref{etale pullback of fact modules}) is an equivalence.
\end{thm}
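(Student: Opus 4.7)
Since the proposition stating that $(p^{\bullet},\tilde{p}^{\bullet})$ is an equivalence on module categories has already been established, by the very definition of $f^{\bullet}$ as the composite $(p^{\bullet}, \tilde{p}^{\bullet})^{-1} \circ (q^{\bullet}, \tilde{q}^{\bullet})$, it is enough to prove that
\[
(q^{\bullet}, \tilde{q}^{\bullet}): \mathbf{A}\text{-}\mathbf{FactModCat}(\Ranp_{Y,y}) \to q^{\bullet}(\mathbf{A})\text{-}\mathbf{FactModCat}(\stackrel{\circ}{\Ranp_{X,x}})
\]
is an equivalence, where recall that $q$ and $\tilde{q}$ are étale, schematic, of finite type.

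My plan is to mimic the structure of the proof of Proposition \ref{equivalence of fact alg objects for open Ran}, upgraded from the open embedding case to the general étale schematic case and from algebra categories to module categories. First I would construct a right adjoint by composing the pushforward $q_{\bullet}$ (respectively $\tilde{q}_{\bullet}$) on crystals of categories (which exists because $q$ and $\tilde{q}$ are schematic) with the strengthening functor $\Str$ from \cite[Proposition-Definition 6.0.1]{CF} and its module-category analog. This gives an adjunction
\[
(q^{\bullet}, \tilde{q}^{\bullet}) \dashv (\Str \circ q_{\bullet}, \Str \circ \tilde{q}_{\bullet})
\]
in $\mathbf{FactModCat}$. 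To show this is an equivalence I would verify two things: conservativity of $(q^{\bullet}, \tilde{q}^{\bullet})$, and fully-faithfulness of its right adjoint.

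For conservativity I would proceed exactly as in the proof of Proposition \ref{equivalence of fact alg objects for open Ran}: étale-locally pull back further via the natural jointly surjective maps from the disjoint loci $(\stackrel{\circ}{\Ranp_X} \times \cdots \times \stackrel{\circ}{\Ranp_X})_{\on{disj}}$ to $\Ranp_Y$, where the surjectivity follows from the fact that $f$ being étale at $x$ ensures preimages exist étale locally around $y$, and the factorization structure lets us glue individual preimages into preimages of disjoint collections of points. The analogous argument handles $\tilde{q}$ on the module side.

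The main obstacle is proving fully-faithfulness of the right adjoint $\Str \circ q_{\bullet}$ (and its module analog). In the case of the open embedding $p$, fully-faithfulness of $p_{\bullet}$ is automatic, so it remains only to use the explicit formula for $\Str$. In our case $q$ is merely étale (not an open embedding), so $q_{\bullet}$ is not fully faithful at the level of plain crystals of categories. My strategy to get around this is to use that étale-locally on $Y$ the map $f: X \to Y$ splits as a disjoint union of open embeddings, so étale-locally the map $q$ itself becomes an open embedding. Combined with the fact that $\mathbf{FactModCat}$ satisfies étale descent on the underlying curve (which should be checked via the machinery of crystals of categories and the strengthening functor together with Lemma \ref{pullback vs restriction}), this reduces the fully-faithfulness claim for étale $q$ to the already-known case of open embeddings. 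Once the two adjunction-axiom statements are verified, the equivalence follows formally, and composing with the equivalences $(p^{\bullet}, \tilde{p}^{\bullet})^{-1}$ concludes the proof.
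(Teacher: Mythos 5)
Your reduction to the case of $(q^{\bullet},\tilde q^{\bullet})$ and the general shape (construct a right adjoint, then check the adjunction is an equivalence) match the paper, but the two remaining steps have real problems.

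First, your conservativity argument has a gap: you claim surjectivity of the maps to $\Ranp_Y$ follows because "$f$ is étale at $x$," but étaleness at $x$ does not make $f$ surjective (nor $q$), and the theorem does not assume surjectivity. The paper handles this by factoring $f = j \circ f_1$ with $f_1$ étale surjective and $j$ an open embedding; conservativity for the surjective piece and equivalence by descent for the open piece are treated separately. Without that factorization, the surjectivity you invoke is simply false.

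Second, and more seriously, your plan hinges on proving that the right adjoint is fully faithful by reducing to the open-embedding case via étale-local triviality of $f$ together with "étale descent for $\mathbf{FactModCat}$ on the underlying curve." This descent statement is not established in the paper and is genuinely delicate: the factorization structure lives over all powers $X^I$ of the curve simultaneously, so it is not a sheaf-theoretic condition one can check étale-locally on $X$ in any naive sense. The paper deliberately avoids this route. Instead, after reducing to the counit map (using conservativity of $\tilde q^{\bullet}$), it exploits the triangle identity for the adjunction at the algebra-category level to reduce to showing $\tilde q^{\bullet}\circ\tilde q_{\bullet}\simeq\Res_\epsilon$, and proves that by applying the core criterion (Corollary B.8.4) to an adjunction in $\mathbf{FactModCat}$ for an arbitrary $\mathbf B$. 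That argument never needs full faithfulness of the right adjoint, which is the whole point.

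Finally, a smaller issue: your right adjoint is written as $\Str\circ\tilde q_\bullet$, but the paper's module-level right adjoint is $\Res_\mu\circ\tilde q_\bullet$ (transport along the unit $\mu$ of the algebra-level adjunction, via the 2-Cartesian fibration structure of $\mathbf{FactModCat}\to\mathbf{FactAlgCat}$). There is no module-level $\Str$ defined; the correct replacement is the factorization restriction $\Res_\mu$, and this identification is what feeds into the triangle-identity trick.
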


\begin{proof}
	It suffices to check that 
	$$\tilde{q}^{\bullet}: \mathbf{A}-\mathbf{FactModCat}(\Ranp_{Y, y})  \rightarrow q^{\bullet}(\mathbf{A})-\mathbf{FactModCat}(\stackrel{\circ}{{\Ranp}_{X, x}})$$
	is an equivalence. Denote the factorization algebra object $q^{\bullet}(\mathbf{A})$ by $\mathbf{B}$. 
	
	Note that from Construction \ref{circ Ran} it follows that we have an adjunction
	\[
	q^{\bullet}: \begin{tikzcd}
	\mathbf{FactAlgCat}({\Ranp_{Y}} )\ar[r, rightarrow,  shift left, ""]
	& \arrow[l,  shift left, ""]   \mathbf{FactAlgCat}(\stackrel{\circ}{{\Ranp}_{X}}) : q_{\bullet}.
	\end{tikzcd}
	\] 
	and similarly
	\[
	\tilde{q}^{\bullet}: \begin{tikzcd}
	\mathbf{FactModCat}(\Ranp_Y, \Ranp_{Y, y} ) \ar[r, rightarrow,  shift left, ""]
	& \arrow[l,  shift left, ""]   \mathbf{FactModCat}(\stackrel{\circ}{{\Ranp}_{X}}, \stackrel{\circ}{{\Ranp}_{X, x}})  :  \tilde{q}_{\bullet}.
	\end{tikzcd}
	\] 
	
	By definition of factorization restriction as the 2-Cartesian lift (see \cite[Theorem-Definition 6.0.2]{CF}) we get that the functor 
	$$\tilde{q}^{\bullet}: \mathbf{A}-\mathbf{FactModCat}(\Ranp_{Y, y})  \rightarrow \mathbf{B}-\mathbf{FactModCat}(\stackrel{\circ}{{\Ranp}_{X, x}})$$
	admits a right adjoint described as 
	$$ \Res_{\mu}\circ \tilde{q}_*,$$
	where $\mu: \mathbf{A} \rightarrow q_{\bullet}\mathbf{B} \in \mathbf{FactAlg}({\Ranp_{Y}} )$ comes from the unit of adjunction.
	
	So we need to check that 
	\begin{equation}\label{counit etale factorization}
	\tilde{q}^{\bullet}\circ \Res_{\mu}\circ \tilde{q}_{\bullet} \xrightarrow{\cong} \Id
	\end{equation}
	and 
	\begin{equation}\label{unit etale factorization}
	\Id \xrightarrow{\cong}  \Res_{\mu} \circ \tilde{q}_{\bullet} \circ \tilde{q}^{\bullet}.
	\end{equation}
	
	We first claim that $$\tilde{q}^{\bullet}: \mathbf{A}-\mathbf{FactModCat}(\Ranp_{Y, y})  \rightarrow \mathbf{B}-\mathbf{FactModCat}(\stackrel{\circ}{{\Ranp}_{X, x}})$$ is conservative. 
	
	Indeed, factor $f: X \rightarrow Y$ as a composition of a surjective map $f_1: X \twoheadrightarrow Y_1$ and an open $j: Y_1\hookrightarrow Y$. Then $$q: \stackrel{\circ}{{\Ranp}_{X}}  \rightarrow {\Ranp_{Y}}$$ factors as $$\stackrel{\circ}{{\Ranp}_{X}} \xtwoheadrightarrow{q_1} \Ranp_{Y_1} \xhookrightarrow{j} \Ranp_{Y}.$$
	Then $$\widetilde{q_1}^{\bullet}: \mathbf{A}-\mathbf{FactModCat}(\Ranp_{Y_1, y})  \rightarrow \mathbf{B}-\mathbf{FactModCat}(\stackrel{\circ}{{\Ranp}_{X, x}})$$ is conservative by surjectivity, and $$\tilde{j}^{\bullet}: \mathbf{A}-\mathbf{FactModCat}(\Ranp_{Y, y})  \rightarrow \mathbf{A}-\mathbf{FactModCat}(\Ranp_{Y_1, y})$$ is an equivalence by descent. 
	
	Therefore it suffices to check only that (\ref{counit etale factorization}) is an equivalence. 
	Using (\ref{pullback vs res diagram}) we can rewrite (\ref{counit etale factorization}) as 
	
	\begin{equation}\label{counit etale factorization new}
	\Res_{q^{\bullet}(\mu)} \circ \tilde{q}^{\bullet}\circ \tilde{q}_{\bullet}\xrightarrow{\cong} \Id.
	\end{equation}
	
	Note that the composition 
	$$q^{\bullet}(\mathbf{A}) \xrightarrow{q^{\bullet}(\mu)} q^{\bullet}q_{\bullet}(\mathbf{B}) \xrightarrow{\epsilon} \mathbf{B},$$
	where the latter map comes from count of the adjunction, is the identity. Therefore to show that (\ref{counit etale factorization new}) is identity it suffices to show that 
	\begin{equation}\label{res(e)}
	\tilde{q}^{\bullet}\circ \tilde{q}_{\bullet} \cong \Res_{\epsilon}.
	\end{equation}
	
	For this statement we may assume that $\mathbf{B}$ is any element of $\mathbf{FactAlgCat}(\stackrel{\circ}{{\Ranp}_{X}} )$. Indeed, since $q$ and $\widetilde{q}$ are etale for any $$\mathbf{N} \in \mathbf{B}-\mathbf{FactModCat}(\stackrel{\circ}{{\Ranp}_{X, x}})$$ we get the adjunction 
	\[
	(\epsilon, \widetilde{\epsilon}): \begin{tikzcd}
	(	q^{\bullet}\circ q_{\bullet}(\mathbf{B}), 	\widetilde{q}^{\bullet}\circ \widetilde{q}_{\bullet}(\mathbf{N})) \ar[r, rightarrow,  shift left, ""]
	& \arrow[l,  shift left, ""]   (\mathbf{B}, \mathbf{N})  :  (\epsilon^R, \widetilde{\epsilon}^R).
	\end{tikzcd}
	\] 
	Note that $\widetilde{\epsilon}$ and $\widetilde{\epsilon}^R$ are equivalences on cores, so we conclude by \cite[Corollary B.8.4]{text}.
\end{proof}

\begin{rem}
	Everything in this subsection prior to Theorem \ref{main etale pullback} works as is for factorization algebras and modules in $\D$ as opposed to $\mathbf{CrysCat}$. The proof of Theorem \ref{main etale pullback} in fact can be simplified in that case. Indeed, the morphisms (\ref{counit etale factorization}) and (\ref{unit etale factorization}) obviously become equivalences after taking cores, but the functor of taking cores is conservative.
\end{rem}

\section{Parameterized factorization patterns}\label{paramfact}

\subsection{Parameterized crystals of categories}\label{paramcryscat}

Let $S$ be a 1-affine prestack such that $\QCoh(S)$ is semi-rigid. 
The goal of this subsection is to set up a theory of crystals of categories parameterized by $S$ and derive properties of such needed for the proof of Theorem \ref{Basic Adjunction}.

\begin{df} Let $Y$ be a laft lax prestack. An \emph{$S$-linear crystal of categories} $\BA$ on $Y$ consists of the following data:
	\begin{itemize}
		\item For any $s:R\to Y$ with $R$ a finite type affine scheme, there is a $\D(R)\otimes \QCoh(S)$-module DG category:
		\[ \BA_s \in \D(R) \otimes \QCoh(S)-\mathbf{ModCat} \cong \QCoh(S)-\mathbf{ModCat}(D(R)-\mathbf{ModCat}). \]
		
		\item For any 2-cell
		\[
		\xymatrix{
			& R \ar[rd]^-{s} \ar@{=>}[d]^-\theta \\
			T \ar[ru]^-v \ar[rr]_{t} & & Y,
		}
		\]
		there is a functor
		\[
		\theta_\dagger: \BA_s \to \BA_t
		\]
		intertwining the symmetric monoidal functor $$u^! \otimes \Id:\D(R)  \otimes \QCoh(S)\to \D(T) \otimes \QCoh(S),$$ such that if $\theta$ is invertible, then the induced functor
		\[
		\D(T) \otimes_{\D(S)} \BA_s \to \BA_t
		\]
		is an equivalence.
		\item Certain higher compatibilities. 
	\end{itemize}
\end{df}

\begin{ex}
	We have the constant $S$-linear crystal $\mathbf{Vect}_Y^S$. 
\end{ex}

As usual, there are two notions of morphisms between crystals of categories on a lax prestack: lax ones and strict ones defined as in Definition \ref{defn-lax-morphism-cryscat}.
 Denote by $\Maps(\mathbf{C}, \mathbf{D})$ ($\Maps_{\str}(\mathbf{C}, \mathbf{D})$) the category of (strict) functors from $\BC$ to $\BD$. 
 
 \begin{ntn}
 	Let 
 	\[\mathbf{CrysCat}^S(Y),\; \mathbf{CrysCat}^{S}(Y)^{\str}\] 
 	be the $(\infty,2)$-category of crystals of categories on $Y$, with 1-morphisms given by morphisms (resp. strict morphisms) between crystals of categories, and 2-morphisms given by natural transformations.
 \end{ntn}
	
\medskip

We now state the $S$-linear avatars of claims in \cite[Subsections A.2-A.4]{CF} and note that the proofs are completely similar. 

\begin{lm}\label{pointwise maps}
	Colimits and finite limits in $\Maps_{\str}(\mathbf{C}, \mathbf{D})$ and $\Maps(\mathbf{C}, \mathbf{D})$ are calculated point-wise, i.e. for any affine $S$ with $s: S \rightarrow Y$ the functors $$\Maps_{\str}(\mathbf{C}, \mathbf{D}) \rightarrow \on{Fun}_{\D(S) \otimes \QCoh(S)}(\mathbf{C}(s), \mathbf{D}(s))$$ and $$\Maps(\mathbf{C}, \mathbf{D}) \rightarrow \on{Fun}_{\D(S) \otimes \QCoh(S)}(\mathbf{C}(s), \mathbf{D}(s))$$
	commute with colimits and finite limits.
\end{lm}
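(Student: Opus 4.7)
The plan is to adapt the proof from \cite[Subsections A.2-A.4]{CF} to the $S$-linear setting. The first step is to realize both $\Maps_{\str}(\BC,\BD)$ and $\Maps(\BC,\BD)$ as appropriate (possibly lax) limits in $\DGCat$ indexed by a simplex-like category built out of the lax prestack $Y$. Concretely, for each affine $s:R\to Y$ with $R$ of finite type one gets an evaluation
\[
\ev_s:\Maps(\BC,\BD)\to \on{Fun}_{\D(R)\otimes \QCoh(S)}(\BC_s,\BD_s),
\]
and similarly for $\Maps_{\str}$; these together with the transitions encoded by 2-cells $\theta$ should exhibit $\Maps(\BC,\BD)$ (resp.\ $\Maps_{\str}(\BC,\BD)$) as the lax (resp.\ strict) end over the simplex category of $Y$.

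Once this is in place, the two claims follow by a formal argument. First, evaluation functors into a limit preserve limits; thus finite limits in $\Maps(\BC,\BD)$ are computed pointwise, and likewise for the strict variant. This handles the finite-limit half of the assertion for free. For colimits, the point is that the transition functors appearing in the indexing diagram are built from the $\D(R)\otimes \QCoh(S)$-linear functors $\theta_{\dagger}$ post-composed into $\on{Fun}(\BC_s,\BD_t)$, and these preserve colimits in the target argument. Since a colimit in a limit of presentable stable categories along colimit-preserving transitions is itself computed pointwise, we conclude that $\ev_s$ sends colimits in $\Maps(\BC,\BD)$ to colimits in the functor categories.

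The entire argument is purely formal once the limit presentation is in place, so the only thing that needs checking is that passing from $\D(R)$ to $\D(R)\otimes \QCoh(S)$ does not disturb anything. Here the hypothesis that $S$ is 1-affine with $\QCoh(S)$ semi-rigid is crucial: semi-rigidity guarantees that tensoring with $\QCoh(S)$ commutes with the relevant limits (see \cite[Lemma C.5.3]{AGKRRV2}), so the limit presentation in the non-parameterized case of \cite{CF} tensors up to give the desired limit presentation in the parameterized case. The anticipated main obstacle is bookkeeping for the lax version $\Maps(\BC,\BD)$, where one must carefully verify that the lax end — which encodes non-invertible 2-cells — still behaves well with respect to pointwise colimits; this reduces to checking that each structural natural transformation $\theta_{\dagger}\circ F_s \Rightarrow F_t \circ \theta_{\dagger}$ can be freely evaluated at a colimit without obstruction, which holds because all functors in sight are colimit preserving on the relevant variable.
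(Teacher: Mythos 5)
Your overall approach is the right one: the paper itself offers no proof beyond "state the $S$-linear avatars of claims in \cite[Subsections A.2-A.4]{CF} and note that the proofs are completely similar," so what you are doing --- realizing $\Maps$ and $\Maps_{\str}$ as (possibly lax) ends over the category of affines mapping to $Y$, observing that evaluation out of a limit preserves limits, and observing that the transition functors (pre- and post-composition by the $\theta_{\dagger}$'s in the relevant internal $\Fun$-categories) preserve colimits so that the standard pointwise-colimits-in-a-limit lemma applies --- is exactly the content that would be imported from \cite{CF}, and the structure of your argument is sound. Two small remarks about correctness: for the strict variant you should add the (easy) observation that the full subcategory of strict morphisms is closed under colimits and finite limits because in a stable setting invertibility of the coherence 2-cells is detected by vanishing of a (co)fiber, and (co)fibers commute with the relevant (co)limits; you gesture at this with "without obstruction" but it deserves a sentence.

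The one substantive inaccuracy is your claim that semi-rigidity of $\QCoh(S)$ is \emph{crucial} here and that one needs it so that "tensoring with $\QCoh(S)$ commutes with the relevant limits." This lemma is not obtained by tensoring the non-parameterized statement with $\QCoh(S)$; it is proved directly by the same argument with $\D(R)$ replaced everywhere by $\D(R)\otimes\QCoh(S)$, and nothing in that argument uses that $\QCoh(S)$ is semi-rigid or that $S$ is 1-affine --- it would work for an arbitrary DG category in place of $\QCoh(S)$. The semi-rigidity and 1-affineness hypotheses are genuinely used in the appendix, but elsewhere: e.g.\ in Lemma \ref{adjoints-base change strict}, where one needs the lax $\mathbf{DGCat}$-linearity of $p_{\bullet}$ to upgrade to strict $\QCoh(S)$-linearity, and in the various adjunction and base-change statements. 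The citation of \cite[Lemma C.5.3]{AGKRRV2} is therefore off-target for the present lemma. This does not invalidate your proof --- the hypothesis is harmless --- but the "crucial" claim is a misdiagnosis of where the hypothesis does work.
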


\begin{cor}
	The categories $\Maps(\mathbf{C}, \mathbf{D})$ and $\Maps_{\str}(\mathbf{C}, \mathbf{D})$ are presentable and stable.
\end{cor}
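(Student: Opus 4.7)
The plan is to derive both assertions formally from Lemma \ref{pointwise maps} together with the observation that each pointwise target category is itself presentable and stable.

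First I would record that for any $s : R \to Y$ with $R$ a finite type affine scheme, the fiber $\on{Fun}_{\D(R)\otimes \QCoh(S)}(\BC(s), \BD(s))$ is presentable and stable. This is a general fact: $\D(R)\otimes \QCoh(S)$ is a semi-rigid symmetric monoidal presentable stable category (since $\QCoh(S)$ is semi-rigid by hypothesis and $\D(R)$ is rigid on compacts), and the category of colimit-preserving $\D(R)\otimes\QCoh(S)$-linear functors between two module categories is itself a presentable stable category --- explicitly, $\on{Fun}_{\D(R)\otimes\QCoh(S)}(\BC(s), \BD(s)) \simeq \BC(s)^{\vee} \otimes_{\D(R)\otimes\QCoh(S)} \BD(s)$ when $\BC(s)$ is dualizable, and in general one uses the standard presentation of such functor categories.

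Next, Lemma \ref{pointwise maps} tells us that evaluation at any such $s$ preserves small colimits and finite limits. In particular, the mapping categories $\Maps(\BC,\BD)$ and $\Maps_{\str}(\BC,\BD)$ admit all small colimits and all finite limits. Stability is then immediate: each mapping category is pointed (the zero functor), and the suspension/loop adjunction is an equivalence because this can be checked after evaluation at each $s$, where it holds by stability of the fiber.

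For presentability it remains to produce accessibility. The key point is that $\Maps(\BC,\BD)$ (resp.\ $\Maps_{\str}(\BC,\BD)$) can be expressed as a (small) limit of the pointwise functor categories indexed by an appropriate category of simplices of $Y$ (restricted to finite type affine test schemes $R \to Y$), with transition morphisms given by the structure morphisms $\theta_{\dagger}$. Since $Y$ is laft, this indexing category is essentially small; since the transition functors preserve colimits (by the definition of an $S$-linear crystal of categories), the limit is taken in $\on{Pr}^L$, hence yields a presentable category. Combining this with cocompleteness from Lemma \ref{pointwise maps} gives presentability. The only real subtlety --- and what I expect to be the main technical hurdle --- is verifying that the limit description of $\Maps$ (as opposed to $\Maps_{\str}$) goes through in the lax setting: the higher-coherence data for lax morphisms requires assembling the limit over a more elaborate diagram (encoding the non-invertible natural transformations of Definition \ref{defn-lax-morphism-cryscat}) rather than a plain equalizer-type diagram, and one must check that this enlarged indexing category is still small and that the transition functors remain continuous.
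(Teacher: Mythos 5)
Your proposal matches the argument the paper intends — as the surrounding remark explains, this corollary is the $S$-linear avatar of the analogous statements in \cite[Appendix A]{CF}, and the proof is meant to proceed exactly by the pointwise reduction via Lemma \ref{pointwise maps} that you describe. One small point worth making explicit in the stability step: verifying that the unit and counit of the loop/suspension adjunction are equivalences "after evaluation at each $s$" uses that the evaluation functors on $\Maps(\BC,\BD)$ (resp.\ $\Maps_{\str}(\BC,\BD)$) are \emph{jointly conservative}, which is not literally contained in Lemma \ref{pointwise maps}; it is, however, an immediate consequence of the (lax-)limit description of these mapping categories that you already invoke for presentability, so the argument closes up.
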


\begin{df}
	Define (lax) global sections of $\mathbf{C}$ to be $$\Gamma_{\str}^S(Y, \mathbf{C}):= \Maps_{\str}(\mathbf{Vect}_Y^S, \mathbf{C}),$$
	$$\Gamma^S(Y, \mathbf{C}):= \Maps(\mathbf{Vect}_Y^S, \mathbf{C}),$$
\end{df}

\begin{rem}
	For $Y \in \PreSt$ we have $\Gamma^S(Y, \mathbf{C}) \cong \Gamma_{\str}^S(Y, \mathbf{C})$.
\end{rem}

\begin{ex}
	We have 
	$$\D(Y) \otimes \QCoh(S) \cong \Maps(\mathbf{Vect}_Y^S, \mathbf{Vect}_Y^S) \cong \Gamma^S(Y, \mathbf{Vect}_Y^S).$$
\end{ex}

\begin{cnstr}
	For a morphism $f: \mathbf{Y_1} \rightarrow \mathbf{Y_2}$ in $\laxPreSt$ we have natural functors 
	$$f^{\bullet}: \mathbf{CrysCat}^S(\mathbf{Y_2}) \rightarrow \mathbf{CrysCat}^S(\mathbf{Y_1}),$$
	and 
	$$f^{\bullet}: \mathbf{CrysCat}^S(\mathbf{Y_2})^{\str} \rightarrow \mathbf{CrysCat}^S(\mathbf{Y_1})^{\str}.$$
	We will call them the pullback functors.
\end{cnstr}

\begin{lm}\label{adjoints-base change strict}
	Let $p: Y \rightarrow Z$ be a schematic morphism of prestacks. Then the functor 
	$$p^{\bullet}: \mathbf{CrysCat}^S(Z)^{\str} \rightarrow \mathbf{CrysCat}^S(Y)^{\str}$$
	admits a right adjoint 
	$$p_{\bullet}: \mathbf{CrysCat}^S(Y)^{\str}\rightarrow\mathbf{CrysCat}^S(Z)^{\str}$$
	satisfying the base change property.
\end{lm}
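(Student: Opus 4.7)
The construction of $p_\bullet$ proceeds pointwise, in direct parallel to the non-parameterized case. For $\BA \in \mathbf{CrysCat}^S(Y)^\str$ and an affine test scheme $s: T \to Z$, I would set
\[
(p_\bullet \BA)_s := \Gamma^S_{\str}(Y \times_Z T,\; q_s^\bullet \BA),
\]
where $q_s: Y \times_Z T \to Y$ is the projection. Since $p$ is schematic, $Y \times_Z T$ is an honest (non-lax) prestack, so the global sections functor is unambiguous, and the $\D(T) \otimes \QCoh(S)$-linear structure on $(p_\bullet \BA)_s$ comes from pullback along the other projection $Y \times_Z T \to T$ together with the $\QCoh(S)$-action already present on $q_s^\bullet \BA$ (the $\QCoh(S)$ factor being inert throughout the construction). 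Functoriality in $s$ is defined by pullback along the induced morphisms of fiber products, and the strictness (i.e. base-change) property required of a crystal of categories reduces to base change of global sections across a Cartesian square of schematic maps.

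To verify the adjunction, I would use Lemma \ref{pointwise maps} to compute both $\Maps_\str(p^\bullet \BC, \BA)$ and $\Maps_\str(\BC, p_\bullet \BA)$ as limits over the category of test points $s: T \to Z$ of the corresponding pointwise strict mapping spaces. At each $s$, one has
\[
\Maps_{\D(Y \times_Z T) \otimes \QCoh(S)}\!\bigl(q_s^\bullet \BC_s,\, q_s^\bullet \BA|_{Y \times_Z T}\bigr)
\;\simeq\;
\Maps_{\D(T) \otimes \QCoh(S)}\!\bigl(\BC_s,\, \Gamma^S_\str(Y \times_Z T,\, q_s^\bullet \BA)\bigr),
\]
which is the defining universal property of $S$-linear global sections. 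Passing to the limit over test points yields the desired adjunction. Base change $p'_\bullet \circ (f')^\bullet \simeq f^\bullet \circ p_\bullet$ for a Cartesian square with $p': Y' \to Z'$ the pullback of $p$ along $f: Z' \to Z$ is then immediate from the pointwise formula, since both sides evaluate on $t: T \to Z'$ to $\Gamma^S_\str(Y' \times_{Z'} T, (f')^\bullet\BA)$ via the canonical identification $Y' \times_{Z'} T \simeq Y \times_Z T$.

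The main point where care is needed is the verification that the pointwise assignment above in fact defines a \emph{strict} crystal of categories, i.e. that for invertible 2-cells the induced base change morphisms are isomorphisms. This follows for strict morphisms $p$ by unwinding the definition of $q_s^\bullet \BA$ and applying base change of global sections in the schematic setting; the semi-rigidity hypothesis on $\QCoh(S)$ is what guarantees that the relative tensor product $\D(T') \otimes_{\D(T)} (-)$ commutes with the limits hidden in the definition of $\Gamma^S_\str$, so that the $S$-linear structure causes no new obstruction. With this preservation property in hand, all higher coherences transport mechanically from the non-parameterized case of \cite{CF}.
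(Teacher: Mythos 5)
Your proof takes a genuinely different route from the paper's. The paper observes that $\mathbf{CrysCat}^S(Y)^{\str}$ is nothing but $\QCoh(S)\Mmod(\mathbf{CrysCat}(Y)^{\str})$, notes that $p^\bullet$ is $\mathbf{DGCat}$-linear and hence descends to $\QCoh(S)$-modules, and then uses the fact that the right adjoint $p_\bullet$ of a $\mathbf{DGCat}$-linear functor is only \emph{lax} linear in general but \emph{strictly} linear when restricted to dualizable categories; since $\QCoh(S)$ is semi-rigid (hence dualizable), $p_\bullet$ also descends, and the whole statement is deduced formally from \cite[Lemma A.1.6]{CF}. You instead rebuild $p_\bullet$ from scratch by the explicit pointwise formula $(p_\bullet\BA)_s = \Gamma_\str^S(Y\times_Z T, q_s^\bullet\BA)$, verify the adjunction by computing strict mapping spaces as limits over test points, and read off base change from the pointwise formula. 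Your approach is more concrete and makes the base-change property visibly tautological, but it duplicates the effort already invested in \cite{CF}, whereas the paper's proof is a two-line formal reduction. Both approaches use semi-rigidity of $\QCoh(S)$, but for different purposes: the paper uses it to know $p_\bullet$ is strictly $\QCoh(S)$-linear, whereas you need it to commute the $\QCoh(S)$-tensoring past the limit defining $\Gamma^S_\str$.

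One imprecision worth flagging in your write-up: you say semi-rigidity of $\QCoh(S)$ "guarantees that the relative tensor product $\D(T')\otimes_{\D(T)}(-)$ commutes with the limits hidden in $\Gamma^S_\str$." That is not what semi-rigidity of $\QCoh(S)$ gives you. Whether $\D(T')\otimes_{\D(T)}(-)$ commutes with the relevant limit is a property of the non-parameterized $\Gamma_\str$ that has to be (and is) established in \cite{CF} for schematic $p$; semi-rigidity of $\QCoh(S)$ is orthogonal to that, and enters only to ensure that the functor $\QCoh(S)\otimes(-)$ itself commutes with those limits, so that $\Gamma^S_\str \simeq \QCoh(S)\otimes\Gamma_\str$ and the non-parameterized verification carries over. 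Also, the citation of Lemma \ref{pointwise maps} for the computation of $\Maps_\str$ as a limit over test points is not quite apt — that lemma concerns colimits and finite limits \emph{inside} the mapping categories being computed pointwise; the statement you actually use is the definitional fact that $\mathbf{CrysCat}^S(Y)^\str$ is a limit of module categories over test points. Neither issue is fatal, but both should be cleaned up.
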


\begin{proof}
	Note that $\mathbf{CrysCat}$ is tensored over $\mathbf{DGCat}$, and since $$p^{\bullet}:  \mathbf{CrysCat}(Z)^{\str} \rightarrow \mathbf{CrysCat}(Y)^{\str}$$ is $\mathbf{DGCat}$-linear it upgrades to a functor between $S$-linear crystals of categories, which coincides with the pullback 
	$$p^{\bullet}: \mathbf{CrysCat}^S(Z)^{\str} \rightarrow \mathbf{CrysCat}^S(Y)^{\str}.$$
	
	Note that  in general $$p_{\bullet}: \mathbf{CrysCat}(Y)^{\str}\rightarrow\mathbf{CrysCat}(Z)^{\str}$$ is lax  $\mathbf{DGCat}$-linear. However, for dualizable categories it is strict   $\mathbf{DGCat}$-linear, therefore using semi-rigidity of $\QCoh(S)$ the assertion follows from \cite[Lemma A.1.6]{CF}.
\end{proof}

\begin{cnstr}
	Since $\mathbf{Vect}_Y^S$ is the symmetric monoidal unit in $\mathbf{CrysCat}^S(Y)$ and by Lemma \ref{pointwise maps} we get that $$\Gamma^S(Y, \mathbf{C}), \text{  }\Maps(\mathbf{C}, \mathbf{D}) \in \D(Y) \otimes\QCoh(S)-\mathbf{ModCat}.$$
\end{cnstr}

\begin{dfprop}\label{adjoints+base change lax}
	Let $p: Y \rightarrow Z$ be a finite type schematic morphism in $\laxPreSt$ such that for any $S \in \AffSch$ the map $Y(S) \rightarrow Z(S)$ is a Cartesian fibration fibered in $\infty$-groupoids. Then:
	
	\begin{enumerate}
		\item The functor 	$$p^{\bullet}: \mathbf{CrysCat}^S(Z) \rightarrow \mathbf{CrysCat}^S(Y)$$
		admits a right adjoint $$p_{\bullet}: \mathbf{CrysCat}^S(Y)\rightarrow\mathbf{CrysCat}^S(Z)$$
		satisfying the base change property;
		
		\item The adjoint pair $(p^{\bullet}, p_{\bullet})$ in $(1)$ extends one in Lemma \ref{adjoints-base change strict}. More precisly, both functors in $(1)$ preserve strict morphisms, and for any 
		$$\mathbf{C} \in \mathbf{CrysCat}^S(Y), \text{  } \mathbf{B} \in \mathbf{CrysCat}^S(Z)$$
		the counit and unit
		$$p^{\bullet} \circ p_{\bullet} (\mathbf{C}) \xrightarrow{\rho(\mathbf{C})} \mathbf{C}, \text{  }  \mathbf{B}\xrightarrow{\mu(\mathbf{B})}p_{\bullet} \circ p^{\bullet} (\mathbf{B})$$
		are contained respectively in $\mathbf{CrysCat}^S(Y )$ and $\mathbf{CrysCat}^S(Z)$;
		
		\item If $p$ is etale (see \cite[Definition A.3.1]{CF}), then $ \mathbf{B}\xrightarrow{\mu(\mathbf{B})}p_{\bullet} \circ p^{\bullet} (\mathbf{B})$ has a right adjoint $$p^{\bullet} \circ p_{\bullet} (\mathbf{B}) \xrightarrow{\varrho(\mathbf{B})} \mathbf{B}$$ contained in $\mathbf{CrysCat}^S(Z)$, which is functorial in $\mathbf{B}$.
	\end{enumerate}
\end{dfprop}

\begin{proof}
	Follows from \cite[Proposition-Definition A.3.3]{CF} by the same argument as in Lemma \ref{adjoints-base change strict}.
\end{proof}

\subsection{Parameterized factorization patterns}\label{Parameterized factorization patterns}

\begin{ntn}
	We can define, parallel to notions described in Section \ref{prelims},  the following evident categories:
	\begin{enumerate}
		\item Set $\mathbf{FactAlgCat}^S$, and call its objects $S$-linear factorization categories over $\Ranp^{ch}$,
		
		\item For $\BA \in \mathbf{FactAlgCat}^S$ set $\FactAlg^S(\BA)$,
		and call its objects $S$-linear factorization algebras over $\Ranp^{ch}$,
		
		\item Set $\mathbf{FactModCat}^S,$
		
		\item For $\mathbf{A} \in \mathbf{FactAlgCat}^S$ set 
		$$\mathbf{A}-\mathbf{FactModCat}^S,$$
		
		\item For $(\BA, \BM ) \in \mathbf{FactModCat}^S$ and $A \in \FactAlg^S(\BA)$ set 
		$$A\FactMod^S(\BM).$$
	\end{enumerate}
\end{ntn}

\begin{rem}
	To get rigorous definitions of the above objects, note that $\mathbf{CrysCat}^S$ is an example of a $\mathbf{Type}$ (as in \cite[Subsection 1.2]{CF}), so the formalism \cite[Section 1-3]{CF} applies. 
\end{rem}

Having established results about $\mathbf{CrysCat}^S$ is subsection \ref{paramcryscat} we get that the main theorem of \cite{CF} upgrades to the following statement.
\begin{thmdefn}[{\cite{CF}}]\label{Basic Adjunction}
	\label{thmdefn-factresS}
	The forgetful functor
	\begin{equation} \label{eqn-basic-proj}
	\mathbf{FactModCat}^S \to \mathbf{FactAlgCat}^S
	\end{equation}
	is a (1,2)-Cartesian fibration. Given a morphism $\Phi: \BA_1\to \BA_2$ in the target, we denote the contravariant transport functor along $\Phi$ by
	\[
	\mathbf{Res}^S_\Phi: \BA_2\mathbf{-FactModCat}^S \to \BA_1\mathbf{-FactModCat}^S,
	\]
	and call it the \emph{$S$-linear factorization restriction functor along $\Phi$}. Moreover, for $\BC_2\in \BA_2\mathbf{-FactModCat}^S$, there is a canonical equivalence
	\begin{equation}
	\label{eqn-thmdefn-factresS}
	\mathbf{Res}^S_\Phi(\BC_2)_x \simeq \Phi(\on{unit}_{\BA_1})\on{-FactMod}^S(\BC_2).
	\end{equation}
	In particular, if $\Phi$ is strictly unital, then $\mathbf{Res}^S_\Phi(\BC_2)$ and $\BC_2$ have equivalent cores.
\end{thmdefn}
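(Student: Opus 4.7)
The plan is to mimic the proof of Theorem \ref{thmdefn-factres} from \cite{CF} verbatim, using the $S$-linear analogs of the underlying technical lemmas established in Subsection \ref{paramcryscat}. The key observation is that $\mathbf{CrysCat}^S$ fits into the axiomatic ``Type'' framework of \cite[Sections 1--3]{CF}: it is tensored over $\mathbf{DGCat}$ via the symmetric monoidal structure, pullback functors exist for all morphisms in $\laxPreSt$, pullback along schematic morphisms admits a right adjoint satisfying base change (Lemma \ref{adjoints-base change strict} and Definition-Proposition \ref{adjoints+base change lax}), and morphism categories can be computed pointwise (Lemma \ref{pointwise maps}). Since the proofs in \cite{CF} are formal consequences of these structural properties, they transport to the parameterized setting once every step is tensored with $\QCoh(S)$; this is legitimate precisely because $\QCoh(S)$ is semi-rigid and $S$ is $1$-affine, so that right adjoints and duals in $\QCoh(S)\text{-}\mathbf{ModCat}$ behave as in $\mathbf{DGCat}$.

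First I would verify the $(1,2)$-Cartesian fibration claim. Following \cite{CF}, one constructs $\mathbf{Res}^S_\Phi(\BC_2)$ by taking the same underlying $S$-linear crystal of categories as $\BC_2$, composing the $\BA_2$-action with $\Phi$, and then applying a strengthening procedure to replace the resulting lax structure by a strict one. The strengthening functor of \cite[Section 6]{CF} is built out of limits together with the adjoint functors of Proposition-Definition \ref{adjoints+base change lax}; both ingredients are $\mathbf{DGCat}$-linear and therefore upgrade to $\mathbf{CrysCat}^S$ by tensoring with $\QCoh(S)$. The universal property characterizing the Cartesian lifts is then checked by the same mapping-space manipulation as in \cite{CF}, invoking Lemma \ref{pointwise maps} to reduce to computations on finite-type affine test schemes.

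Next I would establish the identification of the core (\ref{eqn-thmdefn-factresS}). By construction, restriction to the marked point $x \in \Ranp$ commutes with the strengthening procedure, which again reduces to the pointwise nature of limits in Lemma \ref{pointwise maps}. Hence the core of $\mathbf{Res}^S_\Phi(\BC_2)$ identifies with $S$-linear factorization modules for the image $\Phi(\on{unit}_{\BA_1})$ inside $\BC_2$. When $\Phi$ is strictly unital, $\Phi(\on{unit}_{\BA_1}) \simeq \on{unit}_{\BA_2}$, and the $S$-linear analog of Lemma \ref{lem-mod-for-unit-factalg} (which is again formal in the present framework) then identifies this category with $(\BC_2)_x$.

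The only genuine obstacle is bookkeeping: one has to verify that each of the higher coherences appearing in the construction of $\mathbf{Res}_\Phi$ and of the strengthening functor in \cite{CF} admits an $S$-linear lift, and that these lifts glue into a coherent functor of $(\infty,2)$-categories. No new conceptual input beyond the semi-rigidity of $\QCoh(S)$ is needed; the role of that hypothesis is precisely to guarantee that the symmetric monoidal functor $(-) \otimes \QCoh(S)$ preserves all of the categorical operations (adjoints, base change, strict/lax distinctions, pointwise limits) on which the argument of \cite{CF} relies.
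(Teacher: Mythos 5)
Your proposal is correct and follows essentially the same approach as the paper: the paper simply observes that $\mathbf{CrysCat}^S$ is an instance of a $\mathbf{Type}$ in the sense of \cite[Subsection 1.2]{CF}, so that the formalism of \cite[Sections 1--3]{CF} applies, and then invokes the $S$-linear analogs of the technical lemmas established in Subsection \ref{paramcryscat} (pointwise computation of limits, existence of adjoints satisfying base change, all of which rely on semi-rigidity of $\QCoh(S)$) to transport the main theorem of \cite{CF}. You spell out the same reasoning in more detail than the paper does, but there is no genuine difference in strategy.
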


\begin{rem}\label{restrictino as a limit}
	Exactly as in \cite[Lemma 6.3.2 and 4.4]{CF} we have that $\mathbf{Res}^S_\Phi$ can be computed as a limit. 
\end{rem}

As in \cite{text} we get 

\begin{cor}\label{Basic Adjunction Cor}
	\label{cor-factmod-in-factresS}
	Let $\Phi:\BA\to \BA'$ be a morphism in $\mathbf{FactAlgCat}^S$ and $\BC' \in \BA'\mathbf{-FactModCat}^S$. Let $\cA\in \FactAlg^S(\BA)$. Then there is a canonical equivalence
	\[
	\cA\on{-FactMod}( \mathbf{Res}^S_\Phi(\BC') ) \simeq \Phi(\cA)\on{-FactMod}(\BC').
	\]
\end{cor}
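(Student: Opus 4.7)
The plan is to deduce this corollary directly from Theorem-Definition \ref{Basic Adjunction} by unwinding the definitions of both sides as fibers of mapping spaces and applying the universal property of Cartesian lifts in the $(1,2)$-Cartesian fibration
\[
\mathbf{FactModCat}^S \to \mathbf{FactAlgCat}^S.
\]

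First I would rewrite both sides as fibers of suitable ``factorization module'' functors. By definition,
\[
\cA\on{-FactMod}^S(\mathbf{Res}^S_\Phi(\BC')) = \on{FactFun}^S\bigl((\Vect^S,\Vect^S),(\BA,\mathbf{Res}^S_\Phi(\BC'))\bigr) \times_{\FactAlg^S(\BA)} \{\cA\},
\]
viewing $\cA$ as a factorization functor $\Vect^S \to \BA$; the fiber is taken over the image in $\FactAlg^S(\BA)$ given by the underlying factorization algebra structure. Similarly,
\[
\Phi(\cA)\on{-FactMod}^S(\BC') = \on{FactFun}^S\bigl((\Vect^S,\Vect^S),(\BA',\BC')\bigr) \times_{\FactAlg^S(\BA')} \{\Phi(\cA)\}.
\]

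Next I would invoke the Cartesian property. Let $\Xi: (\BA,\mathbf{Res}^S_\Phi(\BC')) \to (\BA',\BC')$ be the Cartesian lift of $\Phi$ provided by Theorem-Definition \ref{Basic Adjunction}. The defining property of a $(1,2)$-Cartesian fibration gives, for any $(\BB,\BD) \in \mathbf{FactModCat}^S$ and any morphism $\psi:\BB\to\BA$ in $\mathbf{FactAlgCat}^S$, an equivalence
\[
\on{FactFun}^S_\psi\bigl((\BB,\BD),(\BA,\mathbf{Res}^S_\Phi(\BC'))\bigr) \xrightarrow{\;\Xi\circ-\;} \on{FactFun}^S_{\Phi\circ\psi}\bigl((\BB,\BD),(\BA',\BC')\bigr).
\]
Specializing to $\BB = \Vect^S$, $\BD = \Vect^S$, and $\psi = \cA$, the left-hand side is the fiber above $\cA$ appearing in our rewriting of $\cA\on{-FactMod}^S(\mathbf{Res}^S_\Phi(\BC'))$, and the right-hand side is the fiber above $\Phi\circ\cA = \Phi(\cA)$ (per Construction \ref{constr-image-fact-alg}) appearing in our rewriting of $\Phi(\cA)\on{-FactMod}^S(\BC')$. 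Taking the fiber on both sides yields the claimed equivalence.

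The main (and essentially the only) thing to verify is that the $(1,2)$-Cartesian fibration statement of Theorem-Definition \ref{Basic Adjunction} indeed applies to mapping out of the pair $(\Vect^S,\Vect^S)$; this is exactly what the Cartesian fibration property guarantees for \emph{any} source object, so there is no additional work. The naturality in $\cA$ follows because the entire construction is functorial in the morphism $\psi$. Thus the proof is essentially a one-line deduction from the Cartesian fibration theorem, and no genuine obstacle arises beyond ensuring the base-change along the point $\{\cA\} \hookrightarrow \FactAlg^S(\BA)$ is compatible with the Cartesian transport — which is automatic from how fibers in a Cartesian fibration behave.
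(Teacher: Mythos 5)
Your argument is correct, and it is the standard deduction of this corollary from the $(1,2)$-Cartesian fibration statement of Theorem-Definition \ref{Basic Adjunction}; the paper itself gives no explicit proof (it simply asserts ``As in \cite{text} we get''), and the proof in \cite{text}/\cite{CF} of the non-parameterized analogue proceeds in exactly this way — unwinding the universal property of the Cartesian lift $\Xi: (\BA,\mathbf{Res}^S_\Phi(\BC')) \to (\BA',\BC')$ against the source $(\Vect^S,\Vect^S)$ and identifying the fiber over $\Phi\circ\cA$ with $\Phi(\cA)\on{-FactMod}^S(\BC')$ via Construction \ref{constr-image-fact-alg}.
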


\begin{df}
	Let $\BA$ be an $S$-linear factorization algebra category and $\BC$ be an $S$-linear factorization $\BA$-module category. Let $\phi:\cA_1\to \cA_2$ be a morphism in $\FactAlg^S(\BA)$, viewed as a 2-morphism in $\mathbf{FactAlgCat}$. Then the contravariant transport for (\ref{eqn-basic-proj}) along this 2-morphism defines a factorization $(\Vect \otimes \QCoh(S))_{\Ranp}$-linear functor 
	\[
	\mathbf{Res}^S_{\cA_2}((\Vect \otimes \QCoh(S))_{\Ranp}) \to \mathbf{Res}^S_{\cA_1}((\Vect \otimes \QCoh(S))_{\Ranp}).
	\]
	The core of it is a functor
	\[
	\cA_2\on{-FactMod}^S(\BC) \to  \cA_1\on{-FactMod}^S(\BC)
	\]
	which we denote by $\on{Res}^S_\phi$ and call it the \emph{factorization restriction functor along $\phi$}.
	
\end{df}

As in \cite[Lemma B.8.1]{text} we get 
\begin{lm}
	Let 
	\[
	\Phi: \BA \rightleftarrows \BA': \Psi
	\]
	be an adjunction in the 2-category $\mathbf{FactAlgCat}^S$. Then $\Phi$ is strictly unital.
\end{lm}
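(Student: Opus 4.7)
The proof will be obtained by transporting the argument of \cite[Lemma B.8.1]{text} to the $S$-linear setting, since the non-parameterized statement was established using only formal properties of the ambient $(\infty,2)$-category, all of which have $S$-linear analogues developed in Subsections \ref{paramcryscat} and \ref{Parameterized factorization patterns}.

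First, I will unpack what needs to be shown. By Definition \ref{defn-fact-functor}, strict unitality of $\Phi$ amounts to the condition that its underlying morphism in $\mathbf{CrysCat}^S(\Ranp)$ is strict in the sense of Definition \ref{defn-lax-morphism-cryscat}: for every 2-cell $\theta$ in $\Ranp$ (equivalently, for every inclusion $\underline{x}\subset \underline{y}$ of finite subsets of closed points of $X$), the lax compatibility natural transformation
\[
\on{ins}_{\underline{x}\subset\underline{y}} \circ \Phi_{\underline{x}} \Longrightarrow \Phi_{\underline{y}} \circ \on{ins}_{\underline{x}\subset\underline{y}}
\]
packaged into the data of $\Phi$ is invertible.

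The plan is to exploit the mate calculus for 2-adjunctions in $\mathbf{FactAlgCat}^S$. Since $\Phi$ is a left adjoint with right adjoint $\Psi$, the unit and counit of the adjunction lie in $\mathbf{FactAlgCat}^S$, and hence are compatible with the factorization algebra category structures on $\BA$ and $\BA'$. For each 2-cell $\theta$, the lax compatibility transformation for $\Phi$ at $\theta$ corresponds under the adjunction to the lax compatibility transformation for $\Psi$ at $\theta$ as mates. The latter is forced to be an equivalence by the factorization structure on $\Psi$ (combined with the triangle identities relating the units and the insertion morphisms), and therefore so is the former. This is precisely the argument given in \cite[Lemma B.8.1]{text}, and the transcription to the $S$-linear context relies only on Lemma \ref{adjoints-base change strict} and Definition-Proposition \ref{adjoints+base change lax}, which certify that pullbacks/pushforwards and adjunctions for $S$-linear crystals of categories behave formally identically to their non-parameterized counterparts.

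The only potential obstacle is ensuring that the mate calculus can be carried out in the lax 2-category $\mathbf{FactAlgCat}^S$; however, this is a formal property of any $(\infty,2)$-category admitting 2-adjunctions, and in the non-parameterized case it is a direct consequence of the formalism of \cite{CF} reviewed in Section \ref{prelims}. Since every structural input required for the original proof of \cite[Lemma B.8.1]{text} has been recorded in the $S$-linear setting in Subsection \ref{paramcryscat}, no additional argument is needed in the parameterized case.
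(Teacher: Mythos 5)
Your proposal matches the paper's approach exactly: the paper gives no independent argument, merely asserting the lemma with the phrase ``As in \cite[Lemma B.8.1]{text} we get\ldots,'' i.e.\ it transports the non-parameterized result to the $S$-linear setting using the foundations from Appendix \ref{paramfact}, which is precisely what you propose. Your additional sketch of the doctrinal/mate-calculus argument behind the cited lemma is a reasonable gloss (though the final implication ``the latter is an equivalence, therefore so is the former'' is stated loosely — mates of invertible 2-cells need not be invertible; the actual mechanism is that the adjunction supplies an oplax structure on $\Phi$ inverse to its lax one), but since the paper itself does not spell this out, this does not constitute a deviation.
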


The following result follows from Theorem-Definition \ref{thmdefn-factresS} and straightening, and is a direct generalization of \cite[Theorem B.8.2]{text}. We call it the \emph{basic $S$-linear adjunction for factorization restrictions}.

\begin{thm}
	\label{thm-basic-adjS}
	Let $\Phi: \BA \rightleftarrows \BA': \Psi$ be an adjunction in the 2-category $\mathbf{FactAlgCat}^S$. Then there is a canonical adjunction in $\mathbf{BiCat}_\infty$:
	\[
	\mathbf{Res}^S_\Phi: \BA'\mathbf{-FactModCat}^S \rightleftarrows \BA\mathbf{-FactModCat}^S: \mathbf{Res}^S_\Psi.
	\]
\end{thm}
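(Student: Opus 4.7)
The plan is to deduce the adjunction from the general $(\infty,2)$-categorical principle that, for a $(1,2)$-Cartesian fibration $p: \mathbf{E} \to \mathbf{B}$, any adjunction $\Phi \dashv \Psi$ in the base $\mathbf{B}$ induces an adjunction between the fibers (with roles swapped) given by the contravariant transport. In our situation the Cartesian fibration is
\[
\mathbf{FactModCat}^S \to \mathbf{FactAlgCat}^S,
\]
which is exactly the content of Theorem-Definition \ref{thmdefn-factresS}, and the contravariant transport along a 1-morphism $\Phi$ is by construction $\mathbf{Res}^S_\Phi$. So the strategy is to unpackage this general principle in our setting.

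Concretely, I would first observe that the unit $\eta: \Id_\BA \Rightarrow \Psi \circ \Phi$ and counit $\epsilon: \Phi \circ \Psi \Rightarrow \Id_{\BA'}$ of the given adjunction are 2-morphisms in $\mathbf{FactAlgCat}^S$. Applying the contravariant 2-functor $\mathbf{Res}^S_{(-)}: (\mathbf{FactAlgCat}^S)^{1\text{-op}} \to \mathbf{BiCat}_\infty$ (whose existence is packaged into the Cartesian fibration statement of Theorem-Definition \ref{thmdefn-factresS}) converts these into 2-morphisms
\[
\mathbf{Res}^S_\eta: \mathbf{Res}^S_\Phi \circ \mathbf{Res}^S_\Psi \Rightarrow \Id_{\BA\mathbf{-FactModCat}^S}, \qquad \mathbf{Res}^S_\epsilon: \Id_{\BA'\mathbf{-FactModCat}^S} \Rightarrow \mathbf{Res}^S_\Psi \circ \mathbf{Res}^S_\Phi,
\]
which are the candidate counit and unit of the desired adjunction (note the swap, which is intrinsic to any contravariant transport).

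The main obstacle, as usual with this kind of argument, is verifying the triangle identities. However, the triangle identities for $(\mathbf{Res}^S_\Phi, \mathbf{Res}^S_\Psi)$ are obtained by applying $\mathbf{Res}^S_{(-)}$ (now viewed as a contravariant 2-functor also in 2-morphisms, preserving vertical and horizontal composition up to coherent equivalence) to the triangle identities for $(\Phi, \Psi)$. This is formal once one has set up the machinery as a genuine $(\infty,2)$-categorical Cartesian fibration, which has been done in \cite{CF} in the non-parameterized case and extends verbatim to our $S$-linear context, since all the relevant building blocks (pullback $p^\bullet$, pushforward $p_\bullet$, base change, strengthening) have been established as $\QCoh(S)$-linear upgrades in Subsection \ref{paramcryscat} and Dfprop \ref{adjoints+base change lax}.

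Finally, I would verify that the construction is compatible with cores, giving the formulation in terms of factorization module categories concentrated at $x$ (as in the remark following Theorem \ref{thm-basic-adj}): the core of $\mathbf{Res}^S_\Phi(\BC')$ equals that of $\BC'$ since $\Phi$ is strictly unital (by the preceding lemma), while the core of $\mathbf{Res}^S_\Psi(\BC)$ equals $\Psi(\on{unit}_\BA)\on{-FactMod}^S(\BC)$ by the equivalence \eqref{eqn-thmdefn-factresS}. In sum, the entire argument of \cite[Theorem B.8.2]{text} transports through, with $\mathbf{CrysCat}$ replaced by $\mathbf{CrysCat}^S$ and all constructions promoted to their $\QCoh(S)$-linear versions.
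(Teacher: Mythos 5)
Your proposal is correct and takes essentially the same route as the paper, which simply declares the theorem to follow from the $(1,2)$-Cartesian fibration of Theorem-Definition \ref{thmdefn-factresS}, straightening, and direct generalization of \cite[Theorem B.8.2]{text}; you have unpacked that one-line argument into its constituent steps (contravariant transport of unit/counit, triangle identities from functoriality of $\mathbf{Res}^S_{(-)}$, identification of cores). One cosmetic remark: since the transport you describe reverses 2-morphisms as well (which your displayed formulas correctly use), the straightened functor lives on $(\mathbf{FactAlgCat}^S)^{1\&2\text{-op}}$ rather than $(\mathbf{FactAlgCat}^S)^{1\text{-op}}$.
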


\begin{rem}\label{Basic Adjunction Rem}
	Let $\Phi: \BA \rightleftarrows \BA': \Psi$ be an adjunction in the 2-category $\mathbf{FactAlgCat}^S$. Note that
	\begin{itemize}
		\item[(1)]
		For $\BC'\in \BA'\mathbf{-FactModCat}^S$, the core of $\mathbf{Res}^S_\Phi(\BC')$ is equivalent to that of $\BC'$ because $\Phi$ is strictly unital.
		\item[(2)]
		For $\BC\in \BA\mathbf{-FactModCat}^S$, the core of $\mathbf{Res}^S_\Psi(\BC)$ is equivalent to $\Psi(\on{unit}_{\BA})\on{-FactMod}^S(\BC)$.
	\end{itemize}
	Hence we can formulate the basic adjunction as the following equivalence:
	\[
	\on{FactFun}_{\BA}( \BC'_x, \BC_x   ) \simeq \on{FactFun}_{\BA'}( \BC'_x, \Psi(\on{unit}_{\BA})\on{-FactMod}^S_{\BC_x}),
	\]
	where we use the point of view of factorization module categories \emph{concentrated} at $x$ (rather than \emph{supported} on $\Ranp$).
	
\end{rem}

\begin{cor}\label{B.8.4}
	\label{cor-crit-factres}
	Let
	\[
	(\Phi,\Xi): (\BA,\BC) \rightleftarrows (\BA',\BC'): (\Psi,\Upsilon)
	\]
	be an adjunction in $\mathbf{FactModCat}^S$ such that $\Xi$ and $\Upsilon$ induce equivalences between the cores of $\BC$ and $\BC'$. Then the factorization $\BA$-linear functor
	\[
	\Xi^\enh: \BC \to \mathbf{Res}^S_\Phi(\BC')
	\]
	induced by $\Xi$ is an equivalence.
	
\end{cor}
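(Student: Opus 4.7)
The plan is to construct an explicit inverse to $\Xi^\enh$ from the right-adjoint datum $(\Psi,\Upsilon)$, and then verify the inverse property by reducing to the core via the factorization axiom. By Theorem \ref{thm-basic-adjS}, the adjunction $\Phi \dashv \Psi$ in $\mathbf{FactAlgCat}^S$ lifts to an adjunction $\mathbf{Res}^S_\Phi \dashv \mathbf{Res}^S_\Psi$ in $\mathbf{BiCat}_\infty$; let $\epsilon_\BC : \mathbf{Res}^S_\Phi \mathbf{Res}^S_\Psi (\BC) \to \BC$ denote its counit. The enhancement $\Upsilon^\enh : \BC' \to \mathbf{Res}^S_\Psi(\BC)$ of $\Upsilon$ then yields the $\BA$-linear morphism
\[
\Upsilon^\dagger \ := \ \epsilon_\BC \circ \mathbf{Res}^S_\Phi(\Upsilon^\enh) \ : \ \mathbf{Res}^S_\Phi(\BC') \longrightarrow \BC,
\]
which corresponds to $\Upsilon^\enh$ under the adjoint equivalence between the relevant hom-spaces in $\BA\mathbf{-FactModCat}^S$ and $\BA'\mathbf{-FactModCat}^S$. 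This is our candidate inverse to $\Xi^\enh$.

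To verify that $\Xi^\enh$ and $\Upsilon^\dagger$ are mutually inverse, I first observe that by Remark \ref{Basic Adjunction Rem}(1) (applied to both $\Phi$ and $\Psi$), the cores of $\mathbf{Res}^S_\Phi(\BC')$ and $\mathbf{Res}^S_\Psi(\BC)$ are canonically identified with $\BC'_x$ and $\BC_x$, and under these identifications $\Xi^\enh_x = \Xi_x$ while $\Upsilon^\dagger_x = \Upsilon_x$. The adjunction $(\Phi,\Xi) \dashv (\Psi,\Upsilon)$ restricts on cores to an adjunction $\Xi_x \dashv \Upsilon_x$, which is an adjoint equivalence by hypothesis. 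Unpacking the triangle identities in $\mathbf{FactModCat}^S$ together with the adjoint correspondence of the preceding paragraph produces $\BA$-linear 2-morphisms $\eta : \Id_\BC \Rightarrow \Upsilon^\dagger \Xi^\enh$ and $\delta : \Xi^\enh \Upsilon^\dagger \Rightarrow \Id_{\mathbf{Res}^S_\Phi(\BC')}$, whose restrictions to the core are precisely the unit and counit of $\Xi_x \dashv \Upsilon_x$, and hence invertible.

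It remains to propagate this invertibility from the core to all of $\Ranp_{X,x}$. For a factorization $\BA$-module category $\BD$ supported on $\Ranp_{X,x}$, the factorization axiom supplies canonical equivalences
\[
\BD_{\underline{y}} \ \simeq \ \on{mult}\bigl(\BD_x \otimes \BA_{\underline{y}\setminus \{x\}}\bigr)
\]
for finite subsets $\underline{y}$ containing $x$ disjointly from $\underline{y}\setminus\{x\}$. Consequently, any $\BA$-linear 2-morphism between $\BA$-linear 1-morphisms in $\BA\mathbf{-FactModCat}^S$ is determined stratum-by-stratum by its restriction to the core $x$, and is invertible globally if and only if that restriction is invertible. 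Applying this to $\eta$ and $\delta$ concludes that both are equivalences, whence $\Xi^\enh \dashv \Upsilon^\dagger$ is an adjoint equivalence and $\Xi^\enh$ is an equivalence.

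The main obstacle is the final step: rigorously justifying that a $\BA$-linear 2-morphism is determined, up to equivalence, by its value at the core in the $(\infty,2)$-categorical framework. While the intuition from the factorization axiom is transparent at the level of objects, propagating higher homotopy data from $x$ through all the strata of $\Ranp_{X,x}$ compatibly with the $\BA$-linear structure requires careful use of the coherence data built into the $S$-linear version of Definition \ref{defn-fact-mod-cat}. A secondary technical point, subsumed into paragraph two, is the translation of 2-morphisms in the total space of the $(1,2)$-Cartesian fibration \eqref{eqn-basic-proj} into vertical 2-morphisms in a single fiber via the contravariant transport $\mathbf{Res}^S$ --- this is what converts the ambient triangle identities of $(\Phi,\Xi) \dashv (\Psi,\Upsilon)$ into the natural transformations $\eta$ and $\delta$ used above.
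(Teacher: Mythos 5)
The paper's proof of this corollary is a one-line citation: it asserts that the $S$-linear statement ``generalizes directly'' from the corresponding result in \cite{text} together with the foundational results of Subsection \ref{paramcryscat}. Your proposal attempts to reconstruct the underlying argument, and its broad shape --- build a candidate inverse $\Upsilon^\dagger$ from the adjunction data, check it on the core, then propagate invertibility via the factorization structure --- is plausible and close to what the cited source does. However, there is a misstep in the reduction-to-cores paragraph. You assert that the core of $\mathbf{Res}^S_\Psi(\BC)$ is canonically $\BC_x$. This is false in general: while the core of $\mathbf{Res}^S_\Phi(\BC')$ is indeed $\BC'_x$ because the left adjoint $\Phi$ is automatically strictly unital, the right adjoint $\Psi$ need not be, and Remark \ref{Basic Adjunction Rem}(2) identifies the core of $\mathbf{Res}^S_\Psi(\BC)$ as $\Psi(\on{unit}_{\BA})\on{-FactMod}^S(\BC)$, not $\BC_x$. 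Your eventual conclusion $\Upsilon^\dagger_x \simeq \Upsilon_x$ is still correct, but it must be argued through the composite $\BC'_x \to \Psi(\on{unit}_{\BA})\on{-FactMod}^S(\BC) \to \BC_x$, where the second arrow is the evaluation-at-$x$ functor supplied by the counit $\epsilon_\BC$; it cannot be read off by the ``identify both cores with $\BC_x$'' shortcut you used.

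Second, you correctly flag --- but do not close --- the load-bearing step: that a 2-morphism between factorization $\BA$-module functors over $\Ranp_{X,x}$ is invertible if and only if its restriction to the core is invertible. This is exactly the piece of machinery that the paper's one-line proof defers to (the ``rigidity'' of factorization module categories, via Theorem-Definition \ref{Basic Adjunction} and the factorization axiom in Definition \ref{defn-fact-mod-cat}, transported to the $S$-linear setting in Subsection \ref{paramcryscat}). Your heuristic invoking $\BD_{\underline{y}} \simeq \on{mult}(\BD_x \otimes \BA_{\underline{y}\setminus\{x\}})$ is right in spirit but handles only the disjoint strata; the colliding strata are controlled by the unital/insertion coherences, and a complete argument must address them. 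As written, your proof correctly locates this as ``the main obstacle'' and stops --- so the proposal is an accurate blueprint of the proof but not a completed one.
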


\begin{proof}
	Generalizes directly from \cite[Corollary B.8.4] {text} and results in Subsection \ref{paramcryscat}.
\end{proof}

A converse to Corollary \ref{B.8.4} is also true, namely
\begin{cor}\label{Restriction adjoint}
	Let 
	\[
	\Phi: \BA \rightleftarrows \BA': \Psi
	\]
	be an adjunction in $\mathbf{FactAlgCat}^S$. Let $\BC' \in \BA'\mathbf{-FactModCat}^S$. Then there the natural map 
	$$(\Phi,\Xi): (\BA,\mathbf{Res}^S_\Phi(\BC')) \rightarrow (\BA',\BC')$$
	admits a right adjoint, denoted by $(\Psi,\Upsilon)$.
\end{cor}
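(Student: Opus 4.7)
Proof proposal:

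The plan is to construct the right adjoint $(\Psi, \Upsilon)$ by transporting the two adjunctions already at our disposal: the given adjunction $\Phi \dashv \Psi$ in $\mathbf{FactAlgCat}^S$ in the base, and the adjunction $\mathbf{Res}^S_\Phi \dashv \mathbf{Res}^S_\Psi$ on fibers provided by Theorem \ref{thm-basic-adjS}. These should lift through the $(1,2)$-Cartesian fibration structure of Theorem-Definition \ref{Basic Adjunction}.

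First, I would define $\Upsilon$. The unit of the adjunction of Theorem \ref{thm-basic-adjS}, evaluated at $\BC' \in \BA'\mathbf{-FactModCat}^S$, gives a morphism
\[
\BC' \to \mathbf{Res}^S_\Psi(\mathbf{Res}^S_\Phi(\BC'))
\]
in $\BA'\mathbf{-FactModCat}^S$. Under the Cartesian equivalence of Theorem-Definition \ref{Basic Adjunction}, such morphisms correspond precisely to 1-morphisms $(\Psi, \Upsilon): (\BA', \BC') \to (\BA, \mathbf{Res}^S_\Phi(\BC'))$ in $\mathbf{FactModCat}^S$ covering $\Psi$. Note also that the morphism $(\Phi, \Xi)$ from the statement is by construction the universal (Cartesian) lift of $\Phi$ with target $(\BA', \BC')$, corresponding to $\Id_{\mathbf{Res}^S_\Phi(\BC')}$ under the same equivalence.

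Next, I would construct the unit and counit 2-cells. For the counit $(\Phi, \Xi) \circ (\Psi, \Upsilon) \Rightarrow \Id_{(\BA', \BC')}$, the first coordinate is the counit $\epsilon: \Phi \Psi \to \Id_{\BA'}$ of $\Phi \dashv \Psi$; the second coordinate comes from the counit of $\mathbf{Res}^S_\Phi \dashv \mathbf{Res}^S_\Psi$ together with the universal property of the Cartesian morphism $(\Phi, \Xi)$. Symmetrically, the unit 2-cell $\Id_{(\BA, \mathbf{Res}^S_\Phi(\BC'))} \Rightarrow (\Psi, \Upsilon) \circ (\Phi, \Xi)$ is obtained from the unit $\eta: \Id_{\BA} \to \Psi \Phi$.

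The expected main obstacle is verifying the triangle identities in the $(\infty, 2)$-category $\mathbf{FactModCat}^S$. The cleanest approach is to pass to mapping spaces: for any $(\BA_0, \BC_0) \in \mathbf{FactModCat}^S$, it suffices to show that post-composition with $(\Phi, \Xi)$ is left adjoint to post-composition with $(\Psi, \Upsilon)$ as a functor between mapping $\infty$-categories. Fibering over $\Maps_{\mathbf{FactAlgCat}^S}(\BA_0, -)$ via the forgetful functor splits this check into two pieces: on the base, adjointness follows from $\Phi \dashv \Psi$; on each fiber, adjointness follows from Theorem \ref{thm-basic-adjS} applied in that fiber. Since $\mathbf{Res}^S_\Psi$ in Theorem \ref{thm-basic-adjS} is itself constructed from the universal property of Cartesian lifts, the two ingredients are compatible by construction, and the higher coherences of the adjunction should assemble automatically.
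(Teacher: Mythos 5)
Your proof is correct but takes a genuinely different route from the paper. The paper does not invoke Theorem \ref{thm-basic-adjS} at all; instead it proves a purely formal statement about $(1,2)$-Cartesian fibrations: given an adjunction $f \dashv g$ in the base and an object $v$ over the target, the Cartesian lift $L: f^{\dagger}(v) \to v$ automatically admits a right adjoint $R$, constructed by pulling back $\Id_v$ along the counit 2-cell $fg \to \Id$ and factoring through $L$. The triangle identities are then checked directly from the Cartesian universal property. You instead manufacture $\Upsilon$ from the unit of the adjunction $\mathbf{Res}^S_\Phi \dashv \mathbf{Res}^S_\Psi$ supplied by Theorem \ref{thm-basic-adjS}, and verify adjointness via the mapping-space characterization, decomposing along the forgetful fibration to $\mathbf{FactAlgCat}^S$. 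Both constructions produce the same $\Upsilon$ (a Cartesian lift over $\Psi$ is essentially unique), and both rest ultimately on Theorem-Definition \ref{Basic Adjunction}. What the paper's approach buys is self-containment and greater generality — the construction it gives applies to any $(1,2)$-Cartesian fibration, with no dependence on the factorization machinery or on straightening. Your approach is logically heavier, and the final step — that the base adjunction $\Phi \dashv \Psi$ and the fiberwise adjunctions of Theorem \ref{thm-basic-adjS} assemble into an adjunction between the total mapping categories "by construction" — is asserted rather than demonstrated; making that compatibility precise is roughly the same amount of work as the paper's direct check of the triangle identities, which is why the paper opts for the hands-on construction.
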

We deduce the claim from Theorem-Definition \ref{Basic Adjunction} and the following construction. 
\begin{cnstr}
	Let $\pi: \BE \to \BB$ be a $(1,2)$-Cartesian fibration and $f: x \rightleftarrows y: g$ be the adjunction in $\BB$. Let $v$ be an element over $y$. We will construct a right adjoint in $\BE$ to the map $$L: f^{\dagger}(v) \rightarrow v.$$
	
	 Consider the 2-morphism $e: fg \rightarrow \Id_y$ and the 1-morphism $\Id_v$ over $\Id_y$. Pulling back $\Id_v$ along $e$ we obtain $e^{\dagger}(\Id_v):v \rightarrow v$ over $fg$. This morphism factors through $f^{\dagger}(v)$, so we obtain a morphism $R: v \rightarrow f^{\dagger}(v)$. We claim that $R$ is right adjoint to $L$.
	 
	 Indeed, the morphism $e^{\dagger}(\Id_v) \rightarrow \Id_v$ gives $LR \rightarrow \Id_v$, this is the counit of the desired adjunction. Now consider the 2-morphism $f \rightarrow fgf \rightarrow f$. Note that the composition is $\Id_f$. Pulling back $L$, we obtain $L \rightarrow e^{\dagger}(\Id_v) \circ L \rightarrow L$. But since $L$ is a Cartesian arrow, the first morphism $$L \rightarrow e^{\dagger}(\Id_v) \circ L\cong LRL$$ must be the horizontal composition of a 2-morphism $\Id_{f^{\dagger}(v)} \rightarrow RL$  with $\Id_L$. This 2-morphism $$\Id_{f^{\dagger}(v)} \rightarrow RL$$ is the desired unit of the adjunction. The axioms of adjunction follow by unwinding the definitions. 
\end{cnstr}

\begin{cor}\label{factres linear}
	The functor 	\[
	\mathbf{Res}^S_\Phi: \BA'\mathbf{-FactModCat}^S \rightarrow \BA\mathbf{-FactModCat}^S
	\]
	is $\mathbf{DGCat}$-linear. 
\end{cor}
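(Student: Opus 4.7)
The plan is to leverage the fact that $\mathbf{Res}^S_\Phi$ arises as the contravariant transport along $\Phi$ in the $(1,2)$-Cartesian fibration $\mathbf{FactModCat}^S \to \mathbf{FactAlgCat}^S$ established in Theorem-Definition \ref{Basic Adjunction}. The total $(\infty,2)$-category $\mathbf{FactModCat}^S$ carries a natural $\mathbf{DGCat}$-action given on objects by $V \cdot (\BA, \BC) := (\BA, V \otimes \BC)$, where $V \otimes \BC$ is formed pointwise on the underlying $S$-linear crystal of categories; this is well-defined since $\mathbf{CrysCat}^S$ is itself $\mathbf{DGCat}$-tensored by Lemma \ref{pointwise maps}. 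The base $\mathbf{FactAlgCat}^S$ gets the trivial $\mathbf{DGCat}$-action, and the forgetful functor is strictly $\mathbf{DGCat}$-equivariant.

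The goal reduces to producing a natural equivalence $V \otimes \mathbf{Res}^S_\Phi(\BC') \simeq \mathbf{Res}^S_\Phi(V \otimes \BC')$ in $\BA\mathbf{-FactModCat}^S$. I would begin with the canonical Cartesian arrow $\Xi\colon (\BA, \mathbf{Res}^S_\Phi(\BC')) \to (\BA', \BC')$ over $\Phi$ and invoke Corollary \ref{Restriction adjoint} to produce a right adjoint $(\Psi, \Upsilon)$ whose unit and counit restrict to equivalences on cores. Tensoring the entire adjunction data with $V$ produces an adjunction between $(\BA, V \otimes \mathbf{Res}^S_\Phi(\BC'))$ and $(\BA', V \otimes \BC')$ in $\mathbf{FactModCat}^S$ still lying over $\Phi$; this step is legitimate because the $\mathbf{DGCat}$-action preserves adjunctions, which follows from its pointwise definition together with the preservation of unit and counit 2-morphisms by $V \otimes -$ in each fiber.

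Finally, I apply Corollary \ref{B.8.4} to this tensored adjunction. Its hypothesis requires that the unit and counit induce equivalences on cores, and this holds because the core functor $i^\bullet$ is $\mathbf{DGCat}$-linear (as it is a pullback of $S$-linear crystals of categories, computed pointwise by Lemma \ref{pointwise maps}): the induced maps on cores are $V \otimes \Xi_x$ and $V \otimes \Upsilon_x$, which are equivalences because $\Xi_x$ and $\Upsilon_x$ were so by the Cartesian property of $\Xi$ combined with Remark \ref{Basic Adjunction Rem}. Corollary \ref{B.8.4} then yields the desired equivalence $V \otimes \mathbf{Res}^S_\Phi(\BC') \simeq \mathbf{Res}^S_\Phi(V \otimes \BC')$, with naturality in both $V$ and $\BC'$ built into the construction. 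The principal difficulty will be setting up enough of the $\mathbf{DGCat}$-module $(\infty,2)$-categorical framework to make the manipulation ``tensor an adjunction with $V$'' formally legitimate at the level of higher coherences; in practice this demands verifying that the $\mathbf{Type}$-formalism of \cite{CF} can be enriched over $\mathbf{DGCat}$, which follows from the pointwise descriptions of 1- and 2-morphisms in $\mathbf{CrysCat}^S$ given by Lemma \ref{pointwise maps} together with the $\mathbf{DGCat}$-linearity of the pullback and pushforward functors built in Definition-Proposition \ref{adjoints+base change lax}.
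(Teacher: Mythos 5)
Your proposal is correct and follows essentially the same route as the paper's proof: tensor the adjunction $(\Phi,\Xi)\dashv(\Psi,\Upsilon)$ furnished by Corollary~\ref{Restriction adjoint} with a test DG category and invoke Corollary~\ref{B.8.4}, using the fact that cores are taken pointwise to verify its hypothesis. You spell out the $\mathbf{DGCat}$-module structure on $\mathbf{FactModCat}^S$ more carefully than the paper (which uses $-\otimes\BD$ without further comment), but the substance of the argument is identical; one small slip is that Corollary~\ref{B.8.4} requires $\Xi$ and $\Upsilon$ (not the unit and counit) to induce equivalences on cores, though that is exactly what you then verify.
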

\begin{proof}
	Let $\BD \in \mathbf{DGCat}$. Then by Corollary \ref{Restriction adjoint} we get an adjunction 
		\[
	(\Phi,\Xi \otimes \Id_{\BD}): (\BA,\mathbf{Res}^S_\Phi(\BC') \otimes \BD) \rightleftarrows (\BA',\BC' \otimes \BD): (\Psi,\Upsilon\otimes \Id_{\BD}).
	\]
	But now Corollary \ref{B.8.4} implies that $\mathbf{Res}^S_\Phi(\BC') \otimes \BD \cong \mathbf{Res}^S_{\Upsilon\otimes \Id_{\BD}}(\BC' \otimes \BD)$, as desired.
\end{proof}

\subsubsection{}
Let $\mathbf{A}$ be an $S$-linear factorization category, and $\mathbf{C}$ be an $S$-linear factorization $A$-module category. Let $\mathbf{C}_{x }$ denote the fiber of $\mathbf{C}$. Denote by $i_{\Ranp_{X, x}}^! \otimes \Id_{\QCoh(S)}$ 
the restriction functor $\mathbf{C}\rightarrow \mathbf{C}_{x}$. Let $U = X \setminus x$. Let $j_{\Ranp}^! \otimes \Id_{\QCoh(S)}$ denote the restriction functor  $$ \mathbf{C} \rightarrow \mathbf{C}|_{\Ranu  \times  x }.$$

Let $A \in \mathbf{A}$ be an $S$-linear factorization algebra.
\begin{pr}\label{proposition 5.3.3}
	Let $c \in \mathbf{C}_{x \times S}$ be such that the partially defined left adjoint ${j_{\Ranp}}_! \otimes \Id_{\QCoh(S)}$ is defined on $$A \boxtimes_S c \in \mathbf{C}|_{\Ranu  \times  x}.$$
	Then the partially defined left adjoint $\ind_{A}^S$ to forgetful functor 
	$$ A\FactMod^S(\mathbf{C}) \rightarrow \mathbf{C}_{x_0 \times S}$$
	is defined  on $c$. We have $$\ind_{A}^S(c) \cong  ({j_{\Ranp}}_! \otimes \Id_{\QCoh(S)})(A \boxtimes_S c).$$
\end{pr}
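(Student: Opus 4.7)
The plan is to construct the putative induced module $\widetilde{c} := ({j_{\Ranp}}_! \otimes \Id_{\QCoh(S)})(A \boxtimes_S c) \in \Gamma(\Ranp_x \times S, \mathbf{C})$ directly, equip it with a factorization $A$-module structure, and verify the required universal property against an arbitrary $M \in A\on{-FactMod}^S(\mathbf{C})$. The two inputs that make this work are (i) the adjunction for the open embedding $j_{\Ranp}$, which yields $j_{\Ranp}^! \widetilde{c} \simeq A \boxtimes_S c$ canonically (by hypothesis $\widetilde{c}$ is defined), and (ii) the factorization module constraint on any target $M$ with core $m$, which provides a canonical identification $j_{\Ranp}^! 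M \simeq A \boxtimes_S m$ (up to the $\on{add}$-identification).

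\textbf{Module structure on $\widetilde{c}$.} To equip $\widetilde{c}$ with the structure of a factorization $A$-module, one first constructs the required isomorphism $\on{act}: j^\bullet(\widetilde{c} \boxtimes A) \xrightarrow{\sim} \on{add}^\bullet(\widetilde{c})$ on the dense locus inside $(\Ranp_x \times \Ranu)_{\on{disj}} \times S$ where all points are pairwise disjoint from $x$ and from each other. On this locus both sides of $\on{act}$ identify canonically with $A^{\boxtimes 2} \boxtimes_S c$ via the definition of $\widetilde{c}$, and the factorization algebra structure of $A$ supplies the equivalence. One then $j_!$-extends to all of the $\on{disj}$-locus, and the higher coherences of Definition \ref{defn-fact-mod-cat} reduce in the same way to those of $A$.

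\textbf{Universal property.} Given $M \in A\on{-FactMod}^S(\mathbf{C})$ with core $m$, the adjunction $({j_{\Ranp}}_! , j_{\Ranp}^!)$ identifies maps $\widetilde{c} \to M$ in $\mathbf{C}|_{\Ranp_x \times S}$ with maps
\[
A \boxtimes_S c \longrightarrow j_{\Ranp}^! M \simeq A \boxtimes_S m
\]
in $\mathbf{C}|_{\Ranu \times x \times S}$, where the second equivalence uses the factorization constraint on $M$. Cutting down to factorization $A$-module maps, the constraint of $A$-linearity corresponds, via the generalities of Remark \ref{Basic Adjunction Rem} applied to the unit $\on{unit}_A \to A$, to requiring that the map be of the form $\Id_A \boxtimes f$ for some $f: c \to m$ in $\mathbf{C}_{x \times S}$. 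This yields the sought-after equivalence $\on{Map}_{A\on{-FactMod}^S(\mathbf{C})}(\widetilde{c}, M) \simeq \on{Map}_{\mathbf{C}_{x \times S}}(c, m)$.

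\textbf{Main obstacle.} The principal difficulty is in organizing the higher factorization-module coherences so that the universal property above is an equivalence of mapping \emph{spaces}, not merely of their $\pi_0$. The cleanest route is to use the Cartesian-lift description of $\mathbf{Res}_\Phi^S$ from Theorem-Definition \ref{Basic Adjunction} together with its presentation as a limit (Remark \ref{restrictino as a limit}), applied to the strictly unital morphism $\Vect_{\Ranp}^S \xrightarrow{A} \BA$ classifying $A$. This packages the coherences uniformly and reduces the compatibility of the module structure built in the second paragraph with the one implicit on the right-hand side of the universal property to the associativity of $A$ on the open stratum, which holds by construction.
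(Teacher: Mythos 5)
Your overall strategy — set $\widetilde{c} := ({j_{\Ranp}}_! \otimes \Id_{\QCoh(S)})(A \boxtimes_S c)$, endow it with a factorization $A$-module structure, and verify the adjunction — is the natural one and is almost certainly the shape of the argument in the reference the paper cites. However, as written the proposal has two substantive gaps and one factual error.

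\emph{The module structure is not constructed.} You build $\on{act}$ over the locus where points are pairwise disjoint and disjoint from $x$, and then say you "$j_!$-extend to all of the $\on{disj}$-locus." But $\on{act}$ must be an \emph{isomorphism} over all of $(\Ranp_{x} \times \Ranp)_{\on{disj}}$, and pushing a morphism along an open inclusion does not produce an isomorphism on the larger locus; what is actually needed is that both $j^{\bullet}(\widetilde{c} \boxtimes A)$ and $\on{add}^{\bullet}(\widetilde{c})$ are themselves $j_!$-extensions from the smaller open, which is a base-change statement for $j_!$ against the correspondences defining the factorization pattern — nothing of the sort is argued. The cleaner route (which the paper sets up in Subsection 2.5) is to observe that $A \boxtimes_S c$ is already a factorization $A|_U$-module in the $\Ranu$-variant of the formalism, and then to show that $j_!$ carries this to a factorization $A$-module over $\Ranp_{x}$; that requires checking the compatibility with the disjoint-union correspondences, which your sketch omits, as well as all higher coherences.

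\emph{The universal property is circular.} You assert that, after applying $({j_{\Ranp}}_!, j_{\Ranp}^!)$-adjunction, "cutting down to factorization $A$-module maps" selects exactly the maps of the form $\Id_A \boxtimes f$. But $A$-linearity is coherent \emph{data}, not a condition on the underlying 1-morphism, and the contractibility of the space of compatible structures lifting a given underlying map is exactly the content to be established. Your appeal to Remark \ref{Basic Adjunction Rem} does not apply: that remark is about adjunctions $\Phi \dashv \Psi$ between factorization algebra \emph{categories} in $\mathbf{FactAlgCat}^S$, whereas what you want to exploit is the 2-morphism $\on{unit}_\BA \to A$ between factorization algebras in a fixed $\BA$, a different structure.

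\emph{The "Main obstacle" fix leans on a false premise.} The factorization functor $\Vect_{\Ranp}^S \xrightarrow{A} \BA$ classifying a factorization algebra $A$ is only \emph{lax} unital, not strictly unital — see the remark immediately after Definition \ref{defn-fact-alg-obj}. Beyond that, $\mathbf{Res}_A^S$ is a right-adjoint style construction (a contravariant transport along a Cartesian fibration) and does not on its own organize the data needed to produce the \emph{left} adjoint $\ind_A^S$; invoking it "to package the coherences" does not address where the left adjoint's unit and counit come from.

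\end{document}
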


\begin{proof}
	Analogous to \cite[Lemma 5.3.3]{text}.
\end{proof}

\subsection{Parameterized vs non-parameterized factorization patterns}

In this subsection we study the relationship between the usual and $S$-linear factorization algebra and module categories. 

We first note that the natural transformation
\begin{equation}
(\QCoh(S) \otimes -):\mathbf{CrysCat} \rightarrow \mathbf{CrysCat}^S
\end{equation}
of functors 
$$\AffSch_{\on{ft}} \rightarrow \BiCat$$
induces
\begin{equation}
(\QCoh(S) \otimes -):\mathbf{FactAlgCat} \rightarrow \mathbf{FactAlgCat}^S.
\end{equation}

There is an evident pair of adjoint functors 
\begin{equation}\label{adjoint ambi}
\QCoh(S) \otimes -: 
\begin{tikzcd}
\mathbf{CrysCat}(\Ranp)\ar[r, rightarrow,  shift left, ""]
& \arrow[l,  shift left, ""]   \mathbf{CrysCat}^S(\Ranp) : \Oblv.
\end{tikzcd}
\end{equation}

\begin{lm}
	The adjunction (\ref{adjoint ambi}) is ambidextrous.
\end{lm}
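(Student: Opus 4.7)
The plan is to deduce the ambidexterity from the semi-rigidity of $\QCoh(S)$ applied pointwise, and then to assemble the fiberwise ambidexterity into a global statement about crystals of categories.

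Recall that semi-rigidity of a presentably symmetric monoidal category $\BA$ is essentially the statement that $\BA$ is self-dual as a left module over itself, with a natural coevaluation/evaluation pair. A standard consequence (see \cite[Appendix C]{AGKRRV2}) is that for any DG category $\BC$, the free-forgetful adjunction
$$\BA \otimes -: \BC\mathbf{-ModCat} \rightleftarrows (\BA \otimes \BC)\mathbf{-ModCat}: \Oblv$$
is ambidextrous: the right adjoint to $\Oblv$ is the trivial one $\BM \mapsto \BM$ (identified via $\Fun_\BA(\BA,\BM) \simeq \BM$), while the self-duality of $\BA$ over itself identifies this with $\BA \otimes -$. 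Applied with $\BA = \QCoh(S)$ and $\BC = \D(R)$ for each $s: R \to \Ranp$ with $R$ of finite type, this yields the required ambidexterity at each fiber.

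To globalize, I would check that the unit and counit natural transformations which witness $\QCoh(S) \otimes -$ as the right adjoint of $\Oblv$ at each fiber are compatible with the pullback functors $\theta_\dagger$ along 2-cells in $\Ranp$. Since these natural transformations are induced fiberwise by the coevaluation and evaluation maps intrinsic to $\QCoh(S)$, they commute with the symmetric monoidal functor $u^! \otimes \Id_{\QCoh(S)}$ for any $u: T \to R$, so compatibility is automatic. By Lemma \ref{pointwise maps}, these pointwise natural transformations then assemble into natural transformations of $S$-linear crystals of categories on $\Ranp$, exhibiting the ambidexterity globally.

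The main technical subtlety will be tracking the higher coherence data so that this is genuinely an ambidextrous adjunction in the $(\infty,2)$-categorical sense, rather than only at the level of homotopy $1$-categories. This follows formally from the coherence inherent in the self-duality structure of a semi-rigid $\QCoh(S)$ and from the constructions in Subsection \ref{paramcryscat}, in particular the pointwise description of morphisms provided by Lemma \ref{pointwise maps}.
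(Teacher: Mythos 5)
Your proposal is correct and takes essentially the same approach as the paper, which proves the lemma in one line by noting that semi-rigidity of $\QCoh(S)$ implies its self-duality; your write-up just spells out the pointwise reduction and globalization that the paper leaves implicit.
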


\begin{proof}
	Follows from the fact that $\QCoh(S)$ is semi-rigid and thus self-dual.
\end{proof}

\begin{cor}\label{ambidextrous adjunction S-linear}
	The induced functor
	\begin{equation}
	\QCoh(S) \otimes -: 
	\begin{tikzcd}
	\BA \mathbf{-FactModCat}\ar[r, rightarrow,  shift left, ""]
	& \arrow[l,  shift left, ""]   \BA\otimes \QCoh(S) \mathbf{-FactModCat^S}
	\end{tikzcd}
	\end{equation}
	admits a right adjoint $\Oblv$ given by $$(\BA\otimes \QCoh(S), \BM^S) \mapsto (\BA, \BM^S).$$ We claim that this adjunction is ambidextrous.
\end{cor}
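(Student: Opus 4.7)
The plan is to bootstrap from the preceding lemma, which established that the adjunction
$$\QCoh(S)\otimes -: \mathbf{CrysCat}(\Ranp) \rightleftarrows \mathbf{CrysCat}^S(\Ranp) : \Oblv$$
is ambidextrous, using semi-rigidity of $\QCoh(S)$. The key observation is that both $\QCoh(S)\otimes -$ and $\Oblv$ are symmetric monoidal for the natural symmetric monoidal structures on $\mathbf{CrysCat}(\Ranp)$ and $\mathbf{CrysCat}^S(\Ranp)$: tensoring with $\QCoh(S)$ is so by construction, and forgetting the $S$-linear structure is monoidal because the monoidal structure on $S$-linear crystals is obtained via base change along the symmetric monoidal unit of $\QCoh(S)$.

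First, I would verify that both functors lift to functors on pairs (factorization algebra category, factorization module category over it). For $\QCoh(S)\otimes -$ this sends $(\BA, \BM) \mapsto (\BA \otimes \QCoh(S), \BM \otimes \QCoh(S))$, where the factorization module structure is obtained by tensoring. For $\Oblv$, it simply forgets the $S$-linear structure; the fact that the result is again a factorization module category follows from the fact that $\Oblv$ is symmetric monoidal and strictly preserves units. Both lifts are strictly unital, so they give functors in $\mathbf{FactModCat}$ rather than just the lax variants.

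The adjunction at the level of factorization module categories then follows formally from the adjunction at the crystals-of-categories level by testing against the categorical mapping spaces: a morphism
$$(\QCoh(S)\otimes \BM, \BA \otimes \QCoh(S)) \to (\BN^S, \BA \otimes \QCoh(S))$$
in $\BA\otimes\QCoh(S)\mathbf{-FactModCat}^S$ is computed via (lax-)equalizers of morphisms of underlying crystals of categories recording the module structure data (Construction-Proposition \ref{adjoints+base change lax} and Lemma \ref{pointwise maps}), and these equalizers are preserved under the ambidextrous adjunction because they are pointwise and the crystal-of-categories adjunction is $\mathbf{DGCat}$-linear on morphism categories. For ambidextrousness, one uses the unit/counit of the self-duality $\QCoh(S) \cong \QCoh(S)^\vee$; since the self-duality comes from the symmetric monoidal structure, these maps are morphisms of commutative algebras in $\mathbf{DGCat}$, hence lift to maps in $\mathbf{FactModCat}$ that realize $\QCoh(S)\otimes -$ simultaneously as left and right adjoint of $\Oblv$.

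The main obstacle will be verifying coherence: ensuring that the unit and counit triangle identities, together with the ambidexterity data (the compatibility 2-cells between the adjunctions in both directions), assemble into genuine data in the $(\infty, 2)$-category $\mathbf{FactModCat}^S$ rather than only in $\mathbf{CrysCat}^S(\Ranp)$. This reduces to checking that the 2-cells coming from the self-duality of $\QCoh(S)$ respect the multiplication isomorphism $\on{mult}$ and the module action $\on{act}$, which in turn follows from the fact that these 2-cells are pulled back from 2-cells in $\mathbf{DGCat}$ along the structural morphisms $\on{add}$ and the maps defining $\Ranp$ as a $\Ranp$-module, and the latter 2-cells are trivially compatible with any symmetric monoidal structure.
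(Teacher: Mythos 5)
Your proposal fills in an argument the paper leaves entirely implicit: the text offers no written proof of this corollary, treating it as a direct consequence of the preceding lemma (ambidexterity of the adjunction on $\mathbf{CrysCat}^S(\Ranp)$, which in turn follows from semi-rigidity, hence self-duality, of $\QCoh(S)$). Your overall route — lift the crystals-level ambidexterity to factorization module categories by checking compatibility with the $\on{mult}$ and $\on{act}$ data and with the marked-point structure — is the intended one.

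One step is stated imprecisely: the claim that $\Oblv$ is symmetric monoidal. In the factorization definitions the relevant tensor is the \emph{external} tensor $\boxtimes$ (respectively $\boxtimes_S$ in the $S$-linear setting), and $\Oblv$ is only lax with respect to the internal tensor over $\QCoh(S)$; it also does not preserve the monoidal unit, since $\Oblv(\Vect^S_{\on{pt}}) = \QCoh(S) \neq \Vect$. What actually makes $\Oblv$ land in $\BA\mathbf{-FactModCat}$ is the identification $\BC^S \boxtimes_S (\BD \otimes \QCoh(S)) \simeq \BC^S \boxtimes \BD$ for $\BC^S \in \mathbf{CrysCat}^S$ and $\BD \in \mathbf{CrysCat}$: applied to $\BM^S \boxtimes_S \BA^S$ with $\BA^S = \BA \otimes \QCoh(S)$, this rewrites the $S$-linear action isomorphism of $\BM^S$ as an action of $\BA$ on the underlying crystal. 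This is precisely what the paper's formula $\Oblv: (\BA \otimes \QCoh(S), \BM^S) \mapsto (\BA, \BM^S)$ encodes — the algebra component is restricted to $\BA$ at the same time as the $S$-linearity on the module is forgotten, and the two modifications are not independent. Once this is made explicit, your argument goes through, and the remaining verification (that the duality data for $\QCoh(S)$, being morphisms of commutative algebras in $\mathbf{DGCat}$, promote to $\mathbf{FactModCat}^S$) is exactly what the paper appeals to with the phrase ``semi-rigid and thus self-dual.''
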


By the same argument, we get 
\begin{cor}\label{ambidextrous adjunction S-linear 2}
	Let $S^{\prime}$ a 1-affine prestack such that $\QCoh(S^{\prime})$ is semi-rigid. Then for $\BA \in \mathbf{FactAlgCat}^S$ we have an ambidextrous adjunction
	\begin{equation}
	\QCoh(S^{\prime}) \otimes -: 
	\begin{tikzcd}
	\BA \mathbf{-FactMod}^S\ar[r, rightarrow,  shift left, ""]
	& \arrow[l,  shift left, ""]   \BA\otimes \QCoh( S^{\prime}) \mathbf{-FactMod}^{S \times S^{\prime}}: \Oblv.
	\end{tikzcd}
	\end{equation}
\end{cor}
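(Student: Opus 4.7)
The plan is to mimic the argument for Corollary \ref{ambidextrous adjunction S-linear} verbatim, one level up: instead of tensoring with $\QCoh(S)$ over the base category $\mathbf{CrysCat}$, we tensor with $\QCoh(S')$ over the already-parameterized category $\mathbf{CrysCat}^S$, and then package the result into factorization module categories.

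First I would produce the underlying adjunction on $S$-linear crystals of categories. The assignment $\BB \mapsto \QCoh(S') \otimes \BB$ defines a functor
\[
\QCoh(S') \otimes -: \mathbf{CrysCat}^S(\Ranp) \rightarrow \mathbf{CrysCat}^{S \times S'}(\Ranp),
\]
with right adjoint given by the forgetful functor that restricts the $\QCoh(S)\otimes \QCoh(S')$-action to the $\QCoh(S)$-factor. Because $\QCoh(S')$ is semi-rigid, it is self-dual as a $\QCoh(S)$-linear, and hence as a $k$-linear, category; this implies (exactly as in the $S$-case) that the above adjunction is ambidextrous.

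Next I would promote this ambidextrous adjunction through the factorization formalism of Subsection \ref{Parameterized factorization patterns}. The functor $\QCoh(S') \otimes -$ is symmetric monoidal for the tensor product on $\mathbf{CrysCat}^S$, so it lifts to the $(\infty,2)$-categories of factorization algebra categories
\[
\QCoh(S') \otimes -: \mathbf{FactAlgCat}^S \rightleftarrows \mathbf{FactAlgCat}^{S \times S'}: \Oblv,
\]
and similarly to the $(\infty,2)$-categories of factorization module categories
\[
\QCoh(S') \otimes -: \mathbf{FactModCat}^S \rightleftarrows \mathbf{FactModCat}^{S \times S'}: \Oblv.
\]
Ambidexterity is preserved at each step: the left adjoint along a fiber of the forgetful functor to $\mathbf{FactAlgCat}^?$ is computed pointwise by Lemma \ref{pointwise maps} (its $S$-linear avatar), so the right adjoint $\Oblv$ agrees with the left adjoint $\QCoh(S')\otimes -$ after passing to the fiber, just as in the non-parameterized case.

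Finally I would take the fiber of the forgetful functor $\mathbf{FactModCat}^? \to \mathbf{FactAlgCat}^?$ over the object $\BA \in \mathbf{FactAlgCat}^S$, which corresponds on the target side to $\BA \otimes \QCoh(S') \in \mathbf{FactAlgCat}^{S \times S'}$. The ambidextrous adjunction above restricts to the claimed ambidextrous adjunction
\[
\QCoh(S') \otimes -: \BA\mathbf{-FactMod}^S \rightleftarrows (\BA \otimes \QCoh(S'))\mathbf{-FactMod}^{S \times S'}: \Oblv.
\]
The only non-routine point is checking that the ambidexterity survives this restriction to the fiber over $\BA$; this follows because the right adjoint $\Oblv$ of the full adjunction preserves the factorization algebra category $\BA \otimes \QCoh(S')$ (it is computed as $\BA$ since $\Oblv$ on $\mathbf{FactAlgCat}^?$ is forgetful in the $\QCoh(S')$-direction) and the left adjoint sends $\BA$ to the same $\BA \otimes \QCoh(S')$, so both adjoints descend to an adjunction between the fibers, and equality of the two adjoints persists after restriction.
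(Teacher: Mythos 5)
Your proposal matches the paper's very terse proof, which dispatches this corollary with ``By the same argument, we get'' — i.e., by the same use of semi-rigidity and self-duality of $\QCoh(S')$ pushed through the parameterized factorization formalism, exactly as you have spelled out. One small wording fix: $\Oblv$ applied to $\BA \otimes \QCoh(S')$ does not literally produce $\BA$ (it gives $\BA \otimes \QCoh(S')$ viewed as $S$-linear); the formula $(\BA \otimes \QCoh(S'), \BM) \mapsto (\BA, \BM)$ incorporates in addition a factorization restriction along the unit $\BA \to \BA \otimes \QCoh(S')$, just as in the statement of Corollary~\ref{ambidextrous adjunction S-linear}.
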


\section{Factorization modules over commutative algebras}\label{appendix: factmodules over commutative alg}

Let $\cY$ be a prestack affine over $X_{\dR} \times T$, i.e. $\cY = \Spec_{X_{\dR} \times T}(A)$, where $$A \in \CommAlg(\D(X) \otimes \QCoh(T)).$$ The goal of this section is to give a rather elementary proof of the following folklore statement.

\begin{thm}\label{factmod commutative}
	Assume that $A \in \D_{\indhol}(X) \otimes \QCoh(T)$. Then we have an equivalence 
	$$\QCoh(\hormerxJetsF_{\nabla, T \times U_{\dR}}^{\mer, x}(\cY)_x) \cong A\FactMod^T.$$
\end{thm}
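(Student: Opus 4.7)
The plan is to translate both sides into a common geometric description, exploiting the commutativity of $A$. First I would apply the equivalence $\fact: \CommAlg(\D(X)\otimes\QCoh(T))\xrightarrow{\sim}\CFactAlg^T(\Vect^T)$ from \cite[Lemma 8.10.1]{cpsii} to pass from the commutative algebra $A$ to its image $\fact(A)$, a commutative factorization algebra. Under this equivalence, the notation $A\FactMod^T$ in the statement of the theorem refers to factorization $\fact(A)$-modules. By Remark \ref{Ranu} I can restrict attention to factorization data supported on $\Ranu$ and to modules concentrated at $x$, which matches the setup of Proposition \ref{proposition 5.3.3}.

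\textbf{Comparison functor.} I would build a functor from the right-hand side to the left as follows. By Lemma \ref{description of R_B^R}, an $S$-point of $\hormerxJetsF_{\nabla, T\times U_{\dR}}^{\mer, x}(\cY)_x$ is the same datum as a horizontal section $\varphi: D_{x_S,\nabla}\times_{X_{\dR}} U_{\dR} \to \cY$ over $T$. Since $\cY = \Spec_{X_{\dR}\times T}(A)$ is affine, this is equivalent to an algebra map from $A$ to $\varphi_*\cO$ on the horizontal punctured disc. Combined with the commutative factorization operations, this datum naturally produces, for every finite subset $\underline{y}\subset X$ containing $x$, an action of $\fact(A)_{\underline{y}\setminus\{x\}}$ on the pullback of any $\cF\in\QCoh(\hormerxJetsF(\cY)_x)$, together with all required compatibilities. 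The core of the resulting factorization $\fact(A)$-module is the pushforward of $\cF$ along the affine structure map $\pi:\hormerxJetsF(\cY)_x\to T$.

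\textbf{Equivalence via monadicity.} To prove the functor is an equivalence, I would compare the monads induced on $\QCoh(T)$ by the two forgetful functors. On the factorization side, Proposition \ref{proposition 5.3.3} identifies the left adjoint to $\fact(A)\FactMod^T\to\QCoh(T)$ with $({j_{\Ranp}}_!\otimes\Id_{\QCoh(T)})(\fact(A)\boxtimes_T -)$, so the associated monad is tensoring with the algebra $({j_{\Ranp}}_!)(\fact(A))\in\CommAlg(\QCoh(T))$. On the geometric side, the monad is tensoring with $\pi_*\cO$. The key claim to establish is that these two algebras in $\QCoh(T)$ coincide canonically. Granted this, Barr--Beck immediately upgrades the comparison of monads to an equivalence of module categories, since the functor constructed above manifestly intertwines the forgetful functors.

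\textbf{Main obstacle.} The crux is the identification $({j_{\Ranp}}_!)(\fact(A))\simeq \pi_*\cO$ as $T$-algebras. This is a local-to-global statement: the factorization-theoretic pushforward of $\fact(A)$ away from $x$ computes the horizontal sections of $\cY$ on the punctured disc at $x$. The hypothesis $A\in\D_{\indhol}(X)\otimes\QCoh(T)$ is exactly what renders this tractable, as it guarantees that the ${j_{\Ranp}}_!$ can be computed in terms of the underlying $!$-extensions of the indholonomic pieces of $A$ rather than requiring the full chiral formalism. A clean implementation would start from the case $A=\omega_X\boxtimes\cO_T$, where $\cY = X_{\dR}\times T$ and both sides reduce to $\QCoh(T)$ via the parameterized analogue of Lemma \ref{lem-mod-for-unit-factalg}, and then bootstrap using the natural $A$-algebra structure on $\pi_*\cO$ together with the universal horizontal section on $\hormerxJetsF(\cY)_x$.
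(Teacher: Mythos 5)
Your high-level strategy is the same as the paper's: reduce via Barr--Beck and Proposition \ref{proposition 5.3.3} to computing the monad $M = (i^! \otimes \Id_T)(j_{\Ranp !}\otimes\Id_T)(A)$ on $\QCoh(T)$, and then identify it with $\pi_*\cO$ for $\pi:\hormerxJetsF_{\nabla, T\times U_{\dR}}^{\mer,x}(\cY)_x\to T$. You also correctly single out the crux --- the identification of $(j_{\Ranp !})(\fact(A))$ (restricted to $x$) with $\pi_*\cO$ as $T$-algebras --- and correctly observe that the ind-holonomicity hypothesis is what makes the $!$-extension computable.

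However, your sketch of the crucial identification is too vague to constitute a proof, and the ``bootstrap from $A=\omega_X\boxtimes\cO_T$'' idea is not how one actually closes the gap. What is missing is the mechanism by which the chiral $j_!$ interacts with the \emph{commutative} structure. The paper handles this in two precise steps that your proposal elides: (1) Proposition \ref{left adjoint}, which shows that the left adjoint $((j_{\Ranp}^!\otimes\Id_T)^{\enh})^L$ on $\CFactAlg^T$ agrees on underlying objects with the naive $j_{\Ranp !}\otimes\Id_T$, and that it preserves the commutative factorization property (this relies on the general Lemma \ref{partial left adjoint CALG} about partial left adjoints of lax symmetric monoidal functors, plus checking the disjoint-locus condition via base change and étaleness of $\on{add}$, and Lemma \ref{left adjoint to addition on unital Ran} for the existence of $\widetilde{\add}_!$); and (2) Lemma \ref{enhanced left adjoint}, which, after reducing from $\Ranp$ to the first power of $X$ via the $\fact$ equivalence, identifies the commutative-algebra-level left adjoint $((j^!\otimes\Id_T)^{\enh})^L$ with the geometric $R^R$ on the subcategory of affine prestacks (the key point there being right t-exactness of $j_!\otimes\Id$, so connectivity is preserved). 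Only after these two reductions can Lemma \ref{description of R_B^R} be applied to produce the jets prestack. Your proposed route --- comparing an $A$-algebra structure on $\pi_*\cO$ coming from the universal horizontal section against the unit case --- gestures in the right direction but does not supply these compatibilities, and it is not clear how to ``bootstrap'' from the unit $\omega_X\boxtimes\cO_T$ to a general commutative $A$ without essentially reproving (1) and (2). As written, the argument would not go through.
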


\begin{rem}
	Since the first draft of this paper was circulated, the proof of this statement appeared in \cite{GLC} in a greater generality (without the ind-holonomicity condition). However, we still keep this section since the present proof is less technically heavy. 
\end{rem}

\begin{df}
	Define lax prestack $\D_{\on{indhol}}$ to be the left Kan extension of the functor 
	$$\D_{\on{indhol}}: \AffSch_{\on{ft}}^{\op} \rightarrow \on{Cat}_\infty$$
	along the embedding $ \AffSch_{\on{ft}}^{\op} \hookrightarrow \AffSch^{\op}$. 
	
	For any lax prestack $\cY$ define the category of ind-holonomic $D$-modules on $\cY$ as $\Maps(\cY, \D_{\on{indhol}})$. 
\end{df}
\begin{rem}
Note that for an affine scheme $S$ we get $$\D_{\on{indhol}}(S) \cong \underset{S_i \in \AffSch_{\on{ft}} S \rightarrow S_i}{\colim}\D_{\on{indhol}}(S_i).$$
Hence for an affine scheme of finite type we recover the usual definition of ind-holonomic $D$-modules.
\end{rem}

\begin{rem}
	Recall from \cite[2.3.1]{G2} that for $\cY_1, \cY_2 \in \laxPreSt$ and $f: \cY_1 \rightarrow \cY_2$ the map $$f^!: \D_{\on{indhol}}(\cY_2) \rightarrow \D_{\on{indhol}}(\cY_1)$$ admits a left adjoint $f_!$. Hence for a map of curves $$X \rightarrow Y$$ and the induces map $$f: \Ranp \rightarrow \Ranp_Y$$ the left adjoint $f_!$ to the functor $f^!: \D( \Ranp) \rightarrow \D(\Ranp_Y)$ is defined on the subcategory $\D_{\on{indhol}}(\Ranp_Y) \subset \D(\Ranp_Y)$.
	\end{rem}

\begin{lm}
	The object $$A \in \D(\Ranp) \otimes  \QCoh(T)$$ lies in the full subcategory $\D_{\indhol}(\Ranp) \otimes \QCoh(T)$, therefore the functor ${j_{\Ranp}}_! \otimes \Id_{\QCoh(T)}$ is defined on $A$. 
\end{lm}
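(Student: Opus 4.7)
The second assertion of the lemma is immediate from the remark just above it: for any morphism of lax prestacks, the left adjoint $f_!$ to $f^!$ is defined on the ind-holonomic subcategory. So the substance is the ind-holonomicity statement, which I would establish in three steps.

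First, I would reduce the question stratum-by-stratum. By definition $\D_{\indhol}(\Ranp) = \Maps(\Ranp, \D_{\indhol})$; since $\Ranp$ is assembled (as a lax prestack) from the schemes $X^I$ for finite sets $I$, this unfolds as a (lax) limit of the categories $\D_{\indhol}(X^I)$ along $!$-pullbacks, sitting as a full subcategory inside the analogous limit for $\D(X^I)$. Tensoring with $\QCoh(T)$, the condition $A \in \D_{\indhol}(\Ranp) \otimes \QCoh(T)$ becomes equivalent to the pointwise condition $A|_{X^I} \in \D_{\indhol}(X^I) \otimes \QCoh(T)$ for every finite set $I$.

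Next, I would invoke the $\fact$ equivalence (the $T$-linear version of the equivalence $\CommAlg(\BC_X^S) \xrightarrow{\sim} \CFactAlg^S(\BC^S)$ recalled in the preliminaries, with $S = T$ and $\BC = \D$): the commutative factorization algebra $A$ is determined by, and corresponds to, its restriction $A|_X$, a commutative algebra in $\D(X) \otimes \QCoh(T)$. Unwinding the construction of $\fact$ stratum-by-stratum, as in the Beilinson--Drinfeld picture for commutative chiral algebras, the restriction $A|_{X^I}$ is canonically isomorphic to the external tensor product $(A|_X)^{\boxtimes I}$, with the factorization isomorphisms being tautological and the diagonal pushforwards encoded by the commutative product on $A|_X$.

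Finally, since external tensor product preserves holonomicity (and hence, by passage to filtered colimits, ind-holonomicity) in each $\D$-factor while being transparent in the $\QCoh(T)$-factor, the hypothesis $A|_X \in \D_{\indhol}(X) \otimes \QCoh(T)$ yields $(A|_X)^{\boxtimes I} \in \D_{\indhol}(X^I) \otimes \QCoh(T)$, which by the first step completes the proof. The main obstacle, to my eye, is producing the external-tensor-product description of $\fact(A|_X)|_{X^I}$ with the required coherence in the $T$-parametrized setting; however, all the ingredients for this are already laid down via the parameterized factorization and strengthening formalism of the preliminaries.
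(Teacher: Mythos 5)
Your overall strategy is the right one and matches the paper's: reduce to checking ind-holonomicity of $A|_{X^K}$ on each power $X^K$, then express $A|_{X^K}$ in terms of $A|_X$ via operations that preserve ind-holonomicity. However, the specific identification you rely on --- that the restriction $A|_{X^K}$ of the commutative factorization algebra to $X^K$ is canonically $(A|_X)^{\boxtimes K}$ --- is false over the unital Ran space. The factorization isomorphisms only give $A|_{X^K} \simeq (A|_X)^{\boxtimes K}$ after restriction to the open disjoint locus in $X^K$; over the diagonals the two objects differ. Concretely, the $!$-fiber of $A$ at a totally diagonal point $(y,\ldots,y)$ is the fiber $(A|_X)_y$ of the underlying $\D_X$-algebra, whereas the $!$-fiber of $(A|_X)^{\boxtimes K}$ at the same point is a $K$-fold tensor product of $(A|_X)_y$ (up to shift). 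The correct formula, which the paper uses, is a colimit over the category $S_K$ indexing surjections $K\twoheadrightarrow I \twoheadrightarrow J$:
\[
A|_{X^K} \simeq \colim_{(K\twoheadrightarrow I\twoheadrightarrow J)\in S_K}\ (\Delta_{q\circ p,!}\boxtimes \Id_T)\circ(\Delta_q^!\boxtimes \Id_T)\bigl(A^{\boxtimes_T I}\bigr),
\]
whose terms involve not just external tensor products but also diagonal $!$-pullbacks and diagonal $!$-pushforwards, with transition maps built from the commutative multiplication and the unital insertion structure. The identity-factored term $(K,K,K)$ does give $(A|_X)^{\boxtimes_T K}$, but this is only one vertex of the diagram, not the colimit.

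The repair is straightforward and does not require any idea beyond those you already invoked: each of the three operations (external tensor product, diagonal $!$-pullback, diagonal $!$-pushforward) is along a finite-type map and preserves ind-holonomicity on the $\D$-factor, and $\D_{\indhol}(X^K)\otimes\QCoh(T)$ is closed under colimits, so the colimit of ind-holonomic objects is ind-holonomic. But as written, your step asserting $A|_{X^K}\simeq (A|_X)^{\boxtimes K}$ is a genuine error, and the resulting proof only treats one term of the colimit rather than the full expression. You should also explicitly note that $!$-pullback along the diagonal preserves ind-holonomicity (this is the one operation not in your list) and that the subcategory of ind-holonomic objects is closed under colimits.
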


\begin{proof}
	
	It suffices to show that restriction of $A$ to every $X^K $ lies in $\D_{\indhol}(X^K) \otimes \QCoh(T)$.  Restriction to $X $ lies in $\D_{\indhol}(X)\otimes \QCoh(T)$ by assumption.
	
	For general $K$ denote by $S_K$ the $(1,1)$-category indexing data $K \twoheadrightarrow I \twoheadrightarrow J$, where we allow morphisms of
	diagrams that are contravariant in $I$ and covariant in $J$, and surjective term-wise. Then one has
	$$(A)_{X^K} \cong \underset{K \twoheadrightarrow I \twoheadrightarrow J \in S_K}{\colim} ({\Delta_{q \circ p}}_! \boxtimes \Id_T) \circ (\Delta_q^!\boxtimes \Id_T) (A^{\boxtimes_T I}) .$$
	In particular, since  ${j_{\Ranp}}_! \otimes \Id_{\QCoh(T)}$ commutes with colimits and all three operations preserve ind-holonomicity on the first factor we get the result.

\end{proof}

Writing any $\cF \in \QCoh(T)$ as a colimit of $\cO_{T}$ we see that ${j_{\Ranp}}_! \otimes \Id_{\QCoh(T)}$ is defined on $$A \boxtimes_T \cF$$ and 
$$(i_{\Ranp_{X, x}}^! \otimes \Id_{T})({j_{\Ranp}}_! \otimes \Id_{T})(A \boxtimes_T \cF) \cong  (i_{\Ranp_{X, x}}^! \otimes \Id_{T})({j_{\Ranp}}_! \otimes \Id_{T}) (A) \otimes \cF .$$

Note that $i_{\Ranp_{X, x}}^! \otimes \Id_{\QCoh(T)}$ is continuous and conservative, hence by Barr-Beck Theorem and Proposition \ref{proposition 5.3.3} we get that 
\begin{equation}
	A\FactMod^T \cong (i_{\Ranp_{X, x}}^! \otimes \Id_{T})({j_{\Ranp}}_! \otimes \Id_{T}) (A) \Mmod(\QCoh(T)).
\end{equation}

We now calculate $M:= (i_{\Ranp_{X, x}}^! \otimes \Id_{T})({j_{\Ranp}}_! \otimes \Id_{T}) (A)$. 
\subsubsection{}
We begin by noticing that the functor $j_{\Ranp}^! \times \Id_{\QCoh(T)}$ induces a functor 
$$(j_{\Ranp}^! \otimes \Id_{\QCoh(T)})^{\text{enh}}:   \CFactAlg^T(\D(\Ranp_{X}) \otimes \QCoh(T)) \rightarrow \CFactAlg^T(\D(\Ranu) \otimes  \QCoh(T)).$$

Tautologically, the diagram 

\[
\begin{tikzcd}
\CFactAlg^T(\D(\Ranp_{X}) \otimes \QCoh(T)) \arrow[d, "(j_{\Ranp}^! \otimes \Id_{\QCoh(T)})^{\text{enh}}"] \ar[r, "", "\oblv"']&   \D(\Ranp_{X}) \otimes \QCoh(T)\arrow[d, "j_{\Ranp}^! \times \Id_{\QCoh(T)}"]    \\
\CFactAlg^T(\D(\Ranu) \otimes  \QCoh(T))  \ar[r, "", "\oblv"']  &  \D(\Ranu) \otimes  \QCoh(T)
\end{tikzcd}
\]
commutes.

\begin{pr}\label{left adjoint}
	Assume that $$\cF \in \CFactAlg^T(\D(\Ranu) \otimes  \QCoh(T))$$ and on the underlying sheaf the partial left adjoint ${j_{\Ranp}}_! \otimes \Id_{\QCoh(T)}$ is defined. Then partially defined left adjoint $$((j_{\Ranp}^! \otimes \Id_{\QCoh(T)})^{\text{enh}})^L$$ is defined on $\cF$ and 
	$$\oblv((j_{\Ranp}^! \otimes \Id_{\QCoh(T)})^{\text{enh}})^L(\cF) 
	\cong ({j_{\Ranp}}_! \otimes \Id_{\QCoh(T)})(\oblv(\cF)).$$

\end{pr}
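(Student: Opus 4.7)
The plan is to construct a commutative factorization algebra structure on ${j_{\Ranp}}_! \otimes \Id_{\QCoh(T)}(\oblv \cF)$ directly, and then verify that the resulting object represents the desired left adjoint. Since the forgetful functor $\oblv$ is conservative and creates the relevant colimits, this identification is then forced on the underlying sheaf, which is exactly what the proposition asserts.

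The key geometric input is that $j_{\Ranp}: \Ranu \hookrightarrow \Ranp_{X, x}$ is an open embedding of lax prestacks (sending $P$ to $P \cup \{x\}$). Hence ${j_{\Ranp}}_!$, wherever defined, is fully faithful with $j_{\Ranp}^! \circ {j_{\Ranp}}_! \simeq \Id$, and base change for open embeddings yields an equivalence
\[
(j_{\Ranp} \times j_{\Ranp})_! \circ \widetilde{\add}^! \simeq \widetilde{\add}^! \circ {j_{\Ranp}}_!
\]
for the square
\[
\begin{tikzcd}
\Ranu \times \Ranu \ar[d, "j_{\Ranp} \times j_{\Ranp}"'] \ar[r, "\widetilde{\add}"] & \Ranu \ar[d, "j_{\Ranp}"] \\
\Ranp_{X, x} \times \Ranp_{X, x} \ar[r, "\widetilde{\add}"] & \Ranp_{X, x}
\end{tikzcd}
\]
relating the multiplications on the two Ran spaces. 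Combined with the Kunneth identification $(j_{\Ranp} \times j_{\Ranp})_!(G \boxtimes_T G) \cong {j_{\Ranp}}_!(G) \boxtimes_T {j_{\Ranp}}_!(G)$ applied to $G = \oblv \cF$, this shows that ${j_{\Ranp}}_!$ intertwines factorization multiplication with external product. Transporting the commutative factorization algebra structure of $\cF$ across this intertwining equips ${j_{\Ranp}}_!(\oblv \cF)$ with a canonical commutative factorization algebra structure; higher coherences transport by the same argument applied to iterated multiplication squares.

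With the structure in hand, verifying that the resulting object $L(\cF) \in \CFactAlg^T(\D(\Ranp_{X, x}) \otimes \QCoh(T))$ is left adjoint to $(j_{\Ranp}^! \otimes \Id_{\QCoh(T)})^{\text{enh}}$ is immediate. The unit $\cF \to ((j_{\Ranp}^! \otimes \Id_{\QCoh(T)})^{\text{enh}} \circ L)(\cF)$ is inherited from the unit of the underlying $({j_{\Ranp}}_!, j_{\Ranp}^!)$ adjunction, which is an equivalence since $j_{\Ranp}$ is an open embedding. Mapping into any $\cG \in \CFactAlg^T(\D(\Ranp_{X, x}) \otimes \QCoh(T))$ and using that the forgetful functors on both sides are conservative reduces adjointness to the underlying statement.

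The main technical obstacle is the base change equivalence above together with its iterated analogues, carrying the full coherence required to promote ${j_{\Ranp}}_!$ to a symmetric monoidal functor for the factorization tensor product. Pointwise these reduce, via the pro-finite-type presentation of the Ran space, to base change along open embeddings of finite-type schemes (which is standard); the effort lies in the lax-prestack and $T$-parameterized bookkeeping required to assemble these into an honest symmetric monoidal structure compatible with all the coherence data of a commutative factorization algebra.
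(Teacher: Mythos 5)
Your overall strategy — transport the algebra structure across $j_{\Ranp, !}$ using the Cartesian square relating $\widetilde{\add}$ on $\Ranu$ and $\Ranp_X$, then argue this gives the left adjoint — is conceptually in the same spirit as the paper's proof. However, there is a real gap in the step where you claim adjointness.

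You write that ``using that the forgetful functors on both sides are conservative reduces adjointness to the underlying statement.'' This does not work: conservativity tells you that a morphism in $\CFactAlg^T$ is an equivalence iff its image under $\oblv$ is, but it says nothing about the comparison map of \emph{mapping spaces}
\[
\Maps_{\CFactAlg^T(\D(\Ranp_X)\otimes\QCoh(T))}\bigl(L(\cF),\cG\bigr)\longrightarrow\Maps_{\CFactAlg^T(\D(\Ranu)\otimes\QCoh(T))}\bigl(\cF, (j_{\Ranp}^!\otimes\Id)^{\enh}\cG\bigr)
\]
being an equivalence. The forgetful functor to the underlying DG category is not a monomorphism on mapping spaces (mapping spaces of commutative algebra objects carry strictly more data), so the adjunction of underlying sheaves does not directly transport. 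The paper addresses exactly this point with Lemma \ref{partial left adjoint CALG}, a non-trivial abstract statement (built on the HA results cited as \cite[Propositions 5.6.6, 5.6.10]{GL}) showing that a partially defined symmetric monoidal left adjoint $F$ induces a partially defined left adjoint on $\CommAlg$, with an explicit description of its underlying object. This is the missing ingredient in your proof, not a formality that conservativity dispatches.

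There is also a conceptual conflation worth noting. You speak of ``promoting $j_{\Ranp, !}$ to a symmetric monoidal functor for the factorization tensor product.'' The factorization condition is a property, not part of the monoidal structure: a $T$-linear commutative factorization algebra is a commutative algebra with respect to the \emph{convolution} monoidal structure $\cF, \cG \mapsto (\widetilde{\add}_! \otimes \Id_{\QCoh(T)})(\cF \boxtimes_T \cG)$, together with the extra requirement that the multiplication restricts to an isomorphism on the disjoint locus. The paper accordingly breaks the argument in two: first it shows (via Lemma \ref{partial left adjoint CALG}) that $j_{\Ranp, !} \otimes \Id_{\QCoh(T)}$ induces a partially defined left adjoint at the level of $\CommAlg^{\on{conv}}$; then it separately checks that the result lands in $\CFactAlg^T$ by a base change computation on the disjoint locus (using that $\add$ is \'etale). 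Your proposal should be restructured along these lines: treat the monoidal compatibility of $j_{\Ranp, !}$ for \emph{convolution} (this is where Kunneth and the commutative square $j_{\Ranp}\circ\widetilde{\add}_U=\widetilde{\add}_X\circ(j_{\Ranp}\times j_{\Ranp})$ genuinely live), prove the adjunction on $\CommAlg^{\on{conv}}$ via the abstract lemma, and only then verify the factorization property on the disjoint locus. Finally, a small point: the map $j_{\Ranp}$ in this proposition is the open embedding $\Ranu\hookrightarrow\Ranp_X$ (finite subsets of $U=X\setminus\{x\}$ inside finite subsets of $X$), not the map $P\mapsto P\cup\{x\}$ you describe; the latter is relevant for the marked Ran space $\Ranp_{X,x}$ elsewhere in the paper.
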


Consider the following generalization of \cite[Proposition 5.6.10]{GL}:

\begin{lm}\label{partial left adjoint CALG}
	let $C$ and $D$ be presentable symmetric monoidal $\infty$-categories with unit objects $\mathbf{1}_C$ and $\mathbf{1}_D$. Assume that tensor product on $C$ and $D$ preserves colimits separately in each variable. Let $$G: D \rightarrow C$$ be a lax symmetric monoidal functor. 
	
	Assume that partially defined left adjoint $F$ is defined on the full subcategory of $C$ generated by $\mathbf{1}_C$ and powers of $\oblv(c)$ for some $c \in \CommAlg(C)$ and $\oblv: \CommAlg(C) \rightarrow C$. Assume in addition that $F$ is symmetric monoidal and that the unit map $\mathbf{1}_C \rightarrow G(\mathbf{1}_D)$ induces an equivalence $F(\mathbf{1}_C ) \rightarrow \mathbf{1}_D$.  
	
	Then the left adjoint $F_+$ to the functor $$G: \CommAlg(D) \rightarrow \CommAlg(C)$$ induced by $G$ is defined on $c$ and 
	$$\oblv(F_+(c)) \cong F(\oblv(c)).$$
\end{lm}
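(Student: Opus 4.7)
The strategy is to mimic the proof of \cite[Proposition 5.6.10]{GL}, the only new content being bookkeeping that tracks the subcategory on which $F$ is defined. The formal structure is: build $F_+(c)$ as the geometric realization in $\CommAlg(D)$ of a simplicial resolution of $c$ by \emph{free} commutative algebras on objects on which $F$ is defined, verify the adjunction on these free objects, and then transport the adjunction through the realization.

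First, I would recall the monadic bar resolution. Let $T_C = \oblv_C \circ \Free_C$ be the free commutative algebra monad on $C$ and similarly $T_D$ on $D$. Setting $x := \oblv_C(c)$, the commutative algebra $c$ is canonically equivalent to the geometric realization in $\CommAlg(C)$ of the simplicial object $P_\bullet^C$ with $P_n^C := \Free_C(T_C^n(x))$, with face maps built from the $T_C$-algebra structure on $c$ and the monad multiplication, and degeneracies from the unit. The key point is that since $T_C(y) = \bigoplus_n (y^{\otimes n})_{h\Sigma_n}$ is a colimit of tensor powers of $y$, and tensor product preserves colimits separately in each variable, each $T_C^n(x)$ lies in the full subcategory $\cC_0 \subset C$ generated under colimits by $\one_C$ and tensor powers of $x$, i.e. exactly the subcategory on which $F$ is defined.

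Next, I would handle the free case. Because $F$ is symmetric monoidal and commutes with colimits on $\cC_0$, it exchanges the free commutative algebra construction with its $D$-analogue: there is a natural equivalence $F \circ T_C^n(x) \simeq T_D^n(F(x))$, obtained by induction on $n$ using the explicit description of $T$ as a direct sum of symmetric powers together with the hypothesis that $F(\one_C) \simeq \one_D$. In particular, for any $y \in \cC_0$, the commutative algebra $\Free_C(y)$ admits a left adjoint value $F_+(\Free_C(y)) := \Free_D(F(y))$, and the adjunction isomorphism
\[
\Map_{\CommAlg(D)}(\Free_D(F(y)), d) \simeq \Map_D(F(y), \oblv_D d) \simeq \Map_C(y, G\oblv_D d) \simeq \Map_{\CommAlg(C)}(\Free_C(y), G_+ d)
\]
follows from the free/forgetful adjunction on both sides combined with the given partial adjunction $F\dashv G$ on $\cC_0$.

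Finally, I would glue by means of the simplicial resolution. Define $F_+(c)$ to be the geometric realization in $\CommAlg(D)$ of the simplicial object $F_+(P_\bullet^C)$, which makes sense termwise by the previous step. Applying $\oblv_D$ and using that $\oblv_D$ preserves geometric realizations of bar resolutions by Barr--Beck--Lurie, together with $\oblv_D \circ F_+ \circ \Free_C \simeq F \circ T_C$ and the fact that $F$ preserves the colimit $|T_C^{\bullet+1}(x)| \simeq x$ (which holds because this colimit is taken in $\cC_0$ and $F$ is colimit-preserving there), one obtains the desired formula $\oblv_D(F_+(c)) \simeq F(\oblv_C(c))$. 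The adjunction identity for $c$ then follows from the free case by mapping $|P_\bullet^C|$ into $G_+(d)$ term by term. The only real obstacle in the argument is verifying that the partial nature of $F$ does not obstruct any of these steps; this is precisely why the hypothesis is formulated as asking $F$ to be defined on the subcategory generated by $\one_C$ and tensor powers of $\oblv(c)$, since this subcategory is exactly what is needed to contain every $T_C^n(\oblv(c))$ and hence every term of the bar resolution.
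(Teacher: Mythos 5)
Your argument is correct in substance, but it takes a genuinely different route from the paper. The paper's proof is considerably shorter: it observes that the full subcategory $C'$ generated by $\mathbf{1}_C$ and powers of $\oblv(c)$ is a presentable symmetric monoidal category on which $F$ is an \emph{honest} symmetric monoidal left adjoint to $\iota^R\circ G$, and then simply applies the already-cited results \cite[Proposition 5.6.6, Proposition 5.6.10]{GL} to lift this adjunction, together with $\iota\dashv\iota^R$, to adjunctions between $\infty$-categories of commutative algebras. The compatibility $(\iota^R\circ G)_+\cong \iota^R_+\circ G_+$ then identifies $(F\circ\iota)_+$ as the partial left adjoint $F_+$, and the formula for the underlying object is immediate because all the lifted left adjoints commute with the forgetful functors. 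Your proof instead unpacks what is essentially the \emph{content} of \cite[Proposition 5.6.10]{GL} via the monadic bar resolution: you resolve $c$ by free algebras, use that $F$ exchanges free commutative algebra monads because it is symmetric monoidal and colimit-preserving on $\cC_0$, verify the adjunction on free objects, and glue. This is a valid alternative and is arguably more self-contained, but it carries more burden: one must check that $F_+(P_\bullet^C)$ is actually a simplicial object (the last face map of the bar resolution is \emph{not} a map of free algebras induced by a map of generators, so one has to argue via $\Free_C(y)\to\Free_C(z)$ corresponding to $y\to T_C(z)$ and then apply $F$ together with $F\circ T_C\simeq T_D\circ F$), and that the coherences making $F(T_C^n(x))\simeq T_D^n(F(x))$ natural and compatible with the simplicial identities actually hold. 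These are precisely the sort of coherence issues that the citation-based route sidesteps. The conceptual takeaway to highlight: the entire role of the hypothesis on the domain of $F$ in both proofs is the same — it is exactly what is needed for $c$ and its whole bar resolution to live inside a subcategory on which $F$ is a genuine adjoint — and you correctly identified this as the crux.
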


\begin{proof}
	Denote by $C^{\prime}$ the full subcategory of $C$ generated by $\mathbf{1}_C$ and powers of $\oblv(c)$ and by $\iota$ the embedding. By \cite[Proposition 5.6.6]{GL} and \cite[Proposition 5.6.10]{GL} the pair of adjoint functors 
	\[
	\begin{tikzcd}
	C^{\prime}  \arrow[r,  hook, shift left, "\iota"]
	& \arrow[l, shift left, "\iota^R"]  C  
	\end{tikzcd}
	\]
	induces the pair of adjoint functors between unital commutative algebra objects
	\[
	\begin{tikzcd}
	\CommAlg(C^{\prime})  \arrow[r, hook, shift left, "\iota_+"]
	& \arrow[l, shift left, "\iota^R_+"]  \CommAlg(C).
	\end{tikzcd}
	\]
	
	Moreover, the pair of adjoint functors
	\[
	\begin{tikzcd}
	C^{\prime}  \arrow[r,  shift left, "F \circ \iota"]
	& \arrow[l, shift left, "\iota^R \circ G"]  D 
	\end{tikzcd}
	\]
	induces the pair of adjoint functors between unital commutative algebra objects
	\[
	\begin{tikzcd}
	\CommAlg(C^{\prime})  \arrow[r, shift left, "{(F \circ \iota)}_+"]
	& \arrow[l, shift left, "{(\iota^R \circ G)}_+"]  \CommAlg(D).
	\end{tikzcd}
	\]
	Since $(\iota^R \circ G)_+ \cong \iota^R_+ \circ G_+$ we see that partial left adjoint $F_+$ is defined on the subcategory $\CommAlg(C^{\prime})$ and is given by $(F \circ \iota)_+$. Moreover, since by construction $c$ lies in the full subcategory $\CommAlg(C^{\prime})$ we get that 
	$$ F(\oblv(c)) \cong (F \circ \iota)(\oblv(c)) \cong \oblv((F \circ \iota)_+(c)).$$
\end{proof}

\begin{proof}[Proof of Proposition \ref{left adjoint}] 
	Recall (e.g. from \cite[Definition 5.6.1]{GL}) that the structure of a $T$-linear commutative factorization algebra in $\D(\Ranp ) \otimes \QCoh(T)$ is equivalent to the structure of a (unital) commutative algebra in $\D(\Ranp) \otimes \QCoh(T)$ under convolution with the property that the morphism $$\cF \boxtimes_T \cF \rightarrow (\widetilde{\add}^! \otimes \Id_{\QCoh(T)})\cF $$ restricted to the disjoint locus is an isomorphism. In other words, the  symmetic monoidal structure is given by $$\cF, \cG \in \D(\Ranp) \otimes \QCoh(T) \mapsto (\widetilde{\add}_! \otimes \Id_{\QCoh(T)})(\cF \boxtimes_T \cG).$$
	(We postpone the discussion of the fact that $\add_!$ if well-defined to Lemma \ref{left adjoint to addition on unital Ran} and finish the proof first.) 
	
	Denote by $$\CommAlg^{\text{conv}}( \D(\Ranp) \otimes \QCoh(T))$$ the category of (unital) commutative algebras in $\D(\Ranp) \otimes \QCoh(T)$  with respect to the convolution multiplication. We will first prove that for $$\cF \in \CommAlg^{\text{conv}}(\D(\Ranu) \otimes \QCoh(T))$$ such that ${j_{\Ranp}}_! \otimes \Id_{\QCoh(T)}$ is defined on $\oblv(\cF)$, the partial left adjoint $((j_{\Ranp}^! \otimes \Id_{\QCoh(T)})^{\text{enh}})^L$ is defined on $\cF$, and $$\oblv(((j_{\Ranp}^! \otimes \Id_{\QCoh(T)})^{\text{enh}})^L(\cF)) \cong ({j_{\Ranp}}_! \otimes \Id_{\QCoh(T)})(\oblv(\cF)).$$
	This follows from Lemma \ref{partial left adjoint CALG}. Indeed, if ${j_{\Ranp}}_! \otimes \Id_{\QCoh(T)}$ is defined on $c \in  \D(\Ranu) \otimes \QCoh(T)$ then it is also defined on all powers of $c$ for convolution multiplication, and if ${j_{\Ranp}}_! \otimes \Id_{\QCoh(T)}$ is defined on $c, d \in  \D(\Ranu) \otimes \QCoh(T)$ then it is also defined on $c \otimes d$ and is equal to $$({j_{\Ranp}}_! \otimes \Id_{\QCoh(T)})(c) \otimes ({j_{\Ranp}}_! \otimes \Id_{\QCoh(T)})(d).$$
	
	It is left to show that if for $$\cF \in \CFactAlg^T(\D(\Ranu) \otimes  \QCoh(T))$$ the partial left adjoint ${j_{\Ranp}}_! \otimes \Id_{\QCoh(T)}$ is defined on $\oblv(\cF)$, then $$((j_{\Ranp}^! \otimes \Id_{\QCoh(T)})^{\text{enh}})^L(\cF) \in \CFactAlg^T(\D(\Ranp_{X}) \otimes \QCoh(T)) .$$  In other words, we need to check that the morphism 
	$$({j_{\Ranp}}_! \otimes \Id_{\QCoh(T)})(\cF)  \boxtimes_T ({j_{\Ranp}}_! \otimes \Id_{\QCoh(T)})(\cF)  \rightarrow (\widetilde{\add}^! \otimes \Id_{\QCoh(T)})({j_{\Ranp}}_! \otimes \Id_{\QCoh(T)})(\cF)$$
	restricted to the disjoint locus is an isomorphism. Let $$\pi_X \times \Id_T: [\Ranp \times \Ranp]_{\on{disj}} \times T \rightarrow \Ranp \times \Ranp \times T$$ denote the embedding of the disjoint locus. 
	Then
	$$((\pi_{(X)_{\dR}})^! \boxtimes \Id_{\QCoh(T)})((j_{\Ranp} \times j_{\Ranp})_! \boxtimes\Id_{\QCoh(T)})(\cF \boxtimes_T \cF) .$$
	by base change is equivalent to 
	$$((j \times j)_{\on{disj}, !} \boxtimes \Id_{\QCoh(T)})(\pi_{(U)_{\dR}}^! \boxtimes \Id_{\QCoh(T)})(\cF \boxtimes_T \cF),$$
	where $$(j \times j)_{disj}: [\Ranu \times \Ranu]_{\on{disj}} \rightarrow [\Ranp\times \Ranp].$$
	Since  $\cF \in \CFactAlg^T(\D(\Ranu) \otimes  \QCoh(T))$  this is equivalent to 
	$$((j \times j)_{\on{disj}, !} \boxtimes \Id_{\QCoh(T)})(\add^!\boxtimes \Id_{\QCoh(T)})(\cF).$$
	Since $\add$ is etale (\cite{text}) this is equivalent to 
	$$((\add_{\on{disj}}^!\otimes \Id_{\QCoh(T)})({j_{\Ranp}}_! \otimes \Id_{\QCoh(T)})(\cF).$$
\end{proof}
We now prove that $\add_!$ is well-defined.
\begin{lm}
	Restriction functor $f_I^!: \D(\Ranp) \rightarrow \D(X^I)$ along the tautological morphism admits a left adjoint.
\end{lm}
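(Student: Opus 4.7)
The plan is to recognise $\D(\Ranp)$ as a partial lax limit of D-module categories on the $X^K$ and to invoke a general adjoint-existence theorem for such partial lax limits (precisely a variant of the commented-out Lemma \ref{plaxlim}, in the spirit of the formalism of \cite{CF}). Concretely, the lax prestack structure of $\Ranp$ together with $!$-descent for D-modules gives an equivalence
$$
\D(\Ranp) \;\cong\; \plaxlim_{K \in \on{Fin}} \D(X^K),
$$
in which, for a map of finite sets $\phi: K_1 \to K_2$, the transition functor is the $!$-pullback $(\Delta_\phi)^!: \D(X^{K_1}) \to \D(X^{K_2})$ along the induced map $\Delta_\phi: X^{K_2} \to X^{K_1}$, and the limit is required to be \emph{strict} precisely along surjections $\phi$ (for which $\Delta_\phi$ is a diagonal closed embedding). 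Under this identification, $f_I^!$ is the evaluation $\ev_I$ at the object $I$.

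Having set this up, I would apply the following general principle for partial lax limits: the evaluation at a fixed index admits a left adjoint provided (i) each transition functor in the strict direction admits a left adjoint, and (ii) for every cospan $I_1 \to K \twoheadleftarrow I_2$ with $I_2 \twoheadrightarrow K$ a surjection, the category of $K'$ fitting into a commutative square with $K' \twoheadrightarrow I_1$ and $K' \to I_2$ is contractible. Condition (i) is immediate: along a surjection $\phi$, the map $\Delta_\phi$ is a closed immersion of smooth varieties, so $(\Delta_\phi)^!$ admits as left adjoint the $!$-pushforward $(\Delta_\phi)_!$. Condition (ii) is the standard observation that the indicated category has the set-theoretic fibre product $I_1 \times_K I_2$ as an initial object, whose projection to $I_1$ is a surjection because $I_2 \to K$ is.

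Granting these, the left adjoint $f_I^L$ to $f_I^!$ exists, and its restriction to each $X^K$ is given by the expected colimit formula
$$
(f_I^L \cF)|_{X^K} \;\cong\; \colim_{\substack{S \in \on{Fin},\; e:\, S \twoheadrightarrow I, \\ f:\, S \to K}} (\Delta_f)_! (\Delta_e)^! \cF.
$$
The main obstacle is really only bookkeeping: pinning down the surjection/injection conventions in $\on{Fin}$ so that they match the strict/lax directions encoded in the inclusion-of-subsets morphisms of $\Ranp(S)$, and checking that the above colimit formula is compatible with all transition functors (so that it genuinely defines an object of the partial lax limit, and not merely of the full lax limit). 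Both of these reductions are formal once the combinatorial contractibility in (ii) is in hand.
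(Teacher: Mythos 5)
Your approach is genuinely different from the one the paper adopts, and both are workable. The paper identifies $\D(\Ranp)$ with the full subcategory of sections of the Grothendieck construction of $I \mapsto \D(X^I)$ over $\Fin$ sending surjections to coCartesian arrows, and observes that this subcategory is closed under limits --- because the transition functors $\Delta_p^!$ along surjections $p$ preserve limits --- so that the evaluation $f_I^!$ preserves limits; being also continuous it has a left adjoint by the adjoint functor theorem, with no explicit formula and no cofinality combinatorics needed. Your route instead constructs the left adjoint by hand via a general evaluation-adjoint lemma for partial lax limits, which is more informative (a colimit formula of essentially this shape is indeed used later in the paper to prove ind-holonomicity of the restriction of a factorization algebra to each $X^K$), but it presupposes a general lemma that the published text does not actually contain, so you would have to supply its proof rather than merely cite the formalism of \cite{CF}. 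Finally, there is a slip of direction in your colimit formula: with $e: S \twoheadrightarrow I$ and $f: S \to K$, both $\Delta_e: X^I \to X^S$ and $\Delta_f: X^K \to X^S$ point into $X^S$, and the left adjoint of the transition functor $\Delta_e^!$ is $(\Delta_e)_!$ (pushforward along a closed diagonal), so the correct composite is $\Delta_f^! \circ (\Delta_e)_!\,\cF$, not $(\Delta_f)_!(\Delta_e)^!\,\cF$ as written.
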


\begin{proof}
	Consider a functor $$D(-): \Fin \rightarrow \DGCat_{\text{cont}}$$ sending $I \in \Fin$ to $\D(X^I)$ and $p: I \rightarrow J$ to $\Delta_p^!: \D(X^I) \rightarrow \D(X^J)$ and the Grothendieck construction $$\Groth(D(-)) \rightarrow \Fin$$ associated to it. 
	
	Recall (e.g. from \cite[6.2.2]{G2}) that the category  $\D(\Ranp)$ is equivalent to the category of sections of the above functor such that surjections in $\Fin$ are sent to coCartesian arrows. We will show that $f_I^!$ commutes with limits. To do this it suffices to show that a limit of the above sections is computed fiberwise, i.e. we need to show that the naive limit of above sections sends  surjections in $\Fin$ to coCartesian arrows. But this follows from the fact that for $p: I \twoheadrightarrow J$ the functor $\Delta_p^!$ commutes with limits.
\end{proof}

\begin{cor}
	The category $\D(\Ranp)$ is compactly generated by the images of compact objects in $\D(X^I)$ under ${f_I}_!$.
\end{cor}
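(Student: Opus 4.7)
The plan is to combine the preceding lemma (that each $f_I^!$ admits a left adjoint $(f_I)_!$, and indeed that $f_I^!$ is continuous) with a standard generation-by-adjunction argument. I would proceed in three steps.

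First, I would establish that the family $\{f_I^!: \D(\Ranp) \to \D(X^I)\}_{I \in \Fin}$ is jointly conservative. This is immediate from the description of $\D(\Ranp)$ recalled in the proof of the previous lemma: an object of $\D(\Ranp)$ is a section of the Grothendieck construction $\Groth(\D(-)) \to \Fin$ sending surjections to coCartesian arrows, and such a section is zero iff its value $f_I^!(\mathcal{F}) \in \D(X^I)$ at every finite set $I$ is zero.

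Second, I would check that $(f_I)_!$ preserves compact objects. The previous lemma showed that $f_I^!$ commutes with limits; the same proof (combined with the fact that $\Delta_p^!$ also preserves colimits for $p: I \twoheadrightarrow J$) shows that $f_I^!$ commutes with arbitrary colimits. Equivalently, $(f_I)_!$ is a left adjoint whose right adjoint is continuous, so it preserves compacts. Since $\D(X^I)$ is compactly generated, each $(f_I)_!(c)$ with $c \in \D(X^I)^c$ is therefore compact in $\D(\Ranp)$.

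Third, I would combine these to conclude. Let $\mathcal{G} \subset \D(\Ranp)$ be the full subcategory generated under colimits by $\{(f_I)_!(c) : I \in \Fin, c \in \D(X^I)^c\}$. For any $\mathcal{F} \in \D(\Ranp)$ right-orthogonal to all such generators, adjunction gives
\[
0 = \Maps_{\D(\Ranp)}((f_I)_!(c), \mathcal{F}) \simeq \Maps_{\D(X^I)}(c, f_I^!(\mathcal{F}))
\]
for every $I$ and every compact $c$. Since $\D(X^I)$ is compactly generated, this forces $f_I^!(\mathcal{F}) \simeq 0$ for all $I$, and by joint conservativity $\mathcal{F} \simeq 0$. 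Hence $\mathcal{G} = \D(\Ranp)$, which is the desired compact generation. The only subtle point is the joint conservativity statement, but as noted this is built into the very definition of $\D(\Ranp)$ used in the previous lemma, so no serious obstacle is expected.
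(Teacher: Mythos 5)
Your proof is correct and matches the paper's intent: the paper's own proof simply cites joint conservativity of the $f_I^!$, and your three steps spell out the standard argument (joint conservativity of the evaluations, $(f_I)_!$ preserves compacts since $f_I^!$ is continuous, and right-orthogonality forces vanishing via adjunction). The only implicit point worth flagging is that continuity of $f_I^!$ is built into the definition of $\D$ on lax prestacks rather than needing the colimit analogue of the preceding lemma's argument, but the conclusion is the same.
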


\begin{proof}
	Follows from the fact that $f_I^!$ are jointly conservative. 
\end{proof}

\begin{cor}\label{left adjoint to addition on unital Ran}
	The restriction functor $$\widetilde{\add}^!:  \D(\Ranp) \rightarrow  \D(\Ranp \times \Ranp)$$
	admits a left adjoint.
\end{cor}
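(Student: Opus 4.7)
The plan is to follow the strategy of the preceding Lemma verbatim: verify that $\widetilde{\add}^!$ preserves small limits, and then invoke the adjoint functor theorem for presentable $\infty$-categories, which furnishes a left adjoint to any accessible limit-preserving functor between presentable categories.

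First, I would describe $\D(\Ranp \times \Ranp)$ via an analogous Grothendieck-construction presentation. Namely, consider the functor $\Fin \times \Fin \to \DGCat_{\text{cont}}$ sending $(I,J) \mapsto \D(X^I \times X^J)$ with transition maps given by $!$-pullbacks along products of diagonal maps $\Delta_{p_1} \times \Delta_{p_2}$; then $\D(\Ranp \times \Ranp)$ is the full subcategory of sections sending pairs of surjections to coCartesian arrows. Under the canonical identification $X^I \times X^J \cong X^{I \sqcup J}$, the restriction of $\widetilde{\add}: \Ranp \times \Ranp \to \Ranp$ to $X^I \times X^J$ agrees with the tautological map $f_{I \sqcup J}: X^{I \sqcup J} \to \Ranp$. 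Consequently, under the Grothendieck-construction identifications, the functor $\widetilde{\add}^!$ sends a section $(\cF_K)_{K \in \Fin}$ of the source diagram to the section $(I,J) \mapsto \cF_{I \sqcup J}$ of the target diagram; in other words, $\widetilde{\add}^!$ is precomposition with the disjoint-union functor $\sqcup: \Fin \times \Fin \to \Fin$.

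Second, by the argument of the preceding Lemma, limits in $\D(\Ranp)$ and in $\D(\Ranp \times \Ranp)$ are computed fiberwise in the corresponding Grothendieck fibers, since each $!$-pullback $\Delta_p^!$ (and each $(\Delta_{p_1} \times \Delta_{p_2})^!$) commutes with limits, so a naive fiberwise limit of sections continues to send surjections to coCartesian arrows. Since precomposition with $\sqcup$ trivially preserves fiberwise limits, it follows that $\widetilde{\add}^!$ preserves small limits. Accessibility is automatic from this explicit description, and the adjoint functor theorem then yields the desired left adjoint. The main technical point to be careful with is the correct setup of the Grothendieck-construction description of $\D(\Ranp \times \Ranp)$ together with the appropriate coCartesianness condition; once this is in place, the remainder of the proof is essentially a repetition of the preceding Lemma.
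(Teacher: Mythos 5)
Your proof is correct and follows the natural route that the paper leaves implicit: set up the analogous Grothendieck-construction presentation of $\D(\Ranp \times \Ranp)$ over $\Fin \times \Fin$, observe that $\widetilde{\add}^!$ is precomposition with $\sqcup$, and deduce limit preservation fiberwise exactly as in the preceding Lemma, then apply the adjoint functor theorem. One small imprecision worth noting: you assert that each $\Delta_p^!$ commutes with limits, but (as in the paper's proof of the Lemma) only the case of \emph{surjective} $p$ is actually needed for the coCartesianness condition to be preserved under limits, and that is also the case where $\Delta_p$ is a closed embedding so that $\Delta_p^!$ is visibly a right adjoint.
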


\subsubsection{}
Hence we can reduce the computation of $M$ to the first power of the curve. We get that 
\begin{equation}
M \cong (i^! \otimes \Id_{\QCoh(T)}) ((j^! \otimes \Id_{\QCoh(T)})^{\text{enh}})^L (A),
\end{equation}
where 
\[
\begin{tikzcd}
\CommAlg(\D(X) \otimes \QCoh(T)) \arrow[d, "(j^! \otimes \Id_{\QCoh(T)})^{\text{enh}}"] \ar[r, "\sim", "\fact"']&   \CFactAlg^T(\D(\Ranp_{X}) \otimes \QCoh(T)) \arrow[d, "(j_{\Ranp}^! \otimes \Id_{\QCoh(T)})^{\text{enh}}"]    \\
\CommAlg(\D(U) \otimes \QCoh(T))) \ar[r, "\sim", "\fact"']  &  \CFactAlg^T(\D(\Ranu) \otimes  \QCoh(T)).
\end{tikzcd}
\]

\subsubsection{} Consider the full subcategory $$\PreSt_{\Aff_{/X_{\dR} \times T}}^{\op} \subset\CommAlg(\D(X) \otimes \QCoh(T)),$$ consisting of prestacks affine over $X_{\dR} \times T$ and, analogously, the full subcategory $$\PreSt_{\Aff_{/{U}_{\dR} \times T}}^{\op} \subset\CommAlg(\D(U) \otimes \QCoh(T)) ).$$

Restriction defines a functor $R: \PreSt_{\Aff_{/X_{\dR} \times T}} \rightarrow \PreSt_{\Aff_{/{U}_{\dR} \times T}}$. Tautologically, the diagram 
\[
\begin{tikzcd}
\CommAlg(\D(X) \otimes \QCoh(T)) \arrow[d, "(j^! \otimes \Id_{\QCoh(T)})^{\text{enh}}"] &   \PreSt_{\Aff_{/X_{\dR} \times T}}^{\op}\ar[l, hookrightarrow, ""'] \arrow[d, "R"]    \\
\CommAlg(\D(U) \otimes \QCoh(T)) &  \PreSt_{\Aff_{/{U}_{\dR} \times T}}^{\op}\ar[l, hookrightarrow, ""'] 
\end{tikzcd}
\]
commutes.

\begin{lm}\label{enhanced left adjoint}
	The diagram 
	\[
	\begin{tikzcd}
	\CommAlg(\D(X) \otimes \QCoh(T))  &   \PreSt_{\Aff_{/X_{\dR} \times T}}^{\op}\ar[l, hookrightarrow, ""']    \\
	\CommAlg(\D(U) \otimes \QCoh(T)) \arrow[u,dashrightarrow, "((j^! \otimes \Id_{\QCoh(T)})^{\text{enh}})^L"] &  \PreSt_{\Aff_{/{U}_{\dR} \times T}}^{\op}\arrow[u, dashrightarrow,"R^R"]\ar[l, hookrightarrow, ""'] 
	\end{tikzcd}
	\]
	commutes.
\end{lm}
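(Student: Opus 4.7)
The idea is to recognise both dashed arrows as left adjoints (for $R^R$, this is its defining adjunction in prestacks, read in the opposite category) and match their universal properties by going through the horizontal inclusions.

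First I would check that the horizontal inclusions intertwine the two restriction functors. Concretely, for any $\cZ \in \PreSt_{\Aff_{/X_{\dR}\times T}}$ with structure sheaf $\cO_\cZ \in \CommAlg(\D(X)\otimes \QCoh(T))$, there is a canonical isomorphism
$$(j^! \otimes \Id_{\QCoh(T)})(\cO_\cZ) \simeq \cO_{R(\cZ)},$$
where $R(\cZ) = \cZ \times_{X_{\dR}} U_{\dR}$. This is because $j$ is an open embedding, so $j^! \otimes \Id$ is symmetric monoidal on structure sheaves and pullback of $\cO_\cZ$ along an open embedding is the structure sheaf of the fibre product. The compatibility with the commutative-algebra structure is then automatic.

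Next, I would invoke Lemma~\ref{description of R_B^R} with $B = X_{\dR}\times T$: for $\cY \in \PreSt_{\Aff_{/U_{\dR}\times T}}$ on which the partial right adjoint is defined, $R^R(\cY) = \hormerxJetsF_{\nabla, X_{\dR}\times T}^{\mer, x_0}(\cY)$ is affine over $X_{\dR}\times T$ and satisfies
$$\Hom_{\PreSt_{/X_{\dR}\times T}}(\cZ, R^R(\cY)) \simeq \Hom_{\PreSt_{/U_{\dR}\times T}}(R(\cZ), \cY)$$
for all $\cZ \in \PreSt_{\Aff_{/X_{\dR}\times T}}$. Translating this to commutative algebras via the anti-equivalence $\cZ \leftrightarrow \cO_\cZ$ and using the first step, the adjunction becomes
$$\Hom_{\CommAlg(\D(X)\otimes\QCoh(T))}(\cO_{R^R(\cY)}, \cO_\cZ) \simeq \Hom_{\CommAlg(\D(U)\otimes\QCoh(T))}(\cO_\cY, (j^!\otimes \Id)(\cO_\cZ)).$$

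To conclude, I would observe that every $B \in \CommAlg(\D(X)\otimes\QCoh(T))$ is of the form $\cO_\cZ$ for a uniquely determined relative affine $\cZ = \Spec_{X_{\dR}\times T}(B)$, so the displayed adjunction holds against all test commutative algebras, not just those coming from affine prestacks. This exhibits $\cO_{R^R(\cY)}$ as the value of the partial left adjoint to $(j^!\otimes\Id_{\QCoh(T)})^{\text{enh}}$ at $\cO_\cY$, giving the claimed commutativity of the diagram. No step here is technically deep; the only item requiring care is the bookkeeping of the $T$-factor and the unit/counit of the inclusions, which is essentially built into the construction of the horizontal arrows.
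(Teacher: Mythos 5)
Your first two steps are sound: the horizontal inclusions do intertwine $(j^!\otimes\Id_{\QCoh(T)})^{\text{enh}}$ with $R$ (this is the tautological square recorded just above the lemma), and Lemma~\ref{description of R_B^R} applied with $B=X_{\dR}\times T$ indeed supplies the universal property of $R^R$.

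The last step, however, has a genuine gap. You assert that every $B\in\CommAlg(\D(X)\otimes\QCoh(T))$ is of the form $\cO_{\cZ}$ for a relative affine $\cZ=\Spec_{X_{\dR}\times T}(B)$, and hence that the adjunction you derived holds against all test commutative algebras. This is false in the derived setting: the full subcategory $\PreSt_{\Aff_{/X_{\dR}\times T}}^{\op}\subset\CommAlg(\D(X)\otimes\QCoh(T))$ consists only of the \emph{connective} (non-positive cohomological degree) commutative algebras, not all of them. Your adjunction is therefore established only against connective test objects, whereas the universal property characterizing the partial left adjoint $((j^!\otimes\Id_{\QCoh(T)})^{\text{enh}})^L$ must be verified against the entire category $\CommAlg(\D(X)\otimes\QCoh(T))$. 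You dismiss this as mere bookkeeping of the unit/counit of the inclusions, but it is precisely where the proof needs an argument.

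The paper closes this gap by introducing the right adjoints $\iota_X^R,\iota_U^R$ of the horizontal inclusions, identified with truncation $\tau^{\leq 0}$, and observing that these truncations intertwine $R$ and $(j^!\otimes\Id)^{\text{enh}}$ because the open restriction $j^!$ is $t$-exact. Running the adjunction yoga through the $\iota^R$'s reduces the universal property for an arbitrary test algebra $d$ to that for its truncation $\iota_X^R(d)$, which is affine. The paper then also checks that $((j^!\otimes\Id)^{\text{enh}})^L$ preserves non-positive degrees, by reducing to free algebras $\Sym(V)$ and using right $t$-exactness of $j_!$. Some version of this truncation / $t$-exactness argument is required to make your final step correct.
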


\begin{proof}
	Denote by $\iota_{X}$ (resp. $\iota_{U}$) the inclusion $$\PreSt_{\Aff_{/X_{\dR} \times T}}^{\op} \hookrightarrow \CommAlg(\D(X) \otimes \QCoh(T))$$ ( resp. $\PreSt_{\Aff_{/{U}_{\dR} \times T}}^{\op} \hookrightarrow \CommAlg(\D(U) \otimes \QCoh(T)) $). These functors admit right adjoints $\iota_{X}^R$ (resp. $\iota_{U}^R$) given by truncation. 
	
	We have that the diagram 
	\[
	\begin{tikzcd}
	\CommAlg(\D(X) \otimes \QCoh(T)) \arrow[d, "(j^! \otimes \Id_{\QCoh(T)})^{\text{enh}}"] \arrow[r, "\iota_{X}^R"]&   \PreSt_{\Aff_{/X_{\dR} \times T}}^{\op} \arrow[d, "R"]    \\
	\CommAlg(\D(U) \otimes \QCoh(T))\arrow[r, "\iota_{U}^R"] &  \PreSt_{\Aff_{/{U}_{\dR} \times T}}^{\op}
	\end{tikzcd}
	\]
	commutes. Therefore, if the functor $R^R$ is defined on $$c \in \PreSt_{\Aff_{/{U}_{\dR} \times T}}^{\op},$$ then $$((j^! \otimes \Id_{\QCoh(T)})^{\text{enh}})^L$$ is defined on $\iota_{U}(c)$ and is given by $\iota_{X}\circ R^R(c)$.
	
	Then it suffices to show that for $\cM \in \CommAlg(\D(U) \otimes \QCoh(T)) $ lying in non-positive degrees $$((j^! \otimes \Id_{\QCoh(T)})^{\text{enh}})^L (\cM)$$ also lies in non-positive degrees. Without loss of generality we may assume that $\cM \cong \Sym(V)$ for $V \in \D(U) \otimes \QCoh(T)$ in non-positive degrees.
	
	Then notice that $$((j^! \otimes \Id_{\QCoh(T)})^{\text{enh}})^L (\Sym(V)) \cong \Sym((j_! \otimes \Id_{\QCoh(S)})V).$$ Thus the result follows since $(j_! \otimes \Id_{\QCoh(S)})$ is right t-exact. 
\end{proof}

\begin{proof}[Proof of Theorem \ref{factmod commutative}]
	By virtue of Lemma \ref{enhanced left adjoint} we see that $M$ is given by multiplication by global functions on the fiber of $R^R (\cY)$ at $x \times T$. Lemma \ref{description of R_B^R} implies that $$R^R (\cY) \cong \hormerxJetsF_{\nabla, T \times U_{\dR}}^{\mer, x}(\cY),$$
	and we get the result. 
\end{proof}

\subsubsection{}
Let now $\cY_U$ be a prestack affine over $U_{\dR} \times T$, i.e. $\cY_U = \Spec_{U_{\dR} \times T}(A_U)$, where $A_U \in \CommAlg(\D(U) \otimes \QCoh(T))$. 
\begin{rem}\label{factmod commutative U}
	By the same argument as in the Theorem \ref{factmod commutative}, we get that for $$A_U \in \D_{\indhol}(U) \otimes \QCoh(T)$$ we have 
		$$\QCoh(\hormerxJetsF_{\nabla, T \times U_{\dR}}^{\mer, x}(\cY)_x)\cong A_U\FactMod^T.$$
\end{rem}

\bibliographystyle{alpha}
\bibliography{FFFv8}

\end{document}